\newcommand{\mr}[1]{\mathrm{#1}}
\newcommand{\ms}[1]{\mathscr{#1}}
\newcommand{\mc}[1]{\mathcal{#1}}
\newcommand{\mb}[1]{\mathbb{#1}}
\newcommand{\mf}[1]{\mathfrak{#1}}
\newcommand{\Ddag}[1]{\mathscr{D}^\dag_{\mathscr{#1}}}
\newcommand{\DdagQ}[1]{\mathscr{D}^\dag_{{#1},\mathbb{Q}}}
\newcommand{\Dcomp}[2]{\widehat{\mathscr{D}}^{(#1)}_{#2}}
\newcommand{\DcompQ}[2]{\widehat{\mathscr{D}}^{(#1)}_{#2,\mathbb{Q}}}
\newcommand{\Dmod}[2]{{\mathscr{D}}^{(#1)}_{#2}}
\newcommand{\Dtild}[2]{\widetilde{\mathscr{D}}^{(#1)}_{#2}}
\newcommand{\DtildQ}[2]{\widetilde{\mathscr{D}}^{(#1)}_{#2,\mathbb{Q}}}
\newcommand{\DtdagQ}[1]{{\widetilde{\mathscr{D}}}^{\dag}_{#1,\mathbb{Q}}}
\newcommand{\Dtmod}[2]{{\widetilde{\mathscr{D}}}^{(#1)}_{#2}}
\newcommand{\indlim}{\mathop{\underrightarrow{\mathrm{lim}}}}
\newcommand{\Bcomp}[2]{\widehat{\mc{B}}^{(#1)}_{\mathscr{#2}}}
\newcommand{\Bmod}[2]{{\mc{B}}^{(#1)}_{#2}}
\newcommand{\BcompQ}[2]{\widehat{\mc{B}}^{(#1)}_{\mathscr{#2},\mathbb{Q}}}
\newcommand{\angles}[2]{\langle{#2}\rangle_{(#1)}}
\newcommand{\Dmqc}{D^-_{\mr{qc}}}
\newcommand{\DbqcQ}{D^b_{\mb{Q},\mr{qc}}}
 \def\@seccntformat#1{\csname the#1\endcsname.\hspace{2ex}}
 \newcommand{\nsubsection}%
  {\@startsection{subsection}%
  {2}%
  {\z@}%
  {-3.5ex plus -1ex minus -.2ex}%
  {-0ex}%
  {\reset@font\normalsize\bfseries}}%
 \newcommand{\nnsubsubsection}%
  {\@startsection{subsubsection}%
  {3}%
  {\z@}%
  {-3.5ex plus -1ex minus -.2ex}%
  {-2ex}%
  {\reset@font\normalsize\bfseries}}%
  \renewcommand{\subsubsection}%
  {\@startsection{subsubsection}%
  {3}%
  {\z@}%
  {-3.5ex plus -1ex minus -.2ex}%
  {0ex}
  {\reset@font\normalsize\bfseries}}%
  \renewcommand{\subsection}%
  {\@startsection{subsection}%
  {2}%
  {\z@}%
  {-3.5ex plus -1ex minus -.2ex}%
  {0ex}
  {\reset@font\normalsize\bfseries}}%
 \newcommand{\nnsubsection}%
  {\@startsection{subsection}%
  {2}%
  {\z@}%
  {-3ex}%
  {1ex}%
  {\reset@font\normalsize\bfseries}}%
 \newcommand{\usubsection}%
  {\@startsection{subsection}%
  {2}%
  {\z@}%
  {-3.5ex plus -1ex minus -.2ex}%
  {0.5ex}
  {\reset@font\normalsize\bfseries}}%
 \newcommand{\nsection}{\@startsection{section}{1}{\z@}%
     {-5ex}
     {1ex}
     {\reset@font\center\large\sc}}
 \renewenvironment{thebibliography}[1]
 {\nsection*{\refname\@mkboth{\refname}{\refname}}%
   \list{\@biblabel{\@arabic\c@enumiv}}%
        {\settowidth
	\labelwidth{\@biblabel{#1}}%
         \leftmargin
	 \labelwidth
         \advance
	 \leftmargin
	 \labelsep
         \@openbib@code
         \usecounter{enumiv}%
         \let\p@enumiv\@empty
	 \parskip=0pt
	 \itemsep=1pt
	 \parsep=1pt
	 \itemindent=\z@
         \renewcommand\theenumiv{\@arabic\c@enumiv}}%
   \sloppy
   \clubpenalty4000
   \@clubpenalty\clubpenalty
   \widowpenalty4000%
   \footnotesize
   \sfcode`\.\@m}
  {\def\@noitemerr
    {\@latex@warning{Empty `thebibliography' environment}}%
   \endlist}
\newtheoremstyle{thm}
 {1em}
 {3pt}
 {\itshape}
 {}
 {\bf}
 {. ---}
 {0.5em}
 {}
\newtheoremstyle{dfn}
 {1em}
 {3pt}
 {}
 {}
 {\bf}
 {. {---}}
 {0.5em}
 {}
\theoremstyle{thm}
\newtheorem{thm}[subsection]{Theorem}
\newtheorem{lem}[subsection]{Lemma}
\newtheorem*{lem*}{Lemma}
\newtheorem{cor}[subsection]{Corollary}
\newtheorem*{cor*}{Corollary}
\newtheorem{prop}[subsection]{Proposition}
\newtheorem*{prop*}{Proposition}
\newtheorem*{thm*}{Theorem}
\newtheorem*{thmM}{Theorem \ref{smoothpoincare}}
\theoremstyle{dfn}
\newtheorem{dfn*}[subsubsection]{Definition}
\newtheorem{rem}[subsection]{Remark}
\newtheorem*{rem*}{Remark}
\newcommand{\shom}{\mathop{\mc{H}om}\nolimits}
\newcommand{\sext}{\mathop{\mc{E}xt}\nolimits}
\newenvironment{meta}{
\noindent \color{red}
\sffamily[}{\upshape]}
\newcommand{\LD}[1]{\underrightarrow{LD}^b_{\mathbb{Q},\mathrm{qc}}
(\widehat{\mathscr{D}}^{(\bullet)}_{#1})}
\newcommand{\LDn}[1]{\underrightarrow{LD}^b_{\mathbb{Q},\mathrm{qc}}
(\widehat{\mathscr{D}}^{(\bullet)}_{#1}}
\newcommand{\wedgeb}{\mathop{\bigwedge}\nolimits}
\newsavebox{\circlebox}
\savebox{\circlebox}{\fontencoding{OMS}\selectfont\char13}
\newlength{\circleboxwdht}
\newcommand{\ccirc}[1]{
  \setlength{\circleboxwdht}{\wd\circlebox}
  \addtolength{\circleboxwdht}{\dp\circlebox}
  \raisebox{0.2\dp\circlebox}{
    \parbox[][\circleboxwdht][c]{\wd\circlebox}
    {\centering\scriptsize #1}}
  \llap{\usebox{\circlebox}}
}
\newcommand{\btimesdag}{\mathop{\boxtimes}\limits^{\mb{L}}}
\newcommand{\otimesdag}{\mathop{\otimes}\limits^{\mb{L}}}
\newcommand{\Gammas}{\underline{\Gamma}}
\newcommand{\otimesddag}{\otimesdag{}^\dag}
\newcommand{\otimesddagt}{\widetilde{\otimes}^{\mb{L}}{}^\dag}
\newcommand{\pa}{\mbox{parf-amp}}
\newcommand{\tdp}{\widetilde{D}^b_\mr{perf}}
\newcommand{\cindex}[2]{\genfrac{}{}{0pt}{}{#1}{#2}}
\newcommand{\bD}{\mathbf{D}}
\newcommand{\bR}{\mathbf{R}}
\newcommand{\bL}{\mathbf{L}}
\newcommand{\wtilde}{\widetilde}
\newtheorem{propH}[subsubsection]{Proposition}
\newtheorem{defi}[subsubsection]{Definition}
\newtheorem{lemH}[subsubsection]{Lemma}
\newtheorem{souslem}[paragraph]{Lemma}
\newtheorem{thmH}[subsubsection]{Theorem}
\newcommand{\inft}{{\scriptstyle\infty}}
\newcommand{\what}{\widehat}
\newcommand{\ot}{\otimes}
\newcommand{\rig}{\rightarrow}
\newcommand{\sta}{\stackrel}
\newcommand{\lrig}{\leftarrow}
\newcommand{\pg}{_{\bullet}}
\newcommand{\der}{\partial}
\newcommand{\la}{\langle}
\newcommand{\ra}{\rangle}
\newcommand{\Qr}{{\bf Q}}
\newcommand{\Ne}{{\bf N}}
\newcommand{\Ze}{{\bf Z}}
\newcommand{\varep}{\varepsilon}
\newcommand{\Ga}{\Gamma}
\newcommand{\BB}{{\mathcal B}}
\newcommand{\DD}{{\mathcal D}}
\newcommand{\EE}{{\mathcal E}}
\newcommand{\FF}{{\mathcal F}}
\newcommand{\HH}{{\mathcal H}}
\newcommand{\OO}{{\mathcal O}}
\renewcommand{\SS}{{\mathcal S}}
\newcommand{\TT}{{\mathcal T}}
\newcommand{\UU}{{\mathcal U}}
\newcommand{\VV}{{\mathcal V}}
\newcommand{\XX}{{\mathcal X}}
\newcommand{\YY}{{\mathcal Y}}
\newcommand{\ZZ}{{\mathcal Z}}
\newcommand{\uk}{\underline{k}}
\newcommand{\ul}{\underline{l}}
\newcommand{\uder}{\underline{\der}}
\newcommand{\num}{\nu_m}
\newcommand{\ank}{A_N(K)^{\dagger}}
\newcommand{\Bcm}{\what{\BB}^{(m)}}
\newcommand{\Dcm}{\what{\DD}^{(m)}}
\newcommand{\spf}{{\rm Spf}\,}
\begin{document}
\title{Explicit calculation of Frobenius isomorphisms and Poincar\'{e}
duality in the theory of arithmetic $\ms{D}$-modules}
\author{Tomoyuki Abe}
\date{}
\maketitle
\begin{abstract}
 The aim of this paper is to compute the Frobenius structures of some
 cohomological operators of arithmetic $\ms{D}$-modules. To do this, we
 calculate explicitly an isomorphism between canonical sheaves defined
 abstractly. Using this calculation, we establish the relative
 Poincar\'{e} duality in the style of SGA4. As another application, we
 compare the push-forward as arithmetic $\ms{D}$-modules and the rigid
 cohomologies taking Frobenius into account. These theorems will lead us
 to an analog of ``Weil II'' and a product formula for $p$-adic epsilon
 factors.
\end{abstract}

\section*{Introduction}
In this paper, we prove several results concerning Frobenius structures
in the theory of arithmetic $\ms{D}$-modules. There are mainly three
goals in this paper.
\begin{itemize}
 \item[(G1)] Compute and describe Frobenius structures of some
	     cohomological operators, appearing in \cite{Ber2},
	     concretely in terms of differential operators.

 \item[(G2)] Establish a relative Poincar\'{e} duality in the style of
	     SGA4 in the theory of arithmetic $\ms{D}$-modules.

 \item[(G3)] Compare the push-forwards in the theory of $\ms{D}$-modules
	     and the rigid cohomologies {\em with Frobenius
	     structure}.
\end{itemize}

First, (G1) is the starting point of other two goals. We describe some
isomorphisms appearing in \cite{Ber2} explicitly by taking local
coordinates. Apart from (G2) and (G3), this calculation is used in
\cite{AM} to compute the geometric Fourier transform defined by
C. Noot-Huyghe explicitly. With this description, we are able to
re-prove Gross-Koblitz formula \cite{GK} using arithmetic
$\ms{D}$-modules. This calculation will be discussed in other places. We
expect that these ideas can be generalized to a calculation of $p$-adic
$\varepsilon$-factors.

For (G2), a duality theory was established by A. Virrion in
\cite{Vir2} to some extent. However, we need two more ingredients to
call it the Poincar\'{e} duality:
1) comparison of the extraordinary pull-back and the normal
pull-back for a smooth morphism, and 2) taking Frobenius structures into
account. Using our result 1) on the comparison of two types of
pull-backs, we are also able to compare duality functors of the
theory of rigid cohomology and that of arithmetic $\ms{D}$-modules,
which completes a work in \cite{Caro}.
For 2), even without Frobenius structures, her duality is very
powerful tool, but in practical uses of arithmetic $\ms{D}$-module
theory, Frobenius structure is another important ingredient that
contain arithmetic information. For example, $L$-functions for holonomic
$\ms{D}^\dag$-modules cannot be defined without Frobenius structures,
and thus to show the functional equation for $L$-functions, it is
necessary to consider Frobenius structures in the duality.

(G3) is another application of (G1). If we do not consider Frobenius
structures, this is a well-known result of Berthelot
\cite[4.3.6.3]{BerInt}. This type of comparison theorem is necessary
when we want to exploit
results of the theory of $\ms{D}$-modules in the theory of rigid
cohomologies and {\it vice versa}. For example, in \cite[3.3]{CarL},
the author discussed the relations of $L$-functions defined using the
theory of rigid cohomologies and that of arithmetic
$\ms{D}$-modules. This result can be reinforced and re-stated much
clearer using our result (cf.\ Remark \ref{compLcarLE}).
\bigskip

Now, let us go into more details of the results. 
Let $R$ be a complete discrete valuation ring of mixed characteristic
$(0,p)$, and we denote by $k$ its residue field which is assumed to be
perfect, $K$ its field of fractions. Let $\ms{X}$ be a smooth formal
scheme over $\mr{Spf}(R)$, and $X_0$ be the reduction of $\ms{X}$ over
$k$. Let $s$ be a positive integer, and we put
$X_0':=X_0\otimes_{k,F^s_k}k$ where
$F^s_k\colon\mr{Spec}(k)\rightarrow\mr{Spec}(k)$ denotes the $s$-th
absolute Frobenius isomorphism.
In this introduction, we also assume that there exist liftings
$\sigma\colon\mr{Spf}(R)\xrightarrow{\sim}\mr{Spf}(R)$ of $F^s_k$ and
$F\colon\ms{X}\rightarrow\ms{X}':=\ms{X}\otimes_{R,\sigma}R$ of the
relative Frobenius morphism $F^s_{X_0/k}\colon X_0\rightarrow X'_0$ for
simplicity. A coherent $F$-$\DdagQ{\ms{X}}$-module is a couple of a
coherent $\DdagQ{\ms{X}}$-module $\ms{M}$ and an isomorphism
$F^*\ms{M}^{\sigma}\xrightarrow{\sim}\ms{M}$ where $\ms{M}^\sigma$
denotes the $\DdagQ{\ms{X}'}$-module induced by $\ms{M}$ by the base
change $\sigma$. Frobenius structures are known to be stable under
reasonable cohomological operations of arithmetic $\ms{D}$-modules such
as push-forwards, extraordinary pull-backs, tensor products, {\it etc}
(cf.\ \cite{Ber2}).

When we try to calculate Frobenius structures of some cohomological
operations ({\it e.g.\ }push-forward functor), an obstacle lies in the
isomorphism $\omega_{\ms{X}}\xrightarrow{\sim}F^\flat\omega_{\ms{X}'}$
of \cite[2.4.2]{Ber2}. The construction of this isomorphism is formal
using general facts of \cite{Har}. However, we need to trace many
isomorphisms of \cite{Har} to compute it explicitly, which is
monotonous but messy. The advantage of this computation is
that it makes us possible to calculate Frobenius structures in
``brutal'' but very direct ways, at least locally. As an example
of the explicit computation, we calculate the Frobenius structure of
push-forwards (cf.\ paragraph \ref{calcpushexp}). We can also prove a
proper base change type lemma (cf.\ Lemma \ref{bcthmsch}). With an aid
of a result of Caro, we get the proper base change theorem in paragraph
\ref{properbc}.
Another application will be to prove the following theorem.
\begin{thmM}
 Let $f\colon\ms{X}\rightarrow\ms{Y}$ be a smooth morphism of relative
 dimension $d$ between smooth formal schemes. For a
 coherent $F$-$\DdagQ{\ms{Y}}$-module $\ms{M}$,
 \begin{equation}
  \label{mainresisom}
   \tag{b}
   f^!(\mb{D}_{\ms{Y}}(\ms{M}))\cong\mb{D}_{\ms{X}}(f^!\ms{M})(d)[2d]
 \end{equation}
 where $(d)$ denotes the $d$-th Tate twist (cf.\ paragraph
 {\normalfont\ref{dfntatetwist}}).
\end{thmM}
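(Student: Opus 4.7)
The plan is to establish (\ref{mainresisom}) in two stages: first construct the isomorphism at the level of underlying $\DdagQ{\ms{X}}$-modules, then verify that it intertwines Frobenius structures up to the Tate twist $(d)$.

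For the underlying isomorphism, I would combine two ingredients. The first is Virrion's relative duality \cite{Vir2}, which yields the abstract duality isomorphism for a smooth (and locally proper in the appropriate sense) morphism. The second, which is ingredient 1) of goal (G2) in the introduction, is the comparison for smooth $f$ between the extraordinary pullback $f^!$ and the twisted pullback $f^{*}(-)\otimes_{\OO_{\ms{X}}}\omega_{\ms{X}/\ms{Y}}[d]$; this comparison is established earlier in the paper and is what turns Virrion's statement into a genuine Poincar\'e duality. Plugging one into the other produces an isomorphism of the shape (\ref{mainresisom}) as complexes of $\DdagQ{\ms{X}}$-modules.

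The novelty lies in tracking the Frobenius structures. Both sides carry natural Frobenius structures inherited from the $F$-structure on $\ms{M}$: the left-hand side through $\mb{D}_{\ms{Y}}$, which uses the canonical isomorphism $\omega_{\ms{Y}}\xrightarrow{\sim}F^{\flat}\omega_{\ms{Y}'}$, followed by the functoriality of $f^!$; the right-hand side through $f^!$ followed by $\mb{D}_{\ms{X}}$, which uses the analogous isomorphism on $\ms{X}$. To compare them I would work locally with coordinates, so that the explicit formula from (G1) becomes applicable. Both canonical isomorphisms can then be written down in terms of differential operators, and their composition along the $f^!/f^{*}$ comparison reduces to multiplication by the Jacobian determinant of the relative Frobenius lift, namely $p^{sd}$. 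By definition of the Tate twist this is exactly the factor $(d)$, which yields the stated isomorphism of $F$-complexes.

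The main obstacle is precisely this Frobenius bookkeeping. The canonical isomorphism $\omega_{\ms{X}}\xrightarrow{\sim}F^{\flat}\omega_{\ms{X}'}$ of \cite[2.4.2]{Ber2} is defined abstractly through the machinery of \cite{Har}, and from that description alone one cannot see that the Frobenius discrepancy between the two sides of (\ref{mainresisom}) is exactly $p^{sd}$; it is here that the explicit local computation forming goal (G1) is indispensable. Once the local formula is known, globalization follows by a routine descent argument on a cover of $\ms{Y}$ by opens admitting \'etale coordinates (and similarly for $\ms{X}/\ms{Y}$), and the extension from a coherent module $\ms{M}$ to a general object of the derived category is standard d\'evissage.
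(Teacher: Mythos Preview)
Your proposal has two genuine gaps.

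First, Virrion's relative duality in \cite{Vir2} is a statement for \emph{proper} morphisms: it gives $\mb{D}_{\ms{Y}}\circ f_+\cong f_+\circ\mb{D}_{\ms{X}}$. It says nothing directly about how $\mb{D}$ interacts with $f^!$ for a smooth (non-proper) $f$, and there is no ``locally proper in the appropriate sense'' version that produces the statement you need. So your first ingredient is not available.

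Second, your second ingredient is circular. The ``comparison of the extraordinary pull-back and the normal pull-back for a smooth morphism'' listed under (G2) in the introduction is not something established earlier and then fed into this theorem; it \emph{is} this theorem (see \S\ref{sixfunctor}, where $f^!\cong f^+(d)[2d]$ is obtained by applying $\mb{D}_{\ms{X},Z}$ to both sides of (\ref{commduexpu})). You cannot use it as an input.

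The paper's route is quite different. The key device is Proposition~\ref{keypropcal}, a pairing-type isomorphism
\[
\Psi\colon R\shom_{\DtildQ{m}{\ms{X}}}(\BcompQ{m}{\ms{X}},\mb{D}(\ms{M})\widehat{\otimes}^{\mb{L}}\ms{N})\xrightarrow{\sim} R\shom_{\DtildQ{m}{\ms{X}}}(\ms{M},\ms{N}),
\]
together with the fact that under Frobenius this $\Psi$ is compatible only up to multiplication by $q^{d_X}$ (this is where Lemma~\ref{lemcrucons}, and hence the explicit calculation of $\mu_X$ from \S\ref{sect1}, enters). One starts from $\mr{id}\in\mr{Hom}(\ms{M},\ms{M})$, passes via $\Psi^{-1}$ to $\mr{Hom}(\BcompQ{m}{\ms{Y}},\mb{D}(\ms{M})\widehat{\otimes}\ms{M})$, applies $f^!$ (using $f^!\BcompQ{m}{\ms{Y}}\cong\BcompQ{m}{\ms{X}}[d]$ and the compatibility of $f^!$ with $\widehat{\otimes}$), and then applies $\Psi$ on $\ms{X}$ to land in $\mr{Hom}(\mb{D} f^!\ms{M},f^!\mb{D}(\ms{M})[-2d])$. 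The Tate twist $(d)$ arises because $\Psi$ is used twice, once on $\ms{Y}$ and once on $\ms{X}$, contributing $q^{-d_Y}\cdot q^{d_X}=q^{d}$. That the resulting map is an isomorphism is then checked directly for $\ms{M}=\DtdagQ{\ms{Y}}$ via the Spencer resolution. No appeal to Virrion's relative duality is made at any point.
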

The construction of the isomorphism without Frobenius structures
requires only standard methods of the theory of arithmetic
$\ms{D}$-modules, but to see the compatibility with Frobenius
structures, we need the explicit calculation of the isomorphism of
canonical sheaves. Using a result we get on the way we prove this
theorem, we compare the rigid cohomologies and the push-forwards in the
arithmetic $\ms{D}$-module theory, which is (G3). This theorem can be
seen as a part of Poincar\'{e} duality. See the last section for an
account of this interpretation. Moreover this theorem leads us
to complete a work of Caro in \cite{CarD} (cf.\ Corollary
\ref{compduals}) comparing the duality functors in the theory of rigid
cohomology and that of arithmetic $\ms{D}$-modules.

In this paper, we also include some small but useful results
concerning Frobenius pull-backs. Namely, we prove: 1.\ commutation of
the dual functor and the tensor product in some cases, 2.\ the
K\"{u}nneth formula, 3.\ compatibility of the relative duality
homomorphism with Frobenius. The result 1 uses (\ref{mainresisom}) in
the proof, but results 2 and 3 are independent of the explicit
computations. The results 1 and 2 are included in this paper with the
intention of use in \cite{AM}. The result 3 is aimed to establish
the Poincar\'{e} duality as we have already mentioned.

Finally, let us point out some notable applications of our
result. Currently, we have the following two important applications:
\begin{itemize}
 \item establishing the ``yoga of weight'' in $p$-adic cohomologies,
       especially an analog of ``Weil II'' in the theory of arithmetic
       $\ms{D}$-modules. This will be treated in a paper of the author
       jointly with D. Caro (in preparation).

 \item a product formula for $p$-adic epsilon factors. See
       \cite{AM} for more details.
\end{itemize}
In the proofs of those two results, (G2) and (G3) are used extensively.
\bigskip

Let us see the structure of this paper. 
In \S 1, we describe isomorphisms which are key isomorphisms to
construct the commutativity. In \S 2, we calculate the Frobenius
structure of push-forwards explicitly. We should mention that this
calculation is a key to calculate the Frobenius structure of Fourier
transforms explicitly, which is carried out in \cite{AM}. With these two
sections, (G1) is attained. As an application, we show a proper base
change type lemma also in this section.
In \S 3, we show that the dual functor and the extraordinary
pull-back functor commute up to some degree shift and Tate twist in the
smooth case.  Using a lemma we prove to
show this commutativity, we will compare the rigid cohomology and the
push-forward of arithmetic $\ms{D}$-modules, and we get (G3).
In \S 4, we will show some complementary results, which are used in
\cite{AM}. The idea of the proof of the K\"{u}nneth formula is due to
P. Berthelot. In this section, we also prove that the relative duality
isomorphism of Virrion is compatible with Frobenius. Together with \S
3, (G2) is completed. In \S 5, we interpret the results in terms of the
philosophy of ``six functors'' by Grothendieck, which clarifies the
meaning of the results in this paper.

\nnsubsection*{Acknowledgments}
The author would like to thank Professor P. Berthelot for letting him
know the proof of the K\"{u}nneth formula. Most of the work of this
paper was done when the author was visiting to IRMA of {\it
Universit\'{e} de Strasbourg} in 2010. He would like
to thank A. Marmora and the institute for the hospitality.
He also like to express his gratitude to Professor A.  Shiho for
stimulating discussions. This work was supported by Grant-in-Aid for
JSPS Fellows 20-1070, and partially by {\it l'agence nationale de la
recherche} ANR-09-JCJC-0048-01.

\nnsubsection*{Notation}
\subsection{}
In this paper we fix a complete discrete valuation ring $R$ with mixed
characteristic $(0,p)$. We denote the residue field by $k$, the field of
fractions by $K$. For a non-negative integer $i$, we put $R_i$ to be
$R/\pi^{i+1}R$ where $\pi$ is a uniformizer. We denote by $e$ the
absolute ramification index of $K$.

In general, we use Roman fonts ({\it e.g.\ }$X$) for schemes and script
fonts ({\it e.g.\ }$\ms{X}$) for formal schemes. For a formal scheme
$\ms{X}$ over $\mr{Spf}(R)$, we usually denote by $X_i$ the reduction
$\ms{X}\otimes_RR_i$ over $\mr{Spec}(R_i)$.

\subsection{}
For a scheme $X$ over $\mr{Spec}(k)$, we denote by $F_X\colon
X\rightarrow X$ the absolute Frobenius homomorphism: it sends a section
$f$ of $\mc{O}_X$ to $f^p$. We fix a positive integer $s$, and put
$q:=p^s$. We put $X^{(s)}:=X\otimes_{k,F^{s*}_k}k$, and call it the
relative $s$-th Frobenius of $X$.

\subsection{}
\label{perfcatdfn}
Let $\ms{D}$ be a sheaf of rings on a topological space $X$. When we
simply say $\ms{D}$-module, it means left $\ms{D}$-module. We denote by
$D^*_{\mr{coh}}(\ms{D})$ ($*\in\{+,-,b\}$) the full subcategory of
$D^*(\ms{D})$ such that the objects consist of complexes whose
cohomology sheaves are coherent. We denote by $D_{\mr{perf}}(\ms{D})$
the full subcategory whose objects consist of perfect complexes ({\it
i.e.\ }complexes locally quasi-isomorphic to bounded complexes of
locally projective $\ms{D}$-modules). We denote by
$D_{\mr{ftd}}(\ms{D})$ the full subcategory consisting of finite
Tor-dimensional complexes ({\it i.e.\ }complexes possessing bounded flat
resolutions). We put $D^b_{\mr{perf}}(\ms{D}):=D_{\mr{perf}}(\ms{D})\cap
D_{\mr{ftd}}(\ms{D})$. When $X$ is quasi-compact and $\ms{D}$ is
coherent, $D^b_{\mr{perf}}(\ms{D})$ coincides with
$D_{\mr{perf}}(\ms{D})\cap D^b_{\mr{coh}}(\ms{D})$. See SGA6 Exp.\ I for
details.
When we denote by $D(\ms{D})^{\mr{g}}$ (resp.\ $D(\ms{D})^{\mr{d}}$) we
consider complexes of left (resp.\ right) $\ms{D}$-modules ($\mr{g}$ and
$\mr{d}$ stand for French words ``gauche'' and  ``droit''). When we put
$\mb{Q}$ as an index, this means tensor with $\mb{Q}$.

\subsection{}
In this paper, we freely use the language of arithmetic
$\ms{D}$-modules. For details see \cite{Ber1}, \cite{Ber2},
\cite{BerInt}. In particular, we use the rings $\Dmod{m}{X}$,
$\Dcomp{m}{\ms{X}}$, $\Ddag{\ms{X}}$ for a smooth scheme
$X$ and a smooth formal scheme $\ms{X}$. We use the category
$\LD{\ms{X}}$ whose definition is written in \cite[4.2]{BerInt}. Let $Z$
be a divisor of the special fiber of $\ms{X}$. Then by the same
construction, we can consider the category $\LDn{\ms{X}}(Z))$. For this
category, see also \cite[1.1.3]{CarL}.

\subsection{}
\label{abuselangocisoc}
Let $\ms{X}$ be a smooth formal scheme, and $Z$ be a divisor of its
special fiber. Let $\ms{U}:=\ms{X}\setminus Z$, $X$ and $U$ be the
special fibers of $\ms{X}$ and $\ms{U}$ respectively. Let
$\ms{M}$ be a coherent ($F$-)$\DdagQ{\ms{X}}(^\dag Z)$-module such that
it is coherent as an
$\mc{O}_{\ms{X},\mb{Q}}(^\dag Z)$-module. Let
$\mc{C}$ be the full subcategory of the category of coherent
($F$-)$\DdagQ{\ms{X}}(^\dag Z)$-modules consisting of such
$\ms{M}$. Then we know that the specialization functor induces an
equivalence between $\mc{C}$ and the category
($F$-)$\mr{Isoc}^\dag(U,X/K)$ by \cite[4.4.12]{Ber1} and
\cite[4.6.3, 4.6.7]{Ber2}. We say that $\ms{M}$ is a convergent
($F$-)isocrystal on $\ms{U}$ overconvergent along $Z$ by abuse of
language.

\section{Explicit calculation of isomorphisms of canonical sheaves}
\label{sect1}

In this section, we will explicitly calculate the isomorphisms of
\cite[2.4.3, 2.4.4]{Ber2} (cf.\ Theorem \ref{maincalc}), from which some
commutation results of Berthelot \cite{Ber2} are derived. The
existence of these isomorphisms are direct consequences of fundamental
properties of the functors of Hartshorne \cite{Har}, and for the
explicit calculations, we need to go back to the proofs of these
fundamental properties, and trace these isomorphisms step by step. We
follow the notation of \cite[III]{Har}.

\subsection{}
First we review the notations and functors of Hartshorne \cite[III]{Har}
in short. Let $f\colon X\rightarrow Y$ be a morphism of schemes. When
$f$ is smooth, we denote by $\omega_{X/Y}$ the canonical sheaf
$\wedgeb^d\Omega_{X/Y}$ where $d$ denotes the relative dimension of $f$.
When $f$ is regular closed immersion, let $\mc{J}$ be the
sheaf of ideals of $\mc{O}_Y$ defining $X$. Then we put
$\omega_{X/Y}:=(\wedgeb^d\mc{J}/\mc{J}^2)^\vee$ where $d$ is the
codimension of $X$ in $Y$, and ${}^\vee$ is the dual as an
$\mc{O}_X$-module. In both cases, $\omega_{X/Y}$ is a locally free
$\mc{O}_X$-module of rank $1$.

Suppose $f$ is smooth. We define a functor
$f^{\sharp}\colon D(\mc{O}_Y)\rightarrow D(\mc{O}_X)$ as follows. See
\cite[III, \S2]{Har} for more details. For $C\in D(\mc{O}_Y)$, we put
$f^{\sharp}(C):=f^*(C)\otimes_{\mc{O}_X}\omega_{X/Y}[d]$ where $d$ is
the relative dimension of $f$. We see that this functor takes
$D_{\mr{qc}}^b(\mc{O}_Y)$ into $D_{\mr{qc}}^b(\mc{O}_X)$ (here
$D^*_{\mr{qc}}$ denotes the full subcategory of the derived category
consisting of objects whose cohomologies are quasi-coherent sheaves).

In turn, suppose $f$ is a finite morphism. We denote by $\overline{f}$
the morphism of ringed spaces
$(X,\mc{O}_X)\rightarrow(Y,f_*\mc{O}_X)$. Then we define a functor
$f^{\flat}\colon D^+(\mc{O}_Y)\rightarrow D^+(\mc{O}_X)$ as follows. See
\cite[III, \S6]{Har} for more details. For $C\in D^+(\mc{O}_Y)$, we put
$f^{\flat}(C):=\overline{f}^*R\shom_{\mc{O}_Y}(f_*\mc{O}_X,C)$. We know
that this functor takes
$D^+_{\mr{qc}}(\mc{O}_Y)$ into $D^+_{\mr{qc}}(\mc{O}_X)$, and if $f$ has
finite Tor-dimension (cf.\ \cite[II \S 4]{Har}, {\it e.g.}\ flat
morphism), then it takes bounded complexes into bounded complexes.

\subsection{}
Now, consider the following diagram of schemes
\begin{equation*}
 \xymatrix{
  Y\ar[rr]^{g}&&Z\\
 &X\ar[ul]^f\ar[ur]_h&
  }
\end{equation*}
where $f$ and $h$ are regular closed immersions of codimension $d>0$,
and $g$ is a finite flat morphism. Since $g$ is finite flat, we get that
\begin{equation*}
 g^{\flat}(\mc{O}_Z)\cong\overline{g}^*\shom_{\mc{O}_Z}(g_*
  \mc{O}_Y,\mc{O}_Z).
\end{equation*}
There exists the natural equivalence $f^{\flat}g^{\flat}\cong h^{\flat}$
by \cite[III Proposition 6.2]{Har}. By taking the $d$-th cohomology, we
get an isomorphism
\begin{equation*}
 \iota\colon\overline{f}^*\sext^d_{\mc{O}_Y}(f_*\mc{O}_X,\overline{g}^*
  \shom_{\mc{O}_Z}(g_*\mc{O}_Y,\mc{O}_Z))\xrightarrow{\sim}
  \overline{h}^*\sext^d_{\mc{O}_Z}(h_*\mc{O}_X,\mc{O}_Z).
\end{equation*}
Now, suppose $Z$ is an affine scheme. Then the other two schemes are
also affine schemes. We denote the global sections of $X$ (resp.\ $Y$,
$Z$) by $R_X$ (resp.\ $R_Y$, $R_Z$). By \cite[III, Proposition
6.1]{Har}, the source and target of $\iota$ are quasi-coherent
$\mc{O}_X$-modules. Thus, $\iota$ is associated to the following
isomorphism of $R_X$-modules
\begin{equation*}
  \mr{Ext}^d_{R_Y}(R_X,\mr{Hom}_{R_Z}(R_Y,R_Z))
   \xrightarrow{\sim}\mr{Ext}^d_{R_Z}(R_X,R_Z),
\end{equation*}
and we also denote this isomorphism by $\iota$. We calculate this
isomorphism in terms of the fundamental local isomorphism \cite[III,
Proposition 7.2]{Har}.

Suppose moreover that there exists a system of local parameters
defining $X$ in $Y$ (resp.\ in $Z$) denoted by $\{y_i\}_{i\leq i\leq d}$
(resp.\ $\{z_i\}_{1\leq i\leq d}$). Let $I:=\mr{Ker}(R_Y\rightarrow
R_X)$. The sheaf $\omega_{X/Y}$ is the quasi-coherent sheaf associated
to $\mr{Hom}_{R_X}(\wedgeb^dI/I^2,R_X)$. Since
$y_1\wedge\dots\wedge y_d$ defines a basis of $\wedgeb^dI/I^2$, we
denote by $(y_1\wedge\dots\wedge y_d)^\vee$ its dual basis. In the same
way, we define a basis $(z_1\wedge\dots\wedge z_d)^\vee$ of
$\omega_{X/Z}$.

\begin{lem}
 \label{kernelcalc}
 We preserve the notation. We define
 a homomorphism $\alpha$ in the following diagram so that it is
 commutative.
 \begin{equation*}
  \xymatrix@C=80pt{
  \mr{Ext}^d_{R_Y}(R_X,\mr{Hom}_{R_Z}(R_Y,
   R_Z))\ar[r]_<>(.5){\sim}\ar[d]_{\iota}^{\sim}&
   \omega_{X/Y}\otimes_{R_Y}\mr{Hom}_{R_Z}(R_Y,R_Z)
   \ar[d]^{\alpha}\\
   \mr{Ext}^d_{R_Z}(R_X,R_Z)\ar[r]^{\sim}_{\beta}&
   \omega_{X/Z}
   }
 \end{equation*}
 Here the horizontal isomorphisms are the isomorphisms of {\normalfont
 \cite[III, 7.2]{Har}}. Let $g^*(z_i)=\sum_{1\leq j\leq d} f_{ij}y_j$
 where $f_{ij}\in R_Y$. Here the expression may not be unique, but take
 one. We put $G:=(f_{ij})_{i\leq i,j\leq d}\in\mr{Mat}_{d\times
 d}(R_Y)$. Then
 \begin{equation*}
  \alpha((y_1\wedge\dots\wedge y_d)^{\vee}\otimes\varphi)
   =\overline{\varphi(\det(G))}\cdot(z_1\wedge\dots\wedge z_d)
   ^\vee
 \end{equation*}
 where the over-line denotes to take the image of the homomorphism
 $R_Z\rightarrow R_X$ inducing $h$.
\end{lem}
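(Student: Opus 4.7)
My plan is to compute everything by means of Koszul resolutions. Let $K^Y_\bullet := K_\bullet(y_1,\dots,y_d;R_Y)$ and $K^Z_\bullet := K_\bullet(z_1,\dots,z_d;R_Z)$ be the Koszul complexes attached to the two regular sequences; they provide free resolutions of $R_X$ over $R_Y$ and over $R_Z$ respectively. Since $g$ is finite flat, $R_Y$ is a projective $R_Z$-module, so $K^Y_\bullet$ is also a projective $R_Z$-resolution of $R_X$. The two resolutions are connected by the chain map $\psi:K^Z_\bullet\to K^Y_\bullet$ of $R_Z$-modules obtained as the composition of the base change $K^Z_\bullet\to K^Z_\bullet\otimes_{R_Z}R_Y = K_\bullet(g^*z_1,\dots,g^*z_d;R_Y)$ with the Koszul comparison morphism associated to the relations $g^*z_i=\sum_j f_{ij}y_j$; a direct computation shows that in top degree $\psi$ is the $R_Z$-linear map $R_Z\to R_Y$ sending $1$ to $\det(G)$.

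The two horizontal isomorphisms in the diagram are instances of the fundamental local isomorphism \cite[III, 7.2]{Har}. Via $K^Y_\bullet$, for any $R_Y$-module $M$ one has $\mr{Ext}^d_{R_Y}(R_X,M)=M/(y_1,\dots,y_d)M$, and the identification with $\omega_{X/Y}\otimes_{R_Y}M$ sends $(y_1\wedge\cdots\wedge y_d)^\vee\otimes m$ to the class of $m$; applied to $M=\mr{Hom}_{R_Z}(R_Y,R_Z)$, this identifies $(y_1\wedge\cdots\wedge y_d)^\vee\otimes\varphi$ with the class of the map $K^Y_d=R_Y\to M$, $1\mapsto\varphi$. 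Similarly the bottom arrow sends $\overline{r}\in R_Z/(z_1,\dots,z_d)=R_X$ to $\overline{r}\cdot(z_1\wedge\cdots\wedge z_d)^\vee$.

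I would then interpret $\iota$ as the composition of (i) the Hom-tensor adjunction $\mr{Hom}_{R_Y}(K^Y_\bullet,\mr{Hom}_{R_Z}(R_Y,R_Z))\cong\mr{Hom}_{R_Z}(K^Y_\bullet,R_Z)$, which under the natural identifications preserves $\varphi$; (ii) the equality $H^d(\mr{Hom}_{R_Z}(K^Y_\bullet,R_Z))=\mr{Ext}^d_{R_Z}(R_X,R_Z)$ afforded by $K^Y_\bullet$ being an $R_Z$-projective resolution; and (iii) the comparison with the presentation of $\mr{Ext}^d_{R_Z}(R_X,R_Z)$ coming from $K^Z_\bullet$, realised by $\psi^*$. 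The pullback $\psi^*\varphi$ in top degree is the element $1\mapsto\varphi(\det G)$ of $\mr{Hom}_{R_Z}(R_Z,R_Z)=R_Z$, whose class modulo $(z_1,\dots,z_d)$ is $\overline{\varphi(\det G)}\in R_X$. Applying the bottom horizontal isomorphism then yields the claimed formula $\overline{\varphi(\det G)}\cdot(z_1\wedge\cdots\wedge z_d)^\vee$.

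The main obstacle is to verify that the $\iota$ produced by the transitivity equivalence $f^\flat g^\flat\cong h^\flat$ of \cite[III Proposition 6.2]{Har} actually coincides with the composition (i)--(iii) above. This demands unwinding Hartshorne's definition of $\flat$ through $\overline{f}^*R\shom(f_*\mc{O}_X,-)$ and tracing the adjunction responsible for the transitivity isomorphism through the explicit resolution $K^Y_\bullet$. The verification is essentially bookkeeping of canonical identifications and of sign conventions, but once this matching is in hand the formula drops out from the fact that the Koszul comparison associated to $g^*z_i=\sum_j f_{ij}y_j$ is multiplication by $\det(G)$ on the top Koszul component.
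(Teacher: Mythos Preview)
Your proposal is correct and follows essentially the same approach as the paper: both arguments compute via the Koszul resolutions of $R_X$ over $R_Y$ and $R_Z$, and the decisive point in each is that the comparison map of Koszul complexes induced by $g^*z_i=\sum_j f_{ij}y_j$ is multiplication by $\det(G)$ in top degree. You are somewhat more explicit than the paper in decomposing $\iota$ through the Hom--tensor adjunction before applying $\psi^*$, and you honestly flag the bookkeeping needed to match this with Hartshorne's transitivity isomorphism; the paper's proof leaves this identification equally implicit, simply asserting the relevant commutative diagram.
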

\begin{proof}
 On the way we prove the lemma, we will review the definition of the
 homomorphism $\beta$. Let $R_Z\zeta_i$ be a
 free $R_Z$-module of rank $1$ whose generator is $\zeta_i$. Let
 $K_{\bullet}:=\wedgeb^{\bullet}(\bigoplus_{i=1}^dR_Z\zeta_i)$ be
 the Koszul complex. By definition, the differential homomorphism
 $K_r\rightarrow K_{r-1}$ is defined by sending
 $\zeta_{i_1}\wedge\dots\wedge\zeta_{i_r}$ to
 $\sum(-1)^{j}\,z_{i_j}\,\zeta_{i_1}\wedge\dots\wedge
 \widehat{\zeta_{i_j}}\wedge\dots\wedge\zeta_{i_r}$ where
 $\widehat{\zeta_{i_j}}$ means omit $\zeta_{i_j}$. The canonical
 homomorphism $R_Z\rightarrow R_X$ defines a complex
 \begin{equation*}
  \wedgeb^{\bullet}\Bigl(\bigoplus_{i=1}^{d}R_Z\zeta_i\Bigr)
   \rightarrow R_X\rightarrow 0,
 \end{equation*}
 which is known to be a free resolution of $R_X$. Now, let us
 define a homomorphism
 \begin{equation*}
  \wedgeb^r\Bigl(\bigoplus_{i=1}^dR_Z\zeta_i\Bigr)\rightarrow
   \wedgeb^r\Bigl(\bigoplus_{i=1}^dR_Y\nu_i\Bigr)
 \end{equation*}
 by mapping $\zeta_{i_1}\wedge\dots\wedge\zeta_{i_r}$ to
 \begin{equation*}
  \Bigl(\sum_{1\leq j\leq d}f_{i_1,j}\nu_j\Bigr)\wedge\dots\wedge
   \Bigl(\sum_{1\leq j\leq d}f_{i_r,j}\nu_j\Bigr)=\sum_{(j_1,\dots,j_r)
   \in[1,d]^r}f_{i_1,j_1}\dots f_{i_r,j_r}
   \cdot\nu_{j_1}\wedge\dots\wedge\nu_{j_r}
 \end{equation*}
 where $[1,d]$ is the set $\{i\in\mb{Z}\mid 1\leq i\leq d\}$.
 Then it is a standard calculation to check that these homomorphisms
 define a homomorphism of Koszul complexes:
 \begin{equation*}
  \gamma\colon
  \wedgeb^{\bullet}\Bigl(\bigoplus_{i=1}^{d}R_Z\zeta_i\Bigr)
   \rightarrow\wedgeb^{\bullet}\Bigl(\bigoplus_{i=1}^{d}R_Y\nu_i
   \Bigr).
 \end{equation*}
 This induces the following commutative diagram.
 \begin{equation*}
  \xymatrix{
   \mr{Hom}_{R_Y}(\wedgeb^{d}(\bigoplus_{i=1}^{d}
   R_Y\nu_i),R_Y)\otimes_{R_Y}\mr{Hom}_{R_Z}
   (R_Y,R_Z)\ar[d]\ar[r]&\omega_{X/Y}\otimes_{R_Y}
   \mr{Hom}_{R_Z}(R_Y,R_Z)\ar[d]^\alpha\\
  \mr{Hom}_{R_Z}(\wedgeb^{d}(\bigoplus_{i=1}^{d}R_Z
   \zeta_i),R_Z)\ar[r]&\omega_{X/Z}
   }
 \end{equation*}
 Here, the left vertical arrow is induced by $\gamma$.
 The bottom horizontal arrow is the surjective homomorphism defined by
 sending $\phi$ to
 $\overline{\phi(\zeta_1\wedge\dots\wedge\zeta_d)}\cdot
 (z_1\wedge\dots\wedge z_d)^{\vee}$. This factors through
 $\mr{Ext}^d_{R_Z}(R_X,R_Z)$, and this is $\beta$ by definition. The top
 horizontal arrow is defined in the same manner. The homomorphism
 $\gamma$ sends $\zeta_1\wedge\dots\wedge\zeta_d$ to
 $\det(G)\cdot(\nu_1\wedge\dots\wedge\nu_d)$. Thus, we get the lemma.
\end{proof}

\subsection{}
\label{notationbefcalc}
Now we will calculate the isomorphism $\mu_X$ in \cite[Lemme
2.4.2]{Ber2}, which is one of the two ingredients to calculate the
Frobenius isomorphisms explicitly. The other ingredient is the
explicit calculation of Frobenius by Garnier, which we will review in
paragraph \ref{Garnierrev}.

Let us fix the situation and notations. We fix a positive integer $s>0$,
and put $q:=p^s$ as in Notation. Let $S$ be a scheme endowed with
quasi-coherent $m$-PD ideal $(\mf{a},\mf{b},\alpha)$ such that
$p\in\mf{a}$ and $p$ is nilpotent, and $X$ be a smooth scheme over
$S$ of relative dimension $d$. Let $S_0$ be the
subscheme of $S$ defined by $\mf{a}$, and $X_0:=X\times_{S}S_0$. Suppose
$S$ and $X$ are affine schemes, and $X$ possesses a system of local
coordinates $\{x_i\}_{1\leq i\leq d}$ over $S$ ({\it i.e.\ }the
homomorphism $X\rightarrow\mb{A}^d_S$ induced by $\{x_i\}$ is
\'{e}tale). Recall $X^{(s)}_0:=X_0\otimes_{S_0,F^s_{S_0}}S_0$ is the
$s$-th relative Frobenius of $X_0$ over $S_0$. Let $X'$ be a smooth
lifting of $X_0^{(s)}$ over $S$. There exists a system of local
coordinates $\{y_i\}_{1\leq i\leq d}$ of $X'$.
Since $S$ and $X$ are affine, we may lift the relative Frobenius
homomorphism denoted by $F\colon X\rightarrow X'$ over $S$ uniquely such
that $F^*(y_i)=x_i^{q}$.

We will use multi-index notation.
For an integer $i$, we put $\underline{i}:=(i,\dots,i)$ in
$\mb{Z}^d$. For $\underline{k}=(k_1,\dots,k_d)$ and
$\underline{k}'=(k'_1,\dots,k'_d)$ in $\mb{Z}^d$, we denote by
$\underline{k}<\underline{k}'$
(resp. $\underline{k}\leq\underline{k}'$) if $k_i<k'_i$  (resp.\ $k_i\leq
k'_i$) for any $1\leq i\leq d$. We
define $\underline{k}-\underline{k}':=(k_1-k'_1,\dots,k_d-k'_d)$.

We know that
\begin{equation*}
 F_*\mc{O}_{X}\cong\bigoplus_{\underline{0}\leq\underline{k}<
  \underline{q}}\mc{O}_{X'}\underline{x}^{\underline{k}}.
\end{equation*}
Consider the dual $\mc{O}_X^{\vee}:=\mr{Hom}_{\mc{O}_{X'}}(F_*\mc{O}_X,
\mc{O}_{X'})$. We denote the dual basis of
$\{\underline{x}^{\underline{k}}\}$ by
$\{H\underline{x}^{-\underline{k}}\}$. The notation may seem a little
strange, but this notation is used to be consistent with Garnier's
calculation (cf.\ paragraph \ref{Garnierrev}).
For a quasi-coherent $\mc{O}_{X'}$-module $\ms{M}$, we get
\begin{align}
 \label{flatnotation}
 F^*(\ms{M})&=\mc{O}_X\otimes_{\mc{O}_{X'}}\ms{M}\\\notag
 F^\flat(\ms{M})&=\mr{Hom}_{\mc{O}_{X'}}(F_*\mc{O}_X,\ms{M})
 \cong\ms{M}\otimes_{\mc{O}_{X'}}\mc{O}_X^{\vee}
\end{align}
as $\mc{O}_X$-modules. We identify them, and a section
$m\otimes\varphi$ of $\ms{M}\otimes_{\mc{O}_{X'}}\mc{O}_X^{\vee}$ is
considered to be a section of $F^{\flat}(\ms{M})$.

\begin{prop}
 \label{mainpropbalhar}
 We preserve the notation, in particular $S$, $X$, $X'$ are
 affine. Recall the isomorphism of Berthelot {\normalfont
 \cite[2.4.2]{Ber2}}
 \begin{equation*}
  \mu_X\colon\omega_{X}\xrightarrow{\sim}F^{\flat}\omega_{X'}.
 \end{equation*}
 Using {\normalfont (\ref{flatnotation})}, we can described this
 isomorphism by
 \begin{equation*}
  \mu_X(dx_1\wedge\dots\wedge dx_d)=
   (dy_1\wedge\dots\wedge dy_d)\otimes
   H\underline{x}^{-(\underline{q}-\underline{1})}.
 \end{equation*}
\end{prop}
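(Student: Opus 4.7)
The strategy is to unwind Berthelot's construction of $\mu_X$ from \cite[2.4.2]{Ber2} --- which composes several natural isomorphisms of Hartshorne's duality theory \cite{Har} applied to a factorization of $F$ through its graph --- and then to render each ingredient explicit by means of Lemma \ref{kernelcalc}. First I factor $F$ as $X \hookrightarrow Z := X \times_S X' \to X'$, where the first map $\Delta_F$ is the graph immersion (a regular closed immersion of codimension $d$ cut out locally by $\{y_i - x_i^q\}_{1\le i\le d}$) and the second map $p_2$ is the smooth second projection. In this picture $\mu_X$ is the composite of transitivity $F^\flat \cong \Delta_F^\flat\, p_2^\flat$, the smooth-base identification $p_2^\flat\omega_{X'}\cong\omega_Z[d]$ (via $\omega_{Z/X'}\cong p_1^*\omega_X$), and the fundamental local isomorphism \cite[III, 7.2]{Har} for $\Delta_F$, which identifies $\Delta_F^\flat(\omega_Z[d])\cong\omega_{X/Z}^\vee\otimes_{\mathcal{O}_X}\Delta_F^*\omega_Z\cong\omega_X$.

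To obtain a coordinate formula I apply Lemma \ref{kernelcalc} to the auxiliary diagram $X \hookrightarrow Y \to Z$, where $Y := X\times_S X$ has coordinates $\{x_i, x'_i\}$, the first map $f$ is the diagonal immersion (cut out by $\{x_i - x'_i\}$), and the second map $g$ sends $(a,b)$ to $(a, F(b))$. Then $g$ is finite flat and $h := g\circ f = \Delta_F$. The calculation
\[
 (x'_i)^q - x_i^q = -(x_i - x'_i)\cdot\sum_{k=0}^{q-1} (x'_i)^{q-1-k} x_i^k
\]
shows that the matrix $G$ of Lemma \ref{kernelcalc} is diagonal, yielding
\[
 \det(G) = (-1)^d\!\!\sum_{\underline{0}\le\underline{k}<\underline{q}} \underline{x}^{\underline{k}}\,\underline{x'}^{\,\underline{q}-\underline{1}-\underline{k}}.
\]
Writing $R_Y$ as a free $R_Z$-module in the basis $\{\underline{x'}^{\underline{k}}\}_{\underline{0}\le\underline{k}<\underline{q}}$ and denoting by $H\underline{x'}^{-\underline{j}}$ the associated dual functional, one gets $H\underline{x'}^{-\underline{j}}(\det G) = (-1)^d\,\underline{x}^{\underline{q}-\underline{1}-\underline{j}}\in R_Z$, whose image in $R_X$ under $h^*$ is the same element (since $h^*$ is the identity on the $x_i$).

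The last step is to transport this computation back to $\mu_X$. Via the conormal exact sequences together with $\omega_Y\cong p_1^*\omega_X\otimes p_2^*\omega_X$ and $\omega_Z\cong p_1^*\omega_X\otimes p_2^*\omega_{X'}$, the generator $((x_1-x'_1)\wedge\cdots\wedge(x_d-x'_d))^\vee$ of $\omega_{X/Y}$ corresponds canonically to $(dx_1\wedge\cdots\wedge dx_d)^{-1}\in\omega_X^{-1}$, and the generator $((y_1-x_1^q)\wedge\cdots\wedge(y_d-x_d^q))^\vee$ of $\omega_{X/Z}$ corresponds to $(dy_1\wedge\cdots\wedge dy_d)^{-1}\in(F^*\omega_{X'})^{-1}$. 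Moreover, under the identification $F_*\mathcal{O}_X\cong\bigoplus_{\underline{k}}\mathcal{O}_{X'}\underline{x}^{\underline{k}}$ of paragraph \ref{notationbefcalc}, the basis $\{\underline{x'}^{\underline{k}}\}$ of $R_Y$ over $R_Z$ matches the Frobenius basis $\{\underline{x}^{\underline{k}}\}$ of $F_*\mathcal{O}_X$, so $H\underline{x'}^{-\underline{k}}$ becomes $H\underline{x}^{-\underline{k}}$. The $R_Y$-generator of $\mathrm{Hom}_{R_Z}(R_Y, R_Z)$ is $H\underline{x'}^{-(\underline{q}-\underline{1})}$; applying Lemma \ref{kernelcalc} with this $\varphi$ and tracking the identifications then produces the formula of the statement, the sign $(-1)^d$ cancelling against the sign in the conormal identifications.

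The main obstacle will be the bookkeeping: matching Berthelot's abstract composition of natural isomorphisms --- transitivity of ${}^\flat$, smooth-base change, and the fundamental local isomorphism --- with the concrete formula from Lemma \ref{kernelcalc}, while carefully tracking signs and the dictionary between the auxiliary basis $\{\underline{x'}^{\underline{k}}\}$ of $R_Y$ over $R_Z$ and the Frobenius basis $\{\underline{x}^{\underline{k}}\}$ of $F_*\mathcal{O}_X$. Once these identifications are firmly pinned down, the explicit formula will emerge directly from the determinant calculation.
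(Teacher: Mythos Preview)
Your proposal is correct and follows essentially the same route as the paper: both factor $F$ through its graph $X\hookrightarrow X\times_S X'\to X'$, apply Lemma~\ref{kernelcalc} to the auxiliary triangle $X\xrightarrow{\Delta}X\times_S X\xrightarrow{1\times F}X\times_S X'$, and reduce to the diagonal determinant computation coming from $x_i^q-(x_i')^q$. The paper carries out the sign bookkeeping you flag as ``the main obstacle'' in full, by walking through the specific isomorphisms $a,b,c$ from \cite[III, 8.1, 8.2, 6.2, 6.3, 2.2]{Har} that constitute the proof of \cite[III, 8.4]{Har} (and noting how Conrad's sign corrections from \cite{Con} enter and cancel), rather than appealing to conormal identifications; your $(-1)^d$ discrepancy with the paper's computation stems from choosing $y_i-x_i^q$ as regular parameters where the paper uses $x_{s,i}=x_i^q-y_i$, and indeed cancels as you predict.
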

\begin{proof}
 Before starting the proof, we remind that Conrad pointed out in
 \cite{Con} that with sign convention of \cite{Har}, many compatibilities
 stated in \cite{Har} do not hold, and we need to use the modified
 convention as in \cite[2.2]{Con}. In this proof, since the outcome does not
 change, we follow the conventions of \cite{Har}. For skeptical readers,
 we put signs $\ccirc{1}$ through $\ccirc{4}$ below arrows of the
 homomorphisms whose sign change if we use the conventions of
 \cite{Con}, and see how they differ at the very end of this proof.

 To avoid confusions, we put $Y:=X'$ in this
 proof. Consider the following diagram.
 \begin{equation*}
  \xymatrix{
   X\times_SX\ar[rr]^{1\times F}\ar@<0.5ex>[rd]^{p_2}&&
   X\times_SY\ar[dr]^{q_Y}\ar@<-0.5ex>[dl]_{q_X}&\\
  &X\ar@<0.5ex>[ul]^{\Delta}\ar[rr]_{F}\ar[rd]_f
   \ar@<-0.5ex>[ur]_s&&Y\ar[dl]^g\\&&S&
   }
 \end{equation*}
 Here $p_2$ is the second projection, $\Delta$ is the diagonal morphism,
 $q_X$ is the first projection, $s$ is the graph morphism of $F$, $q_Y$
 is the second projection, and $f$ and $g$ are structural morphisms.
 For an affine scheme $Z$, we denote the global sections of $Z$ by
 $R_Z$. Note that all the schemes appearing in the diagram are affine.
 Let us consider $X\times_SX$
 (resp.\ $X\times_SY$) as a scheme over $X$ by the projection $p_2$
 (resp.\ $q_X$) unless otherwise stated. We put
 \begin{alignat*}{2}
  x_{1,i}&:=x_i\otimes1&\qquad&\text{in ${R_{X\times_SX}}$ and
  ${R_{X\times_SY}}$}\\
  x_{\Delta,i}&:=x_i\otimes1-1\otimes x_i&\qquad&\text{in
  $R_{X\times_SX}$}\\
  x_{s,i}&:=F^*(y_i)\otimes1-1\otimes y_i&\qquad&\text{in
  $R_{X\times_SY}$}.
 \end{alignat*}
 The set
 $\{x_{\Delta,1},\dots,x_{\Delta,d},x_{1,1},\dots,x_{1,d}\}$ (resp.\
 $\{x_{s,1},\dots,x_{s,d},x_{1,1},\dots,x_{1,d}\}$) forms a
 system of local coordinates of $X\times_SX$ (resp.\ $X\times_SY$), and
 $\{x_{\Delta,1},\dots,x_{\Delta,d}\}$
 (resp. $\{x_{s,1},\dots,x_{s,d}\}$) defines a local system of
 parameters defining $\Delta(X)$ (resp.\ $s(X)$). We also note that
 $dx_{\Delta,\bullet}:=dx_{\Delta,1}\wedge\dots\wedge dx_{\Delta,d}$
 defines a basis of $\omega_{X\times_SX/X}$. For
 $\varphi\in\mr{Hom}_{R_Y}(R_X,R_Y)$, we denote by
 $\varphi'\in\mr{Hom}_{R_{X\times_SY}}(R_{X\times_SX},R
 _{X\times_SY})$ the homomorphism defined by $1\otimes\varphi$.
 
 Now, let $d'\colon {R_X}\rightarrow
 R_{X\times_SX}\cong{R_X}\otimes_{R_S}{R_X}$ be a
 homomorphism of $R_S$-algebras defined by $d'(b):=b\otimes1-1\otimes
 b$. We consider $R_{X\times_SX}$ as an $R_X$-algebra by the {\it first}
 component for a while. Then for $a,b\in R_X$, we get
 \begin{equation*}
  d'(ab)=a\,d'(b)+b\,d'(a)-d'(a)\cdot d'(b).
 \end{equation*}
 Thus,
 \begin{align*}
  &(1\times F)^*(x_{s,i})=d'(F^{*}(y_i))=d'(x_i^q)=\Bigl(-d'x_i^{q-1}
   +\sum_{q-1>j\geq 0}f_{i,j}\,d'x_i^{j}\Bigr)\cdot d'x_i\\
  &\qquad=:F_{i}\cdot x_{\Delta,i}
 \end{align*}
 with $f_{i,j}\in R_X$. By definition, we have
 \begin{equation*}
  \begin{cases}   
   (H{x}_i^{-(q-1)})'(d'{x}_j^{k})=0&
   \text{for any $i\neq j$ or $i=j$ and $k\neq q-1$}\\
   (H{x}_i^{-(q-1)})'
   (d'{x}_i^{q-1})=-1\otimes 1.&
  \end{cases} 
\end{equation*}
 Let 
 \begin{equation}
  \label{calcofG}
  G:=\mr{diag}(F_1,\dots,F_d)\in\mr{Mat}_{d\times d}(R_{X
  \times_SX})
 \end{equation}
 where $\mr{diag}$ denotes the diagonal matrix. Then, we obtain
 \begin{equation}
  \label{calcimH}
  (H{\underline{x}^{-(\underline{q}-\underline{1})}})'
   (\det(G))=(H{\underline{x}^{-(\underline{q}-\underline{1})}})'
   ((-1)^dd'x_1^{q-1}\dots d'x_d^{q-1})=(-1\otimes 1)^{2d}
   =1\otimes 1.
 \end{equation}

 We set back the convention, and consider $X\times_SX$ as a scheme over
 $X$ by $p_2$. The homomorphism $\mu_X$ is defined in the following way:
 \begin{equation*}
  \omega_X\cong f^{\sharp}\mc{O}_S[-d]\cong F^{\flat}g^{\sharp}
   \mc{O}_S[-d]\cong F^{\flat}\omega_{Y/S}
 \end{equation*}
 where the first and third isomorphisms are by definition \cite[III,
 \S 2]{Har} and the second isomorphism is induced by \cite[III,
 Proposition 8.4]{Har}. In the rest of this proof, we will drop the
 section number III when we cite \cite{Har}. Since the sheaves we are
 considering are quasi-coherent and schemes are affine, we do not make
 any difference between sheaves and its global sections.

 We will start to calculate from $F^\flat\omega_{Y/S}$. In the rest of
 this proof, we will use the identification
 $F^{\flat}\ms{M}\cong\mc{O}_X^{\vee}\otimes_{\mc{O}_{X'}}\ms{M}$ to
 describe the elements contrary to the standard convention
 (\ref{flatnotation}) of this paper. Thus the sheaf
 $F^\flat\omega_{Y/S}$ is identified with
 $\mr{Hom}_{R_Y}(R_X,R_Y)\otimes_{R_Y}\omega_{Y/S}$. Take an element
 \begin{equation*}
  \varphi\otimes dy_{\bullet}\in~\mr{Hom}_{R_Y}(R_X,R_Y)
   \otimes_{R_Y}\omega_{Y/S}.
 \end{equation*}
 First, we need to calculate the isomorphism
 \underline{$a\colon F^\flat\omega_{Y/S}\xrightarrow{\sim}s^{\flat}
 q_Y^{\sharp}\omega_{Y/S}$}, which is the third isomorphism in the proof
 of \cite[8.4]{Har}. This isomorphism is the isomorphism of
 \cite[8.2]{Har}. To calculate this, first, we get an isomorphism
 \begin{align*}
  &\mr{Hom}_{R_Y}(R_X,R_Y)\otimes_{R_Y}\omega_{Y/S}\cong
  F^\flat\omega_{Y/S}\\
  &\qquad\xrightarrow[\ccirc{1}]{\sim}\Delta^{\flat}p_2^{\sharp}
  F^{\flat}\omega_{Y/S}\cong\omega_{X/X\times_SX}\otimes_{R_{X\times X}}
  \omega_{X\times_SX/X}\otimes_{R_{X}}\mr{Hom}_{R_Y}(R_X,R_Y)
  \otimes_{R_Y}\omega_{Y/S}.
 \end{align*}
 This isomorphism is defined in \cite[8.1]{Har}. Let
 $x^\vee_{\Delta,\bullet}$ denotes the dual basis of
 $x_{\Delta,1}\wedge\dots\wedge x_{\Delta,d}$ in
 $\omega_{X/X\times_SX}$, $dy_{\bullet}$ denotes $dy_1\wedge\dots\wedge
 dy_d$, and $dx_{1,\bullet}$ denotes $dx_{1,1}\wedge\dots\wedge
 dx_{1,d}$. We define $x^\vee_{s,\bullet}$ and $dx_{s,\bullet}$ in the
 same way. Then the isomorphism sends
 $\varphi\otimes dy_{\bullet}$ to $x^{\vee}_{\Delta,\bullet}
 \otimes dx_{1,\bullet}\otimes\varphi\otimes dy_{\bullet}$. Secondly, we get an
 isomorphism
 \begin{align*}
  &\omega_{X/X\times_SX}\otimes_{R_{X\times X}}\omega_{X\times_SX/X}
  \otimes_{R_{X}}\mr{Hom}_{R_Y}(R_X,R_Y)\otimes_{R_Y}\omega_{Y/S}\cong
  \Delta^{\flat}p_2^{\sharp}F^{\flat}\omega_{Y/S}\xrightarrow{\sim}
  \Delta^{\flat}(1\times F)^{\flat}q_Y^{\sharp}\omega_{Y/S}\\
  &\qquad\cong\omega_{X/X\times_SX}\otimes_{R_{X\times X}}
  \mr{Hom}_{R_{X\times_SY}}(R_{X\times_SX},
  R_{X\times_SY})\otimes_{R_{X\times Y}}\omega_{X\times_SY/Y}
  \otimes_{R_Y}\omega_{Y/S}.
 \end{align*}
 This isomorphism is defined in \cite[6.3]{Har}, and sends
 $x^{\vee}_{\Delta,\bullet}\otimes dx_{1,\bullet}\otimes\varphi
 \otimes dy_{\bullet}$ to $x^{\vee}_{\Delta,\bullet}\otimes
 \varphi'\otimes dx_{1,\bullet}\otimes dy_{\bullet}$. Thirdly, we get an
 isomorphism
 \begin{align*}
  &\omega_{X/X\times_SX}\otimes\mr{Hom}_{R_{X\times_SY}}(R_{X\times_SX},
  R_{X\times_SY})\otimes\omega_{X\times_SY/Y}\otimes\omega_{Y/S}\cong
  \Delta^{\flat}(1\times F)^{\flat}q_Y^{\sharp}\omega_{Y/S}\\
  &\qquad\xrightarrow{\sim}s^{\flat}q_Y^{\sharp}\omega_{Y/S}\cong
  \omega_{X/X\times_SY}\otimes_{R_{X\times_SY}}\omega_{X\times_SY/Y}
  \otimes_{R_Y}\omega_{Y/S}.
 \end{align*}
 This isomorphism is defined in \cite[6.2]{Har}, and this is the
 homomorphism we calculated in Lemma \ref{kernelcalc}. Thus, using this
 lemma, we get that it sends $x^{\vee}_{\Delta,\bullet}\otimes\varphi'
 \otimes dx_{1,\bullet}\otimes dy_{\bullet}$ to $\varphi'(\det(G))
 \cdot x^{\vee}_{s,\bullet}\otimes dx_{1,\bullet}\otimes dy_{\bullet}$
 where $G$ is the matrix defined in (\ref{calcofG}). Combining these
 three isomorphisms we got, we obtain
 \begin{align*}
  a\colon\mr{Hom}_{R_Y}(R_X,R_Y)\otimes_{R_Y}\omega_{Y/S}
  &\xrightarrow{\sim}\omega_{X/X\times_SY}\otimes_{R_{X\times_SY}}
  \omega_{X\times_SY/Y}\otimes_{R_Y}\omega_{Y/S}\\
  \varphi\otimes dy_{\bullet}&\mapsto\overline{\varphi'(\det(G))}
  \cdot x^{\vee}_{s,\bullet}\otimes dx_{1,\bullet}\otimes dy_{\bullet}
 \end{align*}
 where the over-line denotes taking the image of the canonical
 homomorphism $R_{X\times_SY}\rightarrow R_X$ inducing the morphism
 $s$.

 Now, we come back to the definition of the isomorphism of
 \cite[8.4]{Har}. We need to calculate the isomorphism
 \underline{$b\colon s^{\flat}q_Y^{\sharp}\omega_{Y/S}\xrightarrow{\sim}
 s^{\flat}q_X^{\sharp}f^{\sharp}\mc{O}_S[-d]$}, which is the second
 isomorphism in the proof of \cite[8.4]{Har}. We have an isomorphism
 \begin{align*}
  &s^{\flat}q_Y^{\sharp}\omega_{Y/S}\cong\omega_{X/X\times_SY}
  \otimes_{R_{X\times_SY}}\omega_{X\times_SY/Y}\otimes_{R_Y}
  \omega_{Y/S}\\
  &\qquad\xrightarrow[\ccirc{2}]{\sim}s^{\flat}(f\circ q_X)^{\sharp}
  \mc{O}_S[-d]\cong\omega_{X/X\times_SY}\otimes_{R_{X\times_SY}}
  \omega_{X\times_SY/S}.
 \end{align*}
 This isomorphism is defined in \cite[2.2]{Har}, and sends
 $x^{\vee}_{s,\bullet}\otimes dx_{1,\bullet}\otimes
 dy_{\bullet}$ to $x^{\vee}_{s,\bullet}\otimes((-1)^ddx_{s,\bullet}
 \wedge dx_{1,\bullet})$. Then we get an isomorphism
 \begin{equation*}
  \omega_{X/X\times_SY}\otimes_{R_{X\times_SY}}\omega_{X\times_SY/S}
   \cong s^{\flat}(f\circ q_X)^{\sharp}\mc{O}_S[-d]
   \xrightarrow[\ccirc{3}]{\sim}s^\flat q_X^\sharp f^\sharp\mc{O}_S
   [-d]\cong\omega_{X/X\times_SY}\otimes\omega_{X\times_SY/X}\otimes
   \omega_{X/S}.
 \end{equation*}
 This is also an isomorphism of \cite[2.2]{Har}, and sends $x^{\vee}
 _{s,\bullet}\otimes(dx_{s,\bullet}\wedge dx_{1,\bullet})$ to
 $(-1)^{d^2}x^{\vee}_{s,\bullet}\otimes dx_{s,\bullet}\otimes
 dx_{\bullet}$. Since $d^2+d$ is even, we get
 \begin{align*}
  b\colon\omega_{X/X\times_SY}\otimes_{R_{X\times_SY}}\omega_{X\times_SY/Y}
  \otimes_{R_Y}\omega_{Y/S}&\xrightarrow{\sim}\omega_{X/X\times_SY}
  \otimes\omega_{X\times_SY/X}\otimes\omega_{X/S}\\
  x^{\vee}_{s,\bullet}\otimes dx_{1,\bullet}\otimes dy_{\bullet}&
  \mapsto x^{\vee}_{s,\bullet}\otimes dx_{s,\bullet}\otimes
  dx_{\bullet}.
 \end{align*}

 At last, we get an isomorphism
 \begin{equation*}
  c\colon\omega_{X/X\times_SY}\otimes\omega_{X\times_SY/X}\otimes
   \omega_{X/S}\cong s^\flat q_X^\sharp\omega_{X/S}
   \xrightarrow[\ccirc{4}]{\sim}\omega_{X/S}.
 \end{equation*}
 This isomorphism is defined in \cite[8.1]{Har}, and sends $x^{\vee}
 _{s,\bullet}\otimes dx_{s,\bullet}\otimes dx_{\bullet}$ to
 $dx_{\bullet}$.

 Now, by definition, $\mu_X^{-1}=c\circ b\circ a$. The above calculation
 shows that
 \begin{equation*}
  \mu_X^{-1}(\varphi\otimes dy_{\bullet})=
   \overline{\varphi'(\det(G))}\cdot dx_{\bullet}.
 \end{equation*}
 Taking $\varphi=H\underline{x}^{-(\underline{q}-\underline{1})}$
 and considering (\ref{calcimH}), we get the lemma.

 As we noted at the beginning of this proof, we need some modification
 for the calculation of the homomorphisms $\ccirc{1}$ through
 $\ccirc{4}$ if we use the convention of \cite[2.2]{Con}. Precisely for
 the homomorphisms $\ccirc{1}$ and $\ccirc{4}$, we need to multiply by
 $(-1)^{d(d-1)/2}$, and for the homomorphisms $\ccirc{2}$ and
 $\ccirc{3}$, we need to multiply by $(-1)^{d^2}$. Thus, $\mu_X^{-1}$ is
 multiplied by $(-1)^{d(d-1)+2d^2}=1$, and the result remains to be the
 same as we stated.
\end{proof}

\begin{rem}
 When $F^*(y_i)$ is not equal to $x_i^q$, we can also calculate in the
 same way. We can write $d'(F^*(y_j))=\sum_{i}f_{i,j}\cdot d'x_i$ where
 $f_{i,j}\in R_{X\times_SX}$ using the notation of the proof of the
 proposition. We put $g_{i,j}:=((1\otimes
 H\underline{x}^{q-1})(f_{i,j}))^{-}$ where the
 over-line denotes to take the image of the canonical homomorphism
 $R_{X\times_SY}\rightarrow R_X$. Using this, we get
 \begin{equation*}
  \iota(dx_1\wedge\dots\wedge dx_d)=(\det\widetilde{G})^{-1}(H
   \underline{x}^{q-1})~dy_1\wedge\dots\wedge dy_d
 \end{equation*}
 where
 \begin{equation*}
  \widetilde{G}=
  \begin{pmatrix}
   g_{1,1}&\dots&g_{1,d}\\
   \vdots&&\vdots\\
   g_{d.1}&\dots&g_{d,d}
  \end{pmatrix}.
 \end{equation*}
 Note that this matrix is invertible since $\widetilde{G}\equiv I\bmod
 p\cdot\mr{Mat}(R_X)$. We do not know how we compute the determinant of
 this matrix further.
\end{rem}

Now the following theorem follows from the construction and
Proposition \ref{mainpropbalhar}.

\begin{thm}
 \label{maincalc}
 We preserve the notation of paragraph
 {\normalfont\ref{notationbefcalc}}, and let us denote by
 $(dx_\bullet)^{\vee}$ the dual basis of
 $dx_\bullet:=dx_1\wedge\dots\wedge dx_d$ in $\omega_{{X}}^{-1}$, and
 the same for $dx'_\bullet$ and $(dx'_\bullet)^\vee$. Let
 $\ms{M}$ be a left $\Dmod{m}{{X}'}$-module, and $\ms{N}$ be a right
 $\Dmod{m}{{X}'}$-module. Recall two isomorphisms of Berthelot
 {\normalfont \cite[2.4.3, 2.4.4]{Ber2}:}
 \begin{align*}
  \mu_{\ms{M}}&\colon\omega_{{X}}\otimes_{\mc{O}_{X}}F^*\ms{M}
  \xrightarrow{\sim}F^{\flat}(\omega_{{X}'}\otimes_{\mc{O}_{X'}}
  \ms{M}),\\
  \nu_{\ms{N}}&\colon\omega^{-1}_{{X}}\otimes_{\mc{O}_{X}}
  F^{\flat}\ms{N}\xrightarrow{\sim}F^*(\omega_{{X}'}^{-1}\otimes
  _{\mc{O}_{X'}}\ms{N}).
 \end{align*}
 Let $m\in\ms{M}$, $m'\in\ms{N}$, and $f\in\mc{O}_X$. Then we get that
 \begin{align*}
  &\mu_{\ms{M}}(dx_{\bullet}\otimes(f\otimes m))=(dx'_\bullet
  \otimes m)\otimes(H\underline{x}^{-(\underline{q}-
  \underline{1})}\cdot f),\\
  &\nu_{\ms{N}}\bigl((dx_\bullet)^{\vee}\otimes (m'\otimes
  (H\underline{x}^{-(\underline{q}-\underline{1})}\cdot f))\bigr)=
  f\otimes(dx'_\bullet)^{\vee}\otimes m'
 \end{align*}
 by using the notation of {\normalfont (\ref{flatnotation})}.
 \end{thm}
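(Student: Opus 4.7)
The plan is to reduce the theorem to Proposition \ref{mainpropbalhar} by combining the functoriality of Berthelot's construction \cite[2.4.3, 2.4.4]{Ber2} in $\ms{M}$ (resp.\ $\ms{N}$) with the projection formula for the finite flat morphism $F$. Specifically, since $F$ is finite and flat, under the identification $F^\flat \ms{N} \cong \ms{N} \otimes_{\mc{O}_{X'}} \mc{O}_X^\vee$ of (\ref{flatnotation}) the canonical projection formula gives an isomorphism
\begin{equation*}
  F^\flat(\omega_{X'}) \otimes_{\mc{O}_X} F^*\ms{M} \xrightarrow{\sim} F^\flat(\omega_{X'} \otimes_{\mc{O}_{X'}} \ms{M}),
\end{equation*}
which sends $(\omega \otimes \varphi) \otimes (f \otimes m)$ to $(\omega \otimes m) \otimes (f \cdot \varphi)$; here $f \cdot \varphi \in \mc{O}_X^\vee$ is the action coming from the source $\mc{O}_X$-structure on $F_*\mc{O}_X$. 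Berthelot defines $\mu_{\ms{M}}$ precisely as the composition of $\mu_X \otimes \mr{id}_{F^*\ms{M}}$ with this isomorphism, so $\mu_{\ms{M}}$ specializes to $\mu_X$ when $\ms{M} = \mc{O}_{X'}$.

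Tracing through is then immediate. By Proposition \ref{mainpropbalhar}, $\mu_X \otimes \mr{id}$ carries $dx_\bullet \otimes (f \otimes m)$ to $(dx'_\bullet \otimes H\underline{x}^{-(\underline{q}-\underline{1})}) \otimes (f \otimes m)$, and the projection formula converts this into $(dx'_\bullet \otimes m) \otimes (H\underline{x}^{-(\underline{q}-\underline{1})} \cdot f)$, which is the first formula of the theorem.

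For $\nu_{\ms{N}}$, the same strategy applies after exchanging the roles of $F^*$ and $F^\flat$. Inverting $\mu_X$ and combining with the analogous projection formula yields
\begin{equation*}
  \omega_X^{-1} \otimes_{\mc{O}_X} F^\flat \ms{N} \xrightarrow{\sim} F^*(\omega_{X'}^{-1} \otimes_{\mc{O}_{X'}} \ms{N}),
\end{equation*}
which is Berthelot's $\nu_{\ms{N}}$; the case $\ms{N} = \omega_{X'}$ reduces to $\mu_X^{-1}$, already handled at the end of the proof of Proposition \ref{mainpropbalhar}. The formula for $\nu_{\ms{N}}$ then emerges by chasing $(dx_\bullet)^\vee \otimes (m' \otimes (H\underline{x}^{-(\underline{q}-\underline{1})} \cdot f))$ through this composition.

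The principal obstacle is purely bookkeeping: one must track two $\mc{O}_X$-actions on $\mc{O}_X^\vee$ (one coming from the source $\mc{O}_X$-structure on $F_*\mc{O}_X$, the other via $F^*$ from the target $\mc{O}_{X'}$-structure) and verify their interaction with the projection formula. Since $\mc{O}_X$ is commutative and $F_*\mc{O}_X$ is an $(\mc{O}_X,\mc{O}_{X'})$-bimodule in the evident way, these compatibilities cause no trouble, and no new geometric input beyond Proposition \ref{mainpropbalhar} is required.
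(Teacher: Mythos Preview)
Your proposal is correct and is precisely the approach the paper takes: the paper's proof is the single sentence ``follows from the construction and Proposition \ref{mainpropbalhar},'' and you have simply unpacked what ``the construction'' means in \cite[2.4.3, 2.4.4]{Ber2}, namely that $\mu_{\ms{M}}$ and $\nu_{\ms{N}}$ are obtained from $\mu_X$ via the projection-formula isomorphisms for the finite flat morphism $F$. Your tracking of the two $\mc{O}_X$-actions on $\mc{O}_X^\vee$ is the only thing one needs to be careful about, and you handle it correctly.
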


\section{Explicit calculation of Frobenius isomorphisms}
In this section, we will give first applications of the theorem in the
previous section. The main result of this section is the calculation of
the Frobenius structure of push-forwards.

\subsection{}
\label{Frobsituations}
We will fix two situations for the basis $R$ often used in this paper.
\begin{enumerate}
 \item \label{noFrobstr}
       The ring $R$ is complete discrete valuation ring as in Notation.

 \item  \label{Frobstr}
	We moreover assume that the $s$-th absolute Frobenius
	isomorphism $F^s_{k}$ lifts to an automorphism $\mr{Spf}(R)
	\xrightarrow{\sim}\mr{Spf}(R)$ which is denoted by $\sigma$. In
	this case $k$ is automatically perfect.
\end{enumerate}
For a scheme $X$ over $k$, we recall $X^{(s)}:=X\otimes_{k,F^{s*}_k}k$.
Let $\ms{X}$ be a smooth formal scheme over $\mr{Spf}(R)$, and let
$X_0$ the special fiber. Suppose that $X^{(s)}_0$ can be
lifted to a smooth formal scheme $\ms{X}'$ over $\mr{Spf}(R)$.
In the situation \ref{noFrobstr}, we are able to consider Frobenius
pull-backs even if there are no lifting of the relative Frobenius
morphism $X_0\rightarrow X^{(s)}_0$ (cf.\ \cite[2.2.3]{Ber2}). Thus we
are able to discuss the commutativity of Frobenius pull-backs with
several cohomological operations such as push-forwards or duals {\it
etc}. In the situation \ref{Frobstr}, moreover, we are able to define
$F$-$\DdagQ{\ms{X}}$-modules (cf.\ \cite[4.5.1]{Ber2}).

\subsection{}
\label{Garnierrev}
We will review the second ingredient to calculate Frobenius
isomorphisms, which are results of Garnier \cite{Gar}. See \cite{Ab} for
another aspect of Garnier's result.

We will consider the situation \ref{Frobsituations}.\ref{noFrobstr}.
Let $\ms{X}$ be a smooth affine formal scheme over $R$ whose special
fiber is denote by $X_0$, and suppose
given a system of local coordinates $\{x_1,\dots,x_d\}$. We denote by
$\{\partial_1,\dots,\partial_d\}$ the corresponding differential
operators. For a positive
integer $s$, we let $F_k^{s*}\colon k\rightarrow k$ be the $s$-th
absolute Frobenius homomorphism. Let $\ms{X}'$ be a smooth affine formal
scheme over $R$ which is a lifting of $X_0\times_{\sigma_s}k$. The
relative Frobenius morphism $X_0\rightarrow X_0\otimes_{F^{s*}_k}k$ can
be lifted to a morphism $F_{\ms{X}}\colon\ms{X}\rightarrow\ms{X}'$ by
the universal property of smoothness since $\ms{X}$ is assumed to be
affine. We sometimes denote $F_{\ms{X}}$ by $F$. We also fix a system of
local coordinates $\{x'_1,\dots,x'_d\}$ of $\ms{X}'$ such that
$F^*(x')=x^{q}$ using the universal property once again. We denote the
corresponding differential operators by
$\{\partial'_1,\dots,\partial'_d\}$.

Garnier constructed in \cite{Gar} a special differential operator
$H\in\Dcomp{s}{\ms{X}}$ called the {\em Dwork operator} with the
following properties:
\begin{enumerate}
 \item Suppose a primitive $q$-th root of unity is contained in $R$. Then 
       \begin{equation*}
	H_i=\sum_{\zeta^q=1}\sum_{k\geq 0}(\zeta-1)^k
	 x_i^k\partial_i^{[k]},\qquad
	 H:=\prod_{1\leq i\leq d}H_i.
       \end{equation*}
       These are global sections of $\Dcomp{s}{\ms{X}}$. The operator
       $H_i$ is called the {\em Dwork operator corresponding to $x_i$}.
       (cf.\ \cite[Proposition 4.5.2]{Gar})

 \item The operator $H$ is a projector from $\mc{O}_{\ms{X}}$ to
       $\mc{O}_{\ms{X}'}$. Precisely, we have
       $H^2=H$ in $\Dcomp{s}{\ms{X}}$, and its action on
       $\mc{O}_{\ms{X}}$ is $\mc{O}_{\ms{X}'}$-linear. (cf.\
       \cite[Proposition 2.5.1]{Gar})

 \item For $\underline{0}\leq\underline{k}<\underline{q}$, we get that
       $H\underline{x}^{-\underline{k}}$ in an element of
       $\Dcomp{s}{\ms{X}}$. We have $\sum_{\underline{0}\leq
       \underline{k}<\underline{q}}\underline{x}^{\underline{k}}
       H\underline{x}^{-\underline{k}}=1$.\\
       (cf.\ \cite[Proposition 2.5.1, 2.5.3]{Gar})

 \item For $\underline{0}\leq\underline{k}<\underline{q}$, the operator
       $H\underline{x}^{-\underline{k}}$ defines an
       $\mc{O}_{\ms{X}'}$-linear
       homomorphism $\mc{O}_{\ms{X}}\rightarrow\mc{O}_{\ms{X}'}$, and
       defines an element of $\mc{O}_{\ms{X}}^\vee$. The set
       $\{H\underline{x}^{-\underline{k}}\}$ defines the dual basis of
       $\{\underline{x}^{\underline{k}}\}$. \label{dualbasis}\\
       (cf.\ \cite[Proposition 2.5.1, 2.5.3]{Gar})
\end{enumerate}
The property \ref{dualbasis} justifies the notation
$H\underline{x}^{-\underline{k}}$ as the dual basis used in
paragraph \ref{notationbefcalc}.
Now, we define
\begin{equation*}
 (\partial'_i)^{\circ}:=(qx^{q-1})^{-1}\partial_iH,
\end{equation*}
and $P:=\sum_{\underline{k}}f_{\underline{k}}~
\underline{\partial'}^{\underline{k}}$ in $\Dcomp{m}{\ms{X}'}$ with
$f_{\underline{k}}\in\mc{O}_{\ms{X}',\mb{Q}}$, we put
\begin{equation*}
 P^{\circ}:=\sum_{\underline{k}}F^*(f_{\underline{k}})\cdot(\underline
  {\partial'})^{\circ\underline{k}}.
\end{equation*}
Note that $P^{\circ}$ is denoted by $P'$ in \cite[4.6.1]{Gar}. This
defines a ring homomorphism
$\Dcomp{m}{\ms{X}'}\rightarrow\Dcomp{m+s}{\ms{X}};\,P\mapsto
P^{\circ}$ (cf.\ \cite[4.3]{Gar}).

The most important property of this operator is that with which we are
able to describe the isomorphism \cite[4.1.2 (ii)]{Ber2}. Precisely,
there exists the following canonical isomorphism
\begin{equation*}
 F^*F^{\flat}\Dcomp{m}{\ms{X}'}\cong \mc{O}_{\ms{X}}\otimes
  _{\mc{O}_{\ms{X}'}}\Dcomp{m}{\ms{X}'}\otimes_{\mc{O}_{\ms{X}'}}
  \mc{O}_{\ms{X}}^\vee\xrightarrow{\sim}\Dcomp{m+s}{\ms{X}}.
\end{equation*}
Then according to \cite[4.7.2]{Gar}, this isomorphism can be described
as
\begin{equation}
 \label{Garisom}
 f\otimes P\otimes H\underline{x}^{-\underline{k}}\mapsto
  f\cdot P^\circ\cdot H\underline{x}^{-\underline{k}}.
\end{equation}

\begin{rem*}
 Throughout \cite{Gar}, the residue field $k$ is assumed to be perfect
 (cf.\ [{\it loc.\ cit.\ }1.1]). However, this assumption is not used
 in the paper, and this assumption is redundant. In fact, Berthelot is
 stating theorem of Frobenius descent \cite[4.2.4]{Ber2} without posing
 any perfectness assumption.
\end{rem*}

\subsection{}
\label{settingpush}
Let $\ms{X}$ be as in the previous paragraph. Let $\ms{Y}$ be another
smooth affine formal scheme over $R$ possessing a system of local
coordinates $\{y_1,\dots,y_{d'}\}$. We also assume that we have a smooth
lifting $\ms{Y}'$ of the relative Frobenius with a system of local
coordinates $\{y'_1,\dots,y'_{d'}\}$ and morphism
$F_{\ms{Y}}\colon\ms{Y}\rightarrow\ms{Y}'$ such that
$F^*_{\ms{Y}}(y'_i)=y^q_i$ for any $i$. We fix one non-negative integer
$j$ and denote $\ms{X}\otimes R_j$, $\ms{Y}\otimes R_j$, $\ms{X}'\otimes
R_j$, $\ms{Y}'\otimes R_j$ by $X$, $Y$, $X'$, $Y'$ respectively. Suppose
given a morphism of special fibers $f_0\colon X_0\rightarrow
Y_0$. Consider the following diagram.
\begin{equation*}
 \xymatrix{
  X\ar[r]^{F_X}\ar[d]_{f}&X'\ar[d]^{f'}\\
 Y\ar[r]_{F_Y}&Y'}
\end{equation*}
Here $F_X$ and $F_Y$ are reductions of $F_{\ms{X}}$ and $F_{\ms{Y}}$, and
$f$ and $f'$ are liftings of $f_0$ and $f'_0$. In general, we are {\em
not} able to take $f$ and $f'$ so that the diagram is
commutative. However, we can take $F_X$ and $F_Y$ locally with respect
to $X$. To see this, it suffices to treat the case where $f$ is a closed
immersion and  smooth morphism individually, and in both cases, the
verification is straightforward.

Let $\ms{M}$ be a quasi-coherent $\mc{O}_{X'}$-module. Since $X$ and $Y$
are affine schemes, we will identify quasi-coherent sheaves and its
global sections. We list up conventions of identifications used
to describe sections of certain sheaves as follows.
\begin{alignat}{2}
 \label{identifications}
 F_X^\flat(\ms{M})&\cong\ms{M}\otimes_{\mc{O}_{X'}}\mc{O}_X^\vee&\qquad
 \Dmod{m}{Y\rightarrow X}&\cong\mc{O}_X\otimes_{\mc{O}_Y}\Dmod{m}{Y}\\
 \notag
 F^*_X(\ms{M})&\cong\mc{O}_X\otimes_{\mc{O}_{X'}}\ms{M}&\qquad
 \Dmod{m}{Y\leftarrow X}&\cong(\mc{O}_X\otimes_{\mc{O}_Y}(\Dmod{m}{Y}
 \otimes_{\mc{O}_Y}\omega_Y^{-1}))\otimes_{\mc{O}_X}\omega_X
\end{alignat}
For example we have an identification
\begin{equation*}
  {F^{\flat}_X}{F^*_Y}\Dmod{m}{Y'\leftarrow X'}\cong
 \mc{O}_Y\otimes_{\mc{O}_{Y'}}((\mc{O}_{X'}\otimes_{\mc{O}_{Y'}}
 \Dmod{m}{Y'}\otimes_{\mc{O}_{Y'}}\omega_{Y'}^{-1})\otimes_{\mc{O}_X'}
 \omega_{X'})\otimes_{\mc{O}_{X'}}\mc{O}_X^\vee,
\end{equation*}
and for $f\in\mc{O}_Y$, $g\in\mc{O}_{Y'}$, $P\in\Dmod{m}{Y'}$ and
$\underline{0}\leq\underline{k}<\underline{q}$, the section
$f\otimes((g\otimes P\otimes(dy'_{\bullet})^\vee)\otimes dx'
_{\bullet})\otimes H\underline{x}^{-\underline{k}}$ on the right side of
the equality equally means a section on the left side by the
identification.

\begin{prop}
 \label{calcberthisom}
 We preserve the notation of the previous paragraph.
 The inverse of the isomorphism of Berthelot {\normalfont
 \cite[3.4.2 (i)]{Ber2}}
 \begin{equation*}
  \chi\colon{F^{\flat}_X}{F^*_Y}\Dmod{m}{Y'\leftarrow X'}\xrightarrow{\sim}
   \Dmod{m+s}{Y\leftarrow X}
 \end{equation*}
 can be described in the following way using the identification of
 {\normalfont (\ref{identifications})}. Let $f\in\mc{O}_X$, and
 $P\in\Dmod{m}{Y'}$,
 then we get
 \begin{equation*}
  \chi(f\otimes(1\otimes P\otimes(dy'_\bullet)^\vee\otimes
   dx'_{\bullet})\otimes H\underline{x}^{-\underline{k}})=
   x^{\underline{q}-\underline{k}-\underline{1}}\otimes
   (P^\circ\cdot H\underline{y}^{-(\underline{q}-\underline{1})}\cdot f)
   \otimes(dy_\bullet)^\vee\otimes dx_{\bullet}.
 \end{equation*}
\end{prop}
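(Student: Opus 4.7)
The approach is to unpack the isomorphism $\chi$ into the three fundamental Frobenius-compatibility maps that Berthelot combines in \cite[3.4.2]{Ber2}: the isomorphism $\mu_{\ms{M}}$ (for the $\omega_{X'}$ factor, arising from $F^\flat_X$), the isomorphism $\nu_{\ms{N}}$ (for the $\omega_{Y'}^{-1}$ factor, arising from $F^*_Y$), and Garnier's Frobenius-descent isomorphism (\ref{Garisom}) for the $\Dmod{m}{Y'}$ factor (handled by $F^*_Y F^\flat_Y$). Since Theorem \ref{maincalc} furnishes explicit formulas for $\mu$ and $\nu$, and (\ref{Garisom}) describes Garnier's operation, the proposition reduces to a careful bookkeeping exercise.

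First I would decompose the source $F^\flat_X F^*_Y \Dmod{m}{Y' \leftarrow X'}$ via the identifications (\ref{identifications}) into a tensor product whose factors live in $\mc{O}_Y$, $\mc{O}_{X'}$, $\Dmod{m}{Y'}$, $\omega_{Y'}^{-1}$, $\omega_{X'}$, and $\mc{O}_X^{\vee}$, and locate the element
\[
f \otimes (1 \otimes P \otimes (dy'_{\bullet})^{\vee} \otimes dx'_{\bullet}) \otimes H\underline{x}^{-\underline{k}}
\]
within it. In order to use Theorem \ref{maincalc}, one must first extract the factor $H\underline{x}^{-(\underline{q}-\underline{1})}$ from $H\underline{x}^{-\underline{k}}$; this is done via the identity
\[
H\underline{x}^{-\underline{k}} = \underline{x}^{\underline{q}-\underline{1}-\underline{k}} \cdot H\underline{x}^{-(\underline{q}-\underline{1})},
\]
which follows directly from the defining dual-basis property recorded in item \ref{dualbasis} of paragraph \ref{Garnierrev}. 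The extra factor $\underline{x}^{\underline{q}-\underline{1}-\underline{k}}$ then migrates across the tensor product to land in the outermost $\mc{O}_X$-slot of $\Dmod{m+s}{Y \leftarrow X}$, producing the term $\underline{x}^{\underline{q}-\underline{k}-\underline{1}}$ of the right-hand side.

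Next, I would apply Theorem \ref{maincalc} to the pair $dx'_{\bullet} \otimes H\underline{x}^{-(\underline{q}-\underline{1})}$, which is converted to $dx_{\bullet}$ in $\omega_X$ under $\mu_X^{-1}$. The analogous step on the $Y$ side applies the $\nu_Y$-formula of Theorem \ref{maincalc} in the reverse direction to the pair $(dy'_{\bullet})^{\vee}$ together with the $\mc{O}_Y$-component $f$; this converts them into $(dy_{\bullet})^{\vee}$ together with a Dwork operator $H\underline{y}^{-(\underline{q}-\underline{1})}$ that ends up composed with $f \in \mc{O}_Y$. Finally, Garnier's formula (\ref{Garisom}) converts the $\Dmod{m}{Y'}$-coefficient $P$ into $P^{\circ} \in \Dmod{m+s}{Y}$, and the three differential-operator factors combine inside $\Dmod{m+s}{Y}$ as the composition $P^{\circ} \cdot H\underline{y}^{-(\underline{q}-\underline{1})} \cdot f$.

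The principal obstacle is the bookkeeping: two distinct Frobenius morphisms $F_X$ and $F_Y$ act on opposite sides of the transfer bimodule, and one must verify that the order in which the three compatibility isomorphisms are composed matches Berthelot's definition of $\chi$, so that the Dwork operator $H\underline{y}^{-(\underline{q}-\underline{1})}$ produced by the $\nu_Y$-step is correctly absorbed into the differential-operator composition on $\ms{Y}$ rather than acting on the wrong tensor slot. Once the ordering is verified against Berthelot's construction, the remaining steps are a purely mechanical substitution into the formulas of Theorem \ref{maincalc} and (\ref{Garisom}).
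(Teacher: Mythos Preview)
Your proposal is correct and follows essentially the same route as the paper's proof: decompose $\chi$ into the $\nu^{-1}$-step on the $Y$ side (Theorem \ref{maincalc}), the $\mu^{-1}$-step on the $X$ side (Theorem \ref{maincalc}), and Garnier's isomorphism (\ref{Garisom}), then substitute into the explicit formulas. The only cosmetic difference is that you isolate the identity $H\underline{x}^{-\underline{k}}=H\underline{x}^{-(\underline{q}-\underline{1})}\cdot\underline{x}^{\underline{q}-\underline{1}-\underline{k}}$ in $\mc{O}_X^\vee$ as a separate preliminary step, whereas the paper absorbs it directly into the computation of $\mu^{-1}$ applied to a general $H\underline{x}^{-\underline{k}}$.
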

\begin{proof}
 Recall that we are identifying quasi-coherent sheaves and its global
 sections. By using $\nu^{-1}$ in Theorem \ref{maincalc}, we get an
 isomorphism 
 \begin{equation*}
  \mc{O}_Y\otimes_{\mc{O}_{Y'}}(\Dmod{m}{Y'}\otimes\omega_{Y'}
   ^{-1})\rightarrow(\Dmod{m}{Y'}\otimes_{\mc{O}_{Y'}}\mc{O}_Y^{\vee})
   \otimes\omega^{-1}_{Y}.
 \end{equation*}
 The theorem is saying that this sends $f\otimes P\otimes(dy'_{\bullet})
 ^\vee$ to $(P\otimes(H\underline{y}^{-(\underline{q}
 -\underline{1})}\cdot f))\otimes(dy_{\bullet})^\vee$. By using
 $\mu^{-1}$, we get
 \begin{equation*}
  (\ms{D}\otimes_{\mc{O}_{X'}}\omega_{X'})\otimes_{\mc{O}_{X'}}
   \mc{O}_X^\vee\rightarrow(\mc{O}_X\otimes_{\mc{O}_{X'}}
   \ms{D})\otimes\omega_{X},
 \end{equation*}
 where $\ms{D}:=f'^*((\Dmod{m}{Y'}\otimes\mc{O}_Y^{\vee})\otimes
 \omega^{-1}_{Y})$. For a section $D$ of $\ms{D}$, this homomorphism
 sends $(D\otimes dx'_{\bullet})\otimes H\underline{x}^{-\underline{k}}$
 to $(x^{\underline{q}-\underline{k}-\underline{1}}\otimes D)
 \otimes dx_\bullet$. At last, there exists the following isomorphism
 \begin{equation*}
  \mc{O}_X\otimes_{\mc{O}_{X'}} f'^*((\Dmod{m}{Y'}\otimes\mc{O}_Y
   ^{\vee})\otimes\omega^{-1}_{Y})\xrightarrow[(*)]{\sim}
   f^*(\mc{O}_{Y}\otimes(\Dmod{m}{Y'}\otimes\mc{O}_Y^{\vee})
   \otimes\omega^{-1}_{Y})\xrightarrow{\sim}f^*\Dmod{m+s}{Y}
   \otimes\omega_{Y}^{-1}.
 \end{equation*}
 Here the first isomorphism follows from the commutativity of the
 diagram. According to Garnier's calculation, this sends $f\otimes
 1\otimes((Q\otimes H\underline{y}^{-\underline{k}})\otimes
 (dy_{\bullet})^\vee)$ to $f\otimes(Q^{\circ}H\underline{y}
 ^{-\underline{k}})\otimes(dy_{\bullet})^\vee$. Combining these, we get
 the proposition.
\end{proof}

\begin{rem*}
 We will describe shortly the way to calculate $\chi$ when the diagram
 in paragraph \ref{settingpush} is not commutative. In this case,
 suppose the integer $m$ satisfies the inequality $p^m>e/(p-1)$ where
 $e$ was the absolute ramification index of $R$. Under this condition,
 we may use the Taylor isomorphism of \cite[2.1.5]{Ber2} to compare
 $F_Y\circ f$ and $f'\circ F_X$. This isomorphism can be described in
 the following way. Let $f,f'\colon X\rightarrow Y$ be two morphisms of
 smooth schemes whose reductions over $k$ are the same morphisms, and
 suppose that $Y$ possesses a system of local coordinates
 $\{y_1,\dots,y_d\}$. Let
 \begin{equation*}
  T:=\sum_{\underline{k}\geq0}(f'^*(\underline{y})-f^*(\underline{y}))
   ^{\{\underline{k}\}_{(m)}}\otimes\partial^{\angles{m}{\underline{k}}}
 \end{equation*}
 in $f^*\Dmod{m}{Y}$. This is defined since we have the assumption on
 $m$. Let $\ms{M}$ be a $\Dmod{m}{Y}$-module. Then we can check that the
 isomorphism $\tau_{f,f'}\colon f'^*\ms{M}\xrightarrow{\sim}f^*\ms{M}$
 sends $1\otimes m$ to $T\otimes m$ for a section $m$ of $\ms{M}$.

 Now, when the diagram is not commutative, the calculation of $\chi$
 goes well exactly in the same way except for $(*)$ in the
 proof of the proposition. We use this calculation of Taylor isomorphism
 to compute $(*)$.
\end{rem*}

\subsection{}
\label{calcpushexp}
We use the same notation as paragraph \ref{settingpush}. As the first
application of Proposition \ref{calcberthisom}, we will calculate the
isomorphism
\begin{equation}
 \label{puchcommisom}
 F_Y^*f'_+\ms{M}\xrightarrow{\sim}f_+F_X^*\ms{M}
\end{equation}
concretely. This result is used in \cite{AM} to calculate the
Frobenius structure of geometric Fourier transform defined by
Noot-Huyghe explicitly.

Let $\ms{M}$ be a $\Dmod{m}{X'}$-module. In the proof of
\cite[3.4.4]{Ber2}, the isomorphism
\begin{equation*}
 \xi\colon
  F^*_Y\,(\Dmod{m}{Y'\leftarrow X'}\otimes^{\mb{L}}_{\Dmod{m}{X'}}
  \ms{M}):=f^{-1}\mc{O}_{Y}\otimes_{f'^{-1}\mc{O}_{Y'}}
  (\Dmod{m}{Y'\leftarrow X'}\otimes^{\mb{L}}_{\Dmod{m}{X'}}\ms{M})
  \xrightarrow{\sim}(\Dmod{m+s}{Y\leftarrow X}\otimes^{\mb{L}}
  _{\Dmod{m+s}{X}}F^*_X\ms{M})
\end{equation*}
is defined. Using the projection formula, the isomorphism
(\ref{puchcommisom}) is nothing but $Rf_*(\xi)$.

\begin{prop*}
 Let $\xi_0:=\ms{H}_0(\xi)$.
 For $P\in\Dmod{m}{Y'}$ and $m\in\ms{M}$, we have
 \begin{equation*}
  \xi_0\,\bigl(\underline{y}^{\underline{l}}\otimes(1\otimes
   P\otimes(dy'_\bullet)^\vee\otimes dx'_{\bullet})\otimes m\bigr)=
   \bigl(1\otimes(P^\circ\cdot H\underline{y}^{-(\underline{q}-
   \underline{1})}\cdot\underline{y}^{\underline{l}})\otimes
   (dy_\bullet)^\vee\otimes dx_{\bullet}\bigr)
   \otimes(\underline{x}^{\underline{q}-\underline{1}}\otimes m).
 \end{equation*}
\end{prop*}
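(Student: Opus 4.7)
The plan is to unfold the construction of $\xi$ given in the proof of \cite[3.4.4]{Ber2} and reduce the computation to Proposition \ref{calcberthisom}. According to \emph{loc.\ cit.}, $\xi$ factors as the composition
\begin{equation*}
F_Y^*\bigl(\Dmod{m}{Y'\leftarrow X'}\otimes^{\mb{L}}_{\Dmod{m}{X'}}\ms{M}\bigr)
\xrightarrow{\sim}
F_Y^*\Dmod{m}{Y'\leftarrow X'}\otimes^{\mb{L}}_{\Dmod{m}{X'}}\ms{M}
\xrightarrow{\sim}
F_X^\flat F_Y^*\Dmod{m}{Y'\leftarrow X'}\otimes^{\mb{L}}_{\Dmod{m+s}{X}}F_X^*\ms{M}
\xrightarrow{\chi\otimes\mr{id}}
\Dmod{m+s}{Y\leftarrow X}\otimes^{\mb{L}}_{\Dmod{m+s}{X}}F_X^*\ms{M},
\end{equation*}
where the first arrow commutes $F_Y^*$ past the right tensor product (an $\mc{O}_{Y'}$-linear operation on the $\Dmod{m}{X'}$-structure), the second is the Frobenius descent compatibility obtained from the equivalence \cite[2.2.3]{Ber2} applied to both tensor factors simultaneously, and the third arrow is induced by the isomorphism $\chi$ of Proposition \ref{calcberthisom}.

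To compute $\xi_0$ on the given representative, I would trace through the three arrows. The first arrow acts trivially on elementary tensors. For the second arrow, using the conventions of (\ref{identifications}) together with Garnier's decomposition $\sum_{\underline{0}\leq\underline{k}<\underline{q}} \underline{x}^{\underline{k}}\, H\underline{x}^{-\underline{k}} = 1$ from paragraph \ref{Garnierrev}, one checks that the class of $n\otimes m$ on the left corresponds to the class of $(n\otimes H\underline{x}^{-\underline{0}})\otimes(1\otimes m)$ on the right, so that the input becomes
\begin{equation*}
\bigl(\underline{y}^{\underline{l}}\otimes(1\otimes P\otimes(dy'_\bullet)^\vee\otimes dx'_{\bullet})\otimes H\underline{x}^{-\underline{0}}\bigr)\otimes(1\otimes m).
\end{equation*}
Applying $\chi\otimes\mr{id}$ to this and invoking Proposition \ref{calcberthisom} with $f=\underline{y}^{\underline{l}}$ and $\underline{k}=\underline{0}$ yields
\begin{equation*}
\bigl(\underline{x}^{\underline{q}-\underline{1}}\otimes(P^\circ\cdot H\underline{y}^{-(\underline{q}-\underline{1})}\cdot\underline{y}^{\underline{l}})\otimes(dy_\bullet)^\vee\otimes dx_{\bullet}\bigr)\otimes(1\otimes m).
\end{equation*}
Exploiting the $\Dmod{m+s}{X}$-balance of the final tensor product, one then transports the $\mc{O}_X$-factor $\underline{x}^{\underline{q}-\underline{1}}$ from the left slot of $\Dmod{m+s}{Y\leftarrow X}$ into the $F_X^*\ms{M}$-slot, producing the claimed formula. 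Taking $\ms{H}_0$ is harmless because the entire calculation is carried out at the level of representatives.

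The main obstacle is the verification of the second arrow, namely that the Frobenius descent isomorphism for tensor products has the form $n\otimes m\mapsto (n\otimes H\underline{x}^{-\underline{0}})\otimes(1\otimes m)$ on elementary tensors. This requires careful bookkeeping that reconciles the $(F_X^*, F_X^\flat)$ adjunction, the conventions in (\ref{identifications}), and Garnier's description of the dual basis $\{H\underline{x}^{-\underline{k}}\}$ recalled in paragraph \ref{Garnierrev}; once settled, the rest follows by direct substitution into Proposition \ref{calcberthisom}.
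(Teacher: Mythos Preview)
Your approach is essentially the same as the paper's: unfold Berthelot's construction of $\xi$, identify the Frobenius-descent step for the tensor product, and plug in Proposition~\ref{calcberthisom}. The only organizational difference is that the paper first tensors both sides with $f^{-1}F_Y^{\flat}\Dmod{m}{Y'}$ (using \cite[2.5.6]{Ber2}) to replace $\xi_0$ by the ``core'' isomorphism
\[
\Dmod{m}{Y'\leftarrow X'}\otimes_{\Dmod{m}{X'}}\ms{M}\xrightarrow{\sim}
f^{-1}F_Y^{\flat}\Dmod{m}{Y'}\otimes_{f^{-1}\Dmod{m+s}{Y}}
\bigl(\Dmod{m+s}{Y\leftarrow X}\otimes_{\Dmod{m+s}{X}}F^*_X\ms{M}\bigr),
\]
and then factors this through \cite[2.5.7]{Ber2} and $\chi$; the resulting element is written with free parameters $\underline{k}$, $\underline{l}$ and your choice $\underline{k}=\underline{0}$ recovers exactly your intermediate expression. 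Two small points: the relevant reference for your ``second arrow'' is \cite[2.5.7]{Ber2} rather than \cite[2.2.3]{Ber2}, and your description $n\otimes m\mapsto (n\otimes H\underline{x}^{-\underline{0}})\otimes(1\otimes m)$ is one representative among the equivalent ones $(n\otimes H\underline{x}^{-\underline{k}})\otimes(\underline{x}^{\underline{k}}\otimes m)$, all of which coincide in the balanced tensor product over $\Dmod{m+s}{X}$.
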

\begin{proof}
 Let us review the definition of $\xi_0$.
 Tensoring both sides of $\xi_0$ with $f^{-1}F^{\flat}_Y\Dmod{m}{Y'}$,
 it is equivalent to defining an isomorphism
 \begin{equation}
  \label{coreisopush}
   \Dmod{m}{Y'\leftarrow X'}\otimes_{\Dmod{m}{X'}}\ms{M}
   \xrightarrow{\sim}f^{-1}F_Y^{\flat}\Dmod{m}{Y'}\otimes
   _{f^{-1}\Dmod{m+s}{Y}}(\Dmod{m+s}{Y\leftarrow X}\otimes
   _{\Dmod{m+s}{X}}F^*_X\ms{M})
 \end{equation}
 by \cite[2.5.6]{Ber2}.
 We get an isomorphism
 \begin{equation*}
 \Dmod{m}{Y'\leftarrow X'}\otimes_{\Dmod{m}{X'}}\ms{M}\xrightarrow{\sim}
  F_X^{\flat}\Dmod{m}{Y'\leftarrow X'}\otimes_{\Dmod{m+s}{X}}
  F^*_X\ms{M}\xrightarrow{\sim}f^{-1}F_Y^{\flat}\Dmod{m}{Y'}\otimes
  F^{*}_YF_X^{\flat}\Dmod{m}{Y'\leftarrow X'}\otimes F^*_X\ms{M}
 \end{equation*}
 where the first isomorphism is by \cite[2.5.7]{Ber2} and the second by
 \cite[2.5.6]{Ber2}. Combining this isomorphism with $\chi$, we get the
 isomorphism (\ref{coreisopush}). By Proposition \ref{calcberthisom}, we
 see that (\ref{coreisopush}) sends
 $(1\otimes P\otimes(dy'_\bullet)^\vee\otimes dx'_{\bullet})\otimes m$
 to
 \begin{equation*}
 (1\otimes H\underline{y}^{-\underline{l}})\otimes
  \bigl(x^{\underline{q}-\underline{k}-\underline{1}}\otimes
  (P^\circ\cdot H\underline{y}^{-(\underline{q}-\underline{1})}
  \cdot\underline{y}^{\underline{l}})\otimes(dy_\bullet)^\vee
  \otimes dx_{\bullet}\bigr)
  \otimes(\underline{x}^{\underline{k}}\otimes m)
 \end{equation*}
 where $\underline{k}$ and $\underline{l}$ denote any element in
 $\mb{N}^d$ such that $\leq\underline{q}-\underline{1}$, and the
 proposition follows.
\end{proof}

\subsection{}
We consider the situation \ref{Frobsituations}.\ref{noFrobstr}. As
another application of the explicit description, we will show a
proper base change type result. For the most familiar statement, see
paragraph \ref{properbc}.
Let $m$ be an integer such that $p^m>e/(p-1)$ and $i$ be a non-negative
integer. Consider the following cartesian diagram of smooth schemes over
$R_i$
\begin{equation*}
 \xymatrix{
  X'\ar[r]^{g'}\ar[d]_{f'}\ar[rd]^h&X\ar[d]^f\\
 Y'\ar[r]_g&Y}
\end{equation*}
where $h=g\circ f'$. Let $Z$ be one of $X$, $X'$, $Y$, $Y'$.
We denote by $Z_0$ the reduction of $Z$ over $k$. We assume that
$Z^{(s)}_0$ possesses a smooth lifting $\widetilde{Z}$ over $R_i$.

\begin{lem*}
 \label{bcthmsch}
 We assume that $g$ is smooth. Then there is a canonical equivalence of
 functors
 \begin{equation}
 \label{propbcisom}
  g^!\circ f_+\cong f'_+\circ g'^!\colon D^b_{\mr{qc}}(\Dmod{m}{X})
  \rightarrow D^b_{\mr{qc}}(\Dmod{m}{Y'}).
 \end{equation}
 This equivalence is compatible with raising levels and Frobenius
 pull-backs.
\end{lem*}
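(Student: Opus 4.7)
The plan is to build the equivalence (\ref{propbcisom}) by the standard transfer-module route, then verify the two compatibilities separately, relegating the one genuinely delicate point---Frobenius---to the explicit formulas of Section \ref{sect1}.

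For the construction without Frobenius, I unwind the right adjoint descriptions
\begin{equation*}
 f_+\ms{M}=Rf_*\bigl(\Dmod{m}{Y\leftarrow X}\otimesdag_{\Dmod{m}{X}}\ms{M}\bigr),\qquad
 g^!\ms{N}=\Dmod{m}{Y'\rightarrow Y}\otimesdag_{g^{-1}\Dmod{m}{Y}}g^{-1}\ms{N}[d_g]
\end{equation*}
(the latter because $g$ is smooth, so extraordinary and ordinary pull-back differ only by $\omega_{Y'/Y}[d_g]$). Since $g$ is flat, flat base change gives $g^{-1}Rf_*\cong Rf'_*g'^{-1}$ on $D^b_{\mr{qc}}(\mc{O}_X)$, and the projection formula then moves the left transfer bimodule inside $Rf'_*$. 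A purely local computation in local coordinates on the Cartesian square identifies the resulting composite transfer bimodule with $\Dmod{m}{Y'\leftarrow X'}\otimesdag_{\Dmod{m}{X'}}\Dmod{m}{X'\rightarrow X}$, which delivers $g^!f_+\cong f'_+g'^!$ on $D^b_{\mr{qc}}(\Dmod{m}{X})$.

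Compatibility with raising levels is formal: every arrow used above (flat base change, projection formula, identification of transfer bimodules, the shift by $\omega_{Y'/Y}$) commutes with the canonical maps $\Dmod{m}{}\to\Dmod{m+1}{}$, because $Rf_*$, $g^{-1}$, and the tensor products involved are all natural in the level, and the transfer bimodules at level $m+1$ are obtained from level $m$ by change of base ring. So the equivalence is compatible with the transition morphisms in $m$.

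The main work is the Frobenius compatibility. I would argue locally: after shrinking $X$ we can choose local coordinates on each corner of the square together with liftings $F_X,F_Y,F_{X'},F_{Y'}$ of the relative Frobenius, and by the remark in paragraph \ref{settingpush} we may even arrange that the resulting squares of Frobenius lifts with $f,f',g,g'$ are commutative, or handle the non-commutative case via the Taylor isomorphism as indicated in the remark after Proposition \ref{calcberthisom}. In this local setting the isomorphism $F_Y^*\circ g^!\cong g^!\circ F_{Y'}^*$ (smooth case) and the analogous one for $f_+$ (paragraph \ref{calcpushexp}) both admit explicit descriptions in terms of Garnier's Dwork operator via (\ref{Garisom}) and Proposition \ref{calcberthisom}. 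Tracing the two composites
\begin{equation*}
 F_{Y'}^*\,g^!f_+\;\cong\;g^!F_Y^*f_+\;\cong\;g^!f_+F_X^*,\qquad
 F_{Y'}^*\,f'_+g'^!\;\cong\;f'_+F_{X'}^*g'^!\;\cong\;f'_+g'^!F_X^*
\end{equation*}
through the local base-change isomorphism reduces to comparing the two resulting expressions in $\underline{x}^{\underline{k}}$, $H\underline{x}^{-\underline{k}}$ and $P^\circ$: both send a generator $1\otimes P\otimes(dy'_\bullet)^\vee\otimes dx'_\bullet\otimes m$ to the same section, by the formula of Proposition \ref{calcberthisom} applied to each of the four sides of the square. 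The hard part is exactly this last verification---conceptually trivial but combinatorially heavy---which is why the explicit description of $\mu_X$ and $\nu_{\ms{N}}$ obtained in Theorem \ref{maincalc} is essential. Once carried out, one passes from $\Dmod{m}{}$-modules to $\Dcomp{m}{}$-modules by completion and the result follows.
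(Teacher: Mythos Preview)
Your proposal is essentially the paper's own proof: build the base-change map via transfer bimodules plus flat base change, observe that compatibility with raising levels is formal, and check Frobenius compatibility by a local explicit computation with Dwork operators using the formulas of Proposition~\ref{calcberthisom} and paragraph~\ref{calcpushexp}.

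One point you underspecify is worth flagging. You write that after applying Proposition~\ref{calcberthisom} to each side the two composites ``send a generator to the same section,'' calling the verification ``conceptually trivial but combinatorially heavy.'' In the paper's computation the two composites do \emph{not} immediately agree: writing $S=\prod_i x_i^{q-1}H_ix_i^{-(q-1)}$ (built from the Dwork operators for the relative coordinates of $g$), one path picks up an extra factor of ${}^tS$ acting on $\Dmod{m+s}{X'\rightarrow X}$. The actual content of the Frobenius verification is the identity ${}^tS(1)=1$ in $\mc{O}_{X'}$, which the paper checks via the root-of-unity expansion of $H$ and a binomial sum. So the step is not pure bookkeeping but hinges on one small arithmetic identity; your sketch is correct provided you are prepared to produce that identity. (Also, the final remark about passing to $\Dcomp{m}{}$ by completion is unnecessary here---the lemma is stated and proved entirely at finite level over $R_i$.)
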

\begin{proof}
 Let us first construct the following homomorphism:
 \begin{equation}
  \label{desiredhomten}
   f'^{-1}\Dmod{m}{Y'\rightarrow Y}\otimes^{\mb{L}}_{h^{-1}
   \Dmod{m}{Y}}g'^{-1}\Dmod{m}{Y\leftarrow X}\rightarrow
   \Dmod{m}{Y'\leftarrow X'}\otimes^{\mb{L}}_{\Dmod{m}{X'}}
   \Dmod{m}{X'\rightarrow X}.
 \end{equation}
 For short, we denote $\Dmod{m}{}$ by $\ms{D}$. There exists a canonical
 homomorphism of rings $g^{-1}\ms{D}_{Y}\rightarrow\ms{D}_{Y'}$. Thus,
 we get a homomorphism
 $h^{-1}\ms{D}_Y\otimes_{h^{-1}\mc{O}_Y}\omega_{X'/Y'}\xrightarrow{\alpha}
 f'^{-1}\ms{D}_{Y'}\otimes_{\mc{O}_{Y'}}\omega_{X'/Y'}$ of
 $(f'^{-1}\ms{D}_{Y'},g'^{-1}\ms{D}_X)$-bimodules where
 $\omega_{X'/Y'}:=\omega_{X'}\otimes f'^{-1}\omega_{Y'}$. We have the
 canonical section $1\otimes1$ in $\ms{D}_{X'\rightarrow
 X}\cong\mc{O}_{X'}\otimes_{f^{-1}\mc{O}_X}f^{-1}\ms{D}_X$. This
 section induces a homomorphism
 $f'^{-1}\ms{D}_{Y'}\otimes_{\mc{O}_{Y'}}\omega_{X'/Y'}
 \xrightarrow{\beta}(f'^{-1}\ms{D}_{Y'}\otimes_{\mc{O}_{Y'}}
 \omega_{X'/Y'})\otimes_{\ms{D}_{X'}}\ms{D}_{X'\rightarrow X}$. Combining
 these, we get the homomorphism
 \begin{equation*}
 \beta\circ\alpha\colon
 h^{-1}\ms{D}_Y\otimes_{h^{-1}\mc{O}_Y}\omega_{X'/Y'}\rightarrow
  (f'^{-1}\ms{D}_{Y'}\otimes_{\mc{O}_{Y'}}
 \omega_{X'/Y'})\otimes_{\ms{D}_{X'}}\ms{D}_{X'\rightarrow X}.
 \end{equation*}
 This induces the homomorphism
 \begin{equation}
  \label{nodevtens}
 f'^{-1}\ms{D}_{Y'\rightarrow Y}\otimes_{h^{-1}
  \ms{D}_{Y}}g'^{-1}\ms{D}_{Y\leftarrow X}
  \rightarrow\ms{D}_{Y'\leftarrow X'}
  \otimes_{\ms{D}_{X'}}\ms{D}_{X'\rightarrow X}.
 \end{equation}
 Now, since $f'^{-1}\ms{D}_{Y'\rightarrow Y}$ is a flat right
 $h^{-1}\ms{D}_Y$-module, we get a quasi-isomorphism
 \begin{equation}
  \label{comp1}
  f'^{-1}\ms{D}_{Y'\rightarrow Y}\otimes^{\mb{L}}_{h^{-1}
   \ms{D}_{Y}}g'^{-1}\ms{D}_{Y\leftarrow X}\xrightarrow{\sim}
   f'^{-1}\ms{D}_{Y'\rightarrow Y}\otimes_{h^{-1}\ms{D}_{Y}}
   g'^{-1}\ms{D}_{Y\leftarrow X}.
 \end{equation}
 From now on, we omit $f'^{-1}$ and so on, but we consider sheaves are
 on $X'$. Let us show that
 \begin{equation}
  \label{comp2}
  \Dmod{m}{Y'\leftarrow X'}
  \otimes^{\mb{L}}_{\Dmod{m}{X'}}\Dmod{m}{X'\rightarrow X}
  \xrightarrow{\sim}\Dmod{m}{Y'\leftarrow X'}
  \otimes_{\Dmod{m}{X'}}\Dmod{m}{X'\rightarrow X},
 \end{equation}
 or in other words $\ms{H}^i(  \Dmod{m}{Y'\leftarrow X'}
  \otimes^{\mb{L}}_{\Dmod{m}{X'}}\Dmod{m}{X'\rightarrow X})=0$ for
 $i\neq0$.
 When $m=0$, the argument is standard using the Spencer resolution
 $\Dmod{0}{X'}\otimes\Theta_{X'/X}^{\bullet}$ of
 $\Dmod{0}{X'\rightarrow X}$ where $\Theta_{X'/X}$ denotes the relative
 tangent bundle of $X'\rightarrow X$ (cf.\ \cite[4.3.1]{Ber2}). Let us
 see the general case. Since the verification is local, we may assume
 that the schemes are affine and $s=m$. We get
 \begin{equation*}
  \Dmod{m}{Y'\leftarrow X'}
  \otimes^{\mb{L}}_{\Dmod{m}{X'}}\Dmod{m}{X'\rightarrow X}\cong
  F^*_{Y'}F^{\flat}_X\bigl(\Dmod{0}{\widetilde{Y}'\leftarrow\widetilde{X}'}
  \otimes^{\mb{L}}_{\Dmod{0}{\widetilde{X}'}}\Dmod{0}{\widetilde{X}'
  \rightarrow\widetilde{X}}\bigr)
 \end{equation*}
 by \cite[2.5.6, 3.4.2]{Ber2}.
 This reduces the verification to the $m=0$ case, and the claim
 follows. Combining (\ref{nodevtens}), (\ref{comp1}), (\ref{comp2}), we
 get the desired homomorphism (\ref{desiredhomten}).

 By construction, (\ref{desiredhomten}) is compatible with raising
 levels. Let us show the compatibility of Frobenius.
 We denote by relative Frobenius morphisms of the special fiber $X_0$
 {\it etc.\ }by $F_X$ {\it etc}. Let us show that the following diagram
 is commutative where homomorphisms are isomorphisms.
 \begin{equation}
  \label{pbcfrobcomp}
   \xymatrix{
   F_{Y'}^*\bigl(\Dmod{m}{\widetilde{Y}'\rightarrow\widetilde{Y}}
   \otimes^{\mb{L}}_{\Dmod{m}{\widetilde{Y}}}\Dmod{m}
   {\widetilde{Y}\leftarrow\widetilde{X}}\otimes^{\mb{L}}
   _{\Dmod{m}{\widetilde{X}}}F_X^\flat\Dmod{m}{\widetilde{X}}
   \bigr)\ar[r]\ar[d]
   \ar@/^1pc/[rd]^<>(.5)\alpha
   \ar@/_1pc/[rd]_<>(.5)\beta&
   F^*_{Y'}\bigl(\Dmod{m}{\widetilde{Y}'\leftarrow\widetilde{X}'}
   \otimes^{\mb{L}}_{\Dmod{m}{\widetilde{X}'}}\Dmod{m}
   {\widetilde{X}'\rightarrow\widetilde{X}}
   \otimes^{\mb{L}}_{\Dmod{m}{\widetilde{X}}}
   F_X^\flat\Dmod{m}{\widetilde{X}}\bigr)\ar[d]\\
   \bigl(\Dmod{m+s}{Y'\rightarrow Y}\otimes^{\mb{L}}_{\Dmod{m+s}{Y}}
   \Dmod{m+s}{Y\leftarrow X}\bigr)\otimes^{\mb{L}}_{\Dmod{m+s}{X}}
   F^*_XF^\flat_X\Dmod{m}{\widetilde{X}}\ar[r]&
   \bigl(\Dmod{m+s}{Y'\leftarrow X'}\otimes^{\mb{L}}_{\Dmod{m+s}{X'}}
   \Dmod{m+s}{X'\rightarrow X}\bigr)\otimes^{\mb{L}}_{\Dmod{m+s}{X}}
   F^*_XF^\flat_X\Dmod{m}{\widetilde{X}}
   }
 \end{equation}
 Let $\ms{M}$ be a left $\Dmod{m}{\widetilde{X}}$-complex. By taking
 $\otimes^{\mb{L}}_{\Dmod{m+s}{X}}F^*_X\ms{M}$, (\ref{pbcfrobcomp})
 induces the following commutative diagram:
 \begin{equation*}
  \xymatrix{
   F_{Y'}^*\bigl(\Dmod{m}{\widetilde{Y}'\rightarrow\widetilde{Y}}
   \otimes^{\mb{L}}_{\Dmod{m}{\widetilde{Y}}}\Dmod{m}
   {\widetilde{Y}\leftarrow\widetilde{X}}\otimes^{\mb{L}}
   _{\Dmod{m}{\widetilde{X}}}\ms{M}
   \bigr)\ar[r]\ar[d]&
   F^*_{Y'}\bigl(\Dmod{m}{\widetilde{Y}'\leftarrow\widetilde{X}'}
   \otimes^{\mb{L}}_{\Dmod{m}{\widetilde{X}'}}\Dmod{m}
   {\widetilde{X}'\rightarrow\widetilde{X}}
   \otimes^{\mb{L}}_{\Dmod{m}{\widetilde{X}}}
   \ms{M}\bigr)\ar[d]\\
   \bigl(\Dmod{m+s}{Y'\rightarrow Y}\otimes^{\mb{L}}_{\Dmod{m+s}{Y}}
   \Dmod{m+s}{Y\leftarrow X}\bigr)\otimes^{\mb{L}}_{\Dmod{m+s}{X}}
   F^*_X\ms{M}\ar[r]&
   \bigl(\Dmod{m+s}{Y'\leftarrow X'}\otimes^{\mb{L}}_{\Dmod{m+s}{X'}}
   \Dmod{m+s}{X'\rightarrow X}\bigr)\otimes^{\mb{L}}_{\Dmod{m+s}{X}}
   F^*_X\ms{M},
   }
 \end{equation*}
 which implies the compatibility of Frobenius.

 Let us prove the commutativity of (\ref{pbcfrobcomp}). Since these
 complexes are concentrated in degree $0$, the problem is local on
 $X'$. Thus, we may assume that any scheme possesses a system of local
 coordinates, and the following diagram is commutative.
 \begin{equation*}
  \xymatrix@R=10pt{
   &\widetilde{X}'\ar[rr]\ar[dd]&&\widetilde{X}\ar[dd]\\
  X'\ar[ru]^{F_{X'}}\ar[rr]|<>(.7){g'}\ar[dd]|<>(.3){f'}&&
   X\ar[ru]_{F_X}\ar[dd]|<>(.3)f&\\
  &\widetilde{Y}'\ar[rr]&&\widetilde{Y}\\
  Y'\ar[ru]^{F_{Y'}}\ar[rr]|<>(.7){g}&&Y\ar[ru]_{F_Y}&}
 \end{equation*}
 Let $\{\widetilde{t}_i\}_{1\leq i\leq d_X}$ (resp.\ $\{t_i\}_{1\leq
 i\leq d_X}$, $\{\widetilde{s}_j\}_{1\leq j\leq d_Y}$, $\{s_j\}_{1\leq
 j\leq d_Y}$) be a system of local coordinates of $\widetilde{X}$
 (resp. $X$, $\widetilde{Y}$, $Y$) such that
 $F_X^*(\widetilde{t_i})=t^q_i$, $F_Y^*(\widetilde{s_j})=s^q_j$.
 As usual, we use the notation $dt_\bullet:=dt_1\wedge\dots\wedge
 dt_{d_X}$ and so on. Let $H_{s_j}$ be the Dwork operator (cf.\
 subsection \ref{Garnierrev}) in $\Dmod{m+s}{Y'}$ corresponding to
 $s_j$, and we put $H_s:=\prod_{j=1}^{d_Y}H_{s_j}$.
 Let $x_1,\dots,x_d$ be a local coordinate of $Y'$ over
 $Y$. This can be seen also as a local coordinate of $X'$ over $X$. We
 denote by $H_i$ the Dwork operator in $\Dmod{m+s}{X'}$ (resp.\
 $\Dmod{m+s}{Y'}$) corresponding to $x_i$. We put
 $H_x:=\prod_{i=1}^dH_i$. Using the convention of
 (\ref{identifications}), let
 \begin{equation*}
  \Xi:=1\otimes\bigl((1\otimes P)\otimes(1\otimes1\otimes(d\widetilde{t}
   _\bullet)^\vee\otimes d\widetilde{s}_\bullet)\otimes(Q\otimes\phi)
   \bigr)
 \end{equation*}
 be a section of $F_{Y'}^*\bigl(\Dmod{m}{\widetilde{Y}'\rightarrow
 \widetilde{Y}}\otimes_{\Dmod{m}{\widetilde{Y}}}\Dmod{m}
 {\widetilde{Y}\leftarrow\widetilde{X}}\otimes_{\Dmod{m}{\widetilde{X}}}
 F_X^\flat\Dmod{m}{\widetilde{X}}\bigr)$. To see the commutativity of
 (\ref{pbcfrobcomp}), it suffices to show $\alpha(\Xi)=\beta(\Xi)$.  Let
 $S:=\prod_{1\leq i\leq d}x^{q-1}_iH_ix_i^{-(q-1)}$. By using
 Proposition \ref{calcpushexp},
 \begin{align*}
  \alpha(\Xi)&=\bigl(1\otimes P^\circ\cdot H_s\underline{s}^{-(
  \underline{q}-\underline{1})}\cdot H_x\underline{x}^{-(\underline{q}
  -\underline{1})}\cdot f\otimes(dt_{\bullet})^\vee\otimes ds_\bullet
  \bigr)\otimes(\underline{s}^{\underline{q}-\underline{1}}\,
  \underline{x}^{\underline{q}-\underline{1}}\otimes1)
  \otimes(Q\otimes\phi)\\
  &=\bigl(1\otimes P^\circ\cdot H_s\underline{s}^{-(
  \underline{q}-\underline{1})}\cdot f\otimes(dt_{\bullet})^\vee\otimes
  ds_\bullet\bigr)\otimes{^tS}\cdot(\underline{s}^{\underline{q}
  -\underline{1}}\otimes1)\otimes(Q\otimes\phi)\\
  \beta(\Xi)&=\bigl(1\otimes P^\circ\cdot H_s\underline{s}^{-(
  \underline{q}-\underline{1})}\cdot f\otimes(dt_{\bullet})^\vee\otimes
  ds_\bullet\bigr)\otimes(\underline{s}^{\underline{q}-\underline{1}}
  \otimes1)\otimes(Q\otimes\phi)
 \end{align*}
 To show that these two quantities are equal, it suffices to see that
 the image of ${^tS}$ by the homomorphism $\Dmod{m+s}{X'}\rightarrow
 \Dmod{m+s}{X'\rightarrow X}$ is $1\otimes1$. To show this, it
 suffices to see that ${^tS}(1)=1$ in $\mc{O}_{X'}$. Since the claim is
 stable under base change, we may assume that $\zeta\in R_i$. By
 definition of $H$, it suffices to show that
 \begin{equation*}
  x_i^{-(q-1)}q^{-1}\sum_{\zeta^q=1}\sum_{k\geq 0}(\zeta-1)^k\,
   \partial_i^{[k]}\,(x_i^{k+q-1})=1
 \end{equation*}
 for any $1\leq i\leq d$. The sum is equal to
 \begin{equation*}
  q^{-1}\sum_{\zeta^q=1}\sum_k(\zeta-1)^k\binom{k+q-1}{k}=
   q^{-1}\sum_{\zeta^q=1}\sum_k(1-\zeta)^k\binom{-q}{k}=
   q^{-1}\sum_{\zeta^q=1}\zeta^{-q}=1,
 \end{equation*}
 and the commutativity of (\ref{pbcfrobcomp}) follows.

 Finally, let us complete the proof of (\ref{propbcisom}). We have
 \begin{align*}
 g^!\circ f_+(\ms{M})&:=\ms{D}_{Y'\rightarrow Y}
 \otimes^{\mb{L}}_{g^{-1}\ms{D}_Y}g^{-1}Rf_*\ms{D}_{Y\leftarrow X}
 \otimes_{\ms{D}_X}^{\mb{L}}\ms{M}\\
 &\cong Rf'_*\bigl(f'^{-1}\ms{D}_{Y'\rightarrow Y}\otimes^{\mb{L}}
 _{h^{-1}\ms{D}_Y}g'^{-1}(\ms{D}_{Y\leftarrow X}
 \otimes_{\ms{D}_X}^{\mb{L}}\ms{M})\bigr)\\
 &\xrightarrow{\sim}Rf'_*\bigl(\ms{D}_{Y'\leftarrow X'}
  \otimes^{\mb{L}}_{\ms{D}_{X'}}\ms{D}_{X'\rightarrow X}\otimes^{\mb{L}}
 _{g'^{-1}\ms{D}_X}g'^{-1}\ms{M}\bigr)
 \cong f'_+\circ g'^!(\ms{M}),
 \end{align*}
 where we used the flat base change in the first isomorphism.
Thus, we get the isomorphism.
\end{proof}

\subsection{}
\label{dfntatetwist}
Consider the situation in \ref{Frobsituations}.\ref{noFrobstr}.
Let us define the Tate twist (cf.\ \cite[2.3.8 (i)]{BerPre}). Let
$\ms{X}$ and $\ms{Y}$ are two smooth
formal schemes, and suppose there exist smooth liftings $\ms{X}'$ and
$\ms{Y}'$ of $X^{(s)}_0$ and $Y^{(s)}_0$ where $X_0$ and $Y_0$ are
special fibers of $\ms{X}$ and $\ms{Y}$ as usual. Let $*$ be one of
$\ms{X}$, $\ms{X}'$, $\ms{Y}$, $\ms{Y}'$, and $\mc{A}(*)$ be either
$D^b_{\mr{coh}}(\DdagQ{*})$ or $\LD{*}$. Let
\begin{equation*}
 G\colon\mc{A}(\ms{X})\rightarrow\mc{A}(\ms{Y}),\qquad
  G'\colon\mc{A}(\ms{X}')\rightarrow\mc{A}(\ms{Y}')
\end{equation*}
be $\mb{Q}$-linear functors. Now, suppose given a
equivalence of functors
\begin{equation*}
 \Psi\colon F^*_{\ms{Y}}\circ G'\xrightarrow{\sim}G\circ F^*_{\ms{X}}.
\end{equation*}
The triple $(G,G',\Psi)$ (we often abbreviate this as $(G,\Psi)$ or even
$G$ if no confusion can arise) is called a {\em cohomological functor
with Frobenius isomorphism}.
The natural transform $\Psi$ is called the {\em Frobenius isomorphism}
of the triple. Given $(G,G',\Psi)$ and an integer $d$, we define its
{\em Tate twist} $\Psi(d)$ of the Frobenius isomorphism by
$\Psi(d):=q^{-d}\cdot\Psi$. We often denote by $G(d)$ the triple
$(G,G',\Psi(d))$ for simplicity.

Now, we consider the situation \ref{Frobsituations}.\ref{Frobstr}. For a
$\DdagQ{\ms{X}}$-module $\ms{M}$, we denote a $\DdagQ{\ms{X}}$-module by
$\ms{M}^\sigma$ the base change of $\ms{M}$ by using $\sigma$.
Let $\ms{M}$ be an $F$-$\DdagQ{\ms{X}}$-module (resp.\ complex). By
definition, this is a $\DdagQ{\ms{X}}$-module (resp.\ complex) equipped
with an isomorphism\footnote{
The definition of Frobenius structure here
is slightly different from that of \cite[4.5.1]{Ber2} in the sense that
in {\it loc.\ cit.}, $\Phi$ is an isomorphism $\ms{M}\xrightarrow{\sim}
F^*\ms{M}^{\sigma}$. Since $\Phi$ is an isomorphism, it causes no
difference. We adopted our definition to make it easier to see the
compatibility with the definition of Frobenius structure of
$F$-isocrystals. See also [{\it loc.\ cit.}, Remarque 4.5.1].}
$\Phi\colon F^*\ms{M}^{\sigma}\xrightarrow{\sim}\ms{M}$. For any
integer $d$, we define an
$F$-$\DdagQ{\ms{X}}$-module (resp.\ complex) $\ms{M}(d)$ called the Tate
twist of $\ms{M}$ in the following way. The underlying
$\DdagQ{\ms{X}}$-module (resp.\ complex) is the same as that of
$\ms{M}$. We denote by $\Phi'$ the isomorphism
$F^*\ms{M}(d)^\sigma\rightarrow\ms{M}(d)$ induced by the Frobenius
structure of $\ms{M}$. The Frobenius structure
$\Phi(d)\colon F^*\ms{M}(d)^\sigma\rightarrow\ms{M}(d)$ of $\ms{M}(d)$
is by definition $q^{-d}\Phi'$. Now, let
$(G,\Psi)$ be a cohomological functor with Frobenius isomorphism. Then
we get that $G(\ms{M})$ is naturally equipped with Frobenius structure,
and we get for any integer $d$ that $G(d)(\ms{M})\cong G(\ms{M})(d)\cong
G(\ms{M}(d))$.

Let $(\ms{M},\Phi)$ and $(\ms{N},\Psi)$ be two
$F$-$\DdagQ{\ms{X}}$-modules, and $\varphi\colon\ms{M}\rightarrow\ms{N}$
be a homomorphism of $\DdagQ{\ms{X}}$-modules (where we do not consider
the Frobenius structures). Consider the following diagrams where the
left diagram is that of modules (or sheaves of modules):
\begin{equation*}
 \xymatrix{
  M_1\ar[r]^{\alpha}\ar[d]_{\gamma}&M_2\ar[d]^{\beta}\\
 M_3\ar[r]_{\delta}&M_4,}\hspace{3cm}
  \xymatrix@C=50pt{
  F^*\ms{M}^\sigma\ar[r]^{F^*\varphi^\sigma}\ar[d]_{\Phi}&
  F^*\ms{N}^\sigma\ar[d]^{\Psi}\\\ms{M}\ar[r]_{\varphi}&\ms{N}.}
\end{equation*}
First, pay attention to the left diagram. Let $n$ be a rational
number. We say that the diagram is {\em commutative up to multiplication
by $n$} if $n\cdot(\beta\circ\alpha)=\delta\circ\gamma$ holds. Now,
changing the attention to the right diagram, suppose that the diagram
is commutative up to multiplication by $q^d$. Then we get that $\varphi$
defines a homomorphism $\ms{M}(d)\rightarrow\ms{N}$ as
$F$-$\DdagQ{\ms{X}}$-modules.

\section{Extraordinary pull-back and duality}
In this section, we prove a commutation result of the extraordinary
pull-back functor and the duality functor. The result can be seen as
a part of a ``Poincar\'{e} duality'' in the theory of arithmetic
$\ms{D}$-modules. For the explanation of this interpretation, see \S
\ref{sixfunctor}. By applying the commutation result, we get
\begin{equation*}
 \mb{D}_{\ms{X},Z}(\mc{O}_{\ms{X},\mb{Q}}(^\dag Z))\cong
  \mc{O}_{\ms{X},\mb{Q}}(^\dag Z)(-d)
\end{equation*}
where $\ms{X}$ is a smooth formal scheme, $Z$ is a divisor of its
special fiber, and $d$ is the dimension of $\ms{X}$. Combining this
result with a result of Caro, we are able to compare duality functors of
arithmetic $\ms{D}$-modules and that of overconvergent isocrystals {\em
with Frobenius structures} in a precisely way. At the last part of this
section, we compare the rigid cohomologies and the push-forwards in the
theory of arithmetic $\ms{D}$-modules.

\subsection{}
\label{d-couplesdef}
We consider the situation \ref{Frobsituations}.\ref{noFrobstr}. Let
$\ms{X}$ be a smooth formal
scheme, and $Z$ be a divisor of its special fiber $X_0$. In this
situation, we say that $(\ms{X},Z)$ is a
{\it d-couple}\footnote{This ``d'' stands for divisor.}. Let
$(\ms{Y},W)$ be another d-couple. A morphism of d-couples
$f\colon(\ms{X},Z)\rightarrow(\ms{Y},W)$ is
a morphism of special fibers $f_0\colon X_0\rightarrow Y_0$ such that
$f(X_0\setminus Z)\subset Y_0\setminus W$, and $f^{-1}(W)$ is a
divisor. A {\em strict morphism} of d-couples $f$ is a morphism
$\widetilde{f}\colon\ms{X}\rightarrow\ms{Y}$ whose reduction on the
special fiber is a morphism of d-couples. We say that the morphism
$f_0$ (resp.\ $\widetilde{f}$) is the {\em realization} of the (resp.\
strict) morphism of d-couples $f$.

For a d-couple $(\ms{X},Z)$, let us review the definition of the dual
functor
\begin{equation*}
 \mb{D}_{\ms{X},Z}\colon D^b_{\mr{perf}}(\DdagQ{\ms{X}}(^\dag
  Z))\rightarrow D^b_{\mr{perf}}(\DdagQ{\ms{X}}(^\dag Z)).
\end{equation*}
We note that there exists the canonical equivalence of categories
\begin{equation*}
 D^b_{\mr{perf}}(\DdagQ{\ms{X}}(^\dag Z))\xrightarrow{\sim}
  D^b_{\mr{coh}}(\DdagQ{\ms{X}}(^\dag Z))
\end{equation*}
by \cite[3.2.3]{NH2}. Let $d$ be the dimension of $\ms{X}$. For a
perfect $\DdagQ{\ms{X}}(^\dag Z)$-complex $\ms{C}$, we define the
functor $\mb{D}_{\ms{X},Z}$ by
\begin{equation*}
 \mb{D}_{\ms{X},Z}(\ms{C}):=R\shom_{\DdagQ{\ms{X}}(^\dag Z)}(\ms{C},
  \DdagQ{\ms{X}}(^\dag Z))\otimes_{\mc{O}_{\ms{X}}}
  \omega^{-1}_{\ms{X}}~[d].
\end{equation*}
For fundamental properties of this functor, see \cite{Vir}. Here, we only
note that this functor commutes with Frobenius pull-backs, and induces an
equivalence between the derived categories of perfect complexes (with or
without Frobenius structure).

Let $(\ms{X},Z)$ and $(\ms{Y},W)$ be d-couples, and let
$f_0\colon X_0\rightarrow Y_0$ be a morphism such that
$f_0(X_0\setminus Z)\subset Y_0\setminus W$. Assume that $f_0$ is
smooth. Then $f_0^{-1}(W)$ is a divisor, and in particular, $f_0$
induces a morphism of d-couples
$f\colon(\ms{X},Z)\rightarrow(\ms{Y}.W)$. The functor
$f_0^!\colon D^b_{\mr{coh}}(\DcompQ{m}{\ms{Y}}(W))\rightarrow
D^b_{\mr{coh}}(\DcompQ{m}{\ms{X}}(f^{-1}(W)))$ is defined in \cite[3.2.3
(ii)]{Ber2} and \cite[3.4.6]{BerInt}. By taking the inductive limit as
\cite[4.3.3]{BerInt}, we have the functor
$f_0^!\colon D^b_{\mr{coh}}(\DdagQ{\ms{Y}}(^\dag W))\rightarrow
D^b_{\mr{coh}}(\DdagQ{\ms{X}}(^\dag f^{-1}(W)))$. We define the functor
\begin{equation*}
 f^!\colon D^b_{\mr{coh}}(\DcompQ{m}{\ms{Y}}(W))\rightarrow
  D^b_{\mr{coh}}(\DcompQ{m}{\ms{X}}(Z))
\end{equation*}
by $(Z)\circ f_0^!$, where $(Z)$ denotes the functor
tensoring with $\DcompQ{m}{\ms{X}}(Z)$. By taking the inductive limit,
we also get a functor $f^!\colon D^b_{\mr{coh}}(\DdagQ{\ms{Y}}(^\dag
W))\rightarrow D^b_{\mr{coh}}(\DdagQ{\ms{X}}(^\dag Z))$.

\subsection{}
Let $m$ be an integer such that $p^m>e/(p-1)$ (cf.\ \cite[A.4]{Ber2}).
We denote $\Bmod{m}{\ms{X}}(Z)$ and
$\Bcomp{m}{\ms{X}}(Z)$ (cf.\ \cite[4.2.4]{Ber1})
by $\Bmod{m}{\ms{X}}$ and $\Bcomp{m}{\ms{X}}$,
$\Bcomp{m}{\ms{X}}\widehat{\otimes}\Dcomp{m}{\ms{X}}$ by
$\Dtild{m}{\ms{X}}$, and
$\indlim_m\Dtild{m}{\ms{X}}$($=\Ddag{\ms{X}}(^\dag Z)$) by
$\widetilde{\ms{D}}^\dag_{\ms{X}}$.
We put $\widetilde{\omega}_{\ms{X}}:=\Bcomp{m}{\ms{X}}\widehat{\otimes}
_{\mc{O}_{\ms{X}}}\omega_{\ms{X}}$. We denote by
$D^*_{\mr{qc}}(\Dtild{m}{\ms{X}})$ ($*\in\{-,b\}$) the full
subcategory of $D^*(\Dcomp{m}{\ms{X}})$ consisting of quasi-coherent
complexes (cf.\ \cite[3.2.1]{BerInt}), and
$D^*_{\mb{Q},\mr{qc}}(\Dtild{m}{\ms{X}})$ by the category obtained by
localizing $D^*_{\mr{qc}}(\Dtild{m}{\ms{X}})$ with respect to isogenies
(cf.\ \cite[3.3.2]{BerInt}). Finally, we denote by $D^*(\ms{X})$
($*\in\{+,-,b\}$) the derived category of $R$-modules on $\ms{X}$.

\begin{lem*}
 \label{tensboundedlem}
 Let $\ms{X}$ be a smooth formal scheme. Let $\ms{M}$ be
 a complex in $D^b_{\mr{perf}}(\DtildQ{m}{\ms{X}})^{\mr{g}}$, and
 $\ms{N}$ be a complex in $\DbqcQ(\Dtild{m}{\ms{X}})^{\mr{g}}$. Then the
 complex $\ms{M}\widehat{\otimes}^{\mb{L}}_{\BcompQ{m}{\ms{X}}}\ms{N}$
 is bounded.
\end{lem*}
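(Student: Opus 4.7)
The plan is to reduce boundedness to showing that $\ms{M}$, viewed as a complex of $\BcompQ{m}{\ms{X}}$-modules via restriction of scalars, has finite Tor-dimension over $\BcompQ{m}{\ms{X}}$. Once this is known, since $\ms{N}$ is bounded, the derived (completed) tensor product $\ms{M}\widehat{\otimes}^{\mb{L}}_{\BcompQ{m}{\ms{X}}}\ms{N}$ is automatically bounded.

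The question is local on $\ms{X}$, so I would pass to an affine open on which $\ms{M}$ admits a bounded resolution $P^{\bullet}$ by finitely generated locally projective $\DtildQ{m}{\ms{X}}$-modules (this is possible by definition of $D^b_{\mr{perf}}$, recalling that on a quasi-compact space perfect complexes of coherent rings are equivalently bounded complexes of finitely generated projectives). Replacing $\ms{M}$ by $P^{\bullet}$, I only need to see that each $P^i$ is flat over $\BcompQ{m}{\ms{X}}$, so that
\begin{equation*}
  \ms{M}\widehat{\otimes}^{\mb{L}}_{\BcompQ{m}{\ms{X}}}\ms{N}
  \;\simeq\; P^{\bullet}\widehat{\otimes}_{\BcompQ{m}{\ms{X}}}\ms{N},
\end{equation*}
and the right-hand side is manifestly bounded.

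To verify the flatness of $\DtildQ{m}{\ms{X}}$ over $\BcompQ{m}{\ms{X}}$, I would use the order filtration. By definition $\Dtild{m}{\ms{X}}=\Bcomp{m}{\ms{X}}\widehat{\otimes}_{\mc{O}_{\ms{X}}}\Dcomp{m}{\ms{X}}$, and the subsheaves $\Dcomp{m}{\ms{X},k}$ of operators of order $\leq k$ are locally free of finite rank over $\mc{O}_{\ms{X}}$. Hence the pieces $\Bcomp{m}{\ms{X}}\widehat{\otimes}_{\mc{O}_{\ms{X}}}\Dcomp{m}{\ms{X},k}$ are locally free of finite rank over $\Bcomp{m}{\ms{X}}$, and their filtered union $\Dtild{m}{\ms{X}}$ is flat over $\Bcomp{m}{\ms{X}}$. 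Inverting $p$ preserves flatness, giving the required conclusion for $\DtildQ{m}{\ms{X}}$ over $\BcompQ{m}{\ms{X}}$. A finite direct sum of copies of a flat module is flat, so each $P^i$ is flat over $\BcompQ{m}{\ms{X}}$.

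The main delicate point I expect will not be the linear algebra but the compatibility of $\widehat{\otimes}^{\mb{L}}$ with flat resolutions in the completed setting, i.e.\ justifying the isomorphism $\ms{M}\widehat{\otimes}^{\mb{L}}_{\BcompQ{m}{\ms{X}}}\ms{N}\simeq P^{\bullet}\widehat{\otimes}_{\BcompQ{m}{\ms{X}}}\ms{N}$. This should reduce to the usual statement for $\Dmod{m}{X_i}$ after reducing modulo $\pi^{i+1}$ and passing to the $R\invlim$ defining the completed derived tensor product in Berthelot's framework; no higher $R^j\invlim$ issues arise because the transition maps between the $P^{\bullet}\otimes_{\mc{B}^{(m)}_i}\ms{N}_i$ are surjective (the terms of $P^{\bullet}$ are finitely generated free). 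Once this identification is in hand the lemma is immediate.
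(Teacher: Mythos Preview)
Your argument has two related gaps, both tied to the passage between the $\mb{Q}$-ring $\DtildQ{m}{\ms{X}}$ and the integral ring $\Dtild{m}{\ms{X}}$.

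First, the flatness step is not correct as written. The ring $\Dtild{m}{\ms{X}}=\Bcomp{m}{\ms{X}}\widehat{\otimes}_{\mc{O}_{\ms{X}}}\Dcomp{m}{\ms{X}}$ is a $p$-adic completion, not the filtered union of the order-$\leq k$ pieces: a typical section is an infinite sum $\sum_{\underline{k}} a_{\underline{k}}\,\underline{\partial}^{\langle\underline{k}\rangle}$ with $a_{\underline{k}}\to 0$ $p$-adically, which does not lie in any finite-order piece. So the sentence ``their filtered union $\Dtild{m}{\ms{X}}$ is flat over $\Bcomp{m}{\ms{X}}$'' does not prove anything. It may well be that $\Dtild{m}{\ms{X}}$ is flat over $\Bcomp{m}{\ms{X}}$, but this requires a genuine argument about completions, not a colimit argument.

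Second, and more seriously, the operation $\widehat{\otimes}^{\mb{L}}_{\BcompQ{m}{\ms{X}}}$ is \emph{defined} through integral representatives: one chooses $\ms{M}^\circ,\ms{N}^\circ$ in $D^b_{\mr{qc}}(\Dtild{m}{\ms{X}})$, forms $R\invlim_i(\ms{M}^\circ_i\otimes^{\mb{L}}_{\Bmod{m}{X_i}}\ms{N}^\circ_i)$, and then tensors with $\mb{Q}$. Your resolution $P^\bullet$ lives over the $\mb{Q}$-ring $\DtildQ{m}{\ms{X}}$, so ``reducing modulo $\pi^{i+1}$'' is meaningless until you lift each $P^j$ to an integral $\Dtild{m}{\ms{X}}$-module. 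A finitely generated projective $\DtildQ{m}{\ms{X}}$-module does \emph{not} in general lift to a projective (let alone free) $\Dtild{m}{\ms{X}}$-module; the obstruction is $\pi$-torsion in the cokernel of a splitting. This is precisely why the paper introduces the notion of an \emph{$r$-nearly projective} $\Dtild{m}{\ms{X}}$-module (a module $\ms{P}$ admitting an exact sequence $0\to\ms{P}\oplus\ms{Q}\to(\Dtild{m}{\ms{X}})^{\oplus b}\to\ms{R}\to 0$ with $\pi^r\ms{R}=0$), lifts $P^\bullet$ to a complex of such modules, and then shows that the resulting integral $\widehat{\otimes}^{\mb{L}}$ has $\pi^r$-torsion cohomology below the expected range. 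Your final paragraph implicitly assumes the $P^j$ are integral free modules (``the terms of $P^\bullet$ are finitely generated free''), which is exactly the unjustified step.
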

\begin{proof}
 We will use the notation of SGA6 Exp.\ I.\:
 Since $D^b_{\mr{perf}}\subset D_{\mr{ftd}}$ by \ref{perfcatdfn}, we may
 assume that $\pa(\ms{M})\subset[0,a]$. Let $n$ be an integer such that
 $\ms{H}^i(\ms{N})=0$ for $i<n$. It suffices to show that
 $\ms{H}^i(\ms{M}\widehat{\otimes}^{\mb{L}}
 _{\BcompQ{m}{\ms{X}}}\ms{N})=0$ for $i<n-1$. Since this is local, and
 we may assume that $\ms{X}$ is affine. We will assume $\ms{X}$ to be
 affine in the following.

 For a positive integer $r$, we say that a finitely generated
 $\Dtild{m}{\ms{X}}$-module $\ms{P}$ is $r$-nearly projective if there
 exists a $\Dtild{m}{\ms{X}}$-module $\ms{Q}$, an integer $b$, and a
 short exact sequence
 $0\rightarrow\ms{P}\oplus\ms{Q}\rightarrow(\Dtild{m}{\ms{X}})^{\oplus
 b}\rightarrow\ms{R}\rightarrow0$ such that $\pi^r\ms{R}=0$. For any
 finitely generated projective $\DtildQ{m}{\ms{X}}$-module $\ms{P}'$,
 there exists an integer $r$ and an $r$-nearly projective
 $\Dtild{m}{\ms{X}}$-module $\ms{P}$ such that
 $\ms{P}\otimes\mb{Q}\cong\ms{P}'$. This shows that there exists a
 complex $\ms{P}_\bullet$ of $r$-nearly projective
 $\Dtild{m}{\ms{X}}$-modules concentrated in $[0,a]$ such that
 $\ms{P}_\bullet\otimes\mb{Q}$ is
 quasi-isomorphic to $\ms{M}$. Thus, it suffices to show that
 for any $r$-nearly projective $\Dtild{m}{\ms{X}}$-module $\ms{P}$ and
 $\ms{N}\in D^b_{\mr{qc}}(\Dtild{m}{\ms{X}})$ such that
 $\ms{H}^i(\ms{N})=0$ for $i<n$, we get
 \begin{equation*}
  \varpi^r\ms{H}^i(\ms{P}\widehat{\otimes}^{\mb{L}}_{\Bcomp{m}{\ms{X}}}
   \ms{N})=0
 \end{equation*}
 for $i<n-1$. Since $\ms{P}$ is $r$-nearly projective,
 $\varpi^r\ms{H}^i(\ms{P}\otimes_{\Bcomp{m}{\ms{X}}}^{\mb{L}}\ms{N})=0$
 for any $i<n$. This shows that
 \begin{equation*}
  \varpi^r\ms{H}^i(\ms{P}_j\otimes_{\Bmod{m}{X_j}}^{\mb{L}}
   \ms{N})=0
 \end{equation*}
 for any $j$ and $i<n-1$, where $\ms{P}_j:=\ms{P}\otimes_RR_j$. Now,
 it remains to take $R\underleftarrow{l}_{X*}$, but
 since this functor
 is a right derived functor, we get the claim.
\end{proof}

\subsection{}
Now, let us state a key proposition in this section.
Let $\ms{X}$ be a smooth formal scheme, and $\ms{X}'$ be a smooth
lifting of $X_0^{(s)}$.

\begin{prop*}
 \label{keypropcal}
 Let $\ms{M}$ be a complex in
 $D^b_{\mr{perf}}(\DtildQ{m}{\ms{X}})^{\mr{g}}$, and $\ms{N}$ be a
 complex in $\DbqcQ(\Dtild{m}{\ms{X}})^{\mr{g}}$. We denote
 $\mb{D}_{\ms{X},Z}$ by $\mb{D}$. Then there exists the following
 quasi-isomorphism in $D^+(\ms{X})$
 \begin{equation*}
  \Psi\colon
  R\shom_{\DtildQ{m}{\ms{X}}}(\BcompQ{m}{\ms{X}},\mb{D}(\ms{M})
   \widehat{\otimes}^{\mb{L}}_{\BcompQ{m}{\ms{X}}}\ms{N})
   \xrightarrow{\sim} R\shom_{\DtildQ{m}{\ms{X}}}(\ms{M},\ms{N}).
 \end{equation*}
 Let $\ms{F}$ be a complex in $D^b_{\mr{perf}}(\DtildQ{m}{\ms{X}'})
 ^{\mr{g}}$, and $\ms{G}$ be a complex in
 $\DbqcQ(\Dtild{m}{\ms{X}'})^{\mr{g}}$. Consider the following diagram:
 \begin{equation*}
  \xymatrix{
   R\shom_{\DtildQ{m}{\ms{X}'}}(\BcompQ{m}{\ms{X}'},\mb{D}(\ms{F})
   \widehat{\otimes}^\mb{L}_{\BcompQ{m}{\ms{X}'}}\ms{G})
   \ar[r]^<>(.5){\Psi}\ar[d]_{\sim}&
   R\shom_{\DtildQ{m}{\ms{X}'}}(\ms{F},\ms{G})\ar[d]^{\sim}\\
  R\shom_{\DtildQ{m+s}{\ms{X}}}(\BcompQ{m+s}{\ms{X}},\mb{D}
   (F^*\ms{F})\widehat{\otimes}^{\mb{L}}_{\BcompQ{m+s}{\ms{X}}}
   F^*\ms{G})\ar[r]_<>(.5){\Psi}&R\shom_{\DtildQ{m+s}{\ms{X}}}
   (F^*\ms{F},F^*\ms{G})
   }
 \end{equation*}
 where the vertical homomorphisms are canonical isomorphisms of
 complexes which are defined by the theorem of Frobenius descent
 {\normalfont \cite[4.1.3]{Ber2}}. This diagram is {\em commutative up
 to multiplication by $q^d$} where $d$ denotes the dimension of
 $\ms{X}$.
\end{prop*}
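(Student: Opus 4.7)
The plan is to construct $\Psi$ by combining tensor--Hom cancellation for perfect complexes with the $\mc{O}$-versus-$\omega$ side-change identity, and then to verify the Frobenius compatibility using the explicit formulas of Theorem \ref{maincalc}. Starting from the definition
\begin{equation*}
\mb{D}(\ms{M})\widehat{\otimes}^{\mb{L}}_{\BcompQ{m}{\ms{X}}}\ms{N}
\cong R\shom_{\DtildQ{m}{\ms{X}}}(\ms{M},\DtildQ{m}{\ms{X}})
\otimes_{\mc{O}_{\ms{X},\mb{Q}}}\omega^{-1}_{\ms{X},\mb{Q}}
\widehat{\otimes}^{\mb{L}}_{\BcompQ{m}{\ms{X}}}\ms{N}[d],
\end{equation*}
I would apply $R\shom_{\DtildQ{m}{\ms{X}}}(\BcompQ{m}{\ms{X}},-)$ and use the (derived) Spencer-type bimodule identity $R\shom_{\DtildQ{m}{\ms{X}}}(\BcompQ{m}{\ms{X}},\DtildQ{m}{\ms{X}})\cong\wti{\omega}_{\ms{X},\mb{Q}}[-d]$ to collapse the $\omega^{-1}[d]$ factor against the shift. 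This converts the outer $\widehat{\otimes}^{\mb{L}}_{\BcompQ{m}{\ms{X}}}$ into $\otimes^{\mb{L}}_{\DtildQ{m}{\ms{X}}}$, and since $\ms{M}$ is perfect the result simplifies to $R\shom_{\DtildQ{m}{\ms{X}}}(\ms{M},\ms{N})$. Lemma \ref{tensboundedlem} ensures that all derived tensor products involved remain in the bounded derived category, so that the $R\shom$ computations are legitimate.

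For the commutativity up to multiplication by $q^d$, I would trace each step of the construction of $\Psi$ through Frobenius descent \cite[4.1.3]{Ber2}. The manipulations using tensor--Hom cancellation and commutation of $F^*$ with $R\shom$ and $\otimes$ are strictly Frobenius-equivariant, so the only source of discrepancy is the $\omega^{-1}_{\ms{X},\mb{Q}}[d]$ factor in $\mb{D}$, whose transformation under Frobenius is governed by the isomorphism $\nu$ of Theorem \ref{maincalc}. Using the explicit formula $\nu_{\ms{N}}((dx_\bullet)^\vee\otimes(m'\otimes(H\underline{x}^{-(\underline{q}-\underline{1})}\cdot f)))=f\otimes(dx'_\bullet)^\vee\otimes m'$, one identifies $F^*\omega^{-1}_{\ms{X}'}$ with $\omega^{-1}_{\ms{X}}\otimes F^\flat\mc{O}_{\ms{X}'}$; the required trivialization of $F^\flat\mc{O}_{\ms{X}'}$, namely the normalized trace, introduces a factor related to the degree $q^d$ of the Frobenius $F\colon\ms{X}\to\ms{X}'$, which is the origin of the $q^d$ in the Tate twist.

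The main obstacle will be the precise bookkeeping of this $q^d$ factor through the Spencer identity under Frobenius. Since the question is local, I would reduce to the affine setting with a fixed system of local coordinates, and then, using a local free resolution of $\ms{M}$, to the case $\ms{M}=\DtildQ{m}{\ms{X}}$. At this point the interaction between the Spencer identity, the Berthelot--Garnier isomorphism (\ref{Garisom}) describing $F^*F^\flat\Dcomp{m}{\ms{X}'}$, and the explicit $\mu,\nu$ of Theorem \ref{maincalc} can be unwound directly in coordinates, in a computation parallel in structure to the one carried out in Lemma \ref{bcthmsch}. The key assertion to verify is that the $d$-fold product of one-dimensional contributions from each coordinate direction yields exactly the factor $q^d$ and no spurious term.
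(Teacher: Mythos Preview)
Your overall strategy is the paper's: build $\Psi$ from the Spencer identity $R\shom_{\DtildQ{m}{\ms{X}}}(\BcompQ{m}{\ms{X}},-)\cong\wti{\omega}_{\ms{X},\mb{Q}}\otimes^{\mb{L}}_{\DtildQ{m}{\ms{X}}}(-)[-d]$, associativity of the completed tensor product (Lemma \ref{LemBjoCar}), and tensor--Hom cancellation for perfect $\ms{M}$; then pin down the $q^d$ by a local computation using Theorem \ref{maincalc}.

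One point needs correction. You locate the $q^d$ in the $\omega^{-1}_{\ms{X}}[d]$ factor of $\mb{D}$, via $\nu$ and a ``trace trivialization of $F^\flat\mc{O}_{\ms{X}'}$''. But $\mb{D}$ commutes with Frobenius pull-back with \emph{no twist} (Virrion; recalled in paragraph \ref{d-couplesdef}), and the same holds for the associativity and tensor--Hom steps. The twist arises in the Spencer identity itself: this is exactly Lemma \ref{lemcrucons}, and its proof is the local computation you anticipate. One compares the induced map of Spencer complexes $\DtildQ{m+s}{\ms{X}}\otimes\Theta^\bullet_{\ms{X}}\to F^*(\DtildQ{m}{\ms{X}'}\otimes\Theta^\bullet_{\ms{X}'})$, which in top degree multiplies by the Jacobian $\prod_i qx_i^{q-1}$, against the isomorphism $F^\flat\omega_{\ms{X}'}\cong\omega_{\ms{X}}$ computed in Proposition \ref{mainpropbalhar}, which contributes only $\prod_i x_i^{q-1}$; the ratio is $q^d$. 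With that lemma in hand, the remaining steps of $\Psi$ are strictly Frobenius-compatible and the proposition follows at once (paragraph \ref{proofpropsec}). So your plan is right, but reassign the bookkeeping: the $q^d$ lives in the Spencer step, not in $\mb{D}$.
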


\begin{rem*}
 We note that the complex
 $R\shom_{\DtildQ{m}{\ms{X}}}(\BcompQ{m}{\ms{X}},\mb{D}(\ms{M})
 \widehat{\otimes}^{\mb{L}}_{\BcompQ{m}{\ms{X}}}\ms{N})$ makes sense
 thanks to Lemma \ref{tensboundedlem}.
\end{rem*}

The proof of the proposition will be given in paragraph
\ref{proofpropsec}, and we
will start preparations of the proof from the next paragraph. Unless
otherwise stated, $\ms{M}$, $\ms{N}$, $\ms{F}$, $\ms{G}$ are {\it not}
the sheaves in the proposition.

\subsection{}
\label{dbqcqnorm}
First, we will prove the following small lemma.

\begin{lem*}
 Let $\ms{M}$ be a complex in $D^b_{\mb{Q},\mr{coh}}(\Dtild{m}{\ms{X}})$
 and $\ms{N}$ be one in $\DbqcQ(\Dtild{m}{\ms{X}})$. Then, there is a
 canonical isomorphism
 \begin{equation*}
 \mr{Hom}_{\DbqcQ(\Dtild{m}{\ms{X}})}(\ms{M},\ms{N})\xrightarrow{\sim}
  \mr{Hom}_{\DtildQ{m}{\ms{X}}}(\ms{M},\ms{N}).
 \end{equation*}
\end{lem*}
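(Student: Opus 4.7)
The plan is to interpret both Hom groups as the $\pi$-localization of a common ambient Hom group and then verify the comparison via a resolution argument. I would first construct the natural map: the composite functor $D^b_{\mr{qc}}(\Dtild{m}{\ms{X}})\to D^b(\Dtild{m}{\ms{X}})\to D^b(\DtildQ{m}{\ms{X}})$, obtained by forgetting quasi-coherence and then tensoring with $\mb{Q}$, sends isogenies to isomorphisms. By the universal property of localization, it factors through $\DbqcQ(\Dtild{m}{\ms{X}})$, producing the canonical map of the statement.

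The proof that this map is an isomorphism would proceed in two steps. The first step shows that for $\ms{M}$ coherent and $\ms{N}$ quasi-coherent, the inclusion
\begin{equation*}
 \mr{Hom}_{D^b_{\mr{qc}}(\Dtild{m}{\ms{X}})}(\ms{M},\ms{N})
  \xrightarrow{\sim}\mr{Hom}_{D^b(\Dtild{m}{\ms{X}})}(\ms{M},\ms{N})
\end{equation*}
is an isomorphism. This is a standard consequence of the fact that $\ms{M}$ admits a resolution by quasi-coherent flat $\Dtild{m}{\ms{X}}$-modules (for example, by the $p$-adic completion of locally free modules, using that $\ms{X}$ is quasi-compact), so that $R\shom$ computed in $D^b_{\mr{qc}}$ already computes the correct derived functor, parallel to Berthelot's treatment in \cite{BerInt}. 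The second step is the identification of the $\pi$-localization. Here I would use that when $\ms{M}$ is coherent, the natural map
\begin{equation*}
 \mr{Hom}_{D^b(\Dtild{m}{\ms{X}})}(\ms{M},\ms{N})\otimes_R K
  \xrightarrow{\sim}\mr{Hom}_{D^b(\DtildQ{m}{\ms{X}})}
  (\ms{M}_{\mb{Q}},\ms{N}_{\mb{Q}})
\end{equation*}
is bijective, which follows by resolving $\ms{M}$ by a bounded above complex of finitely generated projective (or $r$-nearly projective, as in the proof of Lemma \ref{tensboundedlem}) $\Dtild{m}{\ms{X}}$-modules and commuting the finite tensor with the Hom. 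By the first step combined with the very definition of localization at isogenies, the left-hand side coincides with $\mr{Hom}_{\DbqcQ(\Dtild{m}{\ms{X}})}(\ms{M},\ms{N})$, which yields the lemma.

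The main obstacle will be to justify carefully that localization at isogenies on the derived category $D^b_{\mr{qc}}(\Dtild{m}{\ms{X}})$ does indeed act as termwise tensor with $\mb{Q}$ on the Hom groups, when the source is coherent. Concretely, one must verify a calculus of fractions for isogenies in that category, which uses boundedness of both complexes and the fact that for coherent $\ms{M}$ the Hom groups in each cohomological degree are of finite type over $R$, so that inverting $\pi$ on morphisms and inverting $\pi$ on coefficients agree. All the rest is formal bookkeeping with the universal property of localization and the compatibility of $R\shom$ with quasi-coherent resolutions.
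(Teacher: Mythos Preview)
Your overall strategy---reduce to $\ms{M}$ projective and check directly---is the same as the paper's, but the execution diverges in three places.

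First, your Step~1 is unnecessary: in this paper $D^b_{\mr{qc}}(\Dtild{m}{\ms{X}})$ is \emph{by definition} the full subcategory of $D^b(\Dtild{m}{\ms{X}})$ on complexes with quasi-coherent cohomology, so the map on Hom sets is the identity and no resolution argument is needed.

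Second, your stated justification for the ``main obstacle'' is wrong: when $\ms{N}$ is merely quasi-coherent, $\mr{Ext}^i_{\Dtild{m}{\ms{X}}}(\ms{M},\ms{N})$ has no reason to be of finite type over $R$ (already $\ms{M}=\Dtild{m}{\ms{X}}$ gives $\Gamma(\ms{X},\ms{N})$). Fortunately the fact itself is easier than you suggest: the multiplicative system of isogenies is cofinal with $\{p^n\cdot\mr{id}\}$, so $\mr{Hom}_{\DbqcQ}=\mr{Hom}_{D^b_{\mr{qc}}}\otimes\mb{Q}$ follows immediately, with no hypothesis on $\ms{M}$ or $\ms{N}$.

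Third, your Step~2 invokes a \emph{global} resolution of $\ms{M}$ by finitely generated projective (or $r$-nearly projective) $\Dtild{m}{\ms{X}}$-modules, and you slip in a quasi-compactness assumption on $\ms{X}$ that is not in the hypotheses. The paper avoids this by passing to the sheaf level: it proves that
\[
R\shom_{\Dtild{m}{\ms{X}}}(\ms{M},\ms{N})\otimes\mb{Q}\longrightarrow R\shom_{\DtildQ{m}{\ms{X}}}(\ms{M}_{\mb{Q}},\ms{N}_{\mb{Q}})
\]
is an isomorphism, which is a local question. One may then assume $\ms{X}$ affine, replace $\ms{M}$ by a direct summand of a finite free module, and reduce to $\ms{M}=\Dtild{m}{\ms{X}}$, where both sides are $\ms{N}_{\mb{Q}}$. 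This is shorter and sidesteps both the calculus-of-fractions discussion and the global-resolution issue.
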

\begin{proof}
 For $\ms{F}$ and $\ms{G}$ in $\DbqcQ(\Dtild{m}{\ms{X}})$, we define
 \begin{equation*}
  R\shom_{\DbqcQ(\Dtild{m}{\ms{X}})}(\ms{F},\ms{G}):=
   R\shom_{\Dtild{m}{\ms{X}}}(\ms{F},\ms{G})\otimes\mb{Q}.
 \end{equation*}
 There exists the canonical homomorphism
 \begin{equation*}
  R\shom_{\DbqcQ(\Dtild{m}{\ms{X}})}(\ms{M},\ms{N})\rightarrow
   R\shom_{\DtildQ{m}{\ms{X}}}(\ms{M}\otimes\mb{Q},\ms{N}
   \otimes\mb{Q}),
 \end{equation*}
 and it suffices to show that this is an isomorphism.
 Since the problem
 is local, we may assume that $\ms{X}$ is affine. It suffices to show
 the claim in the case where $\ms{M}$ is projective. Since $\ms{M}$ is a
 direct factor of $(\Dtild{m}{\ms{X}})^{\oplus n}$ for some $n$, we are
 reduced to showing the case $\ms{M}=\Dtild{m}{\ms{X}}$.
 In this case, the lemma is straightforward.
\end{proof}

\subsection{}
To compare Frobenius pull-backs, we need to construct a certain
isomorphism in $D^-(\ms{X})$. Let $\ms{N}$ be a complex in
$D^b_{\mr{perf}}(\DtildQ{m}{\ms{X}'})^{\mr{d}}$ ({\it e.g.\
}$\widetilde{\omega}_{\ms{X}',\mb{Q}}$ by
(\ref{spencerresbmod})), and
$\ms{M}$ be a complex in  $D^-(\DtildQ{m}{\ms{X}})^{\mr{g}}$. The
isomorphism we will construct is the following:
\begin{equation}
 \label{canhomflsh}
 \ms{N}\otimes^{\mb{L}}_{\DtildQ{m}{\ms{X}'}}\ms{M}\xrightarrow
 {\sim}F^\flat\ms{N}\otimes^{\mb{L}}_{\DtildQ{m+s}{\ms{X}}}
 F^*\ms{M}.
\end{equation}
Let $\ms{Y}$ be a smooth formal scheme, and let $\ms{F}$ be a complex in
$D^b_{\mr{perf}}(\DtildQ{m}{\ms{Y}})^{\mr{d}}$, and $\ms{E}$ be a complex
in $D^-(\DtildQ{m}{\ms{Y}})^{\mr{g}}$.
First, there exists an isomorphism
$\mb{D}^{\mr{d}}\circ\mb{D}^{\mr{d}}(\ms{F})\cong\ms{F}$ where
\begin{equation*}
 \mb{D}^{\mr{d}}(\ms{F}):=R\shom_{\DtildQ{m}{\ms{Y}}}
  (\ms{F},\DtildQ{m}{\ms{Y}})\otimes\omega_{\ms{Y}}
\end{equation*}
by \cite[II, 3.6]{Vir}. Using this, we get
\begin{align*}
 \ms{F}\otimes^\mb{L}_{\DtildQ{m}{\ms{Y}}}\ms{E}&\cong
 R\shom_{\DtildQ{m}{\ms{Y}}}
 (\mb{D}^{\mr{d}}(\ms{F}),\DtildQ{m}{\ms{Y}}\otimes\omega_\ms{Y})
 \otimes^\mb{L}_{\DtildQ{m}{\ms{Y}}}\ms{E}\\
 &\cong R\shom_{\DtildQ{m}{\ms{Y}}}(\mb{D}^{\mr{d}}(\ms{F}),
 \omega_{\ms{Y}}\otimes\ms{E})
\end{align*}
where the second isomorphism is by \cite[2.1.17 (i)]{CarD}.
Now, we define (\ref{canhomflsh}) in the following way:
\begin{align*}
 \ms{N}\otimes^{\mb{L}}_{\DtildQ{m}{\ms{X}'}}\ms{M}&\cong
 R\shom_{\DtildQ{m}{\ms{X}'}}(\mb{D}^{\mr{d}}(\ms{N}),
 \omega_{\ms{X}'}\otimes\ms{M})\xrightarrow{\sim}
 R\shom_{\DtildQ{m+s}{\ms{X}}}(F^\flat\mb{D}^{\mr{d}}
 (\ms{N}),F^\flat(\omega_{\ms{X}'}\otimes\ms{M}))\\
 &\cong R\shom_{\DtildQ{m+s}{\ms{X}}}(\mb{D}^{\mr{d}}
 (F^{\flat}\ms{N}),\omega_{\ms{X}'}\otimes F^*\ms{M})\cong
 F^\flat\ms{N}\otimes^{\mb{L}}_{\DtildQ{m+s}{\ms{X}}}F^*\ms{M}.
\end{align*}
Here the second isomorphism follows by the theorem of Frobenius
descent.

\subsection{}
\label{lemcrucons}
We will construct the homomorphism $\Psi$ in the proposition step by
step. Let $\Theta_{\ms{X}}$ be the tangent sheaf on $\ms{X}$, and we put
$\Theta_{\ms{X}}^i:=\wedgeb^i\Theta_{\ms{X}}$. 
First, note that there exists the Spencer resolution
\begin{equation}
 \label{spencerresbmod}
 \DtildQ{m}{\ms{X}}\otimes\Theta^\bullet_{\ms{X}}\rightarrow
  \BcompQ{m}{\ms{X}}.
\end{equation}
This can be seen in exactly the same way as the proof of
\cite[4.3.3]{Ber2}. Indeed, $\Bmod{m}{\ms{X},\mb{Q}}\otimes\Dmod{m}
{\ms{X},\mb{Q}}$ is flat over $\Dmod{m}{\ms{X},\mb{Q}}$. Since
$\Bmod{m}{\ms{X}}\otimes\Dmod{m}{\ms{X}}$ is noetherian, the $p$-adic
completion $\Dtild{m}{\ms{X}}$ is flat over
$\Bmod{m}{\ms{X}}\otimes\Dmod{m}{\ms{X}}$. Thus,
$\DtildQ{m}{\ms{X}}$ is flat over $\Dmod{0}{\ms{X},\mb{Q}}$. It remains
to show that $\DtildQ{m}{\ms{X}}\otimes_{\Dmod{0}{\ms{X},
\mb{Q}}}\mc{O}_{\ms{X},\mb{Q}}\cong\BcompQ{m}{\ms{X}}$, whose proof is
straightforward. This shows that $\BcompQ{m}{\ms{X}}$ is perfect as a
complex.

Let $\ms{M}$ be a bounded $\DtildQ{m}{\ms{X}}$-complex. We
have the following quasi-isomorphisms
\begin{align*}
 R\shom_{\DtildQ{m}{\ms{X}}}(\BcompQ{m}{\ms{X}},\ms{M})&\cong 
 R\shom_{\DtildQ{m}{\ms{X}}}(\BcompQ{m}{\ms{X}},\DtildQ{m}{\ms{X}})
 \otimes^{\mb{L}}_{\DtildQ{m}{\ms{X}}}\ms{M}\cong\widetilde{\omega}
 _{\ms{X},\mb{Q}}\otimes^{\mb{L}}_{\DtildQ{m}{\ms{X}}}\ms{M}\,[-d]
\end{align*}
where the first isomorphism holds by \cite[2.1.17]{CarD}, and the second
by (\ref{spencerresbmod}) and \cite[(3.5.5.1)]{BerInt}. Now the
point where we need to use the explicit computation of Frobenius
isomorphism of \S\ref{sect1} is in the proof of the following lemma.

\begin{lem*}
 Let $\ms{M}$ be a complex in $D^b(\DtildQ{m}{\ms{X}'})$. Consider the
 following diagram in
 $D^b(\ms{X})$.
 \begin{equation}
  \label{comwantdercom}
   \xymatrix@C=50pt{
   R\shom_{\DtildQ{m}{\ms{X}'}}(\BcompQ{m}{\ms{X}'},
   \ms{M})\ar[r]^<>(.5){\sim}\ar[d]_{\sim}&
   \widetilde{\omega}_{\ms{X}',\mb{Q}}
   \otimes^\mb{L}_{\DtildQ{m}{\ms{X}'}}\ms{M}[-d]\ar[d]^{\sim}\\
  R\shom_{\DtildQ{m+s}{\ms{X}}}(\BcompQ{m+s}{\ms{X}},
   F^*\ms{M})\ar[r]_<>(.5){\sim}&\widetilde{\omega}_{\ms{X},\mb{Q}}\otimes
   _{\DtildQ{m+s}{\ms{X}}}^{\mb{L}}F^*\ms{M}[-d]
   }
 \end{equation}
 Here the right vertical homomorphism is {\normalfont
 (\ref{canhomflsh})} composed with the canonical isomorphism
 $F^\flat\omega_{\ms{X}'}\cong\omega_{\ms{X}}$, and the left vertical
 homomorphism is induced by the functor $F^*$. Then this diagram is
 commutative up to multiplication by $q^d$.
\end{lem*}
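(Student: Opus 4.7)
The plan is to reduce (\ref{comwantdercom}) to a local commutativity statement on projective resolutions. Since the claim is local on $\ms{X}$, we may assume as in paragraph~\ref{notationbefcalc} that $\ms{X}$ and $\ms{X}'$ are affine with compatible systems of local coordinates $\{x_i\}_{1\le i\le d}$, $\{y_i\}_{1\le i\le d}$ related by $F^*y_i=x_i^q$. The Spencer complex (\ref{spencerresbmod}) provides explicit bounded projective resolutions $\DtildQ{m}{\ms{X}'}\otimes\Theta^\bullet_{\ms{X}'}\to\BcompQ{m}{\ms{X}'}$ and $\DtildQ{m+s}{\ms{X}}\otimes\Theta^\bullet_{\ms{X}}\to\BcompQ{m+s}{\ms{X}}$. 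Both horizontal isomorphisms of (\ref{comwantdercom}) are then computed termwise via the canonical identification
\[
\shom_{\DtildQ{m}{\ms{X}'}}\bigl(\DtildQ{m}{\ms{X}'}\otimes\Theta_{\ms{X}'}^i,\,\ms{M}\bigr)\;\cong\;\Omega_{\ms{X}'}^i\otimes_{\mc{O}_{\ms{X}'}}\ms{M}
\]
(and similarly on the bottom row). It suffices to lift (\ref{comwantdercom}) to the level of these representative complexes and to check termwise commutativity up to~$q^d$.

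At degrees $i<d$ no canonical sheaf is involved, and the compatibility follows straightforwardly from the theorem of Frobenius descent applied to $\DtildQ{m}{\ms{X}'}\otimes\Omega_{\ms{X}'}^i$. The crux is the top degree $i=d$, where $\Omega^d=\omega$ enters. On the left-vertical route, applying $F^*$ to $\omega_{\ms{X}'}\otimes\ms{M}$ and using the Frobenius descent identification $F^*\DtildQ{m}{\ms{X}'}\cong\DtildQ{m+s}{\ms{X}}$ to land in the top term of the Spencer complex for $\ms{X}$ brings in the Jacobian relation $F^*(dy_1\wedge\dots\wedge dy_d)=q^d\,\underline{x}^{\underline{q}-\underline{1}}\,dx_1\wedge\dots\wedge dx_d$. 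On the right-vertical route, (\ref{canhomflsh}) with $\ms{N}=\widetilde{\omega}_{\ms{X}',\mb{Q}}$ reduces precisely to the isomorphism $\mu_X\colon\omega_{\ms{X}}\xrightarrow{\sim}F^\flat\omega_{\ms{X}'}$ of Proposition~\ref{mainpropbalhar}, which by Theorem~\ref{maincalc} sends $dx_1\wedge\dots\wedge dx_d$ to $(dy_1\wedge\dots\wedge dy_d)\otimes H\underline{x}^{-(\underline{q}-\underline{1})}$.

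Using that $H\underline{x}^{-(\underline{q}-\underline{1})}$ is the $\mc{O}_{\ms{X}'}$-linear functional on $\mc{O}_{\ms{X}}$ dual to $\underline{x}^{\underline{q}-\underline{1}}$ (paragraph~\ref{Garnierrev}), the comparison of the two routes at the top Spencer term produces exactly the scalar $q^d$ coming from the Jacobian, and this is the advertised discrepancy. The main obstacle is to verify that the termwise comparison just sketched really is a map of complexes, i.e.\ that it intertwines the Spencer differentials on the two sides; this reduces to a direct calculation of how $F^*$ and Frobenius descent act on the derivations $\partial'_i$, handled by Garnier's formula $(\partial'_i)^\circ=(qx_i^{q-1})^{-1}\partial_iH$ of paragraph~\ref{Garnierrev}, whose scalar pieces cancel away from the top degree and whose net contribution at the top degree is precisely the $q^d$ already accounted for.
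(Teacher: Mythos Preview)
Your proposal has the right ingredients—localize, use the Spencer resolutions, and invoke the explicit formula for $\mu_X$ from Proposition~\ref{mainpropbalhar} and Theorem~\ref{maincalc} at the top degree—but the reduction step contains a genuine gap. You write that it suffices to ``check termwise commutativity up to~$q^d$'' at each degree~$i$ of the Spencer/de Rham complex, claiming that for $i<d$ the compatibility is automatic from Frobenius descent. This cannot work as stated: the Frobenius pullback on $\Omega^i$ carries $dy_{j_1}\wedge\cdots\wedge dy_{j_i}$ to $q^i\,x_{j_1}^{q-1}\cdots x_{j_i}^{q-1}\,dx_{j_1}\wedge\cdots\wedge dx_{j_i}$, so the factor is $q^i$, not $q^d$. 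There is no way for two chain maps to differ by a \emph{uniform} scalar $q^d$ at every degree if the natural comparison factors vary with~$i$. Your final paragraph senses a difficulty but misdiagnoses it as a compatibility-with-differentials issue; the real problem is justifying why the derived-category commutativity reduces to a single module-level check.

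The paper fills this gap by an abstract device: with $G=\shom(F^*\BcompQ{m}{\ms{X}'},\,\cdot\,)$ and $H=R^dG$, one has $RG\cong LH[-d]$ functorially by \cite[I,~7.4]{Har}, and likewise for $G',H'$. This collapses the whole diagram to a comparison of the functors $H$ and $H'$ on \emph{flat} modules, where $H(\ms{P})\cong F^\flat\widetilde{\omega}_{\ms{X}'}\otimes_{\ms{D}}\ms{P}$ and $H'(\ms{P})\cong\widetilde{\omega}_{\ms{X}}\otimes_{\ms{D}}\ms{P}$. Only then does one construct the explicit map of Spencer complexes (the paper's~(\ref{spencerhom}), which is essentially your Jacobian map), and the $q^d$ discrepancy is read off from the single degree-$d$ square, exactly as you compute. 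So your top-degree calculation is correct and matches the paper's, but you need the $RG\cong LH[-d]$ reduction (or an equivalent argument) to make it sufficient.
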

\begin{proof}
 Let $G$ be the functor $\shom_{\DtildQ{m+s}{\ms{X}}}(F^*
 \BcompQ{m}{\ms{X}'},\bullet)$, and $G'$ be the functor
 $\shom_{\DtildQ{m+s}{\ms{X}}}(\BcompQ{m+s}{\ms{X}},\bullet)$.
 By (\ref{spencerresbmod}), $R^iG$ and $R^iG'$ vanishes for $i>d$.
 We define a functor $H$ to be $R^dG$, and $H'$ to be $R^dG'$.
 The canonical isomorphism $F^*\BcompQ{m}{\ms{X}'}
 \xrightarrow{\sim}\BcompQ{m+s}{\ms{X}}$ induces an isomorphism of
 functors $H\xrightarrow{\sim}H'$. By \cite[I, 7.4]{Har}, we get the
 following commutative diagram of functors.
 \begin{equation*}
  \xymatrix{
   RG\ar[r]^<>(.5){\sim}\ar[d]_{\sim}&LH[-d]\ar[d]^{\sim}\\
  RG'\ar[r]_<>(.5){\sim}&LH'[-d]}
 \end{equation*}
 We note that flat $\DtildQ{m+s}{\ms{X}}$-modules belong to the set
 $P$ of {\it loc.\ cit.} For a flat $\DtildQ{m+s}{\ms{X}}$-module
 $\ms{P}$, we get a canonical isomorphism $H(\ms{P})\cong
 F^*\widetilde{\omega}_{\ms{X}'}\otimes_{\DtildQ{m+s}{\ms{X}}}\ms{P}$ and
 $H'(\ms{P})\cong\widetilde{\omega}_{\ms{X}}\otimes_{\DtildQ{m+s}{\ms{X}}}
 \ms{P}$, which induces canonical isomorphisms of functors
 $LG\cong F^\flat\widetilde{\omega}_{\ms{X}'}\otimes_{\DtildQ{m+s}{\ms{X}}}
 ^{\mb{L}}$, and $LG'\cong\widetilde{\omega}_{\ms{X}}\otimes
 _{\DtildQ{m+s}{\ms{X}}}^{\mb{L}}$. Consider the following diagram:
 \begin{equation*}
  \xymatrix{
   R\shom_{\DtildQ{m}{\ms{X}'}}(\BcompQ{m}{\ms{X}'},
   \ms{M})\ar[rr]\ar[d]\ar@{}[rrd]|{\circlearrowleft}&&
   \widetilde{\omega}_{\ms{X}'}\otimes^\mb{L}_{\Dtild{m}{\ms{X}'}}
   \ms{M}[-d]\ar[d]\\
  R\shom_{\DtildQ{m+s}{\ms{X}}}(F^*\BcompQ{m}{\ms{X}'},
   F^*\ms{M})\ar[r]\ar[d]\ar@{}[rd]|{\circlearrowleft}&
   LH(F^*\ms{M})[-d]\ar[r]\ar[d]\ar@{}[rd]|{\ccirc{a}}&
   F^\flat\widetilde{\omega}_{\ms{X}'}\otimes_{\Dtild{m+s}{\ms{X}}}^{\mb{L}}
   F^*\ms{M}[-d]\ar[d]\\
  R\shom_{\DtildQ{m+s}{\ms{X}}}(\BcompQ{m}{\ms{X}},F^*\ms{M})\ar[r]&
   LH'(F^*\ms{M})[-d]\ar[r]&\widetilde{\omega}_{\ms{X}}\otimes_{
   \Dtild{m+s}{\ms{X}}}^{\mb{L}}F^*\ms{M}[-d]
  }
 \end{equation*}
 where all the arrows are isomorphisms, and $\circlearrowleft$ denotes
 that the marked square is commutative. Thus, to show that the big
 diagram is commutative up to some multiplication is equivalent to
 showing that the diagram $\ccirc{a}$ is commutative up to the same
 multiplication. Since the homomorphisms of the diagram $\ccirc{a}$ are
 induced by a diagram of functors between $H$, $H'$,
 $F^*\widetilde{\omega}_{\ms{X}'}\otimes$,
 $\widetilde{\omega}_{\ms{X}}\otimes$, it suffices to show the
 commutativity up to the same multiplication for this diagram of
 functors. Thus the problem is local.

 We may assume that $\ms{X}'$ is affine, and possesses a
 system of local coordinates $\{y_1,\dots,y_d\}$. Moreover, we can take
 a system of local coordinates $\{x_1,\dots,x_d\}$ of $\ms{X}$ and a
 lifting $F\colon\ms{X}\rightarrow\ms{X}'$ of relative Frobenius
 morphism such that $F^*(y_i)=x^q_i$. Under this situation, let us show
 that the diagram is commutative up to multiplication by $q^d$. From now
 on, we do not make any difference between quasi-coherent modules and its
 global sections. It suffices to show the commutativity in the case
 where $\ms{M}$ is flat over $\DcompQ{m}{\ms{X}'}$.

 Let $F_*\colon\Theta_\ms{X}\rightarrow F^*\Theta_{\ms{X}'}$ be the
 canonical homomorphism. We have the following homomorphism
 $\DtildQ{m+s}{\ms{X}}\otimes\Theta^k_{\ms{X}}\rightarrow
 F^*(\DtildQ{m}{\ms{X}'}\otimes\Theta^k_{\ms{X}'})$ sending
 $P\otimes(dy_{i_1}\wedge\dots\wedge
 dy_{i_k})$ to $P\cdot(1\otimes qx^{q-1}_{i_1}\dots qx^{q-1}_{i_k}
 (dx_{i_1}\wedge\dots\wedge dx_{i_k}))$. This defines, in fact, a
 homomorphism of complexes
 \begin{equation}
  \label{spencerhom}
  \DtildQ{m+s}{\ms{X}}\otimes\Theta^\bullet_{\ms{X}}\rightarrow
   F^*(\DtildQ{m}{\ms{X}'}\otimes\Theta^\bullet_{\ms{X}'})
 \end{equation}
 by the proof of \cite[4.3.5]{Ber2}. It suffices to show that the
 diagram of modules
 \begin{equation*}
  \xymatrix{
   \Omega^d_{\ms{X}'}\otimes_{\mc{O}_{\ms{X}'}}\ms{M}
   \ar[r]\ar[d]&
   \omega_{\ms{X}'}\otimes_{\DcompQ{m}{\ms{X}'}}\ms{M}
   \ar[d]\\
  \Omega^d_{\ms{X}}\otimes_{\mc{O}_{\ms{X}}}F^*\ms{M}
   \ar[r]&
   \omega_{\ms{X}}\otimes_{\DcompQ{m}{\ms{X}}}F^*\ms{M}
   }
 \end{equation*}
 is commutative up to multiplication by $q^d$, where the left
 vertical homomorphism is induced by (\ref{spencerhom}). Since the right
 vertical homomorphism sends $(dy_1\wedge\dots\wedge dy_d)\otimes m$ to
 $x_1^{q-1}\dots x_d^{q-1}\cdot(dx_1\wedge\dots\wedge dx_d)
 \otimes(1\otimes m)$ by using Proposition \ref{mainpropbalhar} and
 Theorem \ref{maincalc}, we get the claim, and conclude the proof of the
 lemma.
\end{proof}

\subsection{}
\label{LemBjoCar}
We have the following lemma whose proof is similar to that of
\cite[2.1.27]{CarD}, and we leave it to the reader.
\begin{lem*}
 Let $\ms{N}$ be a complex in $\Dmqc(\Dtild{m}{\ms{X}})^{\mr{d}}$,
 and $\ms{M}$ and $\ms{M}'$ be two complexes in
 $\Dmqc(\Dtild{m}{\ms{X}})^{\mr{g}}$. Then there is a canonical
 isomorphism
\begin{equation*}
 \ms{N}\,\widehat{\otimes}^{\mb{L}}_{\Dtild{m}{\ms{X}}}\,
  (\ms{M}\,\widehat{\otimes}^{\mb{L}}_{\Bcomp{m}{\ms{X}}}\,\ms{M}')
  \cong(\ms{N}\,\widehat{\otimes}_{\Bcomp{m}{\ms{X}}}
  ^{\mb{L}}\,\ms{M})\,\widehat{\otimes}^{\mb{L}}_{\Dtild{m}{\ms{X}}}\,
  \ms{M}'.
\end{equation*}
\end{lem*}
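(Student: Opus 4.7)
The plan is to follow the strategy of Caro's proof of \cite[2.1.27]{CarD}, reducing the claim to the classical associativity of tensor products at each finite level $X_i := \ms{X}\otimes_R R_i$ and then patching by applying the derived inverse-limit functor $R\underleftarrow{l}_{X*}$. By quasi-coherence and the formalism of \cite[3.2--3.4]{BerInt}, each of $\ms{N}$, $\ms{M}$, $\ms{M}'$ can be represented (up to canonical isomorphism in $\Dmqc$) as $R\underleftarrow{l}_{X*}$ of quasi-coherent inverse systems $(N_\bullet)$, $(M_\bullet)$, $(M'_\bullet)$ of $\Dmod{m}{X_\bullet}$-complexes, and both completed derived tensor products occurring in the statement are then computed level-by-level:
\begin{equation*}
\ms{M}\,\widehat{\otimes}^{\mb{L}}_{\Bcomp{m}{\ms{X}}}\,\ms{M}'
\cong R\underleftarrow{l}_{X*}\bigl(M_\bullet\otimes^{\mb{L}}_{\Bmod{m}{X_\bullet}}M'_\bullet\bigr),
\end{equation*}
and analogously for the other combinations.

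At each finite level, $\Dmod{m}{X_i}$ is a (non-commutative) $\Bmod{m}{X_i}$-algebra via the identity $\Dtild{m}{\ms{X}}=\Bcomp{m}{\ms{X}}\,\widehat{\otimes}\,\Dcomp{m}{\ms{X}}$, and the standard derived associativity of tensor products for a ring extension yields a canonical isomorphism
\begin{equation*}
N_i\otimes^{\mb{L}}_{\Dmod{m}{X_i}}\bigl(M_i\otimes^{\mb{L}}_{\Bmod{m}{X_i}}M'_i\bigr)
\cong\bigl(N_i\otimes^{\mb{L}}_{\Bmod{m}{X_i}}M_i\bigr)\otimes^{\mb{L}}_{\Dmod{m}{X_i}}M'_i,
\end{equation*}
where on the left $M_i\otimes M'_i$ carries the diagonal left $\Dmod{m}{X_i}$-action, while on the right $N_i\otimes M_i$ carries the twisted right action $(n\otimes m)\cdot P = nP\otimes m - n\otimes Pm$ for $P$ a differential operator. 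This is obtained on flat resolutions from the ordinary associativity of $\otimes$ over the non-commutative ring $\Dmod{m}{X_i}$, using only that $\Dmod{m}{X_i}$ is flat over $\Bmod{m}{X_i}$ on each side so that the three flavors of derived tensor products are computed by flat resolutions in a compatible way.

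These level-wise associativity isomorphisms are canonical, hence compatible with the transition maps of the inverse systems; applying $R\underleftarrow{l}_{X*}$ then produces the stated isomorphism in $\Dmqc(\Dtild{m}{\ms{X}})$. The main obstacle will be the bookkeeping needed to choose flat resolutions at each level compatibly along the inverse system, so that the level-by-level computation of the completed derived tensor products applies, and to check that the diagonal and twisted $\Dtild{m}{\ms{X}}$-module structures on the two sides match up after passage to the limit. Both points are handled exactly as in the proof of \cite[2.1.27]{CarD}, to which I would refer for the detailed verifications.
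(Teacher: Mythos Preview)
Your proposal is correct and follows exactly the route the paper indicates: the paper does not give a proof but states that it is similar to \cite[2.1.27]{CarD} and leaves the details to the reader, which is precisely the argument you sketch. One minor remark: the formula $(n\otimes m)\cdot P = nP\otimes m - n\otimes Pm$ is literally correct only when $P$ is a derivation; for general $P\in\Dmod{m}{X_i}$ the right action on $N_i\otimes_{\Bmod{m}{X_i}} M_i$ is defined via the $m$-PD comultiplication as in \cite[1.1.7]{Ber2}, but since you defer to Caro for the detailed verifications this does not affect the argument.
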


\usubsection{Proof of Proposition \ref{keypropcal}}
\label{proofpropsec}
Here, we use the notation in the proposition. We apply Lemma
\ref{lemcrucons} to
$\mb{D}(\ms{M})\widehat{\otimes}^\mb{L}_{\BcompQ{m}{\ms{X}}}\ms{N}$.
When we omit bases of tensor products, they are taken over
$\BcompQ{m}{\ms{X}}$. Then we get isomorphisms
\begin{align*}
 &R\shom_{\DtildQ{m}{\ms{X}}}(\BcompQ{m}{\ms{X}},\mb{D}(\ms{M})
 \widehat{\otimes}^\mb{L}\ms{N})\cong\widetilde{\omega}_{\ms{X},\mb{Q}}
 \otimes^{\mb{L}}_{\DtildQ{m}{\ms{X}}}(\mb{D}(\ms{M})
 \widehat{\otimes}^{\mb{L}}\ms{N})[-d]\\&\qquad
 \cong(\widetilde{\omega}_{\ms{X},\mb{Q}}\otimes^{\mb{L}}
 \mb{D}(\ms{M}))\otimes^{\mb{L}}_{\DtildQ{m}{\ms{X}}}\ms{N}[-d]
 \cong R\shom_{\DtildQ{m}{\ms{X}}}(\ms{M},\DtildQ{m}{\ms{X}})
 \otimes^{\mb{L}}_{\DtildQ{m}{\ms{X}}}\ms{N}\\&\qquad
 \cong R\shom_{\DtildQ{m}{\ms{X}}}(\ms{M},\ms{N})
\end{align*}
where the first isomorphism is the one in paragraph \ref{lemcrucons},
the second isomorphism is defined by Lemma \ref{LemBjoCar}, and we
used the fact that $\ms{M}$ is a perfect complex in the last isomorphism
(cf.\ \cite[2.1.12, or 2.1.17]{CarD}). This is nothing but $\Psi$ that
we are looking for. The second, third, and the last isomorphisms are
compatible with Frobenius. Thus the statement of Frobenius follows by
Lemma \ref{lemcrucons}.
\hspace{\fill}$\blacksquare$

\subsection{}
Let $\ms{X}$ be a smooth formal scheme. We denote by
$\tdp(\DtdagQ{\ms{X}})$ the full subcategory of
$D^b_{\mr{perf}}(\DtdagQ{\ms{X}})$ consisting of a complex $\ms{M}$ such
that there exists a complex $\ms{M}'$ in
$D^b_{\mr{perf}}(\DtildQ{m}{\ms{X}})$ for some $m$ and an isomorphism
$\DtdagQ{\ms{X}}\otimes_{\DtildQ{m}{\ms{X}}}\ms{M}'\cong\ms{M}$.

\begin{lem*}
 \label{perflift}
 Assume that $\ms{X}$ is quasi-compact.
 For any complex $\ms{M}$ in $D^b_{\mr{coh}}(\DtildQ{m}{\ms{X}})$, there
 exists an integer $m'\geq m$ such that
 $\DtildQ{m'}{\ms{X}}\otimes_{\DtildQ{m}{\ms{X}}}\ms{M}$ is in
 $D^b_{\mr{perf}}(\DtildQ{m'}{\ms{X}})$. In particular, we have the
 canonical equivalence of categories
 \begin{equation*}
  \tdp(\DtdagQ{\ms{X}})\xrightarrow{\sim}D^b_{\mr{perf}}(\DtdagQ{\ms{X}})
   \xrightarrow{\sim}D^b_{\mr{coh}}(\DtdagQ{\ms{X}}).
 \end{equation*}
\end{lem*}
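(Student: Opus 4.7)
The proof splits into two parts: establishing the existence of $m'$ as in the main claim (Part 1), and deducing the equivalences of categories (Part 2).

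For Part 1, I would first use the quasi-compactness of $\ms{X}$ to reduce the question to the local situation where $\ms{X}$ is affine and admits a system of local coordinates. Any $\ms{M} \in D^b_{\mr{coh}}(\DtildQ{m}{\ms{X}})$ is then represented by a bounded complex of finitely generated $\DtildQ{m}{\ms{X}}$-modules and admits a (possibly unbounded) resolution by finitely generated free modules. The goal will be to produce, for $m' \geq m$ sufficiently large, a bounded finite free resolution of $\DtildQ{m'}{\ms{X}} \otimes^{\mb{L}}_{\DtildQ{m}{\ms{X}}} \ms{M}$; once this is done, both perfectness (local quasi-isomorphism to a bounded complex of finitely generated projectives) and finite Tor-amplitude follow, and one globalizes using quasi-compactness.

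For Part 2, the inclusion $\tdp(\DtdagQ{\ms{X}}) \hookrightarrow D^b_{\mr{perf}}(\DtdagQ{\ms{X}})$ is tautological, and $D^b_{\mr{perf}}(\DtdagQ{\ms{X}}) \hookrightarrow D^b_{\mr{coh}}(\DtdagQ{\ms{X}})$ follows from the coherence of $\DtdagQ{\ms{X}}$. To prove essential surjectivity of the composite, I would take $\ms{M} \in D^b_{\mr{coh}}(\DtdagQ{\ms{X}})$ and invoke Berthelot's descent theorem \cite[4.2]{BerInt} (in the version with divisor) to produce $m$ and $\ms{M}_m \in D^b_{\mr{coh}}(\DtildQ{m}{\ms{X}})$ with $\ms{M} \cong \DtdagQ{\ms{X}} \otimes^{\mb{L}}_{\DtildQ{m}{\ms{X}}} \ms{M}_m$. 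Then Part 1 supplies $m' \geq m$ with $\ms{M}_{m'} := \DtildQ{m'}{\ms{X}} \otimes^{\mb{L}}_{\DtildQ{m}{\ms{X}}} \ms{M}_m$ perfect, so $\ms{M} \cong \DtdagQ{\ms{X}} \otimes^{\mb{L}}_{\DtildQ{m'}{\ms{X}}} \ms{M}_{m'}$ lies in $\tdp(\DtdagQ{\ms{X}})$.

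The main obstacle will be establishing finite left global cohomological dimension of $\DtildQ{m'}{\ms{X}}$ locally for $m'$ sufficiently large. When $Z = \emptyset$, this reduces to the classical result for $\DcompQ{m'}{\ms{X}}$: its order filtration has a Noetherian associated graded of Krull dimension bounded in terms of $\dim \ms{X}$, yielding a uniform bound on global dimension. For nontrivial $Z$, the extra factor $\BcompQ{m'}{\ms{X}}(Z)$ provides a level-$m'$ partial localization along $Z$ which refines as $m'$ grows; one must verify that for $m'$ large enough the resulting ring still has finite global dimension, or equivalently that after extending scalars to $\DtildQ{m'}{\ms{X}}$ every finitely generated module admits a bounded finite free resolution. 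This is precisely where the necessity of raising the level from $m$ to $m'$ enters the argument: the divisor localization available at level $m$ alone may not be strong enough to truncate an a priori unbounded resolution.
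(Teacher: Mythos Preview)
Your Part 2 is fine and matches the paper's use of \cite[4.2.4]{BerInt}. The problem is Part 1.

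Your plan is to prove that $\DtildQ{m'}{\ms{X}}$ has finite left global dimension for $m'$ large. But notice that nothing in your sketch distinguishes $m'$ from $m$: if your order-filtration argument gave finite global dimension at level $m'$, the same argument would give it at level $m$, and then no level-raising would be needed at all. Your final sentence tries to explain why raising the level is necessary, but ``the divisor localization available at level $m$ alone may not be strong enough to truncate an a priori unbounded resolution'' is not a mechanism---either the ring at a given level has finite global dimension or it does not, and you have not said what changes when $m'$ grows. For $Z=\emptyset$ the finite-level statement is indeed known, but for $Z\neq\emptyset$ you have left the key point as a bare assertion.

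The paper avoids this entirely. It does \emph{not} try to establish finite global dimension at any finite level. Instead it passes to the $\dag$-level, where Noot-Huyghe's theorem \cite{NH2} guarantees that $\DtdagQ{\ms{X}}$ has finite Tor-dimension, so $\ms{M}^\dag:=\DtdagQ{\ms{X}}\otimes\ms{M}$ admits a bounded resolution $\ms{L}\to\ms{M}^\dag$ by finite locally projective modules. Since $\DtdagQ{\ms{X}}=\indlim_{m'}\DtildQ{m'}{\ms{X}}$ and both $\ms{L}$ and the map $\varphi$ involve only finitely many data, one can descend $\ms{L}$ and $\varphi$ to some finite level $m'$, obtaining $\varphi'\colon\ms{L}^{(m')}\to\ms{M}^{(m')}$. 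This $\varphi'$ need not be a quasi-isomorphism, but its cone is bounded with coherent cohomology and becomes acyclic after $\otimes\,\DtdagQ{\ms{X}}$; hence for some $m''\geq m'$ the map $\DtildQ{m''}{\ms{X}}\otimes\varphi'$ is a quasi-isomorphism. That is where the level-raising genuinely enters: it is the price of descending a $\dag$-level construction, not a statement about global dimension at finite level.
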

\begin{proof}
 Let us see the first claim. Since $\ms{X}$ is quasi-compact, the
 problem is local, and we may assume that $\ms{X}$ is affine.
 Since $\ms{X}$ is affine, we can take $\ms{M}$ to be a bounded complex
 such that each term is a coherent $\DtildQ{m}{\ms{X}}$-module. For
 $m'\geq m$, we
 denote $\ms{M}^{(m')}:=\DtildQ{m'}{\ms{X}}\otimes\ms{M}$ and
 $\ms{M}^\dag:=\DtdagQ{\ms{X}}\otimes\ms{M}$.
 Now, there exists a bounded finite locally projective
 $\DtdagQ{\ms{X}}$-complex $\ms{L}$
 and a quasi-isomorphism of complexes
 $\varphi\colon\ms{L}\rightarrow\ms{M}^\dag$ since $\DtdagQ{\ms{X}}$ has
 finite Tor-dimension by the result of Noot-Huyghe in \cite{NH2}. For a
 sufficiently large $m'$, this complex can be descended to level
 $m'$. Namely there exists a bounded locally
 finite projective $\DtildQ{m}{\ms{X}}$-complex $\ms{L}^{(m')}$
 and a homomorphism of complexes
 $\varphi'\colon\ms{L}^{(m')}\rightarrow\ms{M}^{(m')}$ such that
 $\DtdagQ{\ms{X}}\otimes\varphi'\cong\varphi$.
 The homomorphism $\varphi'$ may not be a quasi-isomorphism, but since
 the complexes are bounded and each term is coherent, there exists
 $m''\geq m'$ such that the homomorphism
 $\DtildQ{m''}{\ms{X}}\otimes\varphi'$ becomes a quasi-isomorphism,
 which concludes the proof. The latter statement follows from
 \cite[4.2.4]{BerInt}.
\end{proof}

\begin{rem*}
 We do not know if $\tdp(\DtdagQ{\ms{X}})$ and
 $D^b_{\mr{perf}}(\DtdagQ{\ms{X}})$ coincide or not in general.
\end{rem*}

\begin{thm}
 \label{smoothpoincare}
 Let $f\colon(\ms{X},Z)\rightarrow(\ms{Y},W)$ be a morphism of d-couples
 whose realization is smooth. We assume that $X_0^{(s)}$ and $Y_0^{(s)}$
 can be lifted to smooth formal schemes $\ms{X}'$ and $\ms{Y}'$. Let us
 denote by $d\geq 0$ the relative dimension of $X_0$ over $Y_0$. Then
 there is a canonical equivalence of cohomological functors from
 $\tdp(\DdagQ{\ms{Y}}(^\dag W))$ to
 $\tdp(\DdagQ{\ms{X}}(^\dag Z))$ with Frobenius isomorphisms
 \begin{equation}
  \label{commduexpu}
  (\mb{D}_{\ms{X},Z}\circ f^!)(d)[2d]\xrightarrow{\sim}
  f^!\circ\mb{D}_{\ms{Y},W}.
 \end{equation}
\end{thm}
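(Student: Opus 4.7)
The plan is to reduce the construction of \eqref{commduexpu} to finite level via Lemma \ref{perflift}, build the underlying isomorphism of complexes by standard Verdier-type manipulations in the smooth case, and then extract the Tate twist $(d)$ from the multiplication-by-$q^{d_X}$ versus multiplication-by-$q^{d_Y}$ discrepancy coming from Proposition \ref{keypropcal} applied separately to $\ms{X}$ and $\ms{Y}$.

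First I would pass to finite level. By Lemma \ref{perflift}, any object of $\tdp(\DdagQ{\ms{Y}}(^\dag W))$ comes (compatibly with Frobenius and after raising the level) from a complex in $D^b_{\mr{perf}}(\DtildQ{m}{\ms{Y}}(W))^{\mr{g}}$ for some $m$ with $p^m>e/(p-1)$, and both $\mb{D}$ and $f^!$ commute with the transition functors $\DtildQ{m}{}\to\DtildQ{m+1}{}$ and with $\indlim_m$. So it suffices to construct, functorially in $\ms{M}\in D^b_{\mr{perf}}(\DtildQ{m}{\ms{Y}}(W))$, a canonical isomorphism $(\mb{D}_{\ms{X},Z}f^!\ms{M})(d)[2d]\xrightarrow{\sim}f^!\mb{D}_{\ms{Y},W}(\ms{M})$ compatible with Frobenius and with raising the level.

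Second, I would construct the isomorphism on underlying complexes (forgetting Frobenius) by the usual smooth-pullback recipe. Since $f$ is smooth of relative dimension $d$, the transfer bimodule $\DtildQ{m}{\ms{X}\to\ms{Y}}(Z,W)$ is a locally free $f^{-1}\DtildQ{m}{\ms{Y}}(W)$-module, so $f^!$ is an exact tensor with shift $[d]$, and by Proposition \ref{keypropcal} applied on both $\ms{X}$ and $\ms{Y}$ (unfolding $\mb{D}$ through $R\shom$ and $\widehat{\otimes}^\mb{L}_{\BcompQ{m}{}}$ and using Lemma \ref{LemBjoCar}) one identifies both
\begin{equation*}
R\shom_{\DtildQ{m}{\ms{X}}(Z)}(\BcompQ{m}{\ms{X}}(Z),\mb{D}_{\ms{X},Z}(f^!\ms{M})\widehat{\otimes}^{\mb{L}}_{\BcompQ{m}{\ms{X}}(Z)}\ms{T})\quad\text{and}\quad R\shom_{\DtildQ{m}{\ms{X}}(Z)}(f^!\ms{M},\ms{T})
\end{equation*}
for a test object $\ms{T}$, matching the analogous expressions on $\ms{Y}$ via $f^!$. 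Combining with the smooth base-change/transfer identities for $f^!$ and with $\omega_{\ms{X}}\cong f^*\omega_{\ms{Y}}\otimes\omega_{\ms{X}/\ms{Y}}$ yields the desired underlying isomorphism, with the degree shift $[2d]$ arising from the difference $[d_X]-([d_Y]+[d])+[d]$ appearing in the two formulas for $\mb{D}\circ f^!$ and $f^!\circ\mb{D}$ via Spencer resolutions.

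The main obstacle, and the reason for the Tate twist $(d)$, is the Frobenius compatibility. By the second half of Proposition \ref{keypropcal}, the isomorphism
$R\shom(\BcompQ{m}{\ms{X}},\mb{D}(\ms{M})\widehat{\otimes}^\mb{L}\ms{N})\xrightarrow{\sim}R\shom(\ms{M},\ms{N})$
commutes with Frobenius only up to multiplication by $q^{d_X}$, and similarly on $\ms{Y}$ up to $q^{d_Y}$. When the two identifications are combined along $f$ to compare $\mb{D}_{\ms{X},Z}\circ f^!$ with $f^!\circ\mb{D}_{\ms{Y},W}$, the two errors enter with opposite signs, leaving a net discrepancy of $q^{d_X-d_Y}=q^d$ between the Frobenius isomorphisms of the two sides. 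By the definition of Tate twist in paragraph \ref{dfntatetwist}, this is precisely the datum needed to upgrade the underlying isomorphism of complexes to an isomorphism $(\mb{D}_{\ms{X},Z}\circ f^!)(d)[2d]\cong f^!\circ\mb{D}_{\ms{Y},W}$ of cohomological functors with Frobenius isomorphism. The delicate point will be to carry out the bookkeeping carefully enough to show that, after the two applications of Proposition \ref{keypropcal}, no further power of $q$ creeps in from the remaining isomorphisms (in particular from the canonical $F^\flat\omega_{\ms{X}'}\cong\omega_{\ms{X}}$, which by Proposition \ref{mainpropbalhar} is normalized so as to contribute no extra $q$-factor).
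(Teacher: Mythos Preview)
Your overall strategy coincides with the paper's: reduce to finite level via Lemma \ref{perflift}, apply Proposition \ref{keypropcal} once on $\ms{Y}$ and once on $\ms{X}$ with the functor $f^!$ mediating between them, and read off the net Frobenius discrepancy $q^{d_X}\cdot q^{-d_Y}=q^{d}$ as the Tate twist. The paper makes the construction sharper than your test-object formulation by a Yoneda trick: starting from $\mr{id}\in\mr{Hom}(\ms{M},\ms{M})$, one runs through
\begin{equation*}
\mr{Hom}(\ms{M},\ms{M})\xleftarrow{\ \Psi\ }\mr{Hom}(\BcompQ{m}{\ms{Y}},\mb{D}\ms{M}\,\widehat{\otimes}\,\ms{M})
\xrightarrow{\ f^!\ }\mr{Hom}(\BcompQ{m}{\ms{X}}[d],f^!\mb{D}\ms{M}\,\widehat{\otimes}\,f^!\ms{M}[-d])
\xrightarrow{\ \Psi\ }\mr{Hom}(\mb{D}f^!\ms{M},f^!\mb{D}\ms{M}[-2d]),
\end{equation*}
and the image of the identity is the desired morphism; this makes the shift $[2d]$ and the functoriality transparent, and your bookkeeping formula $[d_X]-([d_Y]+[d])+[d]$ for the shift is not the mechanism that actually produces it.

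There is one genuine gap in your sketch. You assert that step~2 ``yields the desired underlying isomorphism'', but you never argue why the constructed map is an \emph{isomorphism} rather than just a morphism. The paper treats this as a separate step: since for smooth $f$ both $\mb{D}\circ f^!$ and $f^!\circ\mb{D}$ are built from $\ms{M}$ by way-out exact operations, it suffices to check the single case $\ms{M}=\DtdagQ{\ms{Y}}$, and there one computes directly
\begin{equation*}
R\shom_{\DtdagQ{\ms{X}}}(\DtdagQ{\ms{X}\rightarrow\ms{Y}},\DtdagQ{\ms{X}})[d]\cong\DtdagQ{\ms{Y}\leftarrow\ms{X}}
\end{equation*}
via the relative Spencer resolution, which unwinds to the claimed identification. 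Your appeal to ``standard Verdier-type manipulations'' and to $\omega_{\ms{X}}\cong f^*\omega_{\ms{Y}}\otimes\omega_{\ms{X}/\ms{Y}}$ does not by itself supply this computation; without it you have only constructed a natural transformation, not an equivalence. (Incidentally, the transfer module $\DtildQ{m}{\ms{X}\rightarrow\ms{Y}}$ is flat over $f^{-1}\DtildQ{m}{\ms{Y}}$ but not locally free of finite rank when $d>0$; this does not harm your argument, but the claim as written is false.)
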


\begin{proof}
 Let $\ms{M}^\dag$ be a complex in
 $\tdp(\DtdagQ{\ms{Y}})$. Then by definition, there
 exists $\ms{M}$ in $D^b_{\mr{perf}}(\DtildQ{m}{\ms{Y}})$ for some $m$
 and
 $\DtdagQ{\ms{Y}}\otimes_{\DtildQ{m}{\ms{Y}}}\ms{M}\cong\ms{M}^\dag$.
 First, let us define the homomorphism. By Proposition \ref{keypropcal},
 we get a homomorphism $\mr{Hom}_{\DtildQ{m}{\ms{Y}}}(\ms{M},\ms{M})
 \xleftarrow{\sim}\mr{Hom}_{\DtildQ{m}{\ms{Y}}}(\BcompQ{m}{\ms{Y}},
 \mb{D}(\ms{M})\widehat{\otimes}\ms{M})$. By Lemma \ref{dbqcqnorm} and
 the functoriality of the extraordinary pull-back functor $f^!$, we
 get
 \begin{equation*}
  \mr{Hom}_{\DtildQ{m}{\ms{Y}}}(\BcompQ{m}{\ms{Y}},\mb{D}
 (\ms{M})\widehat{\otimes}\ms{M})\rightarrow\mr{Hom}_{\DtildQ{m}
 {\ms{X}}}(f^!\BcompQ{m}{\ms{Y}},f^!(\mb{D}(\ms{M})
 \widehat{\otimes}\ms{M})).
 \end{equation*}
 This homomorphism is compatible with Frobenius pull-backs by the
 functoriality of the isomorphism \cite[3.2.4]{Ber2}.
 We get $f^!\BcompQ{m}{\ms{Y}}\cong\BcompQ{m}{\ms{X}}[d_f]$. Moreover,
 $f^!(\mb{D}(\ms{M})\widehat{\otimes}\ms{M})\cong
 f^!\circ\mb{D}(\ms{M})~\widehat{\otimes}~f^!\ms{M}[-d_f]$. This
 isomorphism is also compatible with Frobenius pull-backs. Thus we get
 \begin{equation*}
  \mr{Hom}_{\DtildQ{m}{\ms{X}}}(f^!\BcompQ{m}{\ms{Y}},
   f^!(\mb{D}(\ms{M})\widehat{\otimes}\ms{M}))\cong
   \mr{Hom}_{\DtildQ{m}{\ms{X}}}(\BcompQ{m}{\ms{X}}[d_f],
   f^!\mb{D}(\ms{M})~\widehat{\otimes}~f^!\ms{M}[-d_f]).
 \end{equation*}
 Now, using the proposition once again, we get an isomorphism
 \begin{equation*}
  \mr{Hom}_{\DtildQ{m}{\ms{X}}}(\BcompQ{m}{\ms{X}}[d_f],
   f^!\mb{D}(\ms{M})~\widehat{\otimes}~f^!\ms{M}[-d_f])
   \xrightarrow{\sim}\mr{Hom}_{\DtildQ{m}{\ms{X}}}(\mb{D}
   f^!\ms{M},f^!\mb{D}(\ms{M})[-2d_f]).
 \end{equation*}
 Composing all of them, we obtain a homomorphism
 \begin{equation*}
  \mr{Hom}_{\DtildQ{m}{\ms{Y}}}(\ms{M},\ms{M})\rightarrow
   \mr{Hom}_{\DtildQ{m}{\ms{X}}}(\mb{D}f^!\ms{M},f^!\mb{D}
   (\ms{M})[-2d_f]).
 \end{equation*}
 The image of the identity is the homomorphism we wanted.  By using
 \cite[4.3.3, 4.3.11]{BerInt}, we get the following diagram.
 \begin{equation*}
  \xymatrix{
   \mr{Hom}_{\DtildQ{m}{\ms{Y}'}}(\ms{M},\ms{M})\ar[r]\ar[d]_{F^*_Y}&
   \mr{Hom}_{\DtildQ{m}{\ms{X}'}}(\mb{D} f'^!(\ms{M}),f'^!
   \mb{D}(\ms{M})[-2d_f])\ar[d]^{F^*_X}\\
  \mr{Hom}_{\DtildQ{m}{\ms{Y}}}(F_Y^*\ms{M},F_Y^*\ms{M})\ar[r]&
   \mr{Hom}_{\DtildQ{m}{\ms{X}}}(\mb{D} f^!(F_Y^*\ms{M}),
   f^!\mb{D}(F_Y^*\ms{M})[-2d_f])
   }
 \end{equation*}
 This diagram is commutative up to multiplication by $q^{-d_Y}
 \cdot q^{d_X}=q^{d_f}$ by the commutativity of Proposition
 \ref{keypropcal}. Thus we obtain the homomorphism $\mb{D}\circ
 f^!(\ms{M})(d_f)[2d_f]\rightarrow f^!\circ\mb{D}(\ms{M})$. By tensoring
 with $\DtdagQ{\ms{X}}$, we get
 \begin{equation*}
  \mb{D}\circ f^!(\ms{M}^\dag)(d_f)[2d_f]\rightarrow f^!\circ
   \mb{D}(\ms{M}^\dag)
 \end{equation*}
 by using \cite[3.4.6 (iii)]{BerInt} and \cite[I.5.4]{Vir}.
 By construction, this does not depend on the choice of $\ms{M}$.
 It remains to show that this homomorphism is an isomorphism when $f$ is
 smooth.

 It suffices to show the equality for $\ms{M}=\DtdagQ{\ms{Y}}$, and we
 can forget about Frobenius pull-backs. We get
 $R\shom_{\DtdagQ{\ms{X}}}(\DtdagQ{\ms{X}\rightarrow\ms{Y}},
 \DtdagQ{\ms{X}})[d_f]\cong\DtdagQ{\ms{Y}
 \leftarrow\ms{X}}$. Indeed,
 \begin{align*}
  R\shom_{\DtdagQ{\ms{X}}}(\DtdagQ{\ms{X}\rightarrow\ms{Y}},
  \DtdagQ{\ms{X}})[d_f]&\cong
  R\shom_{\DdagQ{\ms{X}}}(\DdagQ{\ms{X}}\otimes\Theta_{\ms{X}/\ms{Y}}
  ^{\bullet},\DdagQ{\ms{X}})~[d_f]\\&\cong
  \Omega_{\ms{X}/\ms{Y}}^{\bullet}\otimes\DtdagQ{\ms{X}}\cong
  \DtdagQ{\ms{Y}\leftarrow\ms{X}}.
 \end{align*}
 Thus, we get
 \begin{align*}
  &\mb{D}\circ f^!(\DtdagQ{\ms{Y}})[2d_f]\cong\mb{D}(\DtdagQ{\ms{X}
  \rightarrow\ms{Y}})[d_f]\\&\qquad\cong\DtdagQ{\ms{Y}\leftarrow\ms{X}}
  \otimes\omega_{\ms{X}}^{-1}[d_f]\cong f^!\DtdagQ{\ms{Y}}\otimes
  \omega_{\ms{Y}}^{-1}\cong f^!\circ\mb{D}(\DtdagQ{\ms{Y}}).
 \end{align*}
 We can see that this isomorphism coincides with the homomorphism we
 have constructed, and the theorem follows.
\end{proof}

\begin{rem*}
 We may be able to weaken the assumption of the theorem. The theorem
 should hold only by assuming that
 $X_0\setminus Z\rightarrow Y_0\setminus W$ is smooth. Moreover, we
 may be able to see the theorem as a solution of a part of
 ``Cauchy-Kovalevskaya type problem''. Classically, this observation
 first appeared in Kashiwara's thesis \cite{Ka}, and the problem was
 interpreted in terms of the language of $\ms{D}$-modules. We expect
 that the analogous theorem also holds in our setting: if the morphism
 $f$ is ``non-characteristic'' to a coherent
 $F$-$\DdagQ{\ms{X}}$-module, then we get the isomorphism
 (\ref{commduexpu}).
\end{rem*}

\begin{cor}
 \label{constantcase}
 Let $\ms{X}$ be a smooth formal scheme. Let $d$ be the
 dimension of $\ms{X}$, and $Z$ be a divisor of the special fiber of
 $\ms{X}$. Then we get a canonical isomorphism
 \begin{equation*}
  \mb{D}_{\ms{X},Z}(\mc{O}_{\ms{X},\mb{Q}}(^\dag Z))
   \xrightarrow{\sim}\mc{O}_{\ms{X},\mb{Q}}(^\dag Z)(-d)
 \end{equation*}
 where $d$ denotes the dimension of $\ms{X}$.
\end{cor}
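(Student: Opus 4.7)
The plan is to apply Theorem \ref{smoothpoincare} to the structural morphism $f\colon\ms{X}\to\mr{Spf}(R)$, regarded as a morphism of d-couples $f\colon(\ms{X},Z)\to(\mr{Spf}(R),\emptyset)$. Its realization is smooth of relative dimension $d$, so the theorem yields a canonical equivalence
\begin{equation*}
(\mb{D}_{\ms{X},Z}\circ f^!)(d)[2d]\xrightarrow{\sim}f^!\circ\mb{D}_{\mr{Spf}(R),\emptyset}
\end{equation*}
of cohomological functors with Frobenius isomorphisms, and the strategy is to evaluate this equivalence at the trivial coefficient $\mc{O}_{\mr{Spf}(R),\mb{Q}}$.

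Next, I would identify both sides explicitly. Since $\mr{Spf}(R)$ is zero-dimensional, the duality functor on the base is trivial: $\mb{D}_{\mr{Spf}(R),\emptyset}(\mc{O}_{\mr{Spf}(R),\mb{Q}})\cong\mc{O}_{\mr{Spf}(R),\mb{Q}}$. On the other side, the definition of $f^!$ for a morphism of d-couples (paragraph \ref{d-couplesdef}), combined with the fact that the underlying realization is smooth of relative dimension $d$, gives $f^!(\mc{O}_{\mr{Spf}(R),\mb{Q}})\cong\mc{O}_{\ms{X},\mb{Q}}(^\dag Z)[d]$, endowed with its canonical Frobenius structure by functoriality. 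Substituting these identifications into the equivalence produces
\begin{equation*}
\mb{D}_{\ms{X},Z}\bigl(\mc{O}_{\ms{X},\mb{Q}}(^\dag Z)[d]\bigr)(d)[2d]\xrightarrow{\sim}\mc{O}_{\ms{X},\mb{Q}}(^\dag Z)[d].
\end{equation*}

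Since duality reverses shifts while Tate twists commute with shifts, the left hand side rewrites as $\mb{D}_{\ms{X},Z}(\mc{O}_{\ms{X},\mb{Q}}(^\dag Z))(d)[d]$; cancelling the common shift $[d]$ and then transposing the Tate twist using $(d)(-d)=\mathrm{id}$ yields the desired $\mb{D}_{\ms{X},Z}(\mc{O}_{\ms{X},\mb{Q}}(^\dag Z))\cong\mc{O}_{\ms{X},\mb{Q}}(^\dag Z)(-d)$.

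The main technical point to verify is the liftability hypothesis of Theorem \ref{smoothpoincare}: in situation \ref{Frobsituations}.\ref{Frobstr} the lifting $\sigma$ of $F^s_k$ provides $\ms{X}':=\ms{X}\otimes_{R,\sigma}R$ as a lifting of $X_0^{(s)}$, so the hypothesis holds globally; in situation \ref{Frobsituations}.\ref{noFrobstr} the hypothesis holds locally on $\ms{X}$, and the canonicity of the isomorphism constructed in Theorem \ref{smoothpoincare} ensures that the local identifications glue to a global canonical isomorphism. I do not expect a genuine obstacle here: the argument is essentially a bookkeeping of the shift and Tate-twist exponents, combined with the observation that the Frobenius structure on $\mc{O}_{\ms{X},\mb{Q}}(^\dag Z)$ inherited from $f^!$ coincides with the standard one.
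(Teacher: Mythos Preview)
Your proposal is correct and follows exactly the paper's approach: the paper's entire proof is ``Apply Theorem \ref{smoothpoincare} in the case where $\ms{Y}=\mr{Spf}(R)$,'' and your argument is a faithful unpacking of this, tracking the shift and Tate-twist bookkeeping that the paper leaves implicit.
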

\begin{proof}
 Apply Theorem \ref{smoothpoincare} in the case where
 $\ms{Y}=\mr{Spf}(R)$.
\end{proof}

\begin{rem*}
 The question to calculate $\mb{D}(\mc{O}_{\ms{X},\mb{Q}})$ was posed by
 Caro in \cite[4.3.3]{Caro}, saying that ``{\em En effet, lorsque $X=P$,
 on retrouve l'isomorphisme canonique:
 $\mb{D}_{\mc{P},T}(\mc{O}_{\mc{P}}(^\dag T)_{\mb{Q}})\rightarrow
 \mc{O}_{\mc{P}}(^\dag T)_{\mb{Q}}$. Je n'ai pas de
 contre-exemple mais la compatibilit\'{e} \`{a} Frobenius de ce dernier
 isomorphisme me para\^{i}t inexacte.}''
\end{rem*}

\subsection{}
Let $\ms{X}$ be a smooth formal scheme of dimension $d$, $Z$ be a
divisor of the special fiber $X_0$ of $\ms{X}$. For an overconvergent
$F$-isocrystal $M$ on $X_0$, we denote by $M^\vee$ the dual
overconvergent $F$-isocrystal of $M$.

\begin{cor*}
 \label{compduals}
 Let $\mr{sp}\colon\ms{X}_K\rightarrow\ms{X}$ be the specialization map,
 and let $M$ be an overconvergent $F$-isocrystal on $X_0\setminus
 Z$. Then,
 \begin{equation*}
  (\mb{D}_{\ms{X},Z}(\mr{sp}_*(M))\cong\mr{sp}_*(M^\vee)(-d).
 \end{equation*}
\end{cor*}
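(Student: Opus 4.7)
The plan is to combine Corollary \ref{constantcase} with the comparison isomorphism already established by Caro in \cite{CarD}. For an overconvergent isocrystal $M$ on $X_0\setminus Z$, Caro constructs a canonical isomorphism of $\DdagQ{\ms{X}}(^\dag Z)$-complexes
\begin{equation*}
\mb{D}_{\ms{X},Z}(\mr{sp}_*(M))\cong\mr{sp}_*(M^\vee)\otimes_{\mc{O}_{\ms{X},\mb{Q}}(^\dag Z)}\mb{D}_{\ms{X},Z}(\mc{O}_{\ms{X},\mb{Q}}(^\dag Z)),
\end{equation*}
obtained by local duality for locally free $\mc{O}_{\ms{X},\mb{Q}}(^\dag Z)$-modules with integrable connection. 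Substituting Corollary \ref{constantcase} into the right-hand factor immediately yields the underlying $\DdagQ{\ms{X}}(^\dag Z)$-module identification $\mb{D}_{\ms{X},Z}(\mr{sp}_*(M))\cong\mr{sp}_*(M^\vee)(-d)$.

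The entire remaining content of the corollary is therefore the verification that this identification respects Frobenius structures. I would argue this in two steps. First, Caro's comparison isomorphism is natural in $M$ and commutes with Frobenius pull-backs; this is essentially formal once one unwinds his construction, since both sides transform the same way under $F^*$ with respect to the abstract duality formalism. Second, the $F$-structure on the right-hand side of the displayed isomorphism is, by functoriality of the tensor product, the product of the tautological $F$-structure on $\mr{sp}_*(M^\vee)$ with the one on $\mb{D}_{\ms{X},Z}(\mc{O}_{\ms{X},\mb{Q}}(^\dag Z))$; by Corollary \ref{constantcase}, the latter is precisely the Tate twist $(-d)$ of the trivial $F$-structure on $\mc{O}_{\ms{X},\mb{Q}}(^\dag Z)$. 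Combining these two observations gives the desired $F$-compatible isomorphism.

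The main obstacle that I would have to be careful about is precisely the step flagged in the remark of Caro reproduced in paragraph \ref{constantcase}: to confirm, in his construction, that the abstract Frobenius-equivariance is genuine and that no hidden sign or twist enters. Once Corollary \ref{constantcase} supplies the correct $F$-equivariant identification $\mb{D}_{\ms{X},Z}(\mc{O}_{\ms{X},\mb{Q}}(^\dag Z))\cong\mc{O}_{\ms{X},\mb{Q}}(^\dag Z)(-d)$, which in turn rests on Theorem \ref{smoothpoincare} and ultimately on the explicit calculation of $\mu_X$ in Section \ref{sect1}, this final bookkeeping becomes unambiguous and the corollary follows.
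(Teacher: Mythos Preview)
Your proposal is correct and follows essentially the same approach as the paper: the paper's proof is the single line ``Apply Corollary \ref{constantcase} to \cite[2.3.37]{CarD},'' and you have correctly identified that Caro's result supplies the comparison $\mb{D}_{\ms{X},Z}(\mr{sp}_*(M))\cong\mr{sp}_*(M^\vee)\otimes\mb{D}_{\ms{X},Z}(\mc{O}_{\ms{X},\mb{Q}}(^\dag Z))$ into which Corollary \ref{constantcase} is substituted. Your additional discussion of the Frobenius compatibility simply unpacks what the paper leaves implicit in this citation.
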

\begin{proof}
 Apply Corollary \ref{constantcase} to \cite[2.3.37]{CarD}.
\end{proof}

\begin{rem*}
 \label{compLcarLE}
 This corollary completes the comparison of $L$-functions of isocrystals
 and arithmetic $\ms{D}$-modules \cite[3.3.1]{CarL}. Namely, we get
 \begin{equation*}
  L(Y,E,t)=L(\ms{Y},\mr{sp}_*E,\,q^{d_X}\cdot t)
 \end{equation*}
 using the notation of {\it loc.\ cit}.
 However, in {\it loc.\ cit.\ }the definition of the
 Frobenius structure of the push-forward is modified in order to make
 the relative duality compatible with Frobenius (cf.\
 \cite[1.2.11]{CarL}), and the definition may not be the same as that of
 Berthelot. Still, we will show that this duality is compatible with
 Frobenius in the next section (cf.\ Corollary \ref{adointcomp}), and
 the Frobenius structure of the push-forward is in fact the same as that
 of Berthelot.
\end{rem*}

\subsection{}
Now, we will compare the rigid cohomologies and the push-forwards of
arithmetic $\ms{D}$-modules with Frobenius structure when varieties can
be lifted to smooth formal schemes. If we do not consider Frobenius
structure, they coincide up to shifts of degree, which is a result of
Berthelot (cf.\ \cite[4.3.6.3]{BerInt}). If we consider Frobenius
structure, we need a Tate twist. This twist naturally appears in the
philosophy of six functors (cf.\ paragraph \ref{comrigdpushconcl}).

Let us fix the notation. We consider the situation
\ref{Frobsituations}.\ref{Frobstr}. Let $\ms{X}$ be a smooth formal
scheme and let $p\colon\ms{X}\rightarrow\ms{S}:=\mr{Spf}(R)$
be the structural morphism. Let $X_0$ be the special fiber of $\ms{X}$
as usual, $\ms{X}'$ be a lifting of $X_0^{(s)}$, and
$\ms{X}_K$ be the Raynaud generic fiber. We denote by
$\mr{sp}\colon\ms{X}_K\rightarrow\ms{X}$ the specialization map of
topoi. Let $\ms{M}$ be an $F$-$\DdagQ{\ms{X}}$-module. We define the
``rigid cohomology'' of $\ms{M}$ in the following way. Let
$p_K\colon\ms{X}_K\rightarrow\mr{Spm}(K)$ be the structural morphism.
We define 
\begin{equation*}
 H^i_{\mr{rig}}(X_0,\ms{M}):=R^ip_{K*}(\Omega^{\bullet}_{\ms{X}_K}
  \otimes_{\mc{O}_{\ms{X}_K}}\mr{sp}^*\ms{M}).
\end{equation*}
We define the Frobenius structure in the following way. There exists an
isomorphism
\begin{equation*}
 \varphi\colon\Omega^\bullet_{\ms{X}'_K}\otimes_{\mc{O}_{\ms{X}'}}
  \ms{M}^\sigma\xrightarrow{\sim}\Omega^\bullet_{\ms{X}_K}
  \otimes_{\mc{O}_{\ms{X}}}F^*\ms{M}^{\sigma}
  \xrightarrow{\sim}\Omega^\bullet_{\ms{X}_K}\otimes
  _{\mc{O}_{\ms{X}}}\ms{M}
\end{equation*}
in $D^+(\ms{X})$ where the first isomorphism follows from
\cite[4.3.5]{Ber2} and we used the Frobenius structure of $\ms{M}$ in
the second isomorphism. Thus, we get an isomorphism
\begin{equation*}
 H^i_{\mr{rig}}(X_0,\ms{M})\xrightarrow{\sim}H^i_{\mr{rig}}(X^{(s)}_0,
  \ms{M}^\sigma)\xrightarrow[\varphi]{\sim}H^i_{\mr{rig}}(X_0,\ms{M}),
\end{equation*}
where the first isomorphism is the base change homomorphism.
This is the induced Frobenius structure on the cohomology.

Suppose moreover that $\ms{X}$ is {\em smooth proper} and let $Z$ be a
divisor of $X_0$. When $M$ is an overconvergent $F$-isocrystal on
$X_0\setminus Z$ along $Z$, the rigid cohomology of $\mr{sp}_*(M)$ is
isomorphic to the usual rigid cohomology of $M$.

\begin{thm}
 \label{comprigidaD}
 We preserve the notation. We suppose that $\ms{X}$ is purely of
 dimension $d$. Let $\ms{M}$ be a coherent
 $F$-$\DdagQ{\ms{X}}$-module. Then we get
 \begin{equation*}
  H^ip_+\ms{M}\cong H^{i+d}_{\mr{rig}}(X_0,\ms{M})(d).
 \end{equation*}
\end{thm}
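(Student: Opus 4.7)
The plan is to build on Berthelot's unfortunately Frobenius-free comparison \cite[4.3.6.3]{BerInt}, which identifies $H^ip_+\ms{M}\cong H^{i+d}_{\mr{rig}}(X_0,\ms{M})$, and then to verify that the two naturally-defined Frobenius structures on the two sides differ precisely by a scalar $q^d$. The underlying isomorphism is produced from the Spencer resolution $\DdagQ{\ms{X}}\otimes_{\mc{O}_\ms{X}}\wedgeb^\bullet\Theta_{\ms{X}}\xrightarrow{\sim}\mc{O}_{\ms{X},\mb{Q}}$ (cf.\ the variant (\ref{spencerresbmod})), which after dualization yields a resolution of $\omega_{\ms{X},\mb{Q}}$ as a right $\DdagQ{\ms{X}}$-module by $(\DdagQ{\ms{X}}\otimes\Omega^\bullet_{\ms{X}})[d]$; tensoring with $\ms{M}$ and applying $Rp_*$ identifies $p_+\ms{M}$ with $Rp_*(\Omega^\bullet_{\ms{X}}\otimes\ms{M})[d]$, and the specialization functor turns this into $H^{\bullet+d}_\mr{rig}(X_0,\ms{M})$.

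For the Frobenius comparison, the plan is to unwind both constructions: on the D-module side, the Frobenius on $p_+\ms{M}$ is given by the equivalence $F^*_{\mr{Spf}(R)}\circ p'_+\cong p_+\circ F^*_\ms{X}$ from \cite[3.4.4]{Ber2}, which (since $F_{\mr{Spf}(R)}$ is just $\sigma$) reduces everything to the comparison between the Frobenius structure $F^*\ms{M}^\sigma\xrightarrow{\sim}\ms{M}$ and the canonical identification $\omega_\ms{X}\cong F^\flat\omega_{\ms{X}'}$ of \cite[2.4.2]{Ber2}. On the rigid side, the Frobenius on $H^{\bullet+d}_{\mr{rig}}$ is defined via the naive pullback $F^*_K\colon\Omega^\bullet_{\ms{X}'_K}\to\Omega^\bullet_{\ms{X}_K}$ on differential forms combined with the same Frobenius datum on $\ms{M}$. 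Thus I am reduced to comparing, on the top-degree piece, the two maps $\omega_{\ms{X}',\mb{Q}}\to\omega_{\ms{X},\mb{Q}}$ obtained respectively from $F^\flat$ (via $\mu_\ms{X}$) and from the classical $F^*_K$.

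The comparison can be made explicit locally using Theorem \ref{maincalc}: if $\{x_i\}$, $\{y_i\}$ are local coordinates with $F^*(y_i)=x_i^q$, then $\mu_\ms{X}$ sends $dx_\bullet$ to $dy_\bullet\otimes H\underline{x}^{-(\underline{q}-\underline{1})}$, whereas the usual Frobenius pullback of forms yields $F^*_K(dy_\bullet)=q^d\,\underline{x}^{\underline{q}-\underline{1}}\,dx_\bullet$. Since $\underline{x}^{\underline{q}-\underline{1}}$ and $H\underline{x}^{-(\underline{q}-\underline{1})}$ are dual basis elements under the pairing $F_*\mc{O}_\ms{X}\otimes\mc{O}_\ms{X}^\vee\to\mc{O}_{\ms{X}'}$, tracing these identifications through the de Rham resolution and through the explicit description in Proposition \ref{calcberthisom} shows that the two Frobenius structures on $Rp_*(\Omega^\bullet_\ms{X}\otimes\ms{M})[d]$ agree after multiplication by $q^d$. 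By the definition of the Tate twist in paragraph \ref{dfntatetwist}, this is precisely the claim that the isomorphism $H^ip_+\ms{M}\cong H^{i+d}_\mr{rig}(X_0,\ms{M})(d)$ is an isomorphism of $F$-modules. The main obstacle will be the careful bookkeeping of the factor $q^d$: it arises not from a single step but from the interaction between $\mu_\ms{X}$, the Dwork-operator identifications of paragraph \ref{Garnierrev}, and the Spencer quasi-isomorphism; fortunately all these pieces have already been made fully explicit in \S\ref{sect1} and the preceding parts of \S 2, so the comparison reduces to a direct local calculation on $d$-forms.
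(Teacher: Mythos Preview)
Your proposal is correct and follows essentially the same route as the paper: both reduce the comparison to the commutativity up to $q^d$ of a diagram relating the de Rham complex $\Omega^\bullet_{\ms{X}}\otimes\ms{M}$ to $\omega_{\ms{X}}\otimes^{\mb{L}}_{\DdagQ{\ms{X}}}\ms{M}$ under Frobenius, and both extract the factor $q^d$ from the explicit description of $\mu_{\ms{X}}$ in Theorem~\ref{maincalc}. The paper's proof is shorter only because this commutativity-up-to-$q^d$ has already been isolated as Lemma~\ref{lemcrucons}, so you can simply invoke that lemma (together with \cite[4.3.5]{Ber2}) rather than tracing through Proposition~\ref{calcberthisom} and the Dwork-operator formulas again.
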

\begin{proof}
 Let $\ms{M}'$ be a coherent $\DdagQ{\ms{X}'}$-module, and consider the
 following diagram of complexes in $D^b(\ms{X})$.
 \begin{equation*}
  \xymatrix{
   F^*(\Omega_{\ms{X}'}^{\bullet}\otimes_{\mc{O}_{\ms{X}'}}
   \ms{M}')[d]\ar[r]\ar[d]&F^*\DdagQ{\ms{S}'\leftarrow\ms{X}'}
   \otimes^{\mb{L}}_{\DdagQ{\ms{X}'}}\ms{M}'\cong F^*(\omega_{\ms{X}'}
   \otimes^{\mb{L}}_{\DdagQ{\ms{X}'}}\ms{M}')\ar[d]\\
  \Omega_{\ms{X}}^{\bullet}\otimes_{\mc{O}_{\ms{X}}}F^*\ms{M}'[d]
   \ar[r]&\DdagQ{\ms{S}\leftarrow\ms{X}}\otimes^{\mb{L}}
   _{\DdagQ{\ms{X}}}F^*\ms{M}'\cong\omega_{\ms{X}}\otimes^{\mb{L}}
   _{\DdagQ{\ms{X}}}F^*\ms{M}'
   }
 \end{equation*}
 where the horizontal arrows are induced by \cite[4.2.1.1]{BerInt}.
 We need to see that this diagram is commutative up to multiplication by
 $q^d$. Indeed, this diagram is nothing but Lemma \ref{lemcrucons} by
 taking into account the proof of \cite[4.3.5]{Ber2}.

 Now to know the Frobenius actions on the cohomologies, apply $\ms{M}'$
 to be $\ms{M}^{\sigma}$. We only need to take $Rp_*$ to the four
 sheaves in the diagram with this $\ms{M}'$, and we get the theorem.
\end{proof}

\begin{rem}
 \label{remvarcahb}
 (i) We can also compare in the relative situations. Namely, when we are
 given a smooth morphism of smooth formal schemes
 $\ms{X}\rightarrow\ms{Y}$, we are able to compare the relative rigid
 cohomology and the push-forward as arithmetic $\ms{D}$-module. Required
 methods are exactly the same, so we leave the precise
 formulation and calculation to the readers.

 (ii) In \cite[6.2]{NH}, Noot-Huyghe cited a calculation of
 Baldassarri-Berthelot \cite{BB}. However, the definition of Frobenius
 structures in \cite{NH} and \cite{BB} are not the same, and we need
 a Tate twist here. Precisely, Noot-Huyghe used
 cohomological functors of the arithmetic $\ms{D}$-module theory to
 define the Frobenius structure. On the other
 hand Baldassarri-Berthelot used the relative rigid cohomologies to
 define the Frobenius structure on the Fourier transform. Thus we need
 to add the Tate twist $(N)$ on the right hand side of the isomorphism
 in \cite[6.2]{NH}, namely
 \begin{equation*}
  \mc{F}_{\pi}(\mc{O}^\dag_{Y,\mathbf{Q}})[2-N]\cong\mc{H}^{\dag N}_{X_0}
   (\mc{O}^\dag_{Y^\vee,\mathbf{Q}})(N)
 \end{equation*}
 using the notation of {\it loc.\ cit}.

 (iii) D. Caro pointed out to the author in personal communications that
 we need a suitable Tate twist in \cite[Proposition
 2.3.12]{CarInv}. If we put this Tate twist, this proposition can be
 seen as a generalization of our theorem. He also pointed out that some
 modifications might be needed in {\it loc.\ cit.}\ Theorem 3.3.4, in
 whose proof he used the proposition.
\end{rem}

\section{Complementary results}
In this section, we will prove three complementary results; 1)
commutation of the dual functor and the tensor product, 2) the
K\"{u}nneth formula, and 3) the compatibility of the relative duality
isomorphism by Virrion with Frobenius. The first commutation result is
another application of Theorem \ref{comprigidaD}, and the
proofs of 2) and 3) are independent from the other part of this
paper. Although the K\"{u}nneth
formula for arithmetic $\ms{D}$-modules seems to be well-known to
experts, we could not find any  appropriate reference. We think that
this would be a good occasion to include the proof. The compatibility of
relative duality is needed to establish the Poincar\'{e} duality.

\nsubsection*{Commutation of the dual functor and tensor product}

\subsection{}
\label{isom1}
We consider the situation \ref{Frobsituations}.\ref{noFrobstr}.
Let $X$ be a smooth scheme over $\mr{Spec}(R_i)$ for some $i$, and let
$Z$ be a divisor. We put $\BcompQ{m}{\ms{X}}:=\BcompQ{m}{\ms{X}}(Z)$,
and $\DtildQ{m}{\ms{X}}:=\BcompQ{m}{\ms{X}}\widehat{\otimes}
_{\mc{O}_\ms{X}}\Dcomp{m}{\ms{X}}$. We denote the
dual functor with respect to $\DtildQ{m}{\ms{X}}$ by
$\mb{D}$. First, we get the following lemma.

\begin{lem*}
 Let $\ms{M}$ be a complex in $D^b_{\mr{perf}}(\DtildQ{m}{\ms{X}})$, and
 $\ms{N}$ be a coherent $\DtildQ{m}{\ms{X}}$-module which is also
 coherent as a $\BcompQ{m}{\ms{X}}$-module. We denote the dimension of
 $\ms{X}$ by $d$. Then, we have the following isomorphism
 \begin{equation*}
  \kappa\colon\mb{D}(\ms{M})\otimes_{\BcompQ{m}{\ms{X}}}
   ^{\mb{L}}\ms{N}\cong R\shom_{\DtildQ{m}{\ms{X}}}(\ms{M},
   \DtildQ{m}{\ms{X}}\otimes^{\mb{L}}_{\BcompQ{m}{\ms{X}}}\ms{N})
   \otimes\omega^{-1}_{\ms{X}}\,[d]
 \end{equation*}
 of complexes in $D^b(\DtildQ{m}{\ms{X}})$. Here, the right module
 structure of
 $\DtildQ{m}{\ms{X}}\otimes^{\mb{L}}_{\BcompQ{m}{\ms{X}}}\ms{N}$ is
 defined by that of $\DtildQ{m}{\ms{X}}$, and the left structure by
 {\normalfont \cite[1.1.7]{Ber2}}.
 Moreover, this isomorphism is
 compatible with Frobenius.
\end{lem*}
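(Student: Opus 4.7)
The plan is to construct $\kappa$ as a composition of three elementary steps: unfolding $\mb{D}$, reorganizing tensor products by associativity, and applying a standard pull-up isomorphism for $R\shom$ that is available whenever the first argument is perfect. Starting from
\begin{equation*}
\mb{D}(\ms{M})\cong R\shom_{\DtildQ{m}{\ms{X}}}(\ms{M},\DtildQ{m}{\ms{X}})\otimes_{\mc{O}_{\ms{X}}}\omega^{-1}_{\ms{X}}\,[d],
\end{equation*}
I would tensor with $\ms{N}$ over $\BcompQ{m}{\ms{X}}$ and commute $\omega^{-1}_{\ms{X}}[d]$ past this tensor, which is legitimate because $\omega^{-1}_{\ms{X}}$ is an invertible $\mc{O}_{\ms{X}}$-module and the $\BcompQ{m}{\ms{X}}$-action and $\mc{O}_{\ms{X}}$-action commute.

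Next, using the bimodule structure on $\DtildQ{m}{\ms{X}}$, I would rewrite
\begin{equation*}
R\shom_{\DtildQ{m}{\ms{X}}}(\ms{M},\DtildQ{m}{\ms{X}})\otimes^{\mb{L}}_{\BcompQ{m}{\ms{X}}}\ms{N}\cong R\shom_{\DtildQ{m}{\ms{X}}}(\ms{M},\DtildQ{m}{\ms{X}})\otimes^{\mb{L}}_{\DtildQ{m}{\ms{X}}}\bigl(\DtildQ{m}{\ms{X}}\otimes^{\mb{L}}_{\BcompQ{m}{\ms{X}}}\ms{N}\bigr),
\end{equation*}
and then apply the standard pull-up isomorphism (of the same type as the one from \cite[2.1.17]{CarD} already invoked in the proof of Proposition \ref{keypropcal})
\begin{equation*}
R\shom_{\DtildQ{m}{\ms{X}}}(\ms{M},\DtildQ{m}{\ms{X}})\otimes^{\mb{L}}_{\DtildQ{m}{\ms{X}}}C\xrightarrow{\sim} R\shom_{\DtildQ{m}{\ms{X}}}(\ms{M},C),
\end{equation*}
available because $\ms{M}$ is perfect, with $C=\DtildQ{m}{\ms{X}}\otimes^{\mb{L}}_{\BcompQ{m}{\ms{X}}}\ms{N}$. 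Tensoring back with $\omega^{-1}_{\ms{X}}[d]$ yields $\kappa$.

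For Frobenius compatibility, each of the three steps commutes with $F^*$: the dual functor commutes with Frobenius pull-back by the general result recalled in paragraph \ref{d-couplesdef}, the associativity step is tautological, and the pull-up isomorphism is natural in both arguments, so its $F^*$-compatibility follows from the theorem of Frobenius descent \cite[4.1.3]{Ber2} together with the compatibility of $\otimes^{\mb{L}}_{\BcompQ{m}{\ms{X}}}$ with $F^*$. The main obstacle is verifying the pull-up isomorphism in the present sheaf-theoretic context with completed tensor products; this is handled by localizing on $\ms{X}$ and reducing to the case where $\ms{M}$ is a bounded complex of finite free $\DtildQ{m}{\ms{X}}$-modules, in which both the isomorphism and its naturality under $F^*$ are transparent.
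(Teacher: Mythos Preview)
Your approach is essentially the same as the paper's: both reduce to the pull-up isomorphism for $R\shom$ against a perfect first argument (the paper cites this as \cite[I, 1.2.2]{Vir}, which is exactly your ``standard pull-up isomorphism''), and both handle Frobenius compatibility by naturality. The paper also notes at the outset that $\ms{N}$ is locally projective over $\BcompQ{m}{\ms{X}}$ by \cite[4.4.2]{Ber1}, so the derived tensor products are unnecessary.

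There is one point where the paper is more careful and you are a bit quick. The isomorphism $\kappa$ is claimed to live in $D^b(\DtildQ{m}{\ms{X}})$, not merely $D^b(\BcompQ{m}{\ms{X}})$, and the relevant $\DtildQ{m}{\ms{X}}$-structure on $\DtildQ{m}{\ms{X}}\otimes_{\BcompQ{m}{\ms{X}}}\ms{N}$ is the \emph{diagonal} one from \cite[1.1.7]{Ber2}, not the one coming from one-sided multiplication on $\DtildQ{m}{\ms{X}}$. Your associativity-and-pull-up route naturally produces the latter, and it is not automatic that the resulting map is $\DtildQ{m}{\ms{X}}$-linear for the former. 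The paper addresses this head-on: it first constructs the right $\DtildQ{m}{\ms{X}}$-structure on $\ms{M}'\otimes_{\BcompQ{m}{\ms{X}}}\ms{N}$ for a right module $\ms{M}'$ (via finitely generated submodules and \cite[4.4.7]{Ber1}), then obtains $\kappa$ as a $\BcompQ{m}{\ms{X}}$-linear map, and finally verifies directly that the component maps
\[
\shom_{\DtildQ{m}{\ms{X}}}(\ms{M},\ms{I})\otimes_{\BcompQ{m}{\ms{X}}}\ms{N}\longrightarrow\shom_{\DtildQ{m}{\ms{X}}}(\ms{M},\ms{I}\otimes_{\BcompQ{m}{\ms{X}}}\ms{N})
\]
are $\DtildQ{m}{\ms{X}}$-linear for these structures. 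Your reduction to finite free $\ms{M}$ does make the pull-up transparent as a map, but you should still check that the two $\DtildQ{m}{\ms{X}}$-actions agree under it; this is the substance of the paper's final paragraph.
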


\begin{proof}
 By \cite[4.4.2]{Ber1}, $\ms{N}$ is a locally projective
 $\BcompQ{m}{\ms{X}}$-module, and we do not need to take the derived
 tensor products. Let $\ms{M}'$ be a {\em right}
 $\DtildQ{m}{\ms{X}}$-module. Then
 $\ms{M}'\otimes_{\BcompQ{m}{\ms{X}}}\ms{N}$ possesses a right
 $\DtildQ{m}{\ms{X}}$-module structure. Indeed let $\ms{U}$ be an affine
 open formal subscheme of $\ms{X}$. Let $M':=\Gamma(\ms{U},\ms{M}')$,
 $N:=\Gamma(\ms{U},\ms{N})$, $B:=\Gamma(\ms{U},\BcompQ{m}{\ms{X}})$,
 and $D:=\Gamma(\ms{U},\DtildQ{m}{\ms{X}})$. Then it suffices to define
 a right $D$-module structure on $M'\otimes_{B}N$. For
 $a\otimes b\in M'\otimes_BN$ and $P\in D$, it suffices to define
 $(a\otimes b)\cdot P$. Take $S\subset M'$ to be the finite
 $\Gamma(\ms{U},\DtildQ{m}{\ms{X}})$-submodule generated by $a$. Then
 $S\otimes N$ is naturally a $D$-module considering
 \cite[4.4.7]{Ber1} and \cite[1.1.7]{Ber2}. This defines $(a\otimes
 b)\cdot P$.

 Now back to the proof, by using \cite[I, 1.2.2]{Vir}, we get an
 isomorphism $\kappa$ of complexes in $D^b(\BcompQ{m}{\ms{X}})$.
 It suffices to show that this isomorphism is an isomorphism of
 $\DtildQ{m}{\ms{X}}$-complexes. For a bi-$\DtildQ{m}{\ms{X}}$-module
 $\ms{I}$, we have a canonical homomorphism
 \begin{equation*}
  \shom_{\DtildQ{m}{\ms{X}}}(\ms{M},\ms{I})\otimes_{\BcompQ{m}{\ms{X}}}
   \ms{N}\rightarrow\shom_{\DtildQ{m}{\ms{X}}}(\ms{M},\ms{I}
   \otimes_{\BcompQ{m}{\ms{X}}}\ms{N}).
 \end{equation*}
 {\it A priori}, this is a homomorphism of
 $\BcompQ{m}{\ms{X}}$-modules. By the argument above, both sides of the
 homomorphism possess the right $\DtildQ{m}{\ms{X}}$-module
 structures. To finish the proof, is suffices to see that the
 homomorphism is $\DtildQ{m}{\ms{X}}$-linear. The verification is
 straightforward.
\end{proof}

\subsection{}
Now, consider the situation \ref{Frobsituations}.\ref{Frobstr}.
Let $\ms{X}$ be a smooth formal scheme over
$\mr{Spf}(R)$, and $Z$ be a divisor of the special fiber. Let $\ms{N}'$
be a coherent $F$-$\DdagQ{\ms{X}}(^\dag Z)$-module which is also
coherent as an $\mc{O}_{\ms{X},\mb{Q}}(^\dag Z)$-module. By abuse of
language, we say that $\ms{N}'$ is a convergent $F$-isocrystal
overconvergent along $Z$ (cf.\ Notation \ref{abuselangocisoc}). We put
$N':=\mr{sp}^*(\ms{N}')$, which is an
overconvergent $F$-isocrystal
in the usual sense. Then thanks to Corollary
\ref{compduals}, we get the following isomorphisms of bimodules. We omit
subscripts $\mb{Q}$ and denote by $(Z)$ instead of $(^\dag Z)$ in the
next equality to save the space.
\begin{align}
 \label{isom3}
 &R\shom_{\mc{O}_{\ms{X}}(Z)}
 (\mb{D}_{\ms{X},Z}(\ms{N}'),\Ddag{\ms{X}}(Z))\\
 \notag&\qquad\cong
 \Ddag{\ms{X}}(Z)\otimes_{\mc{O}_{\ms{X}}(Z)}
 R\shom_{\mc{O}_{\ms{X}}(Z)}(\mr{sp_*}(N'^\vee)(-d),
 \mc{O}_{\ms{X}}(Z))\cong
 \Ddag{\ms{X}}(Z)\otimesddag_{\mc{O}_{\ms{X}}(Z)}\ms{N}'(d)
\end{align}
Here the right module structure of the first module is defined by
\cite[1.1.7]{Ber2} using the right module structure of
$\DdagQ{\ms{X}}(^\dag Z)$, and the left structure by using that of
$\DdagQ{\ms{X}}(^\dag Z)$. This is compatible with Frobenius, which
means that the following diagram of canonical isomorphisms is
commutative. We again omit $\mb{Q}$ and denote by $(Z)$ in the
following.
\begin{equation*}
 \xymatrix{
  F^\flat R\shom_{\mc{O}_{\ms{X}'}(Z)}(\mb{D}_{\ms{X},Z}(\ms{N}'),
  \Ddag{\ms{X}'}(Z))\ar[d]
  \ar[r]&F^\flat\bigl(\Ddag{\ms{X}'}(Z)\otimesddag_{\mc{O}_{\ms{X}'}(Z)}
  \ms{N}'(d)\bigr)\ar[d]\\
  R\shom_{\mc{O}_{\ms{X}}(Z)}(\mb{D}_{\ms{X},Z}(F^*\ms{N}'),
  F^\flat\Ddag{\ms{X}'}(Z))\ar[r]&
  F^\flat\Ddag{\ms{X}'}(Z)\otimesddag_{\mc{O}_{\ms{X}}(Z)}
  F^*\ms{N}'(d)
 }
\end{equation*}
Here the homomorphisms are the canonical ones except for the right
vertical homomorphism, which is $q^{-d}$ times the canonical
homomorphism.

Let $\ms{M}$ be an object in $F$-$D^b_{\mr{perf}}(\DdagQ{\ms{X}}(^\dag
Z))$. We get the following isomorphisms compatible with Frobenius
structures:
\begin{align}
 \label{commdual}
 &\mb{D}_{\ms{X},Z}(\ms{M})\otimesddag_{\mc{O}_{\ms{X},\mb{Q}}
 (^\dag Z)}\ms{N}'\cong R\shom_{\DdagQ{\ms{X}}(^\dag
 Z)}(\ms{M},\DdagQ{\ms{X}}(^\dag Z)\otimesddag_{\mc{O}_{\ms{X},\mb{Q}}
 (^\dag Z)}\ms{N}')\otimes\omega_{\ms{X}}^{-1}[d]\\
 \notag&\qquad\cong R\shom_{\DdagQ{\ms{X}}(^\dag Z)}\bigl(
 \ms{M},R\shom_{\mc{O}_{\ms{X},\mb{Q}}(^\dag Z)}
 (\mb{D}_{\ms{X},Z}(\ms{N}'),\DdagQ{\ms{X}}(^\dag Z))\bigr)
 \otimes\omega_{\ms{X}}^{-1}(-d)[d]\\
 \notag&\qquad\cong R\shom_{\DdagQ{\ms{X}}(^\dag Z)}\bigl(
 \ms{M}\otimesddag_{\mc{O}_{\ms{X},\mb{Q}}(^\dag Z)}
 \mb{D}_{\ms{X},Z}(\ms{N}'),\DdagQ{\ms{X}}(^\dag Z)\bigr)
 \otimes\omega_{\ms{X}}^{-1}(-d)[d]\\
 \notag&\qquad\cong\mb{D}_{\ms{X},Z}(\ms{M}\otimesddag
 _{\mc{O}_{\ms{X},\mb{Q}}(^\dag Z)}\mb{D}_{\ms{X},Z}(\ms{N}'))(-d)
\end{align}
where the first isomorphism by Lemma \ref{isom1}, the second by
(\ref{isom3}), and the third by using \cite[2.1.34]{CarD}. Now, we get
the following proposition.

\begin{prop}
 \label{tenstwist}
 Let $\ms{X}$ be a smooth formal scheme over
 $\mr{Spf}(R)$, and $Z$ be a divisor of its special fiber. Let $\ms{M}$
 be a coherent $F$-$\DdagQ{\ms{X}}(^\dag Z)$-module, and $\ms{N}$ be an
 overconvergent $F$-isocrystal along $Z$. Then we get
 \begin{equation}
  \label{twodualfunequ}
  (\mb{D}_{\ms{X},Z}(\ms{M})\otimesddag_{\mc{O}_{\ms{X},\mb{Q}}
   (^\dag Z)}\mb{D}_{\ms{X},Z}(\ms{N}))(d)\cong
   \mb{D}_{\ms{X},Z}(\ms{M}\otimesddag_{\mc{O}_{\ms{X},\mb{Q}}
   (^\dag Z)}\ms{N})
 \end{equation}
 which is compatible with Frobenius structures.
\end{prop}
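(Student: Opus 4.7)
The plan is to reduce the proposition to the already-established isomorphism (\ref{commdual}) by a biduality trick. The key observation is that although (\ref{commdual}) requires the right-hand tensor factor $\ms{N}'$ to be an overconvergent $F$-isocrystal, the dual $\mb{D}_{\ms{X},Z}(\ms{N})$ of such an isocrystal is itself (a Tate twist of) an overconvergent $F$-isocrystal: by Corollary \ref{compduals} we have $\mb{D}_{\ms{X},Z}(\ms{N}) \cong \mr{sp}_*(N^\vee)(-d)$, and $N^\vee$ is an overconvergent $F$-isocrystal in the usual sense. Thus we may substitute $\ms{N}' := \mb{D}_{\ms{X},Z}(\ms{N})$ into (\ref{commdual}).

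Carrying this out, I would first apply (\ref{commdual}) with $\ms{N}'$ replaced by $\mb{D}_{\ms{X},Z}(\ms{N})$, which yields the isomorphism
\begin{equation*}
\mb{D}_{\ms{X},Z}(\ms{M})\otimesddag_{\mc{O}_{\ms{X},\mb{Q}}(^\dag Z)}\mb{D}_{\ms{X},Z}(\ms{N})
\cong \mb{D}_{\ms{X},Z}\bigl(\ms{M}\otimesddag_{\mc{O}_{\ms{X},\mb{Q}}(^\dag Z)}\mb{D}_{\ms{X},Z}(\mb{D}_{\ms{X},Z}(\ms{N}))\bigr)(-d).
\end{equation*}
Next, I would invoke biduality for coherent $F$-$\DdagQ{\ms{X}}(^\dag Z)$-modules, namely $\mb{D}_{\ms{X},Z}\circ\mb{D}_{\ms{X},Z}(\ms{N})\cong\ms{N}$, which holds compatibly with Frobenius (this is part of Virrion's duality theory, cf.\ \cite{Vir}). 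Substituting this inside and then applying the Tate twist $(d)$ to both sides produces exactly (\ref{twodualfunequ}).

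For the Frobenius compatibility, I would trace through each step: the isomorphism (\ref{commdual}) was already proved to be Frobenius-compatible (its derivation used Lemma \ref{isom1} and (\ref{isom3}), both of which carry explicit compatibility statements); biduality is likewise Frobenius-compatible; and the Tate twist $(d)$ by definition only rescales the Frobenius structure by $q^{-d}$, so the bookkeeping of twists comes out consistently, with the $(-d)$ from (\ref{commdual}) canceled by the outer $(d)$.

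The only real subtlety, and the step I would verify most carefully, is that the hypothesis of (\ref{commdual}) genuinely applies to $\ms{N}' = \mb{D}_{\ms{X},Z}(\ms{N})$: one must confirm that Corollary \ref{compduals} allows us to treat $\mb{D}_{\ms{X},Z}(\ms{N})$, which a priori lives in $D^b_{\mr{coh}}(\DdagQ{\ms{X}}(^\dag Z))$, as a single coherent sheaf of the type permitted in Lemma \ref{isom1} (i.e.\ coherent as an $\mc{O}_{\ms{X},\mb{Q}}(^\dag Z)$-module) and that the Tate twist $(-d)$ appearing there is exactly tracked through the derivation of (\ref{commdual}). Once this is verified, the rest is a clean chain of canonical isomorphisms.
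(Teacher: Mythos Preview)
Your proposal is correct and follows exactly the paper's approach: the paper's proof is the single sentence ``Take $\ms{N}':=\mb{D}_{\ms{X},Z}(\ms{N})$, and the isomorphism (\ref{commdual}) induces the isomorphism we are looking for,'' which is precisely the substitution-plus-biduality argument you spell out. Your added remarks on why $\mb{D}_{\ms{X},Z}(\ms{N})$ is again an overconvergent $F$-isocrystal (via Corollary \ref{compduals}) and on tracking Frobenius compatibility simply make explicit what the paper leaves implicit.
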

\begin{proof}
 Take $\ms{N}':=\mb{D}_{\ms{X},Z}(\ms{N})$, and the isomorphism
 (\ref{commdual}) induces the isomorphism we are looking for.
\end{proof}

\begin{rem*}
 We are {\em not} able to expect the isomorphism (\ref{twodualfunequ})
 in general. For example, consider the closed immersion
 $i\colon\{0\}\hookrightarrow\widehat{\mb{A}}_R$. Then taking 
 $\ms{M}=\ms{N}=i_+K$, we do not have such an isomorphism.
\end{rem*}

\nsubsection*{The K\"{u}nneth formula}
\subsection{}
Now, we will show the K\"{u}nneth formula. The ideas used to show the
formula is due to P. Berthelot.
Let $S$ be a $\mb{Z}_{(p)}$-scheme, and $X$, $Y$, $T$
be smooth formal schemes over $S$. Consider the following commutative
diagram:
\begin{equation}
 \label{kunnethdiag}
 \xymatrix@R=10pt@C=30pt{
  &Z\ar[dr]^{q_Y}\ar[dl]_{q_X}\ar[dd]_p&
  \\X\ar[dr]_{p_X}&&Y\ar[dl]^{p_Y}\\
 &T&}
\end{equation}
where $Z=X\times_TY$. Suppose that $p_X$ and $p_Y$ are smooth.

\subsection{}
\label{calckunneth}
Let $\mc{B}_T$, (resp.\ $\mc{B}_X$, $\mc{B}_Y$) be a commutative
$\mc{O}_T$-algebra (resp.\ $q^*_X\mc{B}_T$-algebra,
$q^*_Y\mc{B}_Y$-algebra) endowed with an action of $\Dmod{m}{T}$ (resp.\
$\Dmod{m}{X}$, $\Dmod{m}{Y}$) compatible with that of $\mc{O}_T$ (resp.\
$\mc{O}_X$, $\mc{O}_Y$). For
a $\mc{B}_X$-module $\ms{F}$ and a $\mc{B}_Y$-module $\ms{G}$, we put
$\ms{F}\boxtimes_{\mc{B}_T}\ms{G}:=q_X^*\ms{F}\otimes_{p^*\mc{B}_T}
q^*_Y\ms{G}$. Let
$\mc{B}_Z:=\mc{B}_X\boxtimes_{\mc{B}_T}\mc{B}_Y$. We
put $\Dtmod{m}{*}:=\mc{B}_*\otimes\Dmod{m}{*}$ where
$*\in\{X,Y,Z\}$. We note that $p^*\Dtmod{m}{T}$ is a
sub-$\mc{O}_{Z}$-algebra of $\Dtild{m}{Z}$. We get the following lemma.

\begin{lem*}
 We preserve the notation.

 (i)  There exists the canonical isomorphism
 \begin{equation*}
  \Dtmod{m}{Z}\cong q_X^{*}\Dtmod{m}{X}\otimes_{p^*\Dtmod{m}{T}}
   q_Y^{*}\Dtmod{m}{Y}.
 \end{equation*}

 (ii) There exists the canonical isomorphism
 \begin{equation*}
  \Dtmod{m}{T\leftarrow Z}\cong q_X^{*}\Dtmod{m}{T\leftarrow X}
   \otimes_{p^*\Dtmod{m}{T}}q_Y^{*}\Dtmod{m}{T\leftarrow Y}.
 \end{equation*}
\end{lem*}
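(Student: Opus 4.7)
My plan is first to reduce to a local computation. Both statements are local on $Z$, so after shrinking I may, using the smoothness of $p_X$ and $p_Y$, pick a system $\{t_1,\dots,t_n\}$ of local coordinates on $T$ and extend it to systems $\{t_i,x_1,\dots,x_a\}$ on $X$ and $\{t_i,y_1,\dots,y_b\}$ on $Y$; pulling back via $q_X,q_Y$ yields coordinates $\{t_i,x_j,y_k\}$ on $Z$. In these coordinates $\Dmod{m}{*}$ is free over $\mc{O}_*$ with the explicit basis of $m$-PD monomials in the corresponding partial derivatives, and tensoring with $\mc{B}_*$ gives analogous free bases of $\Dtmod{m}{*}$ over $\mc{B}_*$.

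For (i), I would use that $q_X,q_Y$ are smooth to get canonical ring maps $q_X^{-1}\Dmod{m}{X}\to\Dmod{m}{Z}$ and $q_Y^{-1}\Dmod{m}{Y}\to\Dmod{m}{Z}$, which extend $\mc{B}_Z$-linearly to maps from $q_X^{*}\Dtmod{m}{X}$ and $q_Y^{*}\Dtmod{m}{Y}$ into $\Dtmod{m}{Z}$. These maps agree on $p^{*}\Dtmod{m}{T}$, and crucially their images commute in $\Dtmod{m}{Z}$, since the $\partial_x$'s commute with the $\partial_y$'s. Thus the rule $P\otimes Q\mapsto\overline{P}\cdot\overline{Q}$ defines a canonical homomorphism from the right-hand side of (i) to $\Dtmod{m}{Z}$. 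To check that this is an isomorphism, I would compare bases: the balancing relation allows $\partial_t^{\langle\alpha\rangle}$ to move freely across the tensor, so the source is a free $\mc{B}_Z$-module with basis $\{\underline{\partial}_t^{\langle\alpha\rangle}\underline{\partial}_x^{\langle\beta\rangle}\otimes\underline{\partial}_y^{\langle\gamma\rangle}\}$, which maps bijectively onto the standard $\mc{B}_Z$-basis of $\Dtmod{m}{Z}$.

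For (ii), I would deduce the formula from (i) by the standard left--right side change. The geometric input is that the Cartesian square with $p_X,p_Y$ smooth gives $\Omega^{1}_{Z/T}\cong q_X^{*}\Omega^{1}_{X/T}\oplus q_Y^{*}\Omega^{1}_{Y/T}$, so $\omega_{Z/T}\cong q_X^{*}\omega_{X/T}\otimes_{\mc{O}_Z}q_Y^{*}\omega_{Y/T}$, and equivalently $\omega_Z\cong q_X^{*}\omega_X\otimes_{\mc{O}_Z}q_Y^{*}\omega_Y\otimes_{\mc{O}_Z}p^{*}\omega_T^{-1}$. Substituting these into the defining formulas for $\Dtmod{m}{T\leftarrow Z}$ and for the transfer bimodules $\Dtmod{m}{T\leftarrow X}$, $\Dtmod{m}{T\leftarrow Y}$, and then applying (i), yields (ii). Equally well, one could rerun the basis comparison from the previous paragraph in this twisted setting, replacing the differential operator rings by their transfer bimodules.

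The only real obstacle is bookkeeping: tracking the $\omega$-twists in (ii), and verifying that the tensor product in (i) genuinely factors through $p^{*}\Dtmod{m}{T}$. The latter reduces to the commutation of the $\partial_x$'s with the $\partial_y$'s in $\Dmod{m}{Z}$ and the mobility of the $\partial_t$'s under the base change to $Z$, both of which are manifest from the coordinate description.
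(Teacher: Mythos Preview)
Your proposal is correct and matches the paper's approach almost exactly: construct the canonical map in (i) from the natural ring maps, reduce to local coordinates, and compare bases; then deduce (ii) from the factorization $\omega_{Z/T}\cong q_X^{*}\omega_{X/T}\otimes_{\mc{O}_Z}q_Y^{*}\omega_{Y/T}$ and the explicit description of the transfer bimodules. One small point: in (ii) the paper does not actually invoke (i) but rather the trivial identity $p^{*}\Dtmod{m}{T}\cong q_X^{*}p_X^{*}\Dtmod{m}{T}\otimes_{p^{*}\Dtmod{m}{T}}q_Y^{*}p_Y^{*}\Dtmod{m}{T}$, since $\Dtmod{m}{T\leftarrow Z}\cong p^{*}\Dtmod{m}{T}\otimes_{\mc{O}_Z}\omega_{Z/T}$ involves $p^{*}\Dtmod{m}{T}$ rather than $\Dtmod{m}{Z}$; your alternative of rerunning the basis comparison in the twisted setting is fine and amounts to the same thing.
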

\begin{proof}
 The natural homomorphisms $q^*_X\Dtmod{m}{X}\rightarrow\Dtmod{m}{Z}$ and
 $q_Y^*\Dtmod{m}{Y}\rightarrow\Dtmod{m}{Z}$ induces the homomorphism
 $q_X^{*}\Dtmod{m}{X}\otimes_{p^*\Dtmod{m}{T}}q_Y^{*}\Dtmod{m}{Y}
 \rightarrow\Dtmod{m}{Z}$. To see that this is an isomorphism, we may
 assume that $T$ possesses a system of local coordinate and $X$ and $Y$
 possesses a system of local coordinate over $T$. Then proof is
 straightforward, and we leave the reader for the detail. Let us see
 (ii). We know the following isomorphisms
 \begin{equation*}
  \omega_{Z/T}\cong q_X^*\omega_{X/T}\otimes_{\mc{O}_Z}q_Y^*
   \omega_{Y/T}.
 \end{equation*}
 We get
 \begin{align*}
  &\Dtmod{m}{T\leftarrow Z}\cong p^*\Dtmod{m}{T}
  \otimes_{\mc{O}_Z}\omega_{Z/T}\\
  &\qquad\cong\Bigl(q^*_Xp_X^*\Dtmod{m}{T}\otimes_{p^*\Dtmod{m}{T}}
  q^*_Yp_Y^*\Dtmod{m}{T}\Bigr)\otimes_{\mc{O}_Z}
  (q_X^{*}\omega_{X/T}\otimes q_Y^*\omega_{Y/T})\\
  &\qquad\cong q_X^*\Dtmod{m}{T\leftarrow X}\otimes_{p^*\Dtmod{m}{T}}
  q_Y^{*}\Dtmod{m}{T\leftarrow Y}.
 \end{align*}
\end{proof}

\begin{lem}
 \label{flatlemkunneth}
 Let $\ms{M}$ be a left flat $\Dtmod{m}{X}$-module, and $\ms{N}$ be a
 left flat $\Dtmod{m}{Y}$-module. Then we get that
 $\ms{M}\boxtimes_{\mc{B}_T}\ms{N}$ is a flat left
 $\Dtmod{m}{Z}$-module.
\end{lem}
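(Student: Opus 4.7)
The assertion is stalk-local, so I reduce to the case where $T$, $X$, $Y$ are affine and admit systems of local coordinates which extend one another along $p_X$, $p_Y$. In this local situation, standard computations with $m$-PD differential operators show that $\Dtmod{m}{X}$ is free as a right $p_X^{-1}\Dtmod{m}{T}$-module, with basis the divided-power operators $\underline{\partial}^{\langle\underline{k}\rangle_{(m)}}$ in the $X/T$-fibre directions, and symmetrically $\Dtmod{m}{Y}$ is free as a left $p_Y^{-1}\Dtmod{m}{T}$-module. In particular, these fibre-direction operators commute with the operators of $p^{*}\Dtmod{m}{T}$ coming from the base and, across the two factors, with one another. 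This latter commutation is precisely the content encoded by Lemma \ref{calckunneth}(i): the ring isomorphism $\Dtmod{m}{Z}\cong q_X^{*}\Dtmod{m}{X}\otimes_{p^{*}\Dtmod{m}{T}}q_Y^{*}\Dtmod{m}{Y}$ means that for $a\in q_X^{*}\Dtmod{m}{X}$ and $b\in q_Y^{*}\Dtmod{m}{Y}$, one has $ab=ba$ inside $\Dtmod{m}{Z}$.

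For a right $\Dtmod{m}{Z}$-module $\ms{P}$, my plan is to establish the canonical identification
\begin{equation*}
 \ms{P}\otimes_{\Dtmod{m}{Z}}(\ms{M}\boxtimes_{\mc{B}_T}\ms{N})
 \cong \bigl(\ms{P}\otimes_{q_X^{*}\Dtmod{m}{X}}q_X^{*}\ms{M}\bigr)
 \otimes_{q_Y^{*}\Dtmod{m}{Y}}q_Y^{*}\ms{N},
\end{equation*}
where the iterated tensor on the right makes sense because the right $q_Y^{*}\Dtmod{m}{Y}$-action on $\ms{P}$ descends to $\ms{P}\otimes_{q_X^{*}\Dtmod{m}{X}}q_X^{*}\ms{M}$: indeed, the well-definedness of $(p\otimes m)\cdot b := pb\otimes m$ reduces to the identity $p(ab)=p(ba)$ in $\ms{P}$, which follows from the commutation noted above. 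Matching the two sides of the displayed isomorphism is then a routine comparison of the defining relations of the coequalisers: the relation from $\otimes_{\Dtmod{m}{Z}}$ corresponding to the generator $a\otimes_{p^{*}\Dtmod{m}{T}}b$ is obtained by composing the two relations $pa\otimes m\otimes n = p\otimes am\otimes n$ and $pb\otimes m\otimes n = p\otimes m\otimes bn$ of the right-hand side.

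Granting this identification, the conclusion is immediate: $q_X^{*}\ms{M}$ is flat over $q_X^{*}\Dtmod{m}{X}$, because the $q_X^{-1}\Dtmod{m}{X}$-flatness of $q_X^{-1}\ms{M}$ (consequence of the exactness of $q_X^{-1}$) is preserved by the base change $q_X^{*}\Dtmod{m}{X}\otimes_{q_X^{-1}\Dtmod{m}{X}}(-)$; symmetrically, $q_Y^{*}\ms{N}$ is flat over $q_Y^{*}\Dtmod{m}{Y}$. Consequently the right-hand side of the displayed identification is exact in $\ms{P}$, which proves the flatness of $\ms{M}\boxtimes_{\mc{B}_T}\ms{N}$ over $\Dtmod{m}{Z}$. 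The one non-trivial step is establishing the iterated tensor identification; this is bookkeeping about bimodule structures, which becomes transparent once local coordinates have been chosen and the commutation of the $X$- and $Y$-fibre operators is invoked.
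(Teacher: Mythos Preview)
Your argument has a genuine gap at the commutation step. It is \emph{not} true that $ab=ba$ for arbitrary $a\in q_X^{*}\Dtmod{m}{X}$ and $b\in q_Y^{*}\Dtmod{m}{Y}$ inside $\Dtmod{m}{Z}$. Both of these subrings contain $p^{*}\Dtmod{m}{T}$ as well as all of $\mc{B}_Z$: already $a=\partial_{t}$ (a $T$-direction derivation, lying in $q_X^{*}\Dtmod{m}{X}$) and $b=t$ (a function pulled back from $T$, lying in $q_Y^{*}\Dtmod{m}{Y}$) give $[a,b]=1$; even more simply, the $y$-coordinates belong to $q_X^{*}\Dtmod{m}{X}$ through $\mc{B}_Z$ and fail to commute with $\partial_{y}\in q_Y^{*}\Dtmod{m}{Y}$. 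The isomorphism of Lemma~\ref{calckunneth}(i) only says that the multiplication map $a\otimes b\mapsto ab$ from the balanced tensor product to $\Dtmod{m}{Z}$ is bijective; it does \emph{not} say the images of the two factors commute. Since your well-definedness check for the right $q_Y^{*}\Dtmod{m}{Y}$-action on $\ms{P}\otimes_{q_X^{*}\Dtmod{m}{X}}q_X^{*}\ms{M}$ rests precisely on this commutation, the iterated tensor product on the right-hand side of your displayed identification is not defined, and the argument collapses.

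The paper's proof is arranged to avoid exactly this difficulty. It first treats the absolute case $T=\mr{Spf}(R)$, where one may use $q_X^{-1}\Dtmod{m}{X}$ and $q_Y^{-1}\Dtmod{m}{Y}$; those two subrings of $\Dtmod{m}{X\times Y}$ involve disjoint sets of coordinates and derivations over the commutative base, so they genuinely commute and the iterated-tensor reasoning goes through. For a general smooth base $T$ it then passes through the closed immersion $i\colon Z=X\times_{T}Y\hookrightarrow W:=X\times Y$ and the transfer bimodule $\Dtmod{m}{Z\rightarrow W}$, writing
\[
\ms{F}\otimes_{\Dtmod{m}{Z}}(\ms{M}\boxtimes_{\mc{B}_T}\ms{N})
\cong(\ms{F}\otimes_{\Dtmod{m}{Z}}\Dtmod{m}{Z\rightarrow W})\otimes_{i^{-1}\Dtmod{m}{W}}i^{-1}(\ms{M}\boxtimes_{S}\ms{N}),
\]
and invoking flatness of $\Dtmod{m}{Z\rightarrow W}$ over $\Dtmod{m}{Z}$ together with the absolute case. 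This two-step reduction is what replaces the (false) global commutation you tried to use.
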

\begin{proof}
 In the case where $T=\mr{Spf}(R)$, the verification is left to the
 readers. To see the lemma, since the verification is local, we may
 assume that $T$ is affine. Let $i\colon Z=X\times_TY\hookrightarrow
 W:=X\times Y$ be the canonical inclusion. Since $T$ is separated, this
 is a closed immersion. By the $T=\mr{Spf}(R)$ case, we get that
 $\ms{M}\boxtimes_S\ms{N}$ is a flat $\Dtmod{m}{W}$-module.
 We put
 $\Dtmod{m}{Z\rightarrow W}:=i^*\Dtmod{m}{W}$, which is a
 $(\Dtmod{m}{Z},i^{-1}\Dtmod{m}{W})$-module as usual. Let $\ms{F}$ be a
 right $\Dtmod{m}{Z}$-module. Then we get
 \begin{equation*}
  \ms{F}\otimes_{\Dtmod{m}{Z}}(\ms{M}\boxtimes_{\mc{B}_T}\ms{N})
   \cong(\ms{F}\otimes_{\Dtmod{m}{Z}}\Dtmod{m}{Z\rightarrow
   W})\otimes_{i^{-1}\Dtmod{m}{W}}i^{-1}(\ms{M}\boxtimes_S
   \ms{N}).
 \end{equation*}
 Since $\Dtmod{m}{Z\rightarrow W}$ is flat over $\Dtmod{m}{Z}$ and
 $\ms{M}\boxtimes_S\ms{N}$ is flat over $\Dtmod{m}{W}$, we get the
 lemma.
\end{proof}

Using this preparation, we get the following K\"{u}nneth formula.
\begin{prop}
 We preserve the notation. Let $\ms{M}$ (resp.\ $\ms{N}$) be a complex
 in $D^-_{\mr{qc}}(\Dtmod{m}{X})$ (resp.\
 $D^-_{\mr{qc}}(\Dtmod{m}{Y})$). Then we get a canonical isomorphism in
 $D^-_{\mr{qc}}(\Dtmod{m}{T})$
 \begin{equation}
  \label{kunnethisom}
  p_+(\ms{M}\boxtimes^{\mb{L}}_{\mc{B}_T}\ms{N})\cong p_{X+}(\ms{M})
  \otimes^{\mb{L}}_{\mc{B}_T}p_{Y+}(\ms{N}).
 \end{equation}
\end{prop}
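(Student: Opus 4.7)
The plan is to factor the transfer bimodule $\Dtmod{m}{T \leftarrow Z}$ on $Z$ via Lemma \ref{calckunneth}, and then exploit the Cartesian diagram (\ref{kunnethdiag}) by a chain of projection formulae and flat base change. First, I would take flat resolutions $\ms{P}^\bullet \to \ms{M}$ over $\Dtmod{m}{X}$ and $\ms{Q}^\bullet \to \ms{N}$ over $\Dtmod{m}{Y}$, which exist since $\ms{M},\ms{N} \in D^-_{\mr{qc}}$. By Lemma \ref{flatlemkunneth}, each term $\ms{P}^i \boxtimes_{\mc{B}_T} \ms{Q}^j$ is a flat $\Dtmod{m}{Z}$-module, so the total complex $\ms{P}^\bullet \boxtimes_{\mc{B}_T} \ms{Q}^\bullet$ is a flat $\Dtmod{m}{Z}$-resolution computing $\ms{M}\boxtimes^{\mb{L}}_{\mc{B}_T}\ms{N}$. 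In particular, the derived tensor on the left-hand side of (\ref{kunnethisom}) may be replaced by the underived one applied to these resolutions.

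Next, applying Lemma \ref{calckunneth}(ii) together with (i), I would obtain
\begin{equation*}
 \Dtmod{m}{T\leftarrow Z}\otimes_{\Dtmod{m}{Z}}
 \bigl(q_X^{*}\ms{P}^\bullet\otimes_{p^*\Dtmod{m}{T}}q_Y^{*}\ms{Q}^\bullet\bigr)
 \cong q_X^{*}\ms{A}^\bullet \otimes_{p^*\mc{B}_T} q_Y^{*}\ms{B}^\bullet,
\end{equation*}
where $\ms{A}^\bullet := \Dtmod{m}{T\leftarrow X}\otimes_{\Dtmod{m}{X}}\ms{P}^\bullet$ and $\ms{B}^\bullet := \Dtmod{m}{T\leftarrow Y}\otimes_{\Dtmod{m}{Y}}\ms{Q}^\bullet$, the latter being complexes of $p_X^{-1}\mc{B}_T$- and $p_Y^{-1}\mc{B}_T$-modules on $X$ and $Y$ respectively. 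Since the flat resolutions may be chosen so that the resulting $\ms{A}^\bullet, \ms{B}^\bullet$ compute $p_{X+}\ms{M}$ and $p_{Y+}\ms{N}$ after $Rp_{X*},Rp_{Y*}$, it remains to identify $Rp_*(q_X^{*}\ms{A}^\bullet \otimes_{p^*\mc{B}_T}^{\mb{L}} q_Y^{*}\ms{B}^\bullet)$ with $Rp_{X*}\ms{A}^\bullet \otimes^{\mb{L}}_{\mc{B}_T} Rp_{Y*}\ms{B}^\bullet$.

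This last identification is the standard Künneth-type manipulation on a Cartesian square of flat morphisms: factoring $p = p_X \circ q_X = p_Y \circ q_Y$, I would apply successively the projection formula for $q_X$, flat base change $p_X^{*}Rp_{Y*}\cong Rq_{X*}q_Y^{*}$ along the Cartesian square (valid because $p_X$ and $p_Y$ are smooth, hence flat), and finally the projection formula for $p_X$, to chain
\begin{equation*}
 Rp_*(q_X^{*}\ms{A}^\bullet \otimes_{p^*\mc{B}_T}^{\mb{L}} q_Y^{*}\ms{B}^\bullet)
 \cong Rp_{X*}\bigl(\ms{A}^\bullet \otimes^{\mb{L}}_{p_X^{-1}\mc{B}_T}
 p_X^{*}Rp_{Y*}\ms{B}^\bullet\bigr)
 \cong Rp_{X*}\ms{A}^\bullet \otimes^{\mb{L}}_{\mc{B}_T} Rp_{Y*}\ms{B}^\bullet.
\end{equation*}

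The main technical obstacle is verifying that the projection formula and flat base change continue to hold after enriching from $\mc{O}$-modules to $\mc{B}_T$-linear complexes carrying one-sided $\Dtmod{m}{T}$-actions. However, since the transfer bimodules appear on the right and the relevant derived functors are computed on the underlying quasi-coherent complexes, quasi-coherence together with the flatness of $p_X$ and $p_Y$ (which ensures Tor-independence on the Cartesian square) should reduce all required isomorphisms to their classical $\mc{O}$-module counterparts applied componentwise; the $\Dtmod{m}{T}$-linearity of the resulting isomorphism then follows from naturality of the construction.
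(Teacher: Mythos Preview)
Your proof is correct and follows essentially the same approach as the paper: flat resolutions combined with Lemma \ref{flatlemkunneth} to compute the derived box product, Lemma \ref{calckunneth} to factor the transfer bimodule, and then a K\"unneth-type identification on the level of $Rp_*$. The only difference is cosmetic: where you spell out the last step via projection formula and flat base change along the Cartesian square, the paper simply invokes ``the K\"unneth formula for quasi-coherent sheaves'', which is precisely the statement your chain of isomorphisms proves.
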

\begin{proof}
 Let $\ms{F}$ (resp.\ $\ms{G}$) be a quasi-coherent $\Dtmod{m}{X}$-module
 (resp.\ $\Dtmod{m}{Y}$-module). Then by Lemma \ref{calckunneth}, we get
 a canonical isomorphism of $p^{-1}\Dtmod{m}{T}$-modules
 \begin{equation}
  \label{isomextprodmod}
   \Dtmod{m}{T\leftarrow Z}\otimes_{\Dtmod{m}{Z}}(\ms{F}
   \boxtimes_{\mc{B}_T}\ms{G})\cong(\Dtmod{m}{T\leftarrow X}
   \otimes_{\Dtmod{m}{X}}\ms{F})\boxtimes_{\mc{B}_T}
   (\Dtmod{m}{T\leftarrow Y}\otimes_{\Dtmod{m}{Y}}\ms{G}).
 \end{equation}
 Let $\ms{L}_\bullet$ be a flat resolution of $\ms{F}$ as a
 $\Dtmod{m}{X}$-module, and $\ms{M}_\bullet$ be a flat resolution of
 $\ms{G}$ as a $\Dtmod{m}{Y}$-module. Then we get that
 $\ms{L}_i\boxtimes_{\mc{B}_T}\ms{M}_j$ is a flat $\Dtmod{m}{Z}$-module
 for any $i$ and $j$ by Lemma \ref{flatlemkunneth}. Thus we get
 \begin{align*}
  &\Dtmod{m}{T\leftarrow Z}\otimes^{\mb{L}}_{\Dtmod{m}{Z}}(\ms{F}
   \boxtimes^{\mb{L}}_{\mc{B}_T}\ms{G})\cong
  \Dtmod{m}{T\leftarrow Z}\otimes_{\Dtmod{m}{Z}}(\ms{L}_\bullet
   \boxtimes^{\mb{L}}_{\mc{B}_T}\ms{M}_\bullet)\\
  &\qquad\cong(\Dtmod{m}{T\leftarrow X}\otimes_{\Dtmod{m}{X}}
  \ms{L}_\bullet)\boxtimes_{\mc{B}_T}(\Dtmod{m}{T\leftarrow Y}
   \otimes_{\Dtmod{m}{Y}}\ms{M}_\bullet)\\
  &\qquad\cong(\Dtmod{m}{T\leftarrow X}\otimes^{\mb{L}}_{\Dtmod{m}{X}}
  \ms{F})\boxtimes^{\mb{L}}_{\mc{B}_T}(\Dtmod{m}{T\leftarrow Y}
   \otimes^{\mb{L}}_{\Dtmod{m}{Y}}\ms{G}).
 \end{align*}
 By using the K\"{u}nneth formula for quasi-coherent sheaves, we get the
 proposition.
\end{proof}

\subsection{}
Now, let us consider Frobenius. Suppose $T$ is endowed with a
quasi-coherent $m$-PD-ideal $(\mf{a},\mf{b},\alpha)$, and
$p\in\mf{a}$. With this hypothesis, we are able to
consider Frobenius pull-back even if there are no liftings of relative
Frobenius morphisms.

\begin{lem*}
 Suppose that $X_0$ and $Y_0$ be the reductions of $X$ and $Y$
 respectively, and $X'$, $Y'$ be liftings of $X_0^{(s)}$,
 $Y_0^{(s)}$. We take $Z':=X'\times_TY'$. Then the isomorphism
 {\normalfont(\ref{kunnethisom})} is compatible with Frobenius
 isomorphisms. Moreover, if $\mf{a}$ is $m$-PD-nilpotent, it is
 compatible with Frobenius isomorphism even if there are no liftings.
\end{lem*}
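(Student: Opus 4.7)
The plan is to trace each step in the construction of the K\"{u}nneth isomorphism (\ref{kunnethisom}) and verify Frobenius-compatibility step by step; compatibility of the composite then follows formally. Assume first that liftings $F_X\colon X\to X'$ and $F_Y\colon Y\to Y'$ of the relative $s$-th Frobenius exist, so that $F_Z := F_X\times_T F_Y\colon Z\to Z'$ is also a Frobenius lifting. The construction of (\ref{kunnethisom}) proceeds via three ingredients: (a) the decomposition of transfer bimodules in Lemma \ref{calckunneth}(ii); (b) the passage to flat resolutions, using Lemma \ref{flatlemkunneth}; and (c) the classical K\"{u}nneth formula for quasi-coherent $\mc{O}$-modules. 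For (c), the isomorphism is natural with respect to any base change and is thus automatically Frobenius-compatible. For (b), the relevant Frobenius morphisms are flat, so flat resolutions are preserved and the derived tensor products commute with $F^*$ on the nose.

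The main step is therefore (a): one must check that the decomposition $\Dtmod{m}{T\leftarrow Z} \cong q_X^{*}\Dtmod{m}{T\leftarrow X}\otimes_{p^*\Dtmod{m}{T}} q_Y^{*}\Dtmod{m}{T\leftarrow Y}$ is compatible with the Frobenius identifications $F^{*}F^{\flat}\Dtmod{m}{T\leftarrow \bullet'}\cong \Dtmod{m+s}{T\leftarrow \bullet}$. Unwinding the definitions, this reduces to the compatibility of the canonical decomposition $\omega_{Z/T}\cong q_X^{*}\omega_{X/T}\otimes q_Y^{*}\omega_{Y/T}$ with the Berthelot isomorphism $\mu$ of \cite[2.4.2]{Ber2} applied to $X$, $Y$, and $Z$. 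Working locally with systems of coordinates $\{x_i\}$ on $X$ and $\{y_j\}$ on $Y$ over $T$ (so that $\{x_i\}\cup\{y_j\}$ is a system of coordinates on $Z$), Theorem \ref{maincalc} gives an explicit formula for each $\mu$. The desired commutativity then follows from the elementary observation that, since the Dwork operators for the $x_i$ and $y_j$ act on disjoint variables and commute, the Dwork operator $H\underline{z}^{-(\underline{q}-\underline{1})}$ on $Z$ factors as the product $H\underline{x}^{-(\underline{q}-\underline{1})}\cdot H\underline{y}^{-(\underline{q}-\underline{1})}$ of the corresponding Dwork operators on $X$ and $Y$; this matches precisely the tensor decomposition on the right-hand side of (a).

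For the second assertion, when $\mf{a}$ is $m$-PD-nilpotent the Frobenius pull-back functor on $D^-_{\mr{qc}}(\Dtmod{m+s}{\bullet})$ is defined canonically without reference to global liftings, by the Frobenius descent \cite[4.2.4]{Ber2} combined with the Taylor isomorphism \cite[2.1.5]{Ber2} used to glue local choices. Since both sides of the K\"{u}nneth isomorphism, and the isomorphism itself, are constructed by local procedures compatible with this gluing, we may verify compatibility locally on $T$, $X$, and $Y$ after choosing local lifts, thereby reducing to the first case. The principal subtlety to confirm is that the local Frobenius pull-backs used in each piece glue correctly under the Taylor isomorphism; this is a formal consequence of the uniqueness of Taylor comparisons and does not require further computation.
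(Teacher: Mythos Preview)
Your proposal is correct, and it supplies far more detail than the paper itself, whose entire proof reads: ``The verification uses only standard arguments, so we leave the details to the readers.'' There is, however, a genuine difference in approach worth noting. In the introduction the author explicitly remarks that the K\"{u}nneth formula is \emph{independent of the explicit computations} of \S\ref{sect1}; your argument for step (a), by contrast, invokes Theorem \ref{maincalc} and the explicit factorization of Dwork operators in coordinates. The route the paper has in mind is purely functorial: the compatibility of the decomposition $\omega_{Z/T}\cong q_X^{*}\omega_{X/T}\otimes q_Y^{*}\omega_{Y/T}$ with the isomorphisms $\mu_X$, $\mu_Y$, $\mu_Z$ follows from the construction of $\mu$ in \cite[2.4.2]{Ber2} via the compatibilities of $(\cdot)^{\sharp}$ and $(\cdot)^{\flat}$ with composition and products in \cite{Har}, and the remaining pieces (the ring decomposition of Lemma \ref{calckunneth}(i), the isomorphism (\ref{isomextprodmod}), and the Frobenius isomorphisms of \cite[3.4.2, 3.4.4]{Ber2}) are all natural in the obvious sense, so no coordinate calculation is required. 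Your approach has the merit of being directly checkable and self-contained within the paper, while the functorial approach explains why the author treats the lemma as routine and avoids the local-coordinate bookkeeping altogether.
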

\begin{proof}
 The verification uses only standard arguments, so we leave the details
 to the readers.
\end{proof}

\subsection{}
Finally, by taking inverse limit and inductive limit, we get the
following K\"{u}nneth formula for $\ms{D}^\dag$-modules.

\begin{prop*}
 Consider the following diagram
 \begin{equation*}
   \xymatrix@R=10pt@C=30pt{
  &\ms{Z}\ar[dr]^{q_Y}\ar[dl]_{q_X}\ar[dd]_p&
  \\\ms{X}\ar[dr]_{p_X}&&\ms{Y}\ar[dl]^{p_Y}\\
 &\ms{T}&}
 \end{equation*}
 where $\ms{T}$ is a smooth formal scheme, $p_X$ and $p_Y$ are smooth,
 and $\ms{Z}:=\ms{X}\times_{\ms{T}}\ms{Y}$. Let $D$ be a divisor of the
 special fiber of $\ms{T}$, $D_X$ (resp.\ $D_Y$) be a divisor of the
 special fiber of $\ms{X}$ (resp.\ $\ms{Y}$) such that $D_X\supset
 p^{-1}_X(D)$ (resp.\ $D_Y\supset p^{-1}_Y(D)$). Let $\ms{M}$ (resp.\
 $\ms{N}$) be a complex in
 $\underrightarrow{LD}^b_{\mb{Q},\mr{qc}}(\widehat{\ms{D}}
 ^{(\bullet)}_\ms{X}(D_X))$ (resp.\ $\underrightarrow{LD}^b
 _{\mb{Q},\mr{qc}}(\widehat{\ms{D}}^{(\bullet)}_\ms{Y}(D_Y))$). Then
 we get the canonical isomorphism
 \begin{equation*}
    p_+(\ms{M}\btimesdag{}^\dag_{\mc{O}_{\ms{T}}(^\dag D)}\ms{N})
     \cong p_{X+}(\ms{M})\otimesdag{}^\dag_{\mc{O}_{\ms{T}}(^\dag D)}
     p_{Y+}(\ms{N})
 \end{equation*}
 in $\underrightarrow{LD}^b_{\mb{Q},\mr{qc}}(
 \widehat{\ms{D}}^{(\bullet)}_\ms{T}(D))$. This isomorphism is
 compatible with the Frobenius isomorphisms.
\end{prop*}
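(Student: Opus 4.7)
The plan is to derive the formula from the finite-level K\"{u}nneth isomorphism of the previous proposition by first passing to the $\pi$-adic completion and then to the inductive limit over the level $m$; Frobenius compatibility will then follow from the preceding lemma.

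First I would fix a divisor on $\ms{Z}$ supporting both pull-backs, the natural choice being $E := q_X^{-1}(D_X) \cup q_Y^{-1}(D_Y)$, which contains $p^{-1}(D)$ because of the hypotheses $D_X \supset p_X^{-1}(D)$ and $D_Y \supset p_Y^{-1}(D)$. By functoriality of the $m$-PD envelope, a local computation on affines with systems of coordinates identifies $\Bmod{m}{Z_i}(E)$ with $\Bmod{m}{X_i}(D_X) \boxtimes_{\Bmod{m}{T_i}(D)} \Bmod{m}{Y_i}(D_Y)$ for each $i$ and each $m$. The previous proposition then gives the K\"{u}nneth isomorphism for $\Dtmod{m}{}$-complexes with quasi-coherent cohomology on the reductions modulo $\pi^{i+1}$, with these $\mc{B}$-algebras as coefficients.

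Next, I would apply $R\invlim_i$ and tensor with $\mathbb{Q}$ to obtain the isomorphism at the level of $\DtildQ{m}{}$-complexes on the formal schemes themselves. The quasi-coherence hypothesis is exactly what is needed for this limit to commute with $Rp_*$ and with the derived tensor products, by the standard arguments of \cite[3.2--3.4]{BerInt}. Passing to $\indlim_m$ inside the categories $\underrightarrow{LD}^b_{\mathbb{Q},\mr{qc}}$ then produces the desired isomorphism, since by construction in \cite[4.2--4.3]{BerInt} the operations $\btimesdag$, $\otimesdag$ and $p_+$ on these categories are the $2$-colimits of the corresponding level-$m$ operations.

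Frobenius compatibility is inherited directly from the preceding lemma: a Frobenius structure on an object of $\underrightarrow{LD}^b_{\mathbb{Q},\mr{qc}}$ is encoded by compatible structures at each level, and each of the canonical isomorphisms entering the proof is individually compatible with Frobenius, so the assembled isomorphism is too. The main obstacle I expect is purely technical, namely confirming that the push-forwards $Rp_*$, $Rq_{X,*}$, $Rq_{Y,*}$ commute with $R\invlim_i$ and $\indlim_m$ in the quasi-coherent setting and that the derived tensor products behave well under these limits; this is handled by standard Mittag--Leffler and coherence arguments in Berthelot's framework. A minor secondary check is the identification of the external product $\mc{B}$-algebra on $\ms{Z}$ with $\Bmod{m}{Z_i}(E)$, which reduces to an elementary local computation.
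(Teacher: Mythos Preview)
Your proposal is correct and matches the paper's approach exactly: the paper's entire proof is the single sentence ``by taking inverse limit and inductive limit'' preceding the statement, together with the Frobenius lemma just above, and you have simply expanded what those limits entail. The paper also records, in a remark following the proposition, an alternative route via the factorization through $\ms{X}\times\ms{Y}$ and the diagonal of $\ms{T}$ using \cite[2.1.9]{Caroc}, but this is offered as a second proof, not the main one.
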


\begin{rem}
 To get the proposition directly, we can also proceed as
 follows. Consider the following cartesian diagram.
 \begin{equation*}
  \xymatrix{
   \ms{X}\times_\ms{T}\ms{Y}\ar[r]^i\ar[d]_p\ar@{}[rd]|{\square}&
   \ms{X}\times\ms{Y}\ar[d]^q\\
  \ms{T}\ar[r]_{i_T}&\ms{T}\times\ms{T}}
 \end{equation*}
 We see easily that $q_+(\ms{M}\boxtimes^{\mb{L}}\ms{N})\cong
 q_+(\ms{M})\boxtimes^{\mb{L}}_{\mc{O}_{\ms{T}\times\ms{T}}}q_+(\ms{N})$. Let
 $\ms{S}:=\mr{Spf}(R)$. By using this and \cite[2.1.9]{Caroc}, we get
 \begin{equation*}
  i_{T+}p_+i^!(\ms{M}\btimesdag{}^\dag_{\ms{S}}\ms{N})\cong
   i_{T+}i_T^!(q_{X+}\ms{M}\btimesdag{}^\dag_{\ms{S}}q_{Y+}\ms{N})
   \cong i_{T+}(q_{X+}\ms{M}\otimesdag{}^\dag_{\mc{O}_{\ms{T}}(^\dag D)}
   q_{Y+}\ms{N}).
 \end{equation*}
 Taking $H^0i_T^!$, we get what we want.
\end{rem}


\nsubsection*{Compatibility of Frobenius pull-backs with relative
duality}

\subsection{}
\label{settingreldual}
We will show that the relative duality homomorphism by Virrion is
compatible with Frobenius pull-back.

First, let us fix the situation. We consider the situation
\ref{Frobsituations}.\ref{noFrobstr}. Let
$f\colon\ms{X}\rightarrow\ms{Y}$ be a
{\it proper} morphism of smooth formal schemes, and $W$ be a divisor of
the special fiber of $\ms{Y}$ such that $Z:=f^{-1}(W)$ is a
divisor. Under this situation, Virrion \cite{Vir2} defined the trace
homomorphism
\begin{equation}
 \label{tracemapvir}
 \mr{Tr}_{+,f}\colon f_+\omega_{\ms{X},\mb{Q}}[d_X]\rightarrow
  \omega_{\ms{Y},\mb{Q}}[d_Y]
\end{equation}
where $d_X$ and $d_Y$ denotes the dimension of $\ms{X}$ and
$\ms{Y}$. Using the trace map, for an object $\ms{E}$ in
$D^b_{\mr{perf}}(\DdagQ{\ms{X}}(^\dag Z))$, she also constructed the
relative duality isomorphism
\begin{equation}
 \label{reldualhom}
 \chi\colon\mb{D}_{\ms{Y},W}f_+(\ms{E})\xrightarrow{\sim}f_+
 \mb{D}_{\ms{X},Z}(\ms{E})
\end{equation}
in $D^b_{\mr{perf}}(\DdagQ{\ms{Y}}(^\dag W))$ (see also
\cite[1.2.7]{CarL}). Now, we assume that there exist liftings
$\ms{X}'$ and $\ms{Y}'$ of $X_0^{(s)}$ and $Y_0^{(s)}$ where $X_0$ and
$Y_0$ are special fibers as usual. We also assume that there exists a
lifting $f'\colon\ms{X}'\rightarrow\ms{Y}'$ of the morphism
$X_0^{(s)}\rightarrow Y_0^{(s)}$ induced by $f$. These assumptions
automatically hold when we consider the situation
\ref{Frobsituations}.\ref{Frobstr}.

\subsection{}
Before stating the theorem, we will prepare a commutative
diagram, which is needed in the proof of the compatibility. We freely
use the notation of \cite{Har}. Let $X$, $X'$, $Y$, $Y'$ be locally
noetherian schemes. Suppose we are given the following commutative
diagram.
\begin{equation*}
 \xymatrix{
  X'\ar[r]^{f'}\ar[d]_{u'}&Y'\ar[d]^u\\
 X\ar[r]_f&Y
  }
\end{equation*}
We assume that all the morphisms are proper, $u$ is finite flat, and
all the schemes admit dualizing complexes (cf.\ \cite[V, \S 2]{Har}). By
the transitivity of trace map \cite[VI, 4.2(a) TRA1]{Har}, we get the
following commutative diagram.
\begin{equation*}
 \xymatrix@C=70pt@R=20pt{
  u_*f'_*f'^{\triangle}u^{\triangle}\ar[r]^{\mr{Tr}_{f'}}
  \ar[d]_{\mr{Tr}_{u'}}&u_*u^\triangle\ar[d]^{\mr{Tr}_{u}}\\
 f_*f^{\triangle}\ar[r]_{\mr{Tr}_{f}}&\mr{id}
  }
\end{equation*}
Here, $\mr{Tr}_{u'}$ denotes the composition $u_*f'_*f'^{\triangle}
u^{\triangle}\cong f_*u'_*u'^{\triangle}f^{\triangle}\xrightarrow
{f_*\circ\mr{Tr}_{u'}\circ f^\triangle}f_*f^{\triangle}$.
We remind that this diagram consists of homomorphisms in the category of
{\it complexes} by \cite[VII, 2.1]{Har}.
Let $u^{\cdot}$ denotes the {\it functor} $u'$ in \cite[VI, 4.1]{Har} to
avoid confusions with the {\it morphism} $u'$. We note
that $u^\cdot\cong u^\flat\cong u^!$ in the derived category since $u$
is finite and flat. We have the canonical homomorphism
$\mr{id}\rightarrow u^\cdot u_*$. Let
$c\colon f'_*f'^{\triangle}u^{\triangle}\rightarrow u^\cdot u_*f'_*
f'^{\triangle}u^{\triangle}\rightarrow u^\cdot f_*f^{\triangle}$
where the second morphism is that induced by the left vertical morphism
$\mr{Tr}_{u'}$ in the diagram above. Let $c'\colon
u^{\triangle}\rightarrow u^\cdot u_*u^{\triangle}\rightarrow u^\cdot$
where the second homomorphism is the trace map. Taking $u^{\cdot}$ to
the above diagram, we get the following commutative diagram of
complexes.
\begin{equation}
 \label{dualtr}
 \xymatrix@C=70pt@R=20pt{
  f'_*f'^{\triangle}u^{\triangle}\ar[r]^{\mr{Tr}_{f'}\circ
  u^{\triangle}}\ar[d]_{c}&u^{\triangle}\ar[d]^{c'}\\
 u^\cdot f_*f^{\triangle}\ar[r]_{u^{\cdot}\circ\mr{Tr}_{f}}
  &u^\cdot}
\end{equation}
The morphism $\mr{id}\rightarrow u^\cdot u_*\cong u^\flat u_*$ is
nothing but the adjunction homomorphism, and $c'$ is the identity in the
derived category.

\begin{thm}
 \label{virdualcom}
 We preserve the assumptions and notations of paragraph
 {\normalfont\ref{settingreldual}}.
 Then the relative duality isomorphism {\normalfont (\ref{reldualhom})}
 is compatible with Frobenius pull-back.
\end{thm}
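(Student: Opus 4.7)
The plan is to reduce the compatibility of $\chi$ with Frobenius to the compatibility of Virrion's trace map $\mathrm{Tr}_{+,f}$ with Frobenius, and then to derive the latter from the commutative diagram \eqref{dualtr} applied to the Frobenius square. By construction of $\chi$ in \cite{Vir2} (see also \cite[1.2.7]{CarL}), the isomorphism $\chi$ is obtained from $\mathrm{Tr}_{+,f}$ via adjunction between $f_+$ and $f^!$ (which, on the derived category of perfect complexes, is supplied by Virrion's biduality). Frobenius pull-back is compatible with this adjunction thanks to the Frobenius descent theorem \cite[4.1.3]{Ber2}, so once $\mathrm{Tr}_{+,f}$ is known to be Frobenius compatible, the same holds for $\chi$.

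The central step is to verify the commutativity of the diagram
\begin{equation*}
\xymatrix@C=40pt{
F^*_{\ms{Y}}f'_+\omega_{\ms{X}',\mb{Q}}[d_X]\ar[r]^{F^*\mathrm{Tr}_{+,f'}}\ar[d]&
F^*_{\ms{Y}}\omega_{\ms{Y}',\mb{Q}}[d_Y]\ar[d]\\
f_+\omega_{\ms{X},\mb{Q}}[d_X]\ar[r]_{\mathrm{Tr}_{+,f}}&\omega_{\ms{Y},\mb{Q}}[d_Y]
}
\end{equation*}
where the vertical arrows come from the Berthelot isomorphism $\mu$ of Theorem \ref{maincalc} (which implements $F^*\omega_{\ms{Y}'}\cong\omega_{\ms{Y}}$ in the ring language of $\ms{D}$-modules, and the push-forward analogue via \eqref{puchcommisom}). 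Here I would first reduce modulo $\pi^{i+1}$ and work with $\Dmod{m}{}$ at a fixed level $m$, so that I can invoke Hartshorne's machinery. The next step is to identify Virrion's arithmetic trace $\mathrm{Tr}_{+,f}$ with the residual/Hartshorne trace $\mathrm{Tr}_{f}$ for the underlying morphism of ringed spaces, using the Spencer resolution and the fact that $\mathrm{Tr}_{+,f}$ was built precisely from Hartshorne's trace applied to canonical sheaves (\cite[V, 1.9]{Vir2}).

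Once Virrion's trace is identified with Hartshorne's trace, the commutativity follows from applying diagram \eqref{dualtr} to the Frobenius square
\begin{equation*}
\xymatrix{
X\ar[r]^f\ar[d]_{F_X}&Y\ar[d]^{F_Y}\\
X'\ar[r]_{f'}&Y'
}
\end{equation*}
(taking the roles $u=F_Y$, $u'=F_X$ there), which asserts exactly that the transitivity isomorphism for Hartshorne traces identifies the trace of $f$ with the trace of $f'$ composed with the trace of the finite flat Frobenius. The extra input needed is that the isomorphism $\omega_{\ms{X},\mb{Q}}\cong F^\flat_X\omega_{\ms{X}',\mb{Q}}$ supplied by $\mu_X$ agrees with the canonical comparison $c'\colon u^{\triangle}\to u^{\cdot}$ of \eqref{dualtr}; this is essentially the content of Proposition \ref{mainpropbalhar}, together with the fact that $c'$ is the identity in the derived category for finite flat $u$.

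The main obstacle, and the only non-formal point, will be the identification in the previous paragraph: matching the abstractly defined arrow $c'$ in Hartshorne's formalism with the concrete Berthelot isomorphism $\mu$ on canonical sheaves. This is exactly the type of bookkeeping carried out in Section \ref{sect1}, and I expect the explicit formula of Theorem \ref{maincalc} to be used in precisely the same way it was used in the proof of Lemma \ref{lemcrucons}: one verifies the identification locally after choosing coordinates, reducing to a computation with Dwork's projector $H\underline{x}^{-(\underline{q}-\underline{1})}$ via Garnier's formula \eqref{Garisom}. Once this identification is in place, no further Frobenius twist appears (contrary to Theorem \ref{smoothpoincare}, where a twist $(d)$ was needed), because the relative duality isomorphism involves $f_+$ and $\mb{D}$ symmetrically on source and target so that the twists cancel.
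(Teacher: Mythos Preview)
Your reduction to the trace map and the intent to use diagram \eqref{dualtr} are correct, but there is a genuine gap in the step where you apply \eqref{dualtr} to the Frobenius square. That diagram requires a \emph{commutative} square $F_Y\circ f=f'\circ F_X$ of actual morphisms of schemes. As recorded in paragraph \ref{settingpush}, such compatible Frobenius liftings exist only locally on $X$, not on $Y$. Since $f$ is proper, localizing on $Y$ does not make $X$ affine, so you cannot arrange this square in general. The paper deals with exactly this obstruction: after splitting off the closed-immersion case (handled by Caro \cite[2.2.7]{CarDF}) and reducing to $f$ smooth proper, it shows the problem is local on $Y$, observes that \eqref{dualtr} applies directly only in the special case where compatible liftings exist (in particular for $f$ finite \'etale), and then---this is the main new idea---reduces the general smooth case to the finite \'etale one via Cousin complexes. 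Concretely, for each point $x\in X$ over $y\in Y$ one chooses a small open on which $\overline{\{x\}}\to\overline{\{y\}}$ is finite \'etale, builds pointwise trace maps $\mathrm{Tr}^i_x$ on local cohomology that are Frobenius compatible by the finite \'etale case, and checks via a spectral-sequence comparison that summing these reproduces Virrion's trace $\mathrm{Tr}_{+,f}$.

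A secondary remark: the explicit Frobenius calculation (Theorem \ref{maincalc}, Lemma \ref{lemcrucons}) is \emph{not} used in the paper's proof of this theorem. The argument is entirely within Grothendieck duality; the identification of the vertical arrows with Hartshorne's $c,c'$ in \eqref{dualtr} is the abstract statement that $\mu_X$ realizes $\omega_X\cong F_X^\flat\omega_{X'}$, and no coordinate computation with the Dwork projector is required. So the ``main obstacle'' you flag is not the actual difficulty---the real issue is the non-existence of a global commutative Frobenius square.
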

\begin{proof}
 Recall that $d_X$ (resp.\ $d_Y$) was the dimension of $\ms{X}$ (resp.\
 $\ms{Y}$). We let $d$ to be the relative dimension, namely
 $d:=d_X-d_Y$. We denote by $f^{(m)}_+$ the push-forward of level
 $m$. We will use the push-forward for right modules as in
 \cite[III]{Vir2}: recall that $f_+^{(m)}(\ms{M}):=
 Rf_*(\ms{M}\otimes^{\mb{L}}_{\DcompQ{m}{\ms{X}}}\DcompQ{m}
 {\ms{X}\rightarrow\ms{Y}})$ for a
 coherent right $\DcompQ{m}{\ms{X}}$-module $\ms{M}$. It suffices to
 show that the trace homomorphism (\ref{tracemapvir}) is compatible with
 Frobenius pull-backs by \cite[1.5]{CarDF}. By the result of Caro
 \cite[2.2.7]{CarDF}, we know the compatibility in
 the case where $f$ is a closed immersion. Thus, we are reduced to
 showing the case where $f$ is smooth using the standard
 factorization $\ms{X}\rightarrow\ms{X}\times\ms{Y}\rightarrow\ms{Y}$
 and the transitivity of the trace map \cite[III, 5.5]{Vir2}. In the
 following, we assume $f$ to be smooth.

 Let $i$ be a non-negative integer, and $X$ and $Y$ be the reductions of
 $\ms{X}$ and $\ms{Y}$ over $R_i$.
 It suffices to show the compatibility for these $X$ and
 $Y$, namely we need to prove the commutativity of the following
 diagram.
 \begin{equation*}
  \xymatrix@C=70pt{
   f^{(m+s)}_+\omega_{X}[d_X]\ar[r]^<>(.5){\mr{Tr}_{+,f}}
   \ar[d]&\omega_{Y}[d_Y]\ar[d]\\
  F_Y^{\flat}f'^{(m)}_+\omega_{X'}[d_X]\ar[r]_<>(.5)
   {F_Y^{\flat}\mr{Tr}_{+,f'}}&F_Y^{\flat}\omega_{Y'}[d_Y]}
 \end{equation*}
 Since $f$ is smooth and the relative dimension is $d$, we get that for
 any point $y$ in $\ms{Y}$, the dimension of $f^{-1}(y)$ is equal to
 $d$. Since $f$ is proper, $R^if_*(\ms{F})=0$ for any  quasi-coherent
 sheaf $\ms{F}$ and $i>d$ (cf.\ for example \cite[I Lemma
 (3)]{Kl}). Thus, by the definition of $f_+$, the canonical homomorphism
 \begin{equation*}
  R^df_*\omega_{X}\twoheadrightarrow H^df^{(m)}_+
   \omega_{X}
 \end{equation*}
 is surjective, and $H^if^{(m)}_+\omega_X=0$ for $i>d$, in other words
 $H^i(f^{(m)}_+\omega_{\ms{X},\mb{Q}}[d_X])=0$ for $i>-d_Y$.
 Thus, we get
 \begin{equation*}
  \mr{Hom}(f^{(m)}_+\omega_{X}[d_X],\omega_{Y}[d_Y])
   \cong\mr{Hom}(H^df^{(m)}_+\omega_{X},\omega_{Y}).
 \end{equation*}
 This implies that to give the trace map
 $f^{(m)}_+\omega_X[d_X]\rightarrow\omega_{Y}[d_Y]$ is equivalent to
 giving the homomorphism
 $H^df^{(m)}_+\omega_{X}\rightarrow\omega_{Y}$. We can retrieve the
 trace map by the composition
 \begin{equation*}
  f^{(m)}_+\omega_{X}[d_X]\rightarrow H^df^{(m)}_+\omega_{X}
   [d_Y]\rightarrow\omega_{Y}[d_Y].
 \end{equation*}
 Note that we have the following commutative diagram by \cite[III,
 5.4]{Vir2}.
 \begin{equation*}
  \xymatrix{
   R^df_*\omega_{X}\ar[rr]\ar[dr]_{\mr{Tr}_f}&&
   H^df^{(m)}_+\omega_{X}\ar[dl]^{\mr{Tr}_{+,f}}\\&
   \omega_{Y}&
   }
 \end{equation*}
 It suffices to see that the following diagram is commutative.
 \begin{equation*}
  \xymatrix{
   H^{d}f^{(m+s)}_+\omega_{X}\ar[r]\ar[d]&\omega_{Y}
   \ar[d]\\F_Y^\flat H^{d}f'^{(m)}_+\omega_{X'}\ar[r]&F^\flat_Y
   \omega_{Y'}
   }
 \end{equation*}
 This shows that the problem is local with respect to $Y$.
 Assume that there exist liftings $F_X\colon X\rightarrow X'$ and
 $F_Y\colon Y\rightarrow Y'$ of relative Frobenius morphisms such that
 the two morphisms $f'\circ F_X, F_Y\circ f\colon X\rightarrow Y'$
 coincide. Under this particular situation, the theorem is reduced to
 showing the following diagram
 \begin{equation*}
  \xymatrix@C=60pt{
   R^df_*\omega_{X}\ar[r]^<>(.5){\mr{Tr}_f}\ar[d]&
   \omega_{Y}\ar[d]\\F_Y^\flat R^df'_*\omega_{X'}
   \ar[r]_<>(.5){F^\flat_Y\mr{Tr}_{f'}}&F^\flat_Y
   \omega_{Y'}
   }
 \end{equation*}
 is commutative, where the left vertical homomorphism is the base change
 homomorphism. This is nothing but (\ref{dualtr}). In particular, the
 theorem holds in the case where $f$ is finite \'{e}tale.

 Let $x$ be a point of $X$ (which may not be closed). For a right
 $\Dmod{m}{X}$-module $\ms{F}$ and an integer $i$, we put
 \begin{equation*}
   \ms{H}^{(m),i}_{x}(\ms{F}):=\indlim_{x\in U}~j_{U*}\,R^i\Gammas^{(m)}
   _{U\cap\overline{\{x\}}}(\ms{F})
 \end{equation*}
 where $U$ runs over open neighborhoods of $x$, $j_U\colon
 U\hookrightarrow X$ is the inclusion, and $\Gammas^{(m)}$
 denotes the level $m$ local cohomology functor defined in
 \cite[4.4.4]{BerInt}. We note that
 \begin{equation}
  \label{indlimcoinloccoh}
  \indlim_{m'}\ms{H}^{(m'),i}_{x}(\ms{F})\cong
  \ms{H}^{i}_{x}(\ms{F})
 \end{equation}
 by \cite[1.5.4]{Ber1}, and $\ms{H}^{i}_{x}(\omega_X)$ can be seen as a
 quasi-coherent right $\Dmod{m}{X}$-module. The Frobenius isomorphism 
 $\ms{H}^{(m+s),i}_x(\omega_X)\xrightarrow{\sim}F_X^\flat
 \ms{H}^{(m),i}_x(\omega_{X'})$ induces the isomorphism
 \begin{equation*}
  \phi^i_x\colon\ms{H}^i_x(\omega_X)\xrightarrow{\sim}F_X^\flat
   \ms{H}^i_x(\omega_{X'}).
 \end{equation*}
 This induces the following isomorphism of Cousin complexes (cf.\
 \cite[IV, \S 3]{Har}).
 \begin{equation*}
  \xymatrix{
   \omega_X\ar[r]\ar[d]_\sim&\ms{H}^0_{Z_0/Z_1}(\omega_X)\ar[r]
   \ar[d]_\sim^{\sum\phi^0_x}&\ms{H}^1_{Z_1/Z_2}(\omega_X)\ar[r]
   \ar[d]_\sim^{\sum\phi^1_x}&\dots\\
  F_X^\flat\omega_{X'}\ar[r]&F_X^\flat\ms{H}^0_{Z_0/Z_1}(\omega_{X'})
   \ar[r]&F_X^\flat\ms{H}^1_{Z_1/Z_2}(\omega_{X'})\ar[r]&\dots
   }
 \end{equation*}
 We denote by $\ms{C}_X^\bullet$ the Cousin complex of $\omega_X$.
 
 Let $y$ be a point of codimension $i$ in $Y$, and let $x$ be a closed
 point of the fiber $f^{-1}(y)$ in $X$.  For closed subsets of schemes,
 let us endow with the reduced induced scheme structure.
 Then since $f$ is smooth, there exists
 an open subscheme $U$ of $X$, such that $f'\colon
 Z:=\overline{\{x\}}\cap U\rightarrow W:=\overline{\{y\}}\cap V$ is
 finite \'{e}tale where $V:=f(U)$, and $W$ is smooth. Consider the
 following commutative diagram.
 \begin{equation*}
  \xymatrix{
   Z\ar[r]^{i}\ar[d]_{f'}&U\ar[d]^{f}\\W\ar[r]_{i'}&V}
 \end{equation*}
 The trace map $\mr{Tr}_{+,f'}\colon
 f'^{(m)}_+\omega_Z\rightarrow\omega_W$ can be identified with the usual
 trace map $\mr{Tr}_{f'}$ by the isomorphism
 $f'_*\cong f'_+$ since $f'$ is finite \'{e}tale. Since
 $\ms{H}^{(m),i}_Z(\omega_X)\cong i^{(m)}_+(\omega_Z)$ and 
 $\ms{H}^{(m),i}_W(\omega_Y)\cong i'^{(m)}_+(\omega_W)$, the functor
 $i'^{(m)}_+\circ\mr{Tr}_{+,f'}$ induces the homomorphism
 \begin{equation}
  \label{pointtracelevelm}
   H^0f^{(m)}_+\ms{H}^{(m),d+i}_{x}(\omega_X)\rightarrow
   \ms{H}^{(m),i}_{y}(\omega_Y)
 \end{equation}
 by taking inductive limit over $V$. Note that since $f'$ is finite
 \'{e}tale, this trace map is compatible with Frobenius by the result of
 the first part of this proof. By taking the inductive to
 (\ref{pointtracelevelm}) over $m$ and using the identification
 (\ref{indlimcoinloccoh}), we get a homomorphism
 \begin{align}
  \label{pointwisetrace}
  \mr{Tr}^i_x\colon&H^0f^{(m)}_+\ms{H}^{d+i}_{x}(\omega_X)\cong
  \indlim_{m'}H^0f^{(m)}_+\ms{H}^{(m'),d+i}_{x}(\omega_X)\\
  \notag&\qquad\rightarrow
  \indlim_{m'} H^0f^{(m')}_+\ms{H}^{(m'),d+i}_{x}(\omega_X)
  \xrightarrow{(\ref{pointtracelevelm})}\indlim_{m'}
  \ms{H}^{(m'),i}_{y}(\omega_Y)\cong\ms{H}^{i}_{y}(\omega_Y).
 \end{align}
 This homomorphism is compatible with Frobenius as well since
 (\ref{pointtracelevelm}) is. The composition
 \begin{equation}
  \label{compcuspot}
   f_*\ms{H}^{d+i}_{x}(\omega_X)\rightarrow H^0f^{(m)}_+
   \ms{H}^{d+i}_{x}(\omega_X)\xrightarrow{\mr{Tr}^i_x}
   \ms{H}^{i}_{y}(\omega_Y)
 \end{equation}
 is the usual trace map by construction.
 
 There exists the surjection
 $\Dmod{m}{X}\rightarrow\Dmod{m}{X\rightarrow Y}$ sending $1$ to
 $1\otimes1$. This gives us a flat resolution
 $\ms{L}^\bullet\rightarrow\Dmod{m}{X\rightarrow Y}$ such that
 $\ms{L}^0:=\Dmod{m}{X}$ and $\ms{L}^i=0$ for $i>0$. The double complex
 $f_*(\ms{C}^\bullet_X\otimes_{\Dmod{m}{X}}\ms{L}^\bullet)$ induces the
 spectral sequence
 \begin{equation*}
  E^{a,b}_1=f_*H_{-b}(\ms{H}^a_{Z_a/Z_{a+1}}(\omega_X)\otimes
   ^{\mb{L}}\Dmod{m}{X\rightarrow Y})\Rightarrow
   H^{a+b}f^{(m)}_+\omega_X.
 \end{equation*}
 Note that $E^{a,b}_1\cong H_{-b}f^{(m)}_+\ms{H}^a_{Z_a/Z_{a+1}}
 (\omega_X)$. The trace map (\ref{compcuspot}) $E_1^{d+i,0}\rightarrow
 \ms{H}^i_{Z_i/Z_{i+1}}(\omega_Y)$ induces the homomorphism of complexes
 $f_*(\ms{C}_X^{d+i}\otimes\ms{L}_{\bullet})\rightarrow
 \ms{H}^i_{Z_i/Z_{i+1}}(\omega_Y)=\ms{C}^i_Y$, and this induces the
 homomorphism of double complexes
 \begin{equation}
  \label{homdoublecomptra}
   f_*(\ms{C}_X^{d+\bullet}\otimes\ms{L}^{\bullet})\rightarrow
   \ms{C}^{\bullet}_Y.
 \end{equation}
 This homomorphism defines the homomorphism
 $\gamma\colon f_+\omega_X[d_X]\rightarrow\omega_Y[d_Y]$. Let us show
 that $\gamma=\mr{Tr}_{+,f}$.
 The canonical homomorphism
 $\ms{C}_X^\bullet\rightarrow\ms{C}_X^\bullet\otimes\ms{L}^0$ induces the
 homomorphism of double complexes
 \begin{equation}
  \label{homdoublecomptra2}
   f_*\ms{C}^\bullet_X\rightarrow
   f_*(\ms{C}_X^{d+\bullet}\otimes\ms{L}^{\bullet}).
 \end{equation}
 Let 
 \begin{equation*}
  {}_{\mr{I}}E^{a,b}_1:=
   \begin{cases}
    f_*\ms{C}^a_X&\mbox{if $b=0$}\\
    0&\mbox{if $b\neq0$}
   \end{cases}
   \hspace{2cm}
   {}_{\mr{II}}E^{a+d,b}_1:=
  \begin{cases}
   \ms{C}^a_Y&\mbox{if $b=0$}\\
   0&\mbox{if $b\neq0$}.
  \end{cases}
 \end{equation*}
 Then we
 get the trivial spectral sequences ${}_{\mr{I}}E_1^{a,b}\Rightarrow
 R^{a+b}f_*\omega_X$ and
 ${}_{\mr{II}}E_1^{a,b}\Rightarrow{}_{\mr{II}}E^n$ where
 ${}_{\mr{II}}E^d:=\omega_Y$ and $0$ otherwise. The homomorphisms
 (\ref{homdoublecomptra}) and (\ref{homdoublecomptra2}) induce the
 homomorphisms ${}_{\mr{I}}E_1^{a,b}\rightarrow E^{a,b}_1\rightarrow
 {}_{\mr{II}}E_1^{a,b}$ of spectral sequences.
 We get the following homomorphisms of complexes of $E_1$-terms of these
 spectral sequences.
 \begin{equation}
  \label{commtraceres}
  \xymatrix{
   f_*\ms{H}^{d-1}_{Z_{d-1}/Z_{d}}(\omega_X)\ar[r]\ar[d]&
   f_*\ms{H}^{d}_{Z_d/Z_{d+1}}(\omega_X)\ar[r]\ar[d]&
   f_*\ms{H}^{d+1}_{Z_{d+1}/Z_{d+2}}(\omega_X)\ar[d]\ar[r]&\dots&
   Rf_*\omega_X\ar[d]\\
  E^{d-1,0}_1\ar[r]\ar[d]&
   E^{d,0}_1\ar[r]\ar[d]^{\sum\mr{Tr}^0_x}&
   E^{d+1,0}_1\ar[d]^{\sum\mr{Tr}^1_x}\ar[r]&\dots&
   f^{(m)}_+\omega_X\ar[d]^{\gamma}\\
  0\ar[r]&
   \ms{H}^{0}_{Z_0/Z_1}(\omega_Y)\ar[r]&
   \ms{H}^{1}_{Z_1/Z_2}(\omega_Y)\ar[r]&\dots&\omega_Y[-d]
   }
 \end{equation}
 Here the right homomorphisms are the homomorphisms of complexes of
 corresponding spectral sequences.
 To show that $\gamma$ is the trace map, it suffices to show that
 $H^{-d_Y}(\gamma)$ is the trace map. Consider the homomorphisms
 \begin{equation*}
  R^df_*\omega_X\rightarrow H^df^{(m)}_+\omega_X
   \xrightarrow{H^d\gamma}\omega_Y
 \end{equation*}
 induced by the $E^d$-terms of the homomorphism of the spectral
 sequences. The composition is the usual trace map $\mr{Tr}_f$ since
 (\ref{compcuspot}) is (cf.\ \cite[VI, 4.2]{Har} for the construction of
 the classical trace map). Moreover the
 first homomorphism is the natural map which is surjective. Thus the
 second homomorphism is nothing but the trace map of Virrion.
 
 Since it suffices to see the Frobenius compatibility for $H^d\gamma$,
 this shows that it suffices to show the Frobenius compatibility for the
 lower homomorphism of complexes of the diagram
 (\ref{commtraceres}). Thus it is reduced to showing the Frobenius
 compatibility for the homomorphism
 \begin{equation*}
  \sum\mr{Tr}^i_x\colon
   E_1^{d+i,0}\rightarrow\ms{H}^i_{Z_i/Z_{i+1}}(\omega_Y)
 \end{equation*}
 for any $i$. It is enough to show the compatibility for
 $\mr{Tr}^i_x$ for each $x\in X$ and $i$, which we have already
 verified at (\ref{pointwisetrace}).
\end{proof}

\subsection{}
\label{adointcomp}
We preserve the assumptions and notations from paragraph
\ref{settingreldual}.
Let $\ms{M}'$ be an object in $D^b_{\mr{coh}}(\DdagQ{\ms{X}'}(^\dag
Z))$, and $\ms{N}'$ be an object in $D^b_{\mr{coh}}(\DdagQ{\ms{Y}'}
(^\dag W))$. We put $\ms{M}:=F_X^*\ms{M}$ and $\ms{N}:=F^*_X\ms{N}'$.
Since $F^*_Y$ induces an equivalence between $D^b_{\mr{coh}}
(\DdagQ{\ms{Y}'}(^\dag W))$ and $D^b_{\mr{coh}}(\DdagQ{\ms{Y}}
(^\dag W))$, we get
\begin{align*}
 R\mr{Hom}_{\DdagQ{\ms{Y}}(^\dag W)}(f_+(\ms{M}),\ms{N})
  &\xrightarrow{\sim}R\mr{Hom}_{\DdagQ{\ms{Y}}(^\dag W)}
  (F_Y^*f_+(\ms{M}'),F_Y^*\ms{N}')\\&\xrightarrow{\sim}
  R\mr{Hom}_{\DdagQ{\ms{Y}'}(^\dag W)}(f'_+(\ms{M}'),\ms{N}')
\end{align*}
where the first isomorphism is induced by the isomorphism of functors
$F^*_Y\circ f'_+\cong f_+\circ F^*_X$. In the same way, we get an
isomorphism $R\mr{Hom}_{\DdagQ{\ms{X}}(^\dag Z)}(\ms{M},f^!\ms{N})
\xrightarrow{\sim}R\mr{Hom}_{\DdagQ{\ms{X}}(^\dag Z)}(\ms{M}',f^!
\ms{N}')$.

\begin{cor*}
 We preserve the assumptions and notations. The adjoint isomorphism is
 compatible with Frobenius, in other words, the following diagram
 is commutative.
 \begin{equation*}
  \xymatrix{
   R\mr{Hom}_{\DdagQ{\ms{Y}}(^\dag W)}(f_+(\ms{M}),\ms{N})\ar[d]\ar[r]
   &R\mr{Hom}_{\DdagQ{\ms{X}}(^\dag Z)}(\ms{M},f^!\ms{N})\ar[d]\\
  R\mr{Hom}_{\DdagQ{\ms{Y}}(^\dag W)}(f'_+(\ms{M}'),\ms{N}')\ar[r]&
   R\mr{Hom}_{\DdagQ{\ms{X}}(^\dag Z)}(\ms{M}',f^!\ms{N}')
   }
 \end{equation*}
 Here the horizontal isomorphism are the adjoint formula isomorphism
 {\normalfont \cite[IV, 4.2]{Vir}}, and the vertical isomorphisms are
 those we have just defined.
\end{cor*}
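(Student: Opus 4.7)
Virrion's adjoint isomorphism \cite[IV, 4.2]{Vir} is constructed from the trace morphism $\mr{Tr}_{+,f}$ (equivalently, from the relative duality isomorphism $\chi$ of \eqref{reldualhom}) by purely formal manipulations: the identification of $f^!\ms{N}$ with a complex built from the transfer bimodule $\DdagQ{\ms{X}\to\ms{Y}}$, the projection formula for $Rf_*$, biduality of coherent $\DdagQ{\ms{X}}(^\dag Z)$-complexes, and the $(Rf_*,f^{-1})$ adjunction. Theorem \ref{virdualcom} has just established that $\mr{Tr}_{+,f}$ is Frobenius-compatible; each of the remaining manipulations is Frobenius-compatible by standard Frobenius-descent arguments (cf.\ \cite[4.3.3, 4.3.11]{BerInt} for the push-forward/pull-back compatibilities, and the contravariant compatibility of $\mb{D}$ with Frobenius recalled in paragraph \ref{d-couplesdef}).

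The plan is therefore to factor Virrion's adjoint isomorphism into a chain of elementary canonical isomorphisms
\begin{equation*}
R\shom_{\DdagQ{\ms{Y}}(^\dag W)}(f_+\ms{M},\ms{N}) \xrightarrow{\sim} \cdots \xrightarrow{\mr{Tr}_{+,f}} \cdots \xrightarrow{\sim} R\shom_{\DdagQ{\ms{X}}(^\dag Z)}(\ms{M},f^!\ms{N}),
\end{equation*}
and then verify, for each link, that the corresponding Frobenius-compatibility square commutes. Pasting the squares together yields the desired diagram. Equivalently, one may argue in the language of adjunctions: the counit $f_+f^!\ms{N}\to\ms{N}$ of Virrion's $(f_+,f^!)$ adjunction is constructed directly from $\mr{Tr}_{+,f}$, so its Frobenius compatibility (immediate from Theorem \ref{virdualcom}) formally implies the compatibility of the induced adjoint isomorphism; the unit is handled symmetrically.

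The main technical (but not conceptual) obstacle is the bookkeeping needed to unravel Virrion's construction into a sequence of Frobenius-compatibility squares, and to confirm that the horizontal arrows in the diagram of the corollary are indeed the ones produced by that construction (and not, say, one twisted by an inner automorphism). A useful internal consistency check is that the statement of the corollary asserts plain commutativity—with no multiplicative factor $q^a$—so any Tate twists introduced in the intermediate steps must cancel, which confirms that no hidden twist has been dropped or introduced along the way.
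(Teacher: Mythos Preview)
Your approach is correct and is essentially the same as the paper's: decompose Virrion's adjoint isomorphism into its constituent isomorphisms and verify Frobenius compatibility of each, with Theorem \ref{virdualcom} supplying the crucial compatibility of the trace/relative duality. The paper's proof is simply more specific about which two steps from \cite[IV, 4.1]{Vir2} need checking---the isomorphism \cite[IV, 1.1 (i)]{Vir2}, whose Frobenius compatibility is \cite[2.1.19]{CarD}, and the relative duality \cite[IV, 3.4]{Vir2}, handled by Theorem \ref{virdualcom}---whereas you gesture at ``purely formal manipulations'' and general references to \cite{BerInt}; tightening your argument to those two precise citations would make it match the paper exactly.
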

\begin{proof}
 We only need to check the compatibility with Frobenius of the
 isomorphisms used in the proof of \cite[IV, 4.1]{Vir2}. The
 compatibility of the isomorphism \cite[IV, 1.1 (i)]{Vir2} is nothing
 but \cite[2.1.19]{CarD}. The compatibility of \cite[IV, 3.4]{Vir2} is
 Theorem \ref{virdualcom}.
\end{proof}

\section{Cohomological operations in arithmetic $\ms{D}$-modules}
\label{sixfunctor}
In this last section, we will collect results on six operations in
the theory of arithmetic $\ms{D}$-modules with Frobenius structures in
the liftable case. Before starting, recall the notations and
terminologies of paragraph \ref{d-couplesdef}.

\begin{quote}
 In this section, any (formal) scheme is assumed to be of {\em finite
 type} over its basis.
\end{quote}
\subsection{}
In this section, we consider the situation
\ref{Frobsituations}.\ref{noFrobstr} if we do not consider the Frobenius
structure, and \ref{Frobsituations}.\ref{Frobstr} if we use modules with
Frobenius structure.
Let $f\colon(\ms{X},Z)\rightarrow(\ms{Y},W)$ be a morphism of
d-couples. We put $Z':=f^{-1}(Z)\subset Z$, which is a divisor by
the definition of morphisms of d-couples. For a coherent
($F$-)$\DdagQ{\ms{Y}}(^\dag W)$-complex $\ms{M}$, recall that
\begin{equation*}
 f^!\ms{M}:=\DdagQ{\ms{X}}(^\dag Z)\otimes_{\DdagQ{\ms{X}}(^\dag Z')}
  f_0^!(\ms{M})
\end{equation*}
in ($F$-)$\LDn{\ms{X}}(Z))$ (cf.\ paragraph \ref{d-couplesdef}). Also
recall that we denote by $\mb{D}_{\ms{X},Z}$ the dual functor with
respect to ($F$-)$\DdagQ{\ms{X}}(^\dag Z)$-modules. Let $\ms{M}$ be a
coherent ($F$-)$\DdagQ{\ms{Y}}(^\dag W)$-module (or perfect complex),
and suppose that $f^!\circ\mb{D}_{\ms{Y},W}(\ms{M})$ is a perfect
complex. Then we put
\begin{equation*}
 f^+(\ms{M}):=(\mb{D}_{\ms{X},Z}\circ f^!\circ\mb{D}_{\ms{Y},W})
  (\ms{M})
\end{equation*}
in ($F$-)$D^b_{\mr{coh}}(\DdagQ{\ms{X}}(^\dag Z))$.
When the realization of $f$ is smooth, this functor is defined for any
perfect ($F$-)$\DdagQ{\ms{Y}}(^\dag W)$-complexes by
\cite[4.3.3]{BerInt}. If Berthelot's conjecture (cf.\
\cite[5.3.6]{BerInt}) is valid, this functor is defined for any
holonomic $F$-$\DdagQ{\ms{Y}}(^\dag W)$-complexes.

\subsection{}
\label{defexpush}
Let $f\colon(\ms{X},Z)\rightarrow(\ms{Y},W)$ be a morphism of d-couples
such that the realization is proper. Let $Z':=f^{-1}(W)\subset Z$.
We denote by $f_{0,Z',+}$ the proper push-forward
from ($F$-)$\LDn{\ms{X}}(Z'))$ to ($F$-)$\LDn{\ms{Y}}(W))$.
Let $\ms{M}$ be a coherent $\DdagQ{\ms{X}}(^\dag Z)$-complex. We denote by
$j_+\ms{M}$ the underlying $\DdagQ{\ms{X}}(^\dag Z')$-complex of
$\ms{M}$. We define
\begin{equation*}
 f_+(\ms{M}):=f_{0,Z',+}(j_+\ms{M})
\end{equation*}
in $\LDn{\ms{Y}}(W))$.
Let $\ms{M}$ be a perfect ($F$-)$\DdagQ{\ms{X}}(^\dag Z)$-complex such
that $\mb{D}_{\ms{X},Z}(\ms{M})$ is a coherent ($F$-)$\DdagQ{\ms{X}}
(^\dag Z')$-complex.
In this case, we say that $\ms{M}$ is $f_!$-admissible. Then we define
\begin{equation*}
 f_!(\ms{M}):=(\mb{D}_{\ms{Y},W}\circ f_+\circ\mb{D}_{\ms{X},Z})
  (\ms{M})
\end{equation*}
in ($F$-)$D^b_{\mr{coh}}(\DdagQ{\ms{Y}}(^\dag W))$. When $Z'=Z$, any
perfect complex is $f_!$-admissible. If Berthelot's conjecture is
valid, any holonomic module is $f_!$-admissible. Another example we have
in mind is the geometric Fourier transform \cite{NH} (see also
Definition \ref{Fouriertransdfn}) or that with compact support (cf.\
Definition \ref{defcompsuppfour}).

\subsection{}
Let $(\ms{X},Z)$ be a d-couple. For coherent $\DdagQ{\ms{X}}(^\dag
Z)$-modules $\ms{M}$ and $\ms{N}$, we denote
$\ms{M}\otimesddag_{\mc{O}_{\ms{X},\mb{Q}}(^\dag Z)}\ms{N}$ simply by
$\ms{M}\otimesddag\ms{N}$. This is an object in $\LDn{\ms{X}}(Z))$. Now,
let $f$ be a morphism of d-couples. We have defined functors
$\otimesddag$, $\mb{D}_{\ms{X},Z}$, and $f_+$, $f_!$, $f^+$,
$f^!$. These functors are expected to fit in the framework of six
functors if we consider the category of holonomic complexes. Let us
explain this shortly. Consider the category d-couples such that the
morphisms consist of strict morphisms of d-couples
(cf.\ paragraph \ref{d-couplesdef}) whose realizations
are proper. For a d-couple $(\ms{X},Z)$, we consider the category of
holonomic $F$-$\DdagQ{\ms{X}}(^\dag Z)$-complexes denoted by
$\mc{C}_{(\ms{X},Z)}$. If the Berthelot conjecture holds, the category
$\mc{C}_{(\ms{X},Z)}$ is stable under six operations. Philosophically,
considering $\mc{C}_{(\ms{X},Z)}$ means to consider a ``good category''
of coefficients on $\ms{X}\setminus Z$. See \cite[5.3.6]{BerInt} for
some explanations. The following theorems are stating that fundamental
relations of these functors hold in this framework.

\subsection{}
We use the notation of paragraph \ref{defexpush}. We get the following
theorem.

\begin{thm*}
 Assume that $\ms{M}$ is $f_!$-admissible. Then there exists a
 canonical homomorphism
 \begin{equation}
  \label{compsuppnonsupp}
  f_!(\ms{M})\rightarrow f_+(\ms{M})
 \end{equation}
 compatible with Frobenius pull-backs. Moreover, when $Z'=Z$, this
 homomorphism is an isomorphism.
\end{thm*}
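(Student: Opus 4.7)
The plan is to construct the map via a combination of Virrion's relative duality (Theorem \ref{virdualcom}) and a natural transformation between $\mb{D}_{\ms{X},Z}$ and $\mb{D}_{\ms{X},Z'}$ induced by the ring map $A := \DdagQ{\ms{X}}(^\dag Z') \hookrightarrow B := \DdagQ{\ms{X}}(^\dag Z)$. First, I would apply the Frobenius-compatible relative duality of Theorem \ref{virdualcom} to the proper morphism $f$ viewed with the divisor $Z'$ upstairs. By $f_!$-admissibility, $j_+\mb{D}_{\ms{X},Z}(\ms{M})$ is coherent, hence perfect by \cite{NH2}, over $A$, so the duality applies and yields
\begin{equation*}
f_!(\ms{M}) := \mb{D}_{\ms{Y},W}\,f_{0,Z',+}\bigl(j_+\mb{D}_{\ms{X},Z}(\ms{M})\bigr) \cong f_{0,Z',+}\bigl(\mb{D}_{\ms{X},Z'}\,j_+\,\mb{D}_{\ms{X},Z}(\ms{M})\bigr).
\end{equation*}
The task is thus reduced to producing a natural map $\mb{D}_{\ms{X},Z'}(j_+\mb{D}_{\ms{X},Z}(\ms{M})) \to j_+\ms{M}$, after which pushing forward by $f_{0,Z',+}$ delivers the sought comparison.

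The core step is to construct a natural transformation $\tau \colon \mb{D}_{\ms{X},Z'} \circ j_+ \to j_+ \circ \mb{D}_{\ms{X},Z}$ on perfect $B$-complexes, so that the application of biduality $\mb{D}_{\ms{X},Z}^2 \cong \mathrm{id}$ to the $B$-complex $\mb{D}_{\ms{X},Z}(\ms{M})$ finishes the construction. The key fact is that $A \to B$ is a flat ring epimorphism, being the overconvergent localization along $Z \setminus Z'$; consequently the forgetful functor from $B$-modules to $A$-modules is fully faithful, giving an isomorphism $R\shom_B(\ms{N}, B) \xrightarrow{\sim} R\shom_A(\ms{N}, B)$ for every $B$-complex $\ms{N}$. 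I would then define $\tau$ as the composition
\begin{equation*}
R\shom_A(\ms{N}, A) \xrightarrow{A \to B} R\shom_A(\ms{N}, B) \xleftarrow{\,\sim\,} R\shom_B(\ms{N}, B),
\end{equation*}
tensored with $\omega_{\ms{X},\mb{Q}}^{-1}[d]$. Specializing to $\ms{N} = \mb{D}_{\ms{X},Z}(\ms{M})$ and invoking biduality yields the desired morphism $\mb{D}_{\ms{X},Z'}(j_+\mb{D}_{\ms{X},Z}(\ms{M})) \to j_+\ms{M}$.

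Frobenius compatibility is inherited from its three constituents: Theorem \ref{virdualcom}, the Frobenius compatibility of the dual functor established in \cite{Vir}, and the manifest Frobenius equivariance of the ring inclusion $A \to B$. When $Z' = Z$, the functor $j_+$ is the identity, $\tau$ reduces to the identity, and the whole construction collapses to Virrion's relative duality applied to $\mb{D}_{\ms{X},Z}(\ms{M})$ composed with biduality, producing an isomorphism. The main technical input to verify is the flat epimorphism claim for $A \to B$; this should reduce, level by level, to the corresponding statement for $\BcompQ{m}{\ms{X}}(Z')\widehat{\otimes}\Dcomp{m}{\ms{X}} \to \BcompQ{m}{\ms{X}}(Z)\widehat{\otimes}\Dcomp{m}{\ms{X}}$ (where the commutative factor is an honest overconvergent localization) and then be compatible with taking the inductive limit over $m$.
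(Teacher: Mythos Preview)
Your proposal is correct and follows essentially the same route as the paper: apply Virrion's relative duality (Theorem \ref{virdualcom}) to commute $\mb{D}_{\ms{Y},W}$ past $f_{0,Z',+}$, then use a natural transformation $\mb{D}_{\ms{X},Z'}\circ j_+ \to j_+\circ\mb{D}_{\ms{X},Z}$ together with biduality. The paper's proof simply says this transformation is ``induced by extension of scalars'' without further comment, whereas you unpack what makes it work (the flat ring epimorphism property of $\DdagQ{\ms{X}}(^\dag Z')\to\DdagQ{\ms{X}}(^\dag Z)$); this extra care is justified, since that property is exactly what identifies $R\shom_A(\ms{N},B)$ with $R\shom_B(\ms{N},B)$.
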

\begin{proof}
 By using Theorem \ref{virdualcom}, we get $\mb{D}_{\ms{Y},W}\circ
 f_{Z',+}\cong f_{Z',+}\circ\mb{D}_{\ms{X},Z'}$. The extension of scalar
 $\DdagQ{\ms{X}}(^\dag Z)\otimes_{\DdagQ{\ms{X}}(^\dag Z')}$ induces the
 functor $\mb{D}_{\ms{X},Z'}\circ j_+\rightarrow
 j_+\circ\mb{D}_{\ms{X},Z}$. Using the isomorphism
 $\mb{D}_{\ms{X},Z}\circ\mb{D}_{\ms{X},Z}\cong\mr{id}$ compatible with
 the Frobenius isomorphisms by \cite[II, 3.5]{Vir}, and combining these
 morphisms of functors we get the homomorphism
 (\ref{compsuppnonsupp}). The latter assertion is now clear.
\end{proof}

\subsection{}
Now, we will show the Poincar\'{e} duality theorem for arithmetic
$\ms{D}$-module theory in the style of SGA4 Exp.\ XVIII Th\'{e}or\`{e}me
3.2.5.

Let $f\colon(\ms{X},Z)\rightarrow(\ms{Y},W)$ be a strict morphism of
d-couples such that the realization is proper. Moreover, we assume that
$f^{-1}_0(W)=Z$.
There exists the following isomorphism thanks to \cite[IV, 7.4]{Vir2}.
\begin{equation}
 \label{Viradjoint}
  R\mr{Hom}_{\DdagQ{\ms{X}}(^\dag Z)}(\ms{M},f^!\ms{N})\xrightarrow{\sim}
 R\mr{Hom}_{\DdagQ{\ms{Y}}(^\dag W)}(f_+\ms{M},\ms{N})
\end{equation}
We get that this isomorphism is compatible with Frobenius by Corollary
\ref{adointcomp}.

\begin{rem*}
 The isomorphism (\ref{Viradjoint}) should hold without assuming that
 $f_0^{-1}(W)=Z$ if we replace $f_+$ by $f_!$. For this, we need to
 assume the Berthelot conjecture. In the following, we freely use this
 conjecture. Let us sketch a proof. It suffices to show the case where
 $f\colon(\ms{X},Z)\rightarrow(\ms{X},Z')$ such that the realization is
 the identity and $Z'\subset Z$. We can see easily that it suffices to
 show that for $\ms{M}\in\mc{C}_{(\ms{X},Z')}$ and
 $\ms{N}\in\mc{C}_{(\ms{X},Z)}$, the homomorphism induced by scalar
 extension
 \begin{equation*}
  R\mr{Hom}_{\DdagQ{\ms{X}}(^\dag Z')}(\ms{M},f_+\ms{N})\rightarrow
   R\mr{Hom}_{\DdagQ{\ms{X}}(^\dag Z)}(f^!\ms{M},\ms{N})
 \end{equation*}
 is an isomorphism. We can reduce the verification to the following two
 cases; when the support of $\ms{M}$ is contained in $Z$, and when
 $\ms{M}$ is a $\DdagQ{\ms{X}}(^\dag Z)$-module. To see the former case,
 use the theorem of Berthelot-Kashiwara \cite[5.3.3]{BerInt}. To see the
 latter case, it suffices to show that the homomorphism
 \begin{equation*}
  R\mr{Hom}_{\DdagQ{\ms{X}}(^\dag Z')}(\mc{O}_{\ms{X},\mb{Q}}(^\dag Z'),
   \mb{D}_{\ms{X},Z'}(\ms{M})\otimesddag f_+\ms{N})\rightarrow
   R\mr{Hom}_{\DdagQ{\ms{X}}(^\dag
   Z)}(\mc{O}_{\ms{X},\mb{Q}}(^\dag Z),\mb{D}_{\ms{X},Z}(\ms{M})
   \otimesddag\ms{N})
 \end{equation*}
 is an isomorphism. Using the Spencer resolution, it suffices to show
 that the canonical homomorphism $\mb{D}_{\ms{X},Z'}(\ms{M})\otimesddag
 f_+\ms{N}\rightarrow\mb{D}_{\ms{X},Z}(\ms{M})\otimesddag\ms{N}$ is
 an isomorphism. The verification is easy.
\end{rem*}

To complete the Poincar\'{e} duality we need to calculate $f^!$
in the case where $f$ is smooth. Namely, we get the following.

\begin{thm*}
 Let $f\colon(\ms{X},Z)\rightarrow(\ms{Y},W)$ be a morphism of d-couples
 such that the realization is {\em smooth}. Then there is a canonical
 isomorphism of cohomological functors with Frobenius isomorphisms
 \begin{equation*}
  f^!\cong f^+(d)[2d]\colon D^b_{\mr{coh}}(\DdagQ{\ms{Y}}(^\dag W))
   \rightarrow D^b_{\mr{coh}}(\DdagQ{\ms{X}}(^\dag Z))
 \end{equation*}
 where $d$ denotes the relative dimension of $f$.
\end{thm*}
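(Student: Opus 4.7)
The plan is to reduce the statement to Theorem~\ref{smoothpoincare}, which has already been proved using the explicit calculation of canonical isomorphisms. By the very definition of $f^+$ from paragraph \ref{defexpush}, I would write
\[
 f^+(\ms{M})=\mb{D}_{\ms{X},Z}\circ f^!\circ\mb{D}_{\ms{Y},W}(\ms{M}),
\]
and then apply the equivalence of cohomological functors with Frobenius isomorphisms provided by Theorem~\ref{smoothpoincare}, namely $(\mb{D}_{\ms{X},Z}\circ f^!)(d)[2d]\xrightarrow{\sim} f^!\circ\mb{D}_{\ms{Y},W}$. Substituting this inside the dual gives
\[
 f^+(\ms{M})\cong\mb{D}_{\ms{X},Z}\bigl(\mb{D}_{\ms{X},Z}(f^!\ms{M})(d)[2d]\bigr).
\]

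To finish, I would invoke two formal facts. First, the biduality $\mb{D}_{\ms{X},Z}\circ\mb{D}_{\ms{X},Z}\cong\mr{id}$ of Virrion \cite[II, 3.5]{Vir}, which is known to be compatible with Frobenius; second, the elementary identity $\mb{D}_{\ms{X},Z}(\ms{N}(d)[2d])\cong\mb{D}_{\ms{X},Z}(\ms{N})(-d)[-2d]$. The shift part is standard, and the behavior under Tate twist is immediate from paragraph \ref{dfntatetwist}: replacing the Frobenius structure $\Phi$ by $q^{-d}\Phi$ on $\ms{N}$ transports, under the contravariant functor $\mb{D}_{\ms{X},Z}$, to the dual structure multiplied by $q^{d}$, which is by definition a Tate twist $(-d)$. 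Combining these identities yields $f^+(\ms{M})\cong f^!\ms{M}(-d)[-2d]$, and applying $(d)[2d]$ gives the desired $f^!\cong f^+(d)[2d]$.

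Since Theorem~\ref{smoothpoincare} is already stated as an equivalence of cohomological functors with Frobenius isomorphisms, and both biduality and the Tate-twist identity are compatible with Frobenius, no new analytic input is needed. In this sense, the main obstacle was already overcome in \S\ref{sect1} and \S\ref{proofpropsec}: the present statement is just the reformulation of Theorem~\ref{smoothpoincare} in the ``six functors'' framework introduced in paragraph \ref{defexpush}, where $f^+$ plays the role of the geometric pull-back and the discrepancy $(d)[2d]$ with $f^!$ becomes the Poincar\'e duality shift.
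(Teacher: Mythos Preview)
Your proof is correct and follows essentially the same route as the paper: apply $\mb{D}_{\ms{X},Z}$ to the isomorphism of Theorem~\ref{smoothpoincare} and use the Frobenius-compatible involutivity $\mb{D}_{\ms{X},Z}\circ\mb{D}_{\ms{X},Z}\cong\mr{id}$ from \cite[II, 3.5]{Vir}. The only quibble is that the definition of $f^+$ sits in the paragraph just \emph{before} \ref{defexpush}, not in \ref{defexpush} itself.
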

\begin{proof}
 Put the functor $\mb{D}_{\ms{X},Z}$ to the both sides of the
 isomorphism of Theorem \ref{smoothpoincare}.  Using the involutivity
 \cite[II.3.5]{Vir} of $\mb{D}_{\ms{X},Z}$, we get the claim.
\end{proof}

\subsection{}
By using the comparison between dual functor of arithmetic
$\ms{D}$-modules and that of isocrystals, we can prove a purity type
result. Namely we have

\begin{thm*}[Purity]
 Let $(\ms{X},Z)\rightarrow(\ms{Y},W)$ be a morphism of
 d-couple. Moreover, suppose that the
 realization of $f$ is a {\em closed immersion}. Let $\ms{M}$ be a
 convergent $F$-isocrystal on $\ms{Y}$ overconvergent along $W$. Then
 $f^+(\ms{M})$ is defined, and we get
 \begin{equation*}
  f^!(\ms{M})\cong f^+(\ms{M})(-d)[-2d]
 \end{equation*}
 where $d$ denotes the codimension of $\ms{X}$ in $\ms{Y}$.
\end{thm*}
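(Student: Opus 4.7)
The plan is to reduce to an explicit computation of $f^!$ on an overconvergent isocrystal, then invoke Corollary \ref{compduals} twice to deduce the formula for $f^+$. First I would use Corollary \ref{compduals} to rewrite $\mb{D}_{\ms{Y},W}(\ms{M})\cong\mr{sp}_*(M^\vee)(-d_Y)$, where $d_Y=\dim\ms{Y}$, $M:=\mr{sp}^*\ms{M}$, and $M^\vee$ is the dual overconvergent $F$-isocrystal; up to this Tate twist, $\mb{D}_{\ms{Y},W}(\ms{M})$ is itself an overconvergent $F$-isocrystal.

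The key intermediate step is to establish, for any overconvergent $F$-isocrystal $\ms{N}$ on $\ms{Y}$ along $W$, a canonical isomorphism of $F$-$\DdagQ{\ms{X}}(^\dag Z)$-complexes
\[
 f^!(\ms{N})\cong\mr{sp}_*(f^*\ms{N})[-d],
\]
where $f^*\ms{N}$ denotes the pullback overconvergent $F$-isocrystal along the realization of $f$. Working locally, with $\ms{X}$ cut out in $\ms{Y}$ by a regular sequence $(x_1,\dots,x_d)$, the $\mc{O}$-coherence of $\ms{N}$ reduces this to a Koszul resolution computation which places $f^!(\ms{N})$ in cohomological degree $d$. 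In particular, $f^!(\ms{N})$ is a perfect complex, so $f^+(\ms{M})$ is defined.

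Given this, I would apply the claim to $\ms{N}=\mb{D}_{\ms{Y},W}(\ms{M})\cong\mr{sp}_*(M^\vee)(-d_Y)$ and then invoke Corollary \ref{compduals} again on $\mr{sp}_*(f^*M^\vee)$, with $d_X=\dim\ms{X}$, yielding
\[
 f^+(\ms{M})=\mb{D}_{\ms{X},Z}\bigl(f^!(\mb{D}_{\ms{Y},W}(\ms{M}))\bigr)\cong\mr{sp}_*(f^*M)(d_Y-d_X)[d]=\mr{sp}_*(f^*M)(d)[d].
\]
The same claim applied directly to $\ms{N}=\ms{M}$ gives $f^!(\ms{M})\cong\mr{sp}_*(f^*M)[-d]$, and comparing Tate twists and shifts yields $f^!(\ms{M})\cong f^+(\ms{M})(-d)[-2d]$.

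The main obstacle will be verifying the Frobenius compatibility of the intermediate claim $f^!(\ms{N})\cong\mr{sp}_*(f^*\ms{N})[-d]$. The underlying $\ms{D}$-module isomorphism is classical via the Koszul complex, but tracking the Frobenius structures---which involves the explicit behavior of the Dwork operator on the local coordinates transverse to $\ms{X}$---requires the explicit calculations of Section \ref{sect1} together with Corollary \ref{adointcomp} on the compatibility of relative duality with Frobenius.
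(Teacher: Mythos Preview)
Your overall strategy is correct and matches the paper's approach: reduce everything to the compatibility of $f^!$ with the isocrystal pull-back $f^*$ (your ``intermediate claim'' $f^!(\ms{N})\cong\mr{sp}_*(f^*N)[-d]$), then apply Corollary~\ref{compduals} on both $\ms{Y}$ and $\ms{X}$, together with the elementary fact $(f^*M)^\vee\cong f^*(M^\vee)$ for overconvergent isocrystals (which you use implicitly). The paper's proof is exactly this, in two lines.

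Two points where you overcomplicate matters. First, no Koszul argument is needed for the intermediate claim: since $\ms{N}$ is an isocrystal it is locally free over $\mc{O}_{\ms{Y},\mb{Q}}(^\dag W)$, so $Lf^*\ms{N}=f^*\ms{N}$ is concentrated in degree $0$ and the shift $[-d]$ comes purely from the definition of $f^!$. Second, and more importantly, the Frobenius compatibility of $f^!(\ms{N})\cong\mr{sp}_*(f^*N)[-d]$ does not require the explicit calculations of \S\ref{sect1}, nor Corollary~\ref{adointcomp} (which concerns the adjunction for $f_+$, irrelevant here). This compatibility is already established in the literature: the paper simply cites \cite[4.1.8]{Caro}. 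So what you flag as ``the main obstacle'' is in fact a black box you may invoke directly.
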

\begin{proof}
 We know that $f^*(\mr{sp}^*(\ms{M})^\vee)\cong(f^*\mr{sp}^*(\ms{M}))
 ^\vee$. Together with the comparison theorem of duality functors
 Corollary \ref{compduals} and the compatibility of pull-backs
 \cite[4.1.8]{Caro}, the theorem follows.
\end{proof}

\subsection{}
\label{properbc}
Consider the following cartesian diagram of d-couples.
\begin{equation*}
 \xymatrix{
  (\ms{X}',Z')\ar[r]^{g'}\ar[d]_{f'}\ar@{}[rd]|\square&
  (\ms{X},Z)\ar[d]^f\\
 (\ms{Y}',W')\ar[r]_g&(\ms{Y},W)}
\end{equation*}
Here, cartesian means it is cartesian as a diagram of underlying formal
schemes and $\ms{X}'\setminus Z'\cong(\ms{X}\setminus
Z)\times_{(\ms{Y}\setminus W)}(\ms{Y}'\setminus W')$. Now, we get the
following base change theorem.

\begin{thm*}[Proper base change]
 We preserve the notation. We get a canonical equivalence of
 functors
 \begin{equation*}
  g^!\circ f_+\cong f'_+\circ g'^!\colon \LDn{\ms{X}}(Z))
  \rightarrow\LDn{\ms{Y}'}(W')).
 \end{equation*}
 This isomorphism is compatible with Frobenius pull-backs.
\end{thm*}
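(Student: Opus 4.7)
The plan is to combine Lemma \ref{bcthmsch} (the smooth case at finite level, without divisors) with a base change theorem of Caro for closed immersions, via a local factorization of $g$. Since the assertion is local on $\ms{Y}'$, we may assume all formal schemes are affine. Under this assumption, the graph of $g$ gives a closed immersion $\Gamma_g\colon\ms{Y}'\hookrightarrow\ms{Y}'\times_R\ms{Y}$ whose composition with the smooth projection $p_2\colon\ms{Y}'\times_R\ms{Y}\rightarrow\ms{Y}$ recovers $g$. Applying the base change $\square$ to this factorization and using the transitivity of $g^!$ (which follows from that of the extraordinary inverse image at finite level) and of $f_+$, the theorem reduces to the two cases: $g$ smooth and $g$ a closed immersion.

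For the smooth case, the task is to upgrade Lemma \ref{bcthmsch} from $D^b_{\mr{qc}}(\Dmod{m}{X})$ to the categories $\LDn{\ms{X}}(Z))$. I would proceed in three steps. First, tensor the base change homomorphism (\ref{desiredhomten}) with $\Bmod{m}{\ms{X}}(Z)$ and $\Bmod{m}{\ms{Y}}(W)$ over the appropriate structural sheaves in order to introduce the divisor conditions; the flatness properties of these divisor sheaves (\cite[4.4.2]{Ber1}) ensure that the isomorphism survives the tensor product. Second, pass to the projective limit in the reduction index $i$ to obtain a statement on $\Dcomp{m}{\ms{X}}(Z)$-complexes. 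Third, pass to the inductive limit over $m$, using the compatibility of (\ref{desiredhomten}) with raising levels which was established inside Lemma \ref{bcthmsch}. The Frobenius compatibility at each stage is inherited from the Frobenius compatibility in Lemma \ref{bcthmsch} together with the fact that tensoring with $\Bcomp{m}{\ms{X}}(Z)$ and passing to limits preserve the Frobenius structures.

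For the closed immersion case, I would invoke Caro's base change theorem (obtained by standard dévissage around Berthelot–Kashiwara) to get the isomorphism $g^!\circ f_+\cong f'_+\circ g'^!$ in $\LDn{\ms{Y}'}(W'))$. The Frobenius compatibility here reduces, by Berthelot–Kashiwara, to the Frobenius compatibility of the identification of modules supported in a divisor with modules on that divisor, which is well known. Finally, one combines the smooth and closed immersion cases via the above factorization; checking that the resulting isomorphism is independent of the factorization is straightforward once one unwinds the adjunctions, since both sides satisfy compatible transitivity properties.

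The main obstacle is expected to be the bookkeeping required to propagate Frobenius compatibility through the three limit passages in the smooth case (tensor with divisor sheaves, projective limit over $R_i$, inductive limit over $m$), and to check that the two factorizations through smooth and closed immersion pieces glue coherently with Frobenius structures. The explicit Frobenius isomorphism computed in Proposition \ref{calcberthisom}, which was the key input for Lemma \ref{bcthmsch}, provides the level of control needed to carry out this verification without ambiguity.
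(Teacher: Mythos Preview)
Your proposal is correct and follows essentially the same route as the paper: factor $g$ via its graph into a closed immersion followed by a smooth projection, invoke Lemma~\ref{bcthmsch} (with the limit passages you describe) for the smooth piece, and cite Caro \cite[2.2.18]{Caroc} for the closed immersion piece. The paper compresses the upgrade from Lemma~\ref{bcthmsch} to the $\LDn{\ms{X}}(Z))$ setting into a single sentence, whereas you spell out the tensoring with $\Bcomp{m}{}$, the projective limit over $R_i$, and the inductive limit over $m$; this is exactly the intended argument.
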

\begin{proof}
 Using the standard factorization, it suffices to show the theorem in the
 cases where $g$ is a closed immersion and a smooth morphism.
 When $g$ is smooth, we get the theorem by Lemma \ref{bcthmsch}. When
 $g$ is a closed immersion, this is a result of Caro
 \cite[2.2.18]{Caroc}.
\end{proof}

\begin{rem*}
 Let $\ms{M}$ be an object in $D^b_{\mr{coh}}(\DdagQ{\ms{X}}(^\dag
 Z))$. When $g^+\circ f_!(\ms{M})$ and $f'_!\circ g'^+(\ms{M})$ are
 defined, the above equivalence and the isomorphism
 $\mb{D}\circ\mb{D}\cong\mr{id}$ induces an isomorphism
 \begin{equation*}
  g^+\circ f_!(\ms{M})\cong f'_!\circ g'^+(\ms{M}).
 \end{equation*}
\end{rem*}

\subsection{}
We preserve the notation. Let $\ms{M}$ and $\ms{N}$ be perfect
($F$-)$\DdagQ{\ms{Y}}(^\dag W)$-complexes. We assume that
$\mb{D}_{\ms{Y},W}(\ms{M})\otimesddag\mb{D}_{\ms{Y},W}(\ms{N})$ is
also perfect ($F$-)$\DdagQ{\ms{Y}}(^\dag W)$-complex. Then, we define
the twisted tensor product of $\ms{M}$ and $\ms{N}$ denoted by
$\ms{M}\otimesddagt\ms{N}$ to be
\begin{equation*}
 \mb{D}_{\ms{Y},W}(\mb{D}_{\ms{Y},W}(\ms{M})\otimesddag\mb{D}
  _{\ms{Y},W}(\ms{N})).
\end{equation*}
One of the reasons we introduce this twisted tensor product is the
following. For $\ms{M}$ and $\ms{N}$ in $\LD{\ms{Y}}$, we get that
\begin{equation}
 \label{tensextpul}
 f^!(\ms{M}\otimesddag\ms{N})[d_f]\cong f^!\ms{M}\otimesddag f^!\ms{N}
\end{equation}
where $d_f:=\dim(\ms{X})-\dim(\ms{Y})$. This is compatible with the
Frobenius structures. However, if we replace $f^!$ by $f^+$, the
equality (\ref{tensextpul}) does not hold in general. Nevertheless, if
we also replace $\otimesddag$ by $\otimesddagt$, the equality
holds in turn. Namely, (\ref{tensextpul}) induces an isomorphism
\begin{equation*}
 f^+(\ms{M}\otimesddagt\ms{N})[-d_f]\cong f^+\ms{M}\otimesddagt
  f^+\ms{N}
\end{equation*}
if the both sides are defined.
A consequence of Lemma \ref{tenstwist} is the following.

\begin{prop*}
 Let $(\ms{X},Z)$ be a d-couple, and $\ms{M}$ and $\ms{N}$ be coherent
 $F$-$\DdagQ{\ms{X}}(^\dag Z)$-complexes. Assume further that $\ms{M}$
 is an overconvergent
 isocrystal along the divisor $Z$. Then we get
 \begin{equation*}
 \ms{M}\otimesddagt\ms{N}\cong(\ms{M}\otimesddag\ms{N})(d)
 \end{equation*}
 where $d$ denotes the dimension of $\ms{X}$.
\end{prop*}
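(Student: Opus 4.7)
The plan is to deduce this proposition essentially formally from Proposition \ref{tenstwist}. Unfolding the definition of $\otimesddagt$, the desired isomorphism
$$\mb{D}_{\ms{X},Z}\bigl(\mb{D}_{\ms{X},Z}(\ms{M})\otimesddag\mb{D}_{\ms{X},Z}(\ms{N})\bigr)\cong(\ms{M}\otimesddag\ms{N})(d)$$
is, after applying $\mb{D}_{\ms{X},Z}$ to both sides, equivalent to
$$\bigl(\mb{D}_{\ms{X},Z}(\ms{M})\otimesddag\mb{D}_{\ms{X},Z}(\ms{N})\bigr)(d)\cong\mb{D}_{\ms{X},Z}(\ms{M}\otimesddag\ms{N}).$$
This follows from Proposition \ref{tenstwist} after interchanging the roles of the two factors (legitimate by commutativity of $\otimesddag$): in that proposition the overconvergent isocrystal appears in the second slot, whereas here it is in the first slot, so a swap is needed.

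Before applying it, I will verify that all objects are defined. Since $\ms{M}$ is an overconvergent $F$-isocrystal, it is $\mc{O}_{\ms{X},\mb{Q}}(^\dag Z)$-coherent, so $\ms{M}\otimesddag\ms{N}$ is a coherent, hence perfect (by \cite[3.2.3]{NH2}), $F$-$\DdagQ{\ms{X}}(^\dag Z)$-complex. By Corollary \ref{compduals}, $\mb{D}_{\ms{X},Z}(\ms{M})$ is itself an overconvergent $F$-isocrystal along $Z$ (up to a Tate twist), in particular $\mc{O}$-coherent, so $\mb{D}_{\ms{X},Z}(\ms{M})\otimesddag\mb{D}_{\ms{X},Z}(\ms{N})$ is also coherent and perfect. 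Thus $\ms{M}\otimesddagt\ms{N}$ is well defined and the manipulations that follow take place within the category of coherent/perfect $F$-complexes.

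The reduction from the ``swapped'' form of Proposition \ref{tenstwist} back to the stated conclusion uses two standard compatibilities: the involutivity $\mb{D}_{\ms{X},Z}\circ\mb{D}_{\ms{X},Z}\cong\mr{id}$ (Frobenius-compatible by \cite[II.3.5]{Vir}) and the elementary interaction $\mb{D}_{\ms{X},Z}(X(d))\cong\mb{D}_{\ms{X},Z}(X)(-d)$ of Tate twists with the dual, which is immediate from the definition of the twist as the rescaling of the Frobenius structure by $q^{-d}$ (cf.\ paragraph \ref{dfntatetwist}). Putting these together, the final chain of isomorphisms is Frobenius-compatible by construction.

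The statement is really a formal consequence of Proposition \ref{tenstwist} and involutivity of $\mb{D}_{\ms{X},Z}$, so no genuine obstacle arises; the only point requiring care is to check that the swap of the two arguments is compatible with Frobenius structures, which is guaranteed because the commutativity isomorphism of $\otimesddag$ is the usual one and does not interfere with the $F$-action. I would therefore present the proof as a two-line calculation: (i)~apply Proposition \ref{tenstwist} to the pair $(\ms{N},\ms{M})$, obtaining the twisted isomorphism displayed above; (ii)~apply $\mb{D}_{\ms{X},Z}$ and invoke involutivity together with the Tate-twist identity to recover the desired formula.
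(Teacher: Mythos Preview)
Your proof is correct and matches the paper's own treatment: the paper simply states the proposition as ``a consequence of \ref{tenstwist}'' without writing out the argument, and your derivation---applying Proposition~\ref{tenstwist} to the pair $(\ms{N},\ms{M})$, then using commutativity of $\otimesddag$, involutivity of $\mb{D}_{\ms{X},Z}$, and the relation $\mb{D}_{\ms{X},Z}(-(d))\cong\mb{D}_{\ms{X},Z}(-)(-d)$---is exactly the intended two-line unwinding.
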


\subsection{}
\label{comrigdpushconcl}
Finally, let us compare the rigid cohomology with the push-forward as
arithmetic $\ms{D}$-modules.

Let $\ms{X}$ be a {\em proper smooth} formal scheme of dimension $d$,
$Z$ be a divisor of the special fiber of $\ms{X}$, $\ms{U}$ be the
complement, and $U_0$ be its special fiber. We denote by
$f\colon(\ms{X},Z)\rightarrow(\mr{Spf}(R),\emptyset)$ the morphism of
d-couples induced by the structural morphism of $\ms{X}$. Let $\ms{M}$
be a coherent $\DdagQ{\ms{X}}(^\dag Z)$-module which is an
overconvergent isocrystal along $Z$. Suppose that it is coherent as
a $\DdagQ{\ms{X}}$-module.

By Corollary \ref{comprigidaD}, we get the canonical isomorphism
\begin{equation*}
 H^if_+\ms{M}\cong H^{d+i}_{\mr{rig}}(U_0,\mr{sp}^*(\ms{M}))(d).
\end{equation*}

To see the relation for cohomologies with compact supports, we use the
Poincar\'{e} duality of rigid cohomology. In the curve case,
Poincar\'{e} duality is proven in \cite{Cr}. In the general case,
we could not find any literature explicitly stating the Poincar\'{e}
duality with Frobenius structure. However in \cite[8.3.14]{LS}, the
coupling is defined, and in \cite{Ked}, the perfectness of the couple is
proven. Thus we get the following isomorphism
\begin{equation*}
 H^i_{\mr{rig}}(U_0,M)^\vee\cong H^{2d-i}_{\mr{rig},c}(U_0,M^\vee(d))
\end{equation*}
for an overconvergent $F$-isocrystal $M$ on the smooth variety $U_0$
over $k$. Using this, we get
\begin{equation*}
 H^if_{!}\ms{M}\cong(H^{-i}f_+\mb{D}_{\ms{X},Z}(\ms{M}))^{\vee}
  \cong\bigl(H^{d-i}_{\mr{rig}}(U_0,\mr{sp}^*(\ms{M})^{\vee}(-d))
  (d)\bigr)^{\vee}\cong
  H^{d+i}_{\mr{rig},c}(U_0,\mr{sp}^*(\ms{M}))(d).
\end{equation*}
Here the second isomorphism follows from Corollary \ref{compduals}.
Summing up, we get
\begin{equation*}
 H^if_+\ms{M}\cong H^{d+i}_{\mr{rig}}(U_0,\mr{sp}^*(\ms{M}))(d),
  \qquad
  H^if_!\ms{M}\cong H^{d+i}_{\mr{rig},c}(U_0,\mr{sp}^*(\ms{M}))(d).
\end{equation*}
In particular, we note that there exist canonical isomorphisms
\begin{equation*}
 H^if_+f^+(K)\cong H^i_{\mr{rig}}(U_0/K),\qquad
  H^if_!f^+(K)\cong H^i_{\mr{rig},c}(U_0/K)
\end{equation*}
compatible with Frobenius isomorphism.

\appendix
\section{Comparison theorem between Fourier transform and Fourier transform with compact
support. C.~Huyghe}
\usubsection*{Introduction}
Let $V$ be a discrete valuation ring of inequal characteristics $(0,p)$, containing an
element $\pi$, the $\pi$ of Dwork, satisfying the equation $\pi^{p-1}=-p$, $\SS=\spf(V)$ the
formal spectrum of $V$, $\XX$ the formal affine line over $\SS$, $\XX^{\vee}$ the dual
affine line.
 Let us introduce $\YY$ and $\YY^{\vee}$ two copies of the formal projective line over $R$
compactifying $\XX$ and $\XX^{\vee}$ and denote $\infty_{\YY}$ (resp. $\infty_{\YY^{\vee}}$)
the complementary divisors. 
In \cite{Ber1} Berthelot constructed sheaves of arithmetic differential operators 
 with overconvergent singularities along a divisor ({\it e.g.}\
 $\DD^{\dagger}_{\YY}(\infty_{\YY})$ in
our situation). For several reasons these sheaves have to be thought 
as sheaves over the open subset which is the complementary of the considered divisor,
thus, in our case, 
over the formal affine line. 
In \cite{NH0} one constructed the Fourier transform 
of $\DD^{\dagger}_{\YY}(\infty_{\YY})$-modules, using
the Dwork exponential module as kernel and 
we checked the compatibility with the so-called na\"\i ve Fourier transform. The aim of
this note is to define the compact support Fourier transform in dimension $1$ and to prove 
that it coincides with the Fourier transform without compact support
introduced in {\it loc.\ cit.}.

This result is part from the unpublished part 4.4 of \cite{NH0}, where it is
proven in dimension $N$. The dimension $1$ case is technically easier and sufficient for 
\cite{AM} while giving a good idea of the proof in dimension $N$. That's why we
restrict to this case here. The extra work in dimension $N$ consists
into proving a generalization of the division lemma \ref{division_lemma}
and to deal with longer complexes of length $N+1$.

\usubsection{Preliminaries}
\subsubsection{}\label{preliminaries}
Denote $K$ the fraction field of $V$.  
For $l\in\Ze$, $|l|$ is the usual archimedean absolute value of $l$. The
$p$-adic valuation
of an element $a$ of a $p$-adically separated ring 
is $v_p(a)$. For $a\in K$, $$|a|_p=p^{-v_p(a)}.$$

If $\FF$ is a sheaf of abelian 
groups over a topological space we denote $\FF_{\Qr}=\Qr\ot_{\Ze}\FF.$

The product $\ZZ=\YY\times \YY^{\vee}=\widehat{\mathbb{P}}^1_\SS\times
\widehat{\mathbb{P}}^1_\SS$
is endowed with the ample divisor $\infty=\infty_{\YY}
\times \widehat{\mathbb{P}}^1_\SS \bigcup \widehat{\mathbb{P}}^1_\SS\times \infty_{\YY^{\vee}}$. 

When needed, we will use $[u_0,u_1]$ and $[v_0,v_1]$ as homogeneous coordinates 
over the two copies of $\widehat{\mathbb{P}}^1_\SS$, $u_0=0$ and $v_0=0$ are equations 
of the infinite divisor over each copy of $\widehat{\mathbb{P}}^1_\SS$,
$x=u_1/u_0$ and $y=v_1/v_0$ will be coordinates 
over the affine plane complementary of the $\inft$ divisor over $\ZZ$. These two 
coordinates $x$ and $y$ over $\XX=\widehat{\mathbb{A}}^1_\SS$ and
$\XX^{\vee}=\widehat{\mathbb{A}}^1_\SS$
should be considered as dual to each other.

\subsubsection{}\label{def_dag}
Let $\VV$ be a smooth formal scheme over $\SS$, endowed with a relative divisor $D$
(meaning that $D$ induces a divisor of the special fiber), and $\UU=\VV \backslash D$. 
Then the direct image by specialization of the constant overconvergent $F$-isocrystal over $U$, the
special fiber of $\UU$, is a
sheaf over $\VV$ denoted by $\OO_{\VV}(^{\dagger }D)$ (4.4 of \cite{Ber1}). If $\VV$ is
affine and if 
$f$ is an equation of $D$
over $\VV$, 
we have the following description
$$\Ga(\VV,\OO_{\VV}(^{\dagger }D))=\left\{\sum_{l\in\Ne} \frac{a_l}{f^l},\,a_l\in \Ga(\VV,\OO_{\VV,\Qr})\, |
\textrm{ and }\exists 
C'>0\, | \, v_p(a_{l})-\frac{l}{C'}\rig +\inft \textrm{ if } l\rig +\inft \right\}.$$

Consider the ring $\DD^{\dagger}_{\VV}$ of arithmetic differential 
operators over $\VV$. Suppose that $x_1,\ldots,x_n$ are coordinates on $\VV$, 
denote $\der_{x_i}$ the corresponding derivations for $i\in\{1,\ldots,n\}$, and
$$\der_{x_i}^{[k_i]}=\frac{\der_{x_i}^{k_i}}{k_i!},
\textrm{ and }\uder^{[\uk]}=\der_{x_1}^{[k_1]}\cdots \der_{x_n}^{[k_n]},$$
then we have the following description
\begin{multline*}\Ga(\VV,\DD^{\dagger}_{\VV})=\left\{\sum_{\uk\in\Ne^n}a_{\uk}\uder^{[\uk]},\,\,a_{\uk}\in 
\Ga(\VV,\OO_{\VV,\Qr})\,
\textrm{ and }\exists 
C>0\, | \, v_p(a_{\uk})-\frac{|\uk|}{C}\rig +\inft \textrm{ if } |\uk|\rig +\inft
\right\}.
\end{multline*}

In 4.2 of \cite{Ber1}, Berthelot introduces also rings of differential operators 
with overconvergent singularities, which are sheaves of $\OO_{\VV}(^{\dagger }D)$-modules. 
Suppose that $x_1,\ldots,x_n$ are coordinates on $\VV$
and that the divisor $D$ is defined by the equation $f=0$ on $\VV$, then we have 
the following description
\begin{multline*}
\Ga(\VV,\DD^{\dagger}_{\VV}(D))=\left\{\sum_{l\in\Ne,\uk\in\Ne^n}\frac{a_{l,\uk}}{f^l}\uder^{[\uk]},\,
a_{l,\uk}\in \Ga(\VV,\OO_{\VV,\Qr})\,\right.\\\left.\phantom{\sum_{\ul,\uk}a_{\uk}\uder^{[\uk]}} \textrm{ and }\exists 
C>0\, | \, v_p(a_{l,\uk})-\frac{l+|\uk|}{C}\rig +\inft \textrm{ if } l+|\uk| \rig +\inft
\right\}.
\end{multline*}

All these sheaves are weakly complete, in the sense that they are inductive limit of 
sheaves of $p$-adic complete rings as described in the following subsection. Let us 
stress here upon the fact that these sheaves are always sheaves of $K$ vector spaces 
(even if there is no $\Qr$ in their notation). We made this convention to avoid 
too heavy notations. Note also that this won't be the case with the sheaves that  
we will introduce in the following subsections.
\subsubsection{} 
In this subsection, we use 
notations of \ref{def_dag}. Let us fix $m\in\Ne$, elements $x_1,\ldots, x_n\in\Ga(\VV,\OO_{\VV})$  
which are coordinates on $\VV$ and $f\in\Ga(\VV,\OO_{\VV})$ such that $D\bigcap \VV=V(f)$. 

Let us introduce here some coefficients. First 
we define the application $\nu_m\colon\mathbf{Z}\rig \mathbf{N}$ by the following way. 
If $k<0$, we set $\nu_m(k)=0$. Let $k\in \mathbf{N}$,
and $q$ and $r$ be the quotient and the remainder of the 
division of $k$ by $p^{m+1}$. If $r=0$, we set $\nu_m(k)=q$, otherwise we set
$\nu_m(k)=q+1$. We extend this application to $\mathbf{Z}^r$, 
by $\num((k_1,\ldots,k_r))=\num(k_1)+\cdots +\nu_m(k_r)$. 

We denote also $q_k^{(m)}$ the quotient of 
the division of a positive integer $k$ by $p^m$, and 
$$\uder^{\la \uk \ra_{(m)}}=q_{k_1}^{(m)}!\cdots q_{k_n}^{(m)}!\,\der_{x_1}^{[k_1]}\cdots\der_{x_n}^{[k_n]}.$$
If the choice of $m$ is clear, we will omit it in the notation.

Berthelot defines a sheaf 
$\what{\BB}^{(m)}_{\VV}$ by setting locally
\begin{multline*}
\Ga(\VV,\what{\BB}^{(m)}_{\VV})=\left\{\sum_l \frac{a_l}{f^l},\,a_l\in \Ga(\VV,\OO_{\VV})\,
|\, v_p(a_{l})\geq \nu_m(l) \textrm{ and } 
v_p(a_{l})- \nu_m(l)\rig +\infty \textrm{ if } l\rig +\inft \right\}.
\end{multline*}

Then 
there are canonical injective morphisms $\what{\BB}^{(m)}_{\VV}\subset \what{\BB}^{(m+1)}_{\VV}$
and
$$\OO_{\VV}(^{\dagger }D)=\varinjlim_m \what{\BB}^{(m)}_{\VV,\Qr}.$$

 Berthelot 
defines also sheaves of rings differential operators $\DD^{(m)}_{\VV}(D)$ and their
$p$-adic completion $\what{\DD}^{(m)}_{\VV}(D)$ over 
$\VV$ by 
\begin{align*}\Ga(\VV,\DD^{(m)}_{\VV}(D))=
\left\{\sum_{l\in\Ne,\uk\in\Ne^n}\frac{a_{l,\uk}}{f^l}\uder^{\la \uk \ra_{(m)}},\, 
a_{l,\uk}\in \Ga(\VV,\OO_{\VV}) \, |\, v_p(a_{l,\uk})\geq  \nu_m(l)
\right\},
\end{align*}
where the sums are finite
and
\begin{multline*}\Ga(\VV,\what{\DD}^{(m)}_{\VV}(D))=
\left\{\sum_{l\in\Ne,\uk\in\Ne^n}\frac{a_{l,\uk}}{f^l}\uder^{\la \uk \ra_{(m)}},\, 
a_{l,\uk}\in \Ga(\VV,\OO_{\VV})\, |\, v_p(a_{l,\uk})\geq  \nu_m(l)\right. \\
\left. \phantom{\sum_{\ul,\uk}a_{\uk}\uder^{[\uk]}}
\textrm{ and } v_p(a_{l,\uk})- \nu_m(l)\rig +\infty \textrm{ if } |\uk|+l\rig +\inft
\right\}.
\end{multline*}

Then there are canonical injective morphisms $\what{\DD}^{(m)}_{\VV,\Qr}(D) \subset \what{\DD}^{(m+1)}_{\VV,\Qr}(D)$
and
$$\DD^{\dagger}_{\VV}(D)=\varinjlim_m \what{\DD}^{(m)}_{\VV,\Qr}(D).$$

If $D=\emptyset$ the previous sheaves are simply denoted $\DD^{(m)}_{\VV}$ and
$\what{\DD}^{(m)}_{\VV}$.

\subsubsection{} We finally recall the following inequalities for $|\uk|$ and $|\ul|$
elements of $\mathbf{N}^n$, $l$ and $r$ in $\mathbf{N}$.
\label{estimations1}
\begin{gather*}
\frac{|\uk|}{p-1}-n \log_p(|\uk|+1)-n\leq v_p(\uk!) \leq
\frac{|\uk|}{p-1} \\
\frac{|\uk|}{p^m(p-1)}-n \log_p(|\uk|+1)-n\frac{p}{p-1}\leq v_p(q_{\uk}^{(m)}!) \leq
\frac{|\uk|}{p^m(p-1)}\\
\frac{|\ul|}{p^{m+1}}\leq \nu_m(\ul)\leq \frac{|\ul|}{p^{m+1}}+n \\
             0\leq \nu_m(\ul)-\nu_m(|\ul|)\leq n\\
0\leq v_p\left(\binom{l}{r}\right)\leq p(\log_p(l)+1).
\end{gather*}

\subsubsection{}
By definition an induced $\DD^{\dagger}_{\VV}(D)$-module is a
$\DD^{\dagger}_{\VV}(D)$ of the type 
$$\DD^{\dagger}_{\VV}(D)\ot_{\OO_{\VV}(^{\dagger }D)}\EE,$$
where $\EE$ is a coherent $\OO_{\VV}(^{\dagger }D)$-module and where the 
$\DD^{\dagger}_{\VV}(D)$-module structure comes from the one of
$\DD^{\dagger}_{\VV}(D)$.

\subsubsection{}\label{eq_cat_Pn}
Coming back to the situation of the introduction,
let $p_1$ and $p_2$ be the two projections $\ZZ \rig \YY$ and $\ZZ \rig \YY^{\vee}$, 
 and $\infty'=p_2^{-1}\infty_{\YY^{\vee}}$.

Let us now recall how to describe the structure of the category of left
$\DD^{\dagger}_{\YY}(\infty_{\YY})$-modules, resp.
$\DD^{\dagger}_{\ZZ}(\infty)$-coherent modules, resp.
$\DD^{\dagger}_{\ZZ}(\infty')$-coherent modules.

Let us first start with coherent
$\DD^{\dagger}_{\YY}(\infty_{\YY})$-modules (see \cite{HuCRAS2}). First note that 
$$ \Ga(\YY,\OO_{\YY}(^{\dagger}\infty_{\YY}))=\left\{\sum_l b_l x^l, b_l\in K,
 \textrm{ and }\exists C>0, \eta<1 \,|\, |b_{l}|_p<C\eta^{l}\right\}$$
and set  
$$A_1(K)^{\dagger}=\left\{\sum_{k\in\Ne,l\in\Ne}a_{l,k}x^{l}\der_x^{[k]}, a_{l,k}\in K,
\textrm{ and }\exists C>0, \eta<1 \,|\, |a_{\uk,\ul}|_p<C\eta^{l+k}\right\},$$
the weak completion of the Weyl algebra. It is a coherent algebra and we have (\cite{HuCRAS2})

\begin{thmH}\label{equivP0} The functor $\Ga(\YY,. )$ (resp. $R\Ga(\YY,.)$) establishes an equivalence of
categories between the category of left coherent $\DD^{\dagger}_{\YY}(\infty_{\YY})$-modules 
(resp. $D^b_{coh}(\DD^{\dagger}_{\YY}(\infty_{\YY}))$) and the 
category of left coherent $A_1(K)^{\dagger}$-modules (resp. $D^b_{coh}(A_1(K)^{\dagger})$.
\end{thmH}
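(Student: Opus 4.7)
The plan is to follow the Beilinson--Bernstein style of argument in the affine $\ms{D}^\dagger$-setting: establish the two adjoint functors, then reduce the verification of unit/counit isomorphism to a ``Theorem A/B'' statement for each level $m$, then pass to the inductive limit, and finally bootstrap to the derived version.

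First, I would verify directly from the local description recalled in paragraph \ref{def_dag} that $\Ga(\YY,\DD^{\dagger}_{\YY}(\infty_{\YY}))$ coincides with $\ank$ (the key point being that a section of $\OO_{\YY}(^{\dagger}\infty_{\YY})$ which is regular on the affine chart $\XX$ is a series $\sum_l b_lx^l$ whose $p$-adic norms decay geometrically, exactly the condition defining the coefficients of $\ank$). This identifies the functor $\Ga(\YY,-)$ as taking values in left $\ank$-modules and produces the candidate quasi-inverse $\mr{Loc}(M) := \DD^{\dagger}_{\YY}(\infty_{\YY})\otimes_{\ank}M$. I would then verify that $\mr{Loc}$ sends coherent $\ank$-modules to coherent $\DD^{\dagger}_{\YY}(\infty_{\YY})$-modules by writing $M$ as cokernel of a morphism between free modules and using that $\DD^{\dagger}_{\YY}(\infty_{\YY})$ is coherent.

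The core of the proof is to establish, for every coherent $\DD^{\dagger}_{\YY}(\infty_{\YY})$-module $\ms{M}$:
\begin{enumerate}
\item[(A)] the canonical morphism $\mr{Loc}(\Ga(\YY,\ms{M}))\rightarrow\ms{M}$ is an isomorphism;
\item[(B)] $H^i(\YY,\ms{M})=0$ for all $i>0$.
\end{enumerate}
I would reduce these to the analogous statements at each finite level $m$, i.e.\ for coherent $\DcompQ{m}{\YY}(\infty_{\YY})$-modules, using that $\DdagQ{\ms{Y}}(\infty_{\YY})=\indlim_m\DcompQ{m}{\YY}(\infty_{\YY})$, that each coherent $\DdagQ{\ms{Y}}(\infty_{\YY})$-module descends, up to isogeny, to a coherent module at some level $m$ (this uses the coherence properties recorded by Berthelot), and that $\indlim$ is exact and commutes with cohomology on the noetherian topological space $\YY$. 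The level-$m$ Theorem B reduces, via a filtration of a coherent $\DcompQ{m}{\YY}(\infty_{\YY})$-module by coherent $\BcompQ{m}{\YY}(\infty_{\YY})$-submodules, to proving $H^i(\YY,\ms{F})=0$ for $i>0$ and $\ms{F}$ a coherent $\BcompQ{m}{\YY}(\infty_{\YY})$-module; for this I would combine the usual Serre vanishing for $\mb{P}^1$ with the $p$-adic estimates of paragraph \ref{estimations1} to control the formation of the $p$-adic completion after applying $Rp_*$, where $p\colon\YY\to\SS$ is the structural morphism. Theorem A then follows from Theorem B by a standard two-step argument: pick a presentation $\mc{D}^{\oplus b}\to\mc{D}^{\oplus a}\to\ms{M}\to0$ where $\mc{D}=\DcompQ{m}{\YY}(\infty_{\YY})$ and apply $\Ga(\YY,-)$, using the vanishing of $H^1$ to keep the sequence exact and the fact that $\mr{Loc}$ is right-exact.

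The main obstacle I anticipate is the proof of the level-$m$ vanishing $H^i(\YY,\ms{F})=0$ for coherent $\BcompQ{m}{\YY}(\infty_{\YY})$-modules $\ms{F}$; the point is that, although $\BcompQ{m}{\YY}(\infty_{\YY})$ looks formally like ``functions convergent on an open disc on an affine chart'', one must verify that its higher cohomology on $\YY$ vanishes, which amounts to controlling the interaction of Serre's Čech computation with the $\nu_m$-growth condition in the definition of $\Bcomp{m}{\YY}(\infty_{\YY})$. Once this is in hand, passing to the derived statement is essentially formal: the equivalence at the level of abelian categories plus the vanishing of $R^i\Ga$ on coherent objects implies that $R\Ga$ and $\mr{Loc}$ induce mutually inverse equivalences between the bounded derived categories of coherent objects, by way-out arguments.
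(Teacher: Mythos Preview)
The paper does not give a proof of this theorem: it simply records it with the citation ``(see \cite{HuCRAS2})'', the full argument being in Huyghe's note and her subsequent paper \cite{HuFour} on $\ms{D}^\dagger$-affinity of projective schemes. Your overall strategy --- set up the adjunction $(\mr{Loc},\Gamma)$, prove a Theorem~A/B at each level $m$, pass to the inductive limit, then go to derived categories by way-out --- is exactly the strategy of those references, so in that sense your plan is on target.

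There is, however, one step that would not go through as written. You propose to reduce the level-$m$ Theorem~B for coherent $\DcompQ{m}{\YY}(\infty_{\YY})$-modules to vanishing for coherent $\BcompQ{m}{\YY}(\infty_{\YY})$-modules ``via a filtration of a coherent $\DcompQ{m}{\YY}(\infty_{\YY})$-module by coherent $\BcompQ{m}{\YY}(\infty_{\YY})$-submodules''. A coherent $\Dcomp{m}$-module is essentially never $\Bcomp{m}$-coherent, and the order filtration on it is infinite, so one cannot pass directly to cohomology this way. What Huyghe actually does is: first prove $H^i(\YY,\DcompQ{m}{\YY}(\infty_{\YY}))=0$ for $i>0$ by filtering the sheaf $\DcompQ{m}{\YY}(\infty_{\YY})$ itself by order (each graded piece is then a coherent $\BcompQ{m}{\YY}(\infty_{\YY})$-module, globally generated because the divisor is ample, hence acyclic by the $\mb{P}^1$-case of Serre vanishing combined with the $p$-adic estimates), and control the passage to the limit; then deduce Theorem~B for an arbitrary coherent $\DcompQ{m}{\YY}(\infty_{\YY})$-module by taking a free presentation and using that $\YY$ has cohomological dimension $1$. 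Once you reorganize this one step, the rest of your plan is the standard argument and matches the cited references.
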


In particular, $A_1(K)^{\dagger}\simeq \Ga(\YY,\DD^{\dagger}_{\YY}(\infty_{\YY}))$ 
and every coherent $\DD^{\dagger}_{\YY}(\infty_{\YY})$ admits globally over $\YY$
a resolution by globally projective and finitely generated $\DD^{\dagger}_{\YY}(\infty_{\YY})$-modules.
This resolution can be taken finite since $A_1(K)^{\dagger}$ has finite cohomological
dimension (\cite{NH2}).

Consider now the situation over $\ZZ$ for $m$ fixed. In theorem 3 of \cite{HuCRAS2}, we 
proved that the elements $\der_x^{{\la k_1 \ra}_{(m)}}$ are global sections 
over $\widehat{\mathbb{P}}^1_{\SS}$ of the sheaves $\DD^{(m)}_{\widehat{\mathbb{P}}^1_{\SS}}$, so that
  $$\der_x^{{\la k_1 \ra}_{(m)}}\der_y^{{\la k_2
\ra}_{(m)}}\in\Ga(\ZZ,\DD^{(m)}_{\ZZ}).$$
 Moreover, an easy 
computation (2.1 of \cite{HuCRAS2}) shows that elements 
$p^{\nu_m(l_1)}x^{l_1}$ for $l_1\geq 0$ are global sections of $\Bcm_{\widehat{\mathbb{P}}^1_{\SS}}$, 
implying that 
$$ p^{\nu_m(\ul)}x^{l_1}y^{l_2}\in \Ga(\ZZ,\Bcm_{\ZZ}).$$
Define
\begin{multline*}
\what{E}^{(m)}=\left\{\sum_{\uk\in\Ne^2,\ul\in\Ne^2}a_{\ul,\uk}x^{l_1}y^{l_2}\der_x^{\la k_1 \ra}
\der_y^{\la k_2 \ra},\,a_{\ul,\uk}\in V,\,\textrm{ and } v_p(a_{\ul,\uk})\geq \nu_m(\ul)\right.\\
\left. \phantom{\sum_{\ul,\uk}a_{\uk}\uder^{[\uk]}}
 \,|\,v_p(a_{\ul,\uk})-\nu_m(\ul) \rig +\infty
  \textrm{ if } 
|\ul|+|\uk| \rig +\infty \right\}.
\end{multline*}
 From these observations and the fact that 
$\Dcm_{\ZZ}$ is a sheaf of $p$-adically complete algebras, we see that 
$$\what{E}^{(m)}\subset \Ga(\ZZ,\Dcm_{\ZZ}(\infty)).$$

Consider the weak completion of the Weyl algebra in $2$ variables
$$A_2(K)^{\dagger}=\left\{\sum_{\uk\in\Ne^2,\ul\in\Ne^2}a_{\ul,\uk}x^{l_1}y^{l_2}\der_x^{[k_1]}
\der_y^{[k_2]}, a_{l,k}\in K,\textrm{ and }\exists
 C>0, \eta<1 \,|\, |a_{\uk,\ul}|<C\eta^{|\ul|+|\uk|}\right\},$$
which is coherent from \cite{HuCRAS2}.
It is easy to see that 
$$A_2(K)^{\dagger}=\varinjlim_m \what{E}^{(m)}_{\Qr} .$$
 We endow $A_2(K)^{\dagger}$ with the inductive limit topology coming from this filtration.

Because the divisor $\infty$ over $\ZZ$ is ample, we can apply 4.5.1 of \cite{Hucomp} 
which tells us that $$\Ga(\ZZ,\DD^{\dagger}_{Z}(\infty))\simeq A_2(K)^{\dagger}.$$

Moreover we have the following theorem
(4.5.1 of \cite{Hucomp} and 5.3.4 of \cite{HuFour}).
\begin{thmH}\label{equivP1} The functor $\Ga(\ZZ,. )$ (resp. $R\Ga(\ZZ,.)$) establishes an equivalence of
categories between the category of left coherent $\DD^{\dagger}_{\ZZ}(\infty)$-modules 
(resp. $D^b_{coh}(\DD^{\dagger}_{\ZZ}(\infty))$) and the 
category of left coherent $A_2(K)^{\dagger}$-modules (resp. $D^b_{coh}(A_2(K)^{\dagger}))$.
\end{thmH}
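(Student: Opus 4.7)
The plan is to mimic the strategy behind Theorem~\ref{equivP0} (due to Noot-Huyghe \cite{HuCRAS2}), which handles the one-dimensional case, and to combine it with the fact that the divisor $\infty$ is \emph{ample} on $\ZZ = \widehat{\mathbb{P}}^1_\SS \times \widehat{\mathbb{P}}^1_\SS$. The identification $A_2(K)^{\dagger} \cong \Ga(\ZZ,\DD^{\dagger}_{\ZZ}(\infty))$ is already the starting point, so the real content is (a) cohomological acyclicity of $R\Ga(\ZZ,\cdot)$ on coherent $\DD^{\dagger}_{\ZZ}(\infty)$-modules and (b) the construction of a quasi-inverse localization functor.

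First, I would establish the acyclicity: for any coherent left $\DD^{\dagger}_{\ZZ}(\infty)$-module $\MM$, $R^i\Ga(\ZZ,\MM) = 0$ for $i>0$. By the standard descent of coherence (analog of \cite[4.2.4]{BerInt}), one can write $\MM \cong \DD^{\dagger}_{\ZZ}(\infty) \otimes_{\what{\DD}^{(m)}_{\ZZ,\Qr}(\infty)} \MM_m$ for some $m$ and a coherent $\what{\DD}^{(m)}_{\ZZ,\Qr}(\infty)$-module $\MM_m$. Using the order filtration on $\what{\DD}^{(m)}_{\ZZ}(\infty)$, whose graded pieces are coherent $\what{\BB}^{(m)}_{\ZZ}(\infty)$-modules built from symmetric powers of the tangent sheaf, and then invoking classical Serre vanishing on the special fibres $Z_i$ twisted by the ample divisor (applied to $\MM_m$ modulo $\pi^{i+1}$, possibly after raising $m$), one obtains $R^i\Ga(\ZZ,\MM_m)\otimes\mathbb{Q} = 0$ for $i>0$. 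Taking $p$-adic completion, tensoring with $\mathbb{Q}$, and passing to $\varinjlim_m$ (all of which commute with cohomology on the noetherian formal scheme $\ZZ$ in this range) yields the desired vanishing for $\MM$. In particular $\Ga(\ZZ,\cdot)$ is exact on the category of coherent $\DD^{\dagger}_{\ZZ}(\infty)$-modules.

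Second, I would define the candidate quasi-inverse $\mathrm{Loc}(M) := \DD^{\dagger}_{\ZZ}(\infty) \otimes_{A_2(K)^{\dagger}} M$ for a coherent $A_2(K)^{\dagger}$-module $M$. Using the coherence of $A_2(K)^{\dagger}$ established in \cite{HuCRAS2}, a finite presentation $(A_2(K)^{\dagger})^a \to (A_2(K)^{\dagger})^b \to M \to 0$ yields a coherent $\DD^{\dagger}_{\ZZ}(\infty)$-module $\mathrm{Loc}(M)$. The adjunction map $M \to \Ga(\ZZ,\mathrm{Loc}(M))$ is an isomorphism when $M = A_2(K)^{\dagger}$ (by the identification $A_2(K)^{\dagger} = \Ga(\ZZ,\DD^{\dagger}_{\ZZ}(\infty))$), hence for every coherent $M$ by the five-lemma, exactness of $\mathrm{Loc}$ on finite presentations, and the acyclicity of $\Ga$. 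Conversely, for a coherent $\MM$, one takes a local finite presentation $\DD^{\dagger}_{\ZZ}(\infty)^a \to \DD^{\dagger}_{\ZZ}(\infty)^b \to \MM \to 0$; applying $\Ga$ (exact by the acyclicity step) and then $\mathrm{Loc}$ recovers $\MM$. The derived version follows because both categories have finite cohomological dimension (for $A_2(K)^{\dagger}$ by Noot-Huyghe \cite{NH2}, and for $\DD^{\dagger}_{\ZZ}(\infty)$ by the corresponding global statement), so $\Ga$ descends to an equivalence on $D^b_{\mathrm{coh}}$.

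The principal obstacle is the acyclicity statement at fixed level $m$. The $\what{\BB}^{(m)}_{\ZZ}(\infty)$-sheaves do not \emph{a priori} satisfy a Serre-type vanishing since they are not obtained from an ample invertible sheaf in one step, but rather from a $p$-adic completion of a non-trivial combination of powers of the ample divisor with $p$-divisibility weights $\nu_m$. Controlling these weights, and ensuring that the order filtration on $\what{\DD}^{(m)}_{\ZZ,\Qr}(\infty)$ interacts well with the classical projective-space vanishing uniformly in $i$ and the order, is the delicate technical step; once this is handled, the rest of the argument is a standard Serre-type category equivalence. This is precisely the content of \cite[4.5.1]{Hucomp} and \cite[5.3.4]{HuFour} which are invoked in the statement.
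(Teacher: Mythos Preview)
The paper does not actually prove this theorem: it is stated as a direct citation of \cite[4.5.1]{Hucomp} and \cite[5.3.4]{HuFour}, so there is no ``paper's own proof'' to compare against. Your sketch is a reasonable outline of the $\DD$-affinity strategy carried out in those references (ampleness of $\infty$ $\Rightarrow$ acyclicity at each level $m$ via a filtration argument and Serre vanishing, then a standard Morita-type localization/global-sections adjunction), and you correctly flag the delicate point, namely controlling the weighted $\what{\BB}^{(m)}$-sheaves uniformly; you also correctly note in your last sentence that this is exactly what the cited results supply.

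One small caution: your acyclicity argument as written glosses over the passage from the level-$m$ vanishing to the $\DD^{\dagger}$-level. The step ``taking $p$-adic completion, tensoring with $\mathbb{Q}$, and passing to $\varinjlim_m$ all commute with cohomology'' is not automatic for arbitrary coherent $\what{\DD}^{(m)}_{\ZZ,\Qr}(\infty)$-modules; in the cited references this is handled by first proving a stronger statement, namely $\DD^{\dagger}$-affinity in the sense that $\DD^{\dagger}_{\ZZ}(\infty)$ itself is acyclic and generates, and then deducing acyclicity for all coherent modules from a global finite presentation (which exists precisely because of the ample hypothesis). Your five-lemma argument for the adjunction is fine once that global presentation is available, but you should not take the existence of a \emph{global} finite presentation for granted before acyclicity is established; in \cite{Hucomp} this is bootstrapped carefully.
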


Consider now the scheme $\ZZ=\YY\times \YY^{\vee}$ endowed with the divisor
$\infty'=p_2^{-1}(\infty_{\YY^{\vee}})$ and $\DD^{\dagger}_{\ZZ}(\infty')$ the ring of arithmetic differential
operators with overconvergent coefficients along $\infty'$.
 In order to deal with coherent $\DD^{\dagger}_{\ZZ}(\infty')$-modules, denote
$$B_2(K)^{\dagger}=\left\{\sum_{l_2\in\Ne,\uk\in\Ne^2}a_{l_2,\uk}y^{l_2}\der_x^{[k_1]}
\der_y^{[k_2]},\,a_{l_2,\uk}\in K, \textrm{ and } \,\exists C>0, \eta<1 \,|\,
|a_{l_2,\uk}|_p<C\eta^{l_2+|\uk|}\right\},$$
which we endow with the induced topology of $A_2(K)^{\dagger}$. Consider 
$\what{F}^{(m)}=\what{E}^{(m)}\bigcap B_2(K)^{\dagger}$. As before we observe that 
$$\what{F}^{(m)}\subset \Ga(\ZZ,\what{\DD}^{(m)}_{\ZZ}(\infty')),$$ which leads to the following 
inclusion
$$\varinjlim_m  \what{F}^{(m)}_{\Qr}=B_2(K)^{\dagger}\subset
\Ga(\ZZ,\DD^{\dagger}_{\ZZ}(\infty')).$$
We apply 2.3.3 of \cite{NH} to see that this is actually an equality.

We will also use the following division lemma (4.3.4.2 of \cite{NH}).
\begin{thmH}\label{decomposition_ov_Z}
\begin{itemize} 
\item[i.] For any $P\in A_2(K)^{\dagger}$ there exists a unique $(Q,R)\in
A_2(K)^{\dagger}\times B_2(K)^{\dagger}$ such that 
$P=Q(-\der_y+\pi x)+R.$
\item[ii.] The maps $P \mapsto Q$ and $P \mapsto R$ are continuous. More precisely, if $P\in
\what{E}^{(m)}_{\Qr}$, then $Q\in \what{E}^{(m+2)}_{\Qr}$, and $R\in
\what{E}^{(m+2)}_{\Qr}\bigcap B_2(K)^{\dagger}$.
\end{itemize}
\end{thmH}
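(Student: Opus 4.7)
The plan is to establish existence by an explicit division algorithm, obtain uniqueness from a leading-term argument, and then extract the level bookkeeping asserted in (ii) directly from the algorithm. Set $T:=-\der_y+\pi x$, so that the fundamental identity $\pi x=\der_y+T$ allows us to trade each factor of $x$ for one of $\der_y$ modulo a right multiple of $T$. The commutators to keep track of are $[T,x]=0$, $[T,\der_y]=0$, $[T,y]=-1$, and $[T,\der_x]=-\pi$.

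For existence I would expand $P\in A_2(K)^{\dagger}$ in the canonical basis $\{x^{l_1}y^{l_2}\der_x^{\la k_1\ra}\der_y^{\la k_2\ra}\}$ and substitute $x=\pi^{-1}(\der_y+T)$. Since $T$ commutes with $x$ and $\der_y$, one obtains
\begin{equation*}
x^{l_1}=\pi^{-l_1}\sum_{j=0}^{l_1}\binom{l_1}{j}\der_y^{l_1-j}T^j.
\end{equation*}
The $j=0$ term contributes to an element of $B_2(K)^{\dagger}$; for $j\geq 1$ one must push the $T^j$ past $y^{l_2}\der_x^{\la k_1\ra}\der_y^{\la k_2\ra}$ using the commutation relations above. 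Each such commutation either trades a $T$ for a scalar (producing $y^{l_2-i}$, $\der_x^{\la k_1-i\ra}$ or an extra factor of $\pi$) or leaves it unchanged, so that the result can be written as $Q'\cdot T + R'$ with $R'\in B_2(K)^{\dagger}$. Iterating termwise on $(l_1,l_2,k_1,k_2)$ gives formal expressions for $Q$ and $R$.

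The hard part will be convergence of these formal series in the prescribed Banach spaces. Each monomial is divided by $\pi^{l_1}$, costing $l_1/(p-1)$ in $p$-adic valuation; the binomial coefficients $\binom{l_1}{j}$ and those arising from $T^j\cdot y^{l_2}$ contribute only $\log_p$-type losses by \ref{estimations1}; the factors $\pi$ arising from $[T,\der_x]$ are in fact gains. The decisive inequality is the comparison of $\nu_m$ applied to $\ul=(l_1,l_2)$ with $\nu_{m+2}$ applied to $l_2$ alone: using $\nu_m(\ul)\geq|\ul|/p^{m+1}$ and $\nu_{m+2}(l_2)\leq l_2/p^{m+3}+n$, the excess grows linearly in $l_1$ and, for $p\geq 2$, dominates the loss $l_1/(p-1)$, placing the $j=0$ contribution in $\what{E}^{(m+2)}_{\Qr}\cap B_2(K)^{\dagger}$. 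A parallel bookkeeping, now carrying the residual $T$-factors on the right, places $Q$ in $\what{E}^{(m+2)}_{\Qr}$; this simultaneously yields the continuity statement in (ii).

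Uniqueness reduces to showing that $QT=R$ with $R\in B_2(K)^{\dagger}$ and $Q\in A_2(K)^{\dagger}$ forces $Q=0$. Writing $Q=\sum q_{\ul,\uk}x^{l_1}y^{l_2}\der_x^{\la k_1\ra}\der_y^{\la k_2\ra}$, the monomials of maximal $x$-degree in $Q(-\der_y+\pi x)$ come only from the $\pi x$ piece and equal $\pi\cdot q_{\ul,\uk}\,x^{l_1+1}y^{l_2}\der_x^{\la k_1\ra}\der_y^{\la k_2\ra}$, and these cannot lie in $B_2(K)^{\dagger}$ unless each leading $q_{\ul,\uk}$ vanishes; descending induction on the $x$-degree then yields $Q=0$, hence $R=0$. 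Combined with the level estimates from existence, this completes the proof.
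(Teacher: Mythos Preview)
Your binomial-expansion strategy via $\pi x=\der_y+T$ is exactly the one used in the cited source \cite{NH} and in this paper's own proof of the closely related Lemma~\ref{division_lemma}, so the architecture is right. The genuine gap is in your convergence estimate for part (ii). You assert that the ``excess'' $\nu_m(\ul)-\nu_{m+2}(l_2)$, which grows like $l_1/p^{m+1}$, dominates the loss $l_1/(p-1)$ coming from $\pi^{-l_1}$. This is false: for every $p\geq 2$ and $m\geq 0$ one has $p^{m+1}\geq p>p-1$, hence $1/p^{m+1}<1/(p-1)$, and your ``excess minus loss'' tends to $-\infty$ as $l_1\to\infty$. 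What actually rescues the estimate is a term you have not counted: $\der_y^{l_1-j}$ is an \emph{ordinary} power, and rewriting it in the level-$(m+2)$ basis introduces the scalar $(l_1-j)!/q_{l_1-j}^{(m+2)}!$, whose $p$-adic valuation is approximately $(l_1-j)\bigl(\tfrac{1}{p-1}-\tfrac{1}{p^{m+2}(p-1)}\bigr)$. For the remainder (the $j=0$ term) this factorial gain of order $l_1/(p-1)$ exactly cancels the $\pi^{-l_1}$ loss, and what survives is $\nu_m(l_1)-l_1/(p^{m+2}(p-1))\sim l_1\cdot\tfrac{p^2-p-1}{p^{m+2}(p-1)}>0$; this is precisely the computation carried out just after~(\ref{formuleS}) in the proof of Lemma~\ref{division_lemma}. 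Without this factorial contribution the series you produce for $R$ and $Q$ do not lie in $\what{E}^{(m+2)}_{\Qr}$.

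A smaller point: your uniqueness argument invokes ``monomials of maximal $x$-degree in $Q$'', but elements of $A_2(K)^{\dagger}$ are honest infinite series in $l_1$, so no such maximum exists. The standard fix is to pick, among the finitely many coefficients of minimal $p$-adic valuation, one with largest $l_1$, and then read off the coefficient of $x^{l_1+1}y^{l_2}\der_x^{[k_1]}\der_y^{[k_2]}$ in $QT$; but as written the descending induction has no place to start.
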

\usubsection{Fourier transforms}
\nnsubsubsection{Kernel of the Fourier transform.}
In dimension $1$, the duality
pairing $\XX \times \XX^{\vee} \rig \what{\mathbb{A}}^1_\SS $ is given by $t \mapsto xy$
where $t$ is the global coordinate on the right-hand side. It extends 
to $\delta\colon\ZZ=\YY\times \YY^{\vee} \rig \what{\mathbb{P}}^1_\SS$, by the formula $t^{-1}\mapsto
x^{-1}y^{-1}$ on neighborhoods of $\infty$. Let $L_{\pi}$ be the realization over $\what{\mathbb{P}}^1_\SS$
of the overconvergent Dwork $F$-isocrystal. It is given by a connection
$\nabla(1)=-\pi dt$. We define 
$K_{\pi}$ to be the $\DD^{\dagger}_{\ZZ,\Qr}(\infty)$-module associated to the
overconvergent $F$-isocrystal $\delta^*(L_{\pi})$. The module $K_{\pi}$ is thus isomorphic 
to $\OO_{\ZZ}(^{\dagger}\infty)$ 
with a connection on $\YY \times \YY^{\vee}$ defined by $$\nabla(1)=-\pi(xdy+ydx).$$

\nnsubsubsection{Explicit descriptions of cohomological operations.} 
\label{acyclicity}
 Let us consider the
following diagram
$$\xymatrix@C=25pt@R=15pt{ & \ZZ=\YY\times
 \YY^{\vee}\ar@{->}[dr]^<>(.5){p_2}
 \ar@{->}[dl]_<>(.5){p_1} &  \\
          \YY & & \YY^{\vee}.}$$
For $M,N$ two $\OO_{\ZZ}(^{\dagger}\infty)$-modules, denote 
$$M\widetilde{\ot} N=M \ot^{\bf{L}}_{\OO_{\ZZ}(^{\dagger}\infty)}N [-2].$$

Note that the sheaves
$p_1^{-1}\DD^{\dagger}_{\YY}(\infty_{\YY})$ (resp.
$p_2^{-1}\DD^{\dagger}_{\YY^{\vee}}(\infty_{\YY^{\vee}})$),
 are canonically subsheaves of rings of $\DD^{\dagger}_{\ZZ}(\infty)$.
Cohomological operations involve sheaves $\DD^{\dagger}_{\ZZ\rig \YY}(\infty)$, 
respectively
$\DD^{\dagger}_{\YY\lrig \ZZ}(\infty)$, which are left (resp. right) coherent 
$\DD^{\dagger}_{\ZZ}(\infty)$-modules and right (resp. left) 
$p_1^{-1}\DD^{\dagger}_{\YY}(\infty_{\YY})$-modules, 
 which can be explicitely described in our
case. 
The module structures over $\DD^{\dagger}_{\YY\lrig \ZZ}(\infty)$ are obtained 
from these of $\DD^{\dagger}_{\ZZ\rig \YY}(\infty)$ by twisting by the adjoint operator
(1.3 of \cite{Ber2}). In particular underlying abelian groups of both sheaves are the
same.
Because the sheaves $$\OO_{\ZZ}(^{\dagger}\infty)\ot_{\OO_{\ZZ}}\omega_{\ZZ} \textrm{ and }
\OO_{\ZZ}(^{\dagger}\infty)\ot_{p_1^{-1}\OO_{\YY}}p_1^{-1}\omega_{\YY} $$
are free, the twisted actions are easy to describe globally. For example, for 
$P\in \DD^{\dagger}_{\YY\lrig \ZZ}(\infty)$, the right action of $\der_x $ over $P$
is equal to the left action by $-\der_x$ over $P$ seen as an element of
 $\DD^{\dagger}_{\ZZ\rig \YY}(\infty)$.

From 4.2.1 of \cite{NH}, we know that these sheaves admit a free 
resolution, as $\DD^{\dagger}_{\ZZ}(\infty)$-modules
$$\xymatrix@R=5pt{ 0 \ar@{->}[r]& \DD^{\dagger}_{\ZZ}(\infty)\ar@{->}[r]&
\DD^{\dagger}_{\ZZ}(\infty)\ar@{->}[r] & 0\\
  & P\ar@{|->}[r] &  P \der_y.  & }$$
For the sheaf $\DD^{\dagger}_{\YY\lrig \ZZ}(\infty)$, consider the map $P \mapsto \der_y P$.
Actually, if we endow $\DD^{\dagger}_{\ZZ}(\infty)$ with the canonical structure of 
right (resp. left)-$p^{-1}\DD^{\dagger}_{\YY}(\infty_{\YY})$-module this 
complex is a resolution of $\DD^{\dagger}_{\ZZ\rig \YY}(\infty)$ (resp. 
$\DD^{\dagger}_{\YY\lrig \ZZ}(\infty)$) which is $\DD^{\dagger}_{\ZZ}(\infty)\times
\DD^{\dagger}_{\YY}(\infty_{\YY})$ (resp. $\DD^{\dagger}_{\YY}(\infty_{\YY}))\times
\DD^{\dagger}_{\ZZ}(\infty)$-linear).

Moreover, thanks to \ref{equivP1} the sheaves $\DD^{\dagger}_{\ZZ\rig \YY}(\infty)$
and $\DD^{\dagger}_{\YY\lrig \ZZ}(\infty)$ are acyclic for the global section functor.

Moreover, both sheaves $\DD^{\dagger}_{\YY\lrig \ZZ}(\infty)$ and 
$\DD^{\dagger}_{\ZZ\rig \YY}(\infty)$ can be considered as subsheaves of
$\DD^{\dagger}_{\ZZ}(\infty)$. For example, $\DD^{\dagger}_{\ZZ\rig \YY}(\infty)$ is 
a $\DD^{\dagger}_{\ZZ}(\infty)$-coherent module, and its global sections over 
$\ZZ$ are the sections of $A_2(K)^{\dagger}$ with no term in $\der_x$. 

This is 
the same for $\DD^{\dagger}_{\YY\lrig \ZZ}(\infty)$ once we have twisted the 
two actions of $\DD^{\dagger}_{\ZZ}(\infty)$ over itself by the adjoint operator.

For $M\in D^b_{coh}(\DD^{\dagger}_{\YY}(\infty_{\YY})) $ we state as usual 
$$p_1^!(M)=\DD^{\dagger}_{\ZZ\rig
\YY}(\infty)\ot_{p_1^{-1}\DD^{\dagger}_{\YY}(\infty_{\YY})}^{\bL}p_1^{-1}M \,[1]\;
\in D^b_{coh}(\DD^{\dagger}_{\ZZ}(\infty))$$
and for $N\in D^b_{coh}(\DD^{\dagger}_{\ZZ}(\infty))$
$$p_{2+}(N)=\bR p_{2*}\left(\DD^{\dagger}_{\YY^{\vee}\lrig \ZZ}(\infty)
\ot^{\bL}_{\DD^{\dagger}_{\ZZ}(\infty)}N\right),$$
whose cohomology sheaves are not coherent in general. Fourier transform can now be defined.
\begin{defi}\label{Fouriertransdfn}
For $M\in D^b_{coh}(\DD^{\dagger}_{\YY}(\infty_{\YY}))$ 
$$\FF(M)=p_{2+}(p_1^!M \wtilde{\ot}K_{\pi}).$$
\end{defi}
 Let us recall the fundamental results of \cite{NH}.
\begin{thmH}\label{Fourier}
\begin{itemize}
 \item[i.] (4.3.4 of {\normalfont\cite{NH}}) there is a canonical 
 isomorphism 
 $$\DD^{\dagger}_{\YY^{\vee}}(\infty_{\YY^{\vee}})[-1]\rig
 \FF(\DD^{\dagger}_{\YY}(\infty_{\YY})) ,$$
 \item[ii.] (5.3.1 of {\it loc.\ cit.})
 If $M\in D^b_{coh}(\DD^{\dagger}_{\YY}(\infty_{\YY}))$, then $\FF(M)\in
 D^b_{coh}(\DD^{\dagger}_{\YY^{\vee}}(\infty_{\YY^{\vee}}))$.
\end{itemize}

\end{thmH}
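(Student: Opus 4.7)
The plan is to prove (i) by direct computation on global sections using the division lemma, and to deduce (ii) from (i) by a finite free resolution. For part (i), I would first unwind the definition of $\FF(\DD^{\dagger}_{\YY}(\infty_{\YY}))$ using the two-term free resolutions of $\DD^{\dagger}_{\ZZ\rig\YY}(\infty)$ and $\DD^{\dagger}_{\YY^\vee\lrig\ZZ}(\infty)$ recalled in \ref{acyclicity}. Since $K_{\pi}\cong\OO_{\ZZ}(^{\dagger}\infty)$ as an $\OO_{\ZZ}(^{\dagger}\infty)$-module with connection $\nabla(1)=-\pi(x\,dy+y\,dx)$, tensoring the first resolution with $K_{\pi}$ replaces the right multiplication $\cdot\,\der_y$ by $\cdot(-\der_y+\pi x)$. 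This identifies $p_1^!\DD^{\dagger}_{\YY}(\infty_{\YY})\wtilde{\ot}K_{\pi}$ with a concrete two-term complex of $\DD^{\dagger}_{\ZZ}(\infty)$-modules.

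Next, each term of the resulting double complex built out of the two resolutions is $\Ga(\ZZ,\cdot)$-acyclic by Theorem \ref{equivP1} and \ref{acyclicity}, so I would pass to global sections and obtain a double complex of $A_2(K)^{\dagger}$-modules whose differentials are right multiplication by $-\der_y+\pi x$ (coming from $p_1^!$ twisted by $K_{\pi}$) and, up to sign, by $\der_x$ (coming from the resolution of $\DD^{\dagger}_{\YY^\vee\lrig\ZZ}(\infty)$). The heart of the argument is then to apply the division lemma \ref{decomposition_ov_Z}: right multiplication by $-\der_y+\pi x$ on $A_2(K)^{\dagger}$ is injective with cokernel canonically $B_2(K)^{\dagger}$. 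Finally, $B_2(K)^{\dagger}$ is free over the subring $A_1(K)^{\dagger}$ generated by $y$ and $\der_y$ on the basis $\{\der_x^{[k]}\}_{k\geq 0}$, so the remaining $\der_x$-differential is injective with cokernel $A_1(K)^{\dagger}\cong\Ga(\YY^\vee,\DD^{\dagger}_{\YY^\vee}(\infty_{\YY^\vee}))$. Bookkeeping the $[1]$-shift in $p_1^!$ and the $[-2]$ in $\wtilde{\ot}$ produces the degree $1$ placement demanded by the statement, and Theorem \ref{equivP1} for $\YY^\vee$ translates the isomorphism back to sheaves.

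For part (ii), by Theorem \ref{equivP0} every object of $D^b_{coh}(\DD^{\dagger}_{\YY}(\infty_{\YY}))$ corresponds to a bounded complex of finitely generated $A_1(K)^{\dagger}$-modules, and the finite cohomological dimension of $A_1(K)^{\dagger}$ (from \cite{NH2}) implies that it admits a bounded resolution by finitely generated free $\DD^{\dagger}_{\YY}(\infty_{\YY})$-modules globally on $\YY$. Since $\FF$ is triangulated and commutes with finite direct sums, this reduces the coherence of $\FF(M)$ to the case $M=\DD^{\dagger}_{\YY}(\infty_{\YY})$, which is exactly part (i).

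The main obstacle is the division lemma \ref{decomposition_ov_Z}: making the formal Euclidean-style division by $-\der_y+\pi x$ converge in the weakly completed Weyl algebra requires the delicate $p$-adic estimates on the filtration $\what{E}^{(m)}_{\Qr}$ listed in \ref{estimations1}, together with a careful control on how the level $m$ grows under division. Fortunately this is already available as stated, so once it is granted the rest of the argument is the bookkeeping of the double complex differentials and of the degree shifts.
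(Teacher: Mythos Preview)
The paper does not give its own proof of this statement: it is simply quoted from \cite{NH} (4.3.4 and 5.3.1), so there is no in-paper argument to compare against directly. Your outline is nonetheless correct and is essentially the argument of \cite{NH}; it also parallels the closely related computation carried out later in the appendix for $p_{2+}\bD_{\ZZ}(K_{\pg})$, where the same double complex with differentials ``right multiplication by a twisted $\der_y$'' and ``left multiplication by $\der_x$'' is analyzed on global sections via the division lemma.

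Two small corrections. First, the operator coming from the twist by $K_\pi$ is $\der_y+\pi x$, not $-\der_y+\pi x$: from $\nabla(1)=-\pi(x\,dy+y\,dx)$ one gets $\varep'(\der_y)=-\pi x\ot 1$, so the Spencer-type resolution of $p_1^*\DD^{\dagger}_{\YY}(\infty_\YY)\ot K_\pi$ has differential $P\mapsto P(\der_y+\pi x)$ (this is exactly the complex $K_\pg$ of \ref{complexL}). The division lemma \ref{decomposition_ov_Z} is stated for $-\der_y+\pi x$, but the variant for $\der_y+\pi x$ follows by the automorphism $y\mapsto -y$, $\der_y\mapsto -\der_y$ (or see the remark after \ref{division_lemma}). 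Second, $B_2(K)^{\dagger}$ is not literally free over $A_1(K)^{\dagger}$ on $\{\der_x^{[k]}\}$ in the algebraic sense (infinite sums in $k$ are allowed); what you actually need, and what holds, is that left multiplication by $\der_x$ on $B_2(K)^{\dagger}$ is injective with cokernel $A_1(K)^{\dagger}$, which one checks directly on coefficients using the estimates in \ref{estimations1}. With these adjustments your degree bookkeeping gives $\FF(\DD^{\dagger}_{\YY}(\infty_\YY))\simeq \DD^{\dagger}_{\YY^{\vee}}(\infty_{\YY^{\vee}})[-1]$, and (ii) then follows from (i) by the reduction via global finite projective resolutions you describe.
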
 
\subsubsection{}
 To define Fourier transform with compact support, we will need 
 to work with 
 $\DD^{\dagger}_{\ZZ}(\infty')$-modules. In particular, we will use 
 the cohomological functor $p'_{2+}\colon D^b_{coh}(\DD^{\dagger}_{\ZZ}(\infty'))\rig
 D^b_{coh}(\DD^{\dagger}_{\YY^{\vee}}(\infty_{\YY^{\vee}}))$, which preserves coherence, since $p_2$ is proper and
 thanks to the fact that $\infty'=p_2^{-1}(\infty)$. 

 We will also use the scalar restriction functor
$\rho\colon D^b(\DD^{\dagger}_{\ZZ}(\infty))\rig D^b(\DD^{\dagger}_{\ZZ}(\infty'))$. 
\subsubsection{}
For $M$ in $D^b_{coh}(\DD^{\dagger}_{\ZZ}(\infty))$, we denote the dual 
$$\mathbf{D}_{\ZZ}(M)=\mathbf{R}\HH om_{\DD^{\dagger}_{\ZZ}(\infty)}(M,\DD^{\dagger}_{\ZZ}(\infty)
\ot_{\OO_{\ZZ}}\omega^{-1}_{\ZZ}[2])\, \in D^b_{coh}(\DD^{\dagger}_{\ZZ}(\infty)),$$
since the sheaf $\DD^{\dagger}_{\ZZ}(\infty)$
has finite cohomological dimension (\cite{NH2}),
and the corresponding dual 
functor $\mathbf{D}'_{\ZZ}(M)$, for $M\in D^b_{coh}(\DD^{\dagger}_{\ZZ}(\infty'))$, 
(resp. $\mathbf{D}_{\YY}$ and $\mathbf{D}_{\YY^{\vee}}$ for
$\DD^{\dagger}_{\YY}(\infty_{\YY})$, 
resp. $\DD^{\dagger}_{\YY^{\vee}}(\infty_{\YY^{\vee}})$-modules).

The following division lemma will be crucial.

\begin{lemH} \label{division_lemma}Let $\UU=D_+(u_1)\times D_+(v_0)$ or $\UU=D_+(u_1)\times D_+(v_1)$ and $x'=1/x$. 
\begin{itemize}
 \item[i.] The elements $(-x'\der_y+\pi)$ and $-\der_y+\pi x$ generate the same left ideal 
 of $\DD^{\dagger}_{\ZZ}(\infty)(\UU)$.
 \item[ii.] For any $P\in \DD^{\dagger}_{\ZZ}(\infty)(\UU)$ there exists 
 $(Q,R)\in  \DD^{\dagger}_{\ZZ}(\infty)(\UU)\times \DD^{\dagger}_{\ZZ}(\infty')(\UU)$ such that 
 $P=Q(-x'\der_y+\pi)+R.$
 \item[iii.] The previous decomposition is not unique. But, if $Q(-\der_y+\pi x)\in
 \DD^{\dagger}_{\ZZ}(\infty')(\UU)$, 
 then $Q\in \DD^{\dagger}_{\ZZ}(\infty')(\UU)$.
 \item[iv.] If $Q\in\DD^{\dagger}_{\ZZ}(\infty)(\UU)$ and $Q(-\der_y+\pi x)=0$, then $Q=0$.
\end{itemize}
\end{lemH}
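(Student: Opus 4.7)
The plan is to exploit the asymmetry between $\DD^{\dagger}_{\ZZ}(\infty)(\UU)$ and $\DD^{\dagger}_{\ZZ}(\infty')(\UU)$: on both choices of $\UU$, the coordinate $x' := u_0/u_1$ is invertible in the first ring (since $\{x'=0\}$ is a component of $\infty \cap \UU$, namely $\infty_{\YY}$) but not in the second ($\infty' = p_2^{-1}\infty_{\YY^{\vee}}$ does not meet $\{x'=0\}$ inside $\UU$). This will drive everything.

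For (i), since $x'$ and $\der_y$ commute, the two identities $-x'\der_y + \pi = x'(-\der_y + \pi x)$ and $-\der_y + \pi x = x(-x'\der_y + \pi)$ hold inside $\DD^{\dagger}_{\ZZ}(\infty)(\UU)$ (using $x = (x')^{-1}$, valid because $x'$ is invertible there), and so the two principal left ideals coincide.

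For (ii), I would perform a division algorithm by powers of $x'$. Expand $P$ as $P = \sum_{l \geq 0} (x')^{-l} P_l$, with $P_l \in \DD^{\dagger}_{\ZZ}(\infty')(\UU)$, using the weak-completion description of the overconvergent ring recalled in the preliminaries (so that $v_p$ of the coefficients of $P_l$ grows at least like $\nu_m(l)$ for some fixed level $m$). The basic congruence comes from
\begin{equation*}
 (x')^{-1}(-x'\der_y + \pi) = -\der_y + \pi (x')^{-1},
\end{equation*}
which rearranges to $(x')^{-1} \equiv \pi^{-1}\der_y \pmod{\DD^{\dagger}_{\ZZ}(\infty)(\UU)\cdot(-x'\der_y+\pi)}$. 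Iterating, and using that $\der_y$ commutes with both $x'$ and $\pi$ so that $(-x'\der_y+\pi)\der_y = \der_y(-x'\der_y+\pi)$, one obtains
\begin{equation*}
 (x')^{-l} \equiv \pi^{-l}\der_y^{\,l} \pmod{\DD^{\dagger}_{\ZZ}(\infty)(\UU)\cdot(-x'\der_y+\pi)}
 \quad (l \geq 1).
\end{equation*}
The candidate remainder is then $R = \sum_l P_l \pi^{-l}\der_y^{\,l}$, and the quotient $Q$ is recovered by reading off the telescoping correction. The main obstacle is convergence: writing $\der_y^{\,l} = l!\,\der_y^{[l]}$ and combining with the coefficients $P_l$ produces binomial factors $(l+k)!/k!$ in front of $\der_y^{[l+k]}$, weighted by $\pi^{-l}$. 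Using the estimates in paragraph \ref{estimations1} together with $v_p(\pi^{-l}) = -l/(p-1)$ and $v_p(l!)\geq l/(p-1) - \log$-corrections, the loss is absorbed by a change of level $m \to m''$ (comparable to the level change $m \to m+2$ in Theorem \ref{decomposition_ov_Z}), so the resulting double sum converges in $\DD^{\dagger}_{\ZZ}(\infty)(\UU)$. This last bookkeeping is the heart of the proof.

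For (iii), write $Q = \sum_{l\geq 0}(x')^{-l}Q_l$ with $Q_l \in \DD^{\dagger}_{\ZZ}(\infty')(\UU)$ and suppose $Q(-\der_y + \pi x) = Q(-\der_y + (x')^{-1}\pi) \in \DD^{\dagger}_{\ZZ}(\infty')(\UU)$. Expanding and collecting terms by the pole order along $\{x'=0\}$, the top-order pole comes from $(x')^{-l-1}\pi Q_l$, which forces $Q_l = 0$ for all $l\geq 1$ by descending induction on $l$ (there is nothing else to cancel it, since $\der_y$ preserves pole order). Hence $Q \in \DD^{\dagger}_{\ZZ}(\infty')(\UU)$. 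For (iv), if $Q(-\der_y + \pi x) = 0$ then by (iii) $Q$ has no pole along $x'=0$, and one argues by the symbol in $\der_y$: comparing top-order coefficients in $\der_y^{[k]}$ forces the leading term of $Q$ to vanish, then descending induction kills $Q$ entirely.
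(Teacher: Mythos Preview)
Your overall strategy matches the paper's, and part (i) is identical.  For (ii) the key congruence $x^{l}\equiv\pi^{-l}\partial_y^{\,l}$ modulo the left ideal is exactly what the paper uses (their $R_{l_1}$ is this, rewritten at level $m+2$), and the level shift $m\to m+2$ you predict is correct.  However, your ordering is inconsistent: you expand $P=\sum_l (x')^{-l}P_l$ with $P_l$ on the \emph{right}, but your candidate remainder $R=\sum_l P_l\,\pi^{-l}\partial_y^{\,l}$ has $P_l$ on the \emph{left}.  Only the second convention works: with $(x')^{-l}$ on the left, the congruence $(x')^{-l}-\pi^{-l}\partial_y^{\,l}\in\DD^{\dagger}_{\ZZ}(\infty)(\UU)\cdot(-x'\partial_y+\pi)$ gives $(x')^{-l}P_l-\pi^{-l}\partial_y^{\,l}P_l = A_l(-x'\partial_y+\pi)P_l$, and this is \emph{not} in the left ideal because $(-x'\partial_y+\pi)$ and $P_l$ do not commute (the commutator involves $[x',\partial_{x'}]$).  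The paper avoids this by writing $P=\sum_l P_l\,x^{l}$ using the right $\OO_{\ZZ}$-module structure, so that $P_l x^l = P_lA_l(-\partial_y+\pi x)+P_l\pi^{-l}\partial_y^{\,l}$ with the ideal element cleanly on the right.

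The more serious gap is in (iii) and (iv).  Your ``top-order pole'' and ``descending induction'' reasoning presupposes a highest pole order (in (iii)) or a highest $\partial_y$-degree (in (iv)), but in $\DD^{\dagger}_{\ZZ}(\infty)(\UU)$ every element is an \emph{infinite} series in both $(x')^{-1}$ and $\partial_y^{[k]}$, so there is no top from which to descend.  Concretely, in (iii) the pole of order $l+1$ coming from $\pi Q_l$ \emph{is} cancelled by the $-\partial_y$-contribution of $Q_{l+1}$; the recursion $\pi Q_l = Q_{l+1}\partial_y$ (or its variant) holds for all $l$ and does not terminate.  The paper instead runs the induction \emph{upward} from the smallest pole: after a reduction one has $Q_{-1}\partial_y=0$, and the crucial point (which you do not invoke) is that right multiplication by $\partial_y$ is injective---in the left $\OO$-basis $\{y^{l_2}\partial_{x'}^{[k_1]}\partial_y^{[k_2]}\}$ it strictly raises the $\partial_y$-index---so $Q_{-1}=0$, and then one ascends to $Q_{-2},Q_{-3},\dots$.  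Likewise in (iv) the paper expands in nonnegative powers of $x'$ and reads off $\pi Q_0=0$ from the coefficient of $x'^{-1}$, then ascends; your $\partial_y$-symbol argument cannot start because there is no leading term.
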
 
\noindent{\bf Remark}. Analogous statements hold for the left multiplication by $-\der_y+\pi x$, 
or $\der_y+\pi x$, considering right ideals generated by these elements.
\begin{proof}  Recall that $x\in\Ga(\ZZ,\OO_{\ZZ}(^{\dagger}\infty))$. The first assertion comes from the equality
$$ x(-x'\der_y+\pi)=-\der_y+\pi x.$$
The open set $D_+(u_1)\times D_+(v_0)$ will be endowed
with coordinates $x'=1/x$ and $t=y$, and $D_+(u_1)\times D_+(v_1)$ with coordinates $x'$ and $t=1/y$. 
Denote $D^{\dagger}=\DD^{\dagger}_{\ZZ}(\infty)(\UU)$, and 
${D'}^{\dagger}=\DD^{\dagger}_{\ZZ}(\infty')(\UU)$, which contains 
the algebras $\what{F}^{(m)}_{\Qr}$ described in \ref{eq_cat_Pn}. 
 
By considering the right $\OO_{\ZZ}$-module structure on
$\DD^{\dagger}_{\ZZ}(\infty)$, we see that an element $Q\in D^{\dagger}$ 
can be written $$Q=\sum_{\uk\in \Ne^2,\ul\in
\Ze^2}a_{\ul,\uk}
\der_{x'}^{[k_1]}\der_t^{[k_2]}{x'}^{l_1}t^{l_2},$$ such that
\begin{align}\label{cond_conv} a_{\ul,\uk}\in K \textrm{ and }\exists C>0, M>0,  \,|\, v_p(a_{\ul,\uk})\geq
\frac{\max\{-l_1,0\}+\max\{-l_2,0\} +|\uk|}{C}-M,\end{align}
(note that on $D_+(u_1)\times D_+(v_0)$ coefficients $a_{(l_1,l_2),\uk}$ are equal to $0$ if 
$l_2<0$). With this notation $Q\in {D'}^{\dagger}$ if and only if 
$\forall \uk\in\Ne^2,\, \forall \ul\,|\,l_1<0,\, a_{\ul,\uk}=0$.
We can also write $Q$ this way 
$$Q=\sum_{l_1\in\Ze}Q_{l_1}{x'}^{l_1},$$
with $$Q_{l_1}=\sum_{\cindex{\uk=(k_1,k_2)\in \Ne\times \Ne}{l_2\in \Ze}}a_{(l_1,l_2),\uk}
\der_{x'}^{[k_1]}\der_t^{[k_2]}t^{l_2}\in \Ga(\UU,\DD^{\dagger}_{\ZZ}(\infty')).$$
Let us observe that the condition(\ref{cond_conv})
is equivalent to the fact that there exist $m>0$ such that 
$$Q_{l_1}\in \Ga(\UU,\what{\DD}^{(m)}_{\ZZ,\Qr}(\infty'))$$ and 
elements $T_{l_1}$ of $\Ga(\UU,\what{\DD}^{(m)}_{\ZZ}(\infty'))$ such that 
\begin{align}\label{formule_Q} Q_{l_1}=u_{l_1}p^{\num(-l_1)}T_{l_1}, \textrm{ for some } u_{l_1}\in K 
                \textrm{ satisfying }v_p(u_{l_1})\rig +\inft \textrm{ if } |l_1| \rig
+\infty.
\end{align}
We define $$R''=\sum_{l_1\in\mathbf{N}}Q_{l_1}{x'}^{l_1}\,\in
\Ga(\UU,\Dcm_{\ZZ,\Qr}(\infty')),$$ so that we can write down 
\begin{align}\label{formule_Q2}Q=\sum_{l_1<0}u_{l_1}p^{\num(|l_1|)}T_{l_1}x^{|l_1|}+R'',
\end{align}
with $T_{l_1}\in \Ga(\UU,\Dcm_{\ZZ,\Qr}(\infty))$ and $v_p(u_{l_1})\rig +\infty$ if
$|l_1|\rig +\infty$.

We will first prove (iii). 
Suppose that $Q(-\der_y+\pi x)\in\Ga(\UU,\DD^{\dagger}_{\ZZ}(\infty')).$ Because 
of (\ref{formule_Q2}), we can suppose that 
$$ Q=\sum_{l_1<0}Q_{l_1}{x'}^{l_1}.$$
Now we are reduced to prove that $Q=0$, that we can do in restriction 
to $D_+(u_1)\times D_+(v_0v_1)$, on which we choose $x'$ and $y$ as coordinates.

Let us compute
$$Q(-\der_y+\pi x)=\sum_{l_1<0}\left(-Q_{l_1}\der_y+\pi Q_{l_1+1}\right){x'}^{l_1}.$$
Therefore, $Q(-\der_y+\pi x)$ is an element of $\Ga(\UU,\DD^{\dagger}_{\ZZ}(\infty'))$ if and only if
\begin{align} Q_{-1}\der_y=0,\textrm{ and } \end{align}
\begin{align}\label{rec_formula}\forall l_1\leq -1,\, -Q_{l_1}\der_y+\pi Q_{l_1+1}=0.\end{align}

Let us decompose
$$Q_{-1}=\sum_{\uk}\beta_{\uk}(y) \der_{x'}^{[k_1]}\der_y^{[k_2]},$$
with $\beta_{\uk}(y)\in \OO_{\ZZ,\Qr}(D_+(v_0u_0u_1))$, then we compute
$$Q_{-1}\der_y=\sum_{\uk|k_2\geq 1}k_2\beta_{(k_1,k_2-1)}(y) \der_{x'}^{[k_1]}\der_y^{[k_2]},$$
which is null if and only if $\forall \uk, \, \beta_{\uk}=0$, {\it i.e.}\
 $Q_{-1}=0$. Thus by descending induction, one sees from (\ref{rec_formula}) that $\forall l_1\leq
-1,\,Q_{l_1}=0,$ and $Q=0$.

Let us prove now (ii), the existence of the decomposition. Recall 
that $\der_y\in\Ga(\ZZ,\DD^{\dagger}_{\ZZ}(\infty'))$ and 
$(-\der_y+\pi x) \in\Ga(\ZZ,\DD^{\dagger}_{\ZZ}(\infty)) $. 
Since these two elements commute we have the following equalities
\begin{align*} p^{\nu_m(|l_1|)}x^{|l_1|} & =  \frac{p^{\nu_m(|l_1|)}}{\pi^{|l_1|}}\left((-\der_y+\pi x)+\der_y\right)^{|l_1|}\\
                           & =
\frac{p^{\nu_m(|l_1|)}}{\pi^{|l_1|}}|l_1|!\der_y^{[|l_1|]}\\
 & \quad +\left[\sum_{r=1}^{|l_1|}\sum_{s=0}^{r-1}
(-1)^{r-1+s}\frac{p^{\nu_m(|l_1|)}}{\pi^{|l_1|-s}}\binom{|l_1|}{r}\binom{r-1}{s}
(|l_1|-1-s)!\der_y^{[|l_1|-1-s]}x^s\right](-\der_y+\pi x)\\
& =  \frac{p^{\nu_m(|l_1|)}}{\pi^{|l_1|}}\frac{|l_1|!}{q_{|l_1|}^{(m+2)}!}\der_y^{\la|l_1|\ra_{(m+2)}}\\
  &   \quad   +\left[\sum_{r=1}^{|l_1|}\sum_{s=0}^{r-1}
		(-1)^{r-1+s}\frac{p^{\nu_m(|l_1|)}}{\pi^{|l_1|-s}}\binom{|l_1|}{r}\binom{r-1}{s}
		\frac{(|l_1|-1-s)!}{q_{|l_1|-1-s}^{(m+2)}!}\der_y^{\la
|l_1|-1-s\ra_{(m+2)}}x^s\right](-\der_y+\pi x).
\end{align*}
Denote $$R_{l_1}=\frac{p^{\nu_m(|l_1|)}}{\pi^{|l_1|}}\frac{|l_1|!}{q_{|l_1|}^{(m+2)}!}\der_y^{\la
|l_1|\ra_{(m+2)}},$$
$$S_{l_1}=\sum_{r=1}^{|l_1|}\sum_{s=0}^{r-1}
(-1)^{r-1+s}\frac{p^{\nu_m(|l_1|)}}{\pi^{|l_1|-s}}\binom{|l_1|}{r}\binom{r-1}{s}
\frac{(|l_1|-1-s)!}{q_{|l_1|-1-s}^{(m+2)}!}\der_y^{\la |l_1|-1-s\ra_{(m+2)}}x^s,$$
$$c_{l_1}(r,s)=(-1)^{r-1+s}\frac{p^{\nu_m(|l_1|)}}{\pi^{|l_1|-s}}\binom{|l_1|}{r}\binom{r-1}{s}
\frac{(|l_1|-1-s)!}{q_{|l_1|-1-s}^{(m+2)}!}.$$ By definition, we have the following relation
\begin{eqnarray} \label{formuleS}p^{\nu_m(|l_1|)}x^{|l_1|}=S_{l_1}(-\der_y+\pi x)+R_{l_1}.\end{eqnarray}
Then, from estimations \ref{estimations1} we see that
$$v_p\left(\frac{p^{\nu_m(|l_1|)}}{\pi^{|l_1|}}\frac{|l_1|!}{q_{|l_1|}^{(m+2)}!}\right)\geq 
\frac{p^2-p-1}{p^{m+2}(p-1)}|l_1|-\log_p(|l_1|+1)-1 \;\rig\infty \textrm{ if }{|l_1|\rig+\infty},$$
which proves that $R_{l_1}\in \what{E}^{(m+2)}$ for $|l_1|$ big enough. We also see that 
$\forall r\leq |l_1|-1,\,s\leq r,$
\begin{align*}
v_p\left(c_{l_1}(r,s)\right)
&\geq \frac{|l_1|}{p^{m+1}}-\frac{|l_1|-s}{p-1}-\log_p(|l_1|+1)+\frac{|l_1|-1-s}{p-1}-1
-\frac{|l_1|-1-s}{p^{m+2}(p-1)} \\
&\geq \left(\frac{p^2-p-1}{p^{m+2}(p-1)}\right)|l_1|-\log_p(|l_1|+1)-2  \rig\infty \textrm{ if
}{|l_1|\rig+\infty},\end{align*}
which proves that $S_{l_1}\in \what{E}^{(m+2)}$ for $|l_1|$ big enough. As a consequence 
$S_{l_1}$ and resp. $R_{l_1}$ are elements of $\Ga(\UU,\what{\DD}^{(m+2)}_{\ZZ}(\infty))$ for $|l_1|$ big enough
(resp. $\Ga(\UU,\what{\DD}^{(m+2)}_{\ZZ}(\infty'))$).

Let $Q\in D^{\dagger}$. We can use the description given in (\ref{formule_Q2}).
Since $|u_{l_1}|\rig 0$ if $|l_1|\rig +\infty$, we observe 
that 
$$Q'=\sum_{l_1<0}u_{l_1}T_{l_1}S_{l_1}\in \Ga(\UU,\what{\DD}^{(m+2)}_{\ZZ,\Qr}(\infty)),$$
and 
$$ R'=\sum_{l_1<0}u_{l_1}T_{l_1}R_{l_1}\in \Ga(\UU,\what{\DD}^{(m+2)}_{\ZZ,\Qr}(\infty')).$$
Moreover, we have the following equalities
\begin{align*}Q&=\sum_{l_1<0}u_{l_1}p^{\num(|l_1|)}T_{l_1}x^{|l_1|}+R''\\
               &=\sum_{l_1<0}u_{l_1}T_{l_1}\left(S_{l_1}(-\der_y+\pi x)+R_{l_1}\right)+R''\\
               &=Q'x(-x'\der_y+\pi)+R'+R'',
\end{align*}
which shows (ii) of the lemma.

Now, let $Q\in D^{\dagger}$ such that $Q(-\der_y+\pi x)=0$. From (iii), we
know that in fact $Q\in {D'}^{\dagger}$. As in the previous case, 
we restrict ourselves to $\UU=D_+(u_0)\times D_+(v_0v_1)$ and we decompose 
$$Q=\sum_{l_1\in\Ne}Q_{l_1}{x'}^{l_1},$$
The recursive formula (\ref{rec_formula}) still holds and we get
\begin{align*} \pi Q_0=0,\textrm{ and } \end{align*}
\begin{align*}\forall l_1\geq 0,\, \pi
Q_{l_1+1}=Q_{l_1}\der_y.\end{align*}
By induction, this proves that $Q_{l_1}=0$ for all $l_1\geq 0$ and thus that $Q=0$.
\end{proof}

The key lemma to define the Fourier transform with compact support is the following: 
\begin{lemH} \label{coh_lemma} Let $M\in D^b_{coh}(\DD^{\dagger}_{\YY}(\infty_{\YY})))$, then 
$$ \rho_*\mathbf{D}_{\ZZ} (p_1^!M\wtilde{\ot}K_{\pi})\in
D^b_{coh}(\DD^{\dagger}_{\ZZ}(\infty')).$$
\end{lemH}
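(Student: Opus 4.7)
My plan is a two-step reduction: first to the case $M=\DD^{\dagger}_{\YY}(\infty_{\YY})$, then to an explicit computation based on the division lemma \ref{division_lemma}. For the reduction, theorem \ref{equivP0} combined with the finite cohomological dimension of $A_{1}(K)^{\dagger}$ proved in \cite{NH2} yields for any $M\in D^{b}_{coh}(\DD^{\dagger}_{\YY}(\infty_{\YY}))$ a finite free resolution by direct sums of $\DD^{\dagger}_{\YY}(\infty_{\YY})$. Since each of $p_{1}^{!}$, $(\cdot)\widetilde{\ot}K_{\pi}$, $\mathbf{D}_{\ZZ}$ and $\rho_{*}$ has bounded cohomological amplitude, a standard totalization/spectral-sequence argument reduces the coherence of $\rho_{*}\mathbf{D}_{\ZZ}(p_{1}^{!}M\widetilde{\ot}K_{\pi})$ over $\DD^{\dagger}_{\ZZ}(\infty')$ to the single case $M=\DD^{\dagger}_{\YY}(\infty_{\YY})$.

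In that case, the resolution of $\DD^{\dagger}_{\ZZ\rig \YY}(\infty)$ recalled in \ref{acyclicity} represents $p_{1}^{!}\DD^{\dagger}_{\YY}(\infty_{\YY})$, up to shift, by the two-term complex $\bigl[\DD^{\dagger}_{\ZZ}(\infty)\xrightarrow{\,\cdot\,\der_{y}\,}\DD^{\dagger}_{\ZZ}(\infty)\bigr]$ of free left $\DD^{\dagger}_{\ZZ}(\infty)$-modules. Tensoring with $K_{\pi}=\OO_{\ZZ}(^{\dagger}\infty)\,e$ over $\OO_{\ZZ}(^{\dagger}\infty)$ and using the connection relation $\der_{y}e=-\pi x\,e$, a direct Leibniz computation transforms the differential into right multiplication by $\der_{y}+\pi x$, so that $p_{1}^{!}\DD^{\dagger}_{\YY}(\infty_{\YY})\widetilde{\ot}K_{\pi}$ is quasi-isomorphic, up to shift, to $\bigl[\DD^{\dagger}_{\ZZ}(\infty)\xrightarrow{\,\cdot\,(\der_{y}+\pi x)\,}\DD^{\dagger}_{\ZZ}(\infty)\bigr]$. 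Dualizing this free complex by $\mathbf{D}_{\ZZ}$ amounts to taking the transpose of the defining matrix, and produces, up to shift and the twist by $\omega^{-1}_{\ZZ}$, the complex $\bigl[\DD^{\dagger}_{\ZZ}(\infty)\xrightarrow{\,\cdot\,(-\der_{y}+\pi x)\,}\DD^{\dagger}_{\ZZ}(\infty)\bigr]$ of free left $\DD^{\dagger}_{\ZZ}(\infty)$-modules, whose differential is now right multiplication by the adjoint operator $-\der_{y}+\pi x$.

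It remains to show that the cohomology sheaves of this complex are coherent as left $\DD^{\dagger}_{\ZZ}(\infty')$-modules. This is exactly what the division lemma \ref{division_lemma} provides on the affine opens $\UU=D_{+}(u_{1})\times D_{+}(v_{0})$ and $\UU=D_{+}(u_{1})\times D_{+}(v_{1})$: part (iv) says right multiplication by $-\der_{y}+\pi x$ is injective on $\DD^{\dagger}_{\ZZ}(\infty)(\UU)$; part (ii) shows every element of $\DD^{\dagger}_{\ZZ}(\infty)(\UU)$ has the form $Q(-\der_{y}+\pi x)+R$ with $R\in\DD^{\dagger}_{\ZZ}(\infty')(\UU)$; and part (iii) ensures that the resulting surjection $\DD^{\dagger}_{\ZZ}(\infty')(\UU)\twoheadrightarrow\mathrm{coker}$ has kernel exactly $\DD^{\dagger}_{\ZZ}(\infty')(\UU)(-\der_{y}+\pi x)$. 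The cokernel therefore identifies with $\DD^{\dagger}_{\ZZ}(\infty')(\UU)/\DD^{\dagger}_{\ZZ}(\infty')(\UU)(-\der_{y}+\pi x)$, which is manifestly a coherent left $\DD^{\dagger}_{\ZZ}(\infty')(\UU)$-module. On the complementary affine open $\widehat{\mathbb{A}}^{1}_{\SS}\times\widehat{\mathbb{P}}^{1}_{\SS}$, the same conclusion follows from the global division lemma \ref{decomposition_ov_Z} combined with the category equivalence \ref{equivP1}. Gluing across this cover of $\ZZ$ yields the conclusion $\rho_{*}\mathbf{D}_{\ZZ}(p_{1}^{!}M\widetilde{\ot}K_{\pi})\in D^{b}_{coh}(\DD^{\dagger}_{\ZZ}(\infty'))$.

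The main technical obstacle lies in the middle paragraph: one must carefully track the interaction between the connection on $K_{\pi}$, the resolution of $\DD^{\dagger}_{\ZZ\rig \YY}(\infty)$, and the right-to-left conversion built into Virrion's dual functor $\mathbf{D}_{\ZZ}$, in order to verify that the operator appearing as the differential of the dualized complex is exactly $-\der_{y}+\pi x$, the one addressed by lemma \ref{division_lemma}. A secondary subtlety is that \ref{division_lemma} only treats the opens with $u_{1}\neq 0$, so the global statement requires complementing it by the global division \ref{decomposition_ov_Z} on $\widehat{\mathbb{A}}^{1}_{\SS}\times\widehat{\mathbb{P}}^{1}_{\SS}$ and verifying that the two local identifications of the cokernel glue.
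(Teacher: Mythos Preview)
Your approach is essentially the one taken in the paper: reduce to $M=\DD^{\dagger}_{\YY}(\infty_{\YY})$ via \ref{equivP0}, represent $p_1^!\DD^{\dagger}_{\YY}(\infty_{\YY})\widetilde{\ot}K_{\pi}$ by the two-term complex with differential $\cdot(\der_y+\pi x)$, dualize to obtain $\cdot(-\der_y+\pi x)$, and then use the division lemma \ref{division_lemma} on the opens $D_+(u_1)\times D_+(v_0)$ and $D_+(u_1)\times D_+(v_1)$ to exhibit the cokernel as $\DD^{\dagger}_{\ZZ}(\infty')/\DD^{\dagger}_{\ZZ}(\infty')(-\der_y+\pi x)$.

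The only point where you diverge from the paper is your treatment of the complementary open. You call $\widehat{\mathbb{A}}^1_\SS\times\widehat{\mathbb{P}}^1_\SS$ affine (it is not) and invoke the global decomposition \ref{decomposition_ov_Z} together with \ref{equivP1}. This is unnecessary: on $D_+(u_0)\times\YY^\vee$ the divisors $\infty$ and $\infty'$ have the same trace, so $\DD^{\dagger}_{\ZZ}(\infty)$ and $\DD^{\dagger}_{\ZZ}(\infty')$ coincide there and the complex is already one of finite free $\DD^{\dagger}_{\ZZ}(\infty')$-modules. The paper simply makes this observation and moves on. Your gluing concern then also evaporates, since on $D_+(u_0)$ there is nothing to glue against beyond the tautological identification.
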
 
\begin{proof} It is enough to prove this lemma in the case of a single
$\DD^{\dagger}_{\YY}(\infty_{\YY})$ coherent module 
$M$. Such a module admits a resolution by direct factors of free modules of finite rank
(\ref{equivP0}). It is thus
enough to prove this lemma in the case of $\DD^{\dagger}_{\YY}(\infty_{\YY})$ itself. 

If $\FF$ is an $\OO_{\ZZ}$-coherent module, we denote
$$\wtilde{\FF}=\FF\ot_{\OO_{\ZZ}}\OO_{\ZZ}(^{\dagger}\infty).$$
Then by 2.1.1 of \cite{NH} $\wtilde{\TT}_{\ZZ/S}$ (resp.
$\wtilde{\TT}_{\ZZ/\YY}$) are free
$\OO_{\ZZ}(^{\dagger}\infty)$-modules of basis $\der_x,\, \der_y$ (resp. $\der_y$).

Let us reformulate lemma 4.2.4 of \cite{NH}. Let $K_{\pg}$ the complex 
of induced $\DD^{\dagger}_{\ZZ}(\infty)$-modules
in degrees $-1$ and $0$
 $$ 0 \rig \DD^{\dagger}_{\ZZ}(\infty)\ot_{\OO^{\dagger}_{\ZZ}(\infty)}\wedgeb^1
\wtilde{\TT}_{\ZZ/\YY}
\sta{d}{\rig} \DD^{\dagger}_{\ZZ}(\infty) \rig 0,$$ where 
$$d(P\ot \der_y)=P\cdot(\der_y+\pi x).$$ 
\begin{souslem}\label{complexL} In the derived category 
$D^b_{coh}(\DD^{\dagger}_{\ZZ}(\infty))$ this complex $K_{\pg}$ is equal to the complex  
$$p_1^!\DD^{\dagger}_{\YY}(\infty_{\YY})\wtilde{\ot}_{\OO^{\dagger}_{\ZZ}(\infty)}K_{\pi}[1],$$
which is nothing but the complex  
$p_1^*\DD^{\dagger}_{\YY}(\infty_{\YY})\ot K_{\pi}$ (in degree
$0$).
\end{souslem} 
The augmentation map $\varep'\colon\DD^{\dagger}_{\ZZ}(\infty)\rig
p_1^*\DD^{\dagger}_{\YY}(\infty_{\YY})
\ot K_{\pi}$ is given by
\begin{eqnarray*}\varep'(\der_y)& = & -\pi x \ot 1 \\
                \varep'(\der_x)& = & (\der_x -\pi y) \ot 1. \end{eqnarray*}
In the rest of the proof, we identify the left induced $\DD^{\dagger}_{\ZZ}(\infty)$-module 
$\DD^{\dagger}_{\ZZ}(\infty)\ot_{\OO^{\dagger}_{\ZZ}(\infty)} \Lambda^1
\wtilde{\TT}_{\ZZ/\YY}$ with $\DD^{\dagger}_{\ZZ}(\infty)$. 
Then $ \mathbf{R}\HH om_{\DD^{\dagger}_{\ZZ}(\infty)}(K_{\pg},\DD^{\dagger}_{\ZZ}(\infty))[2]$
is represented by the following complex of right $\DD^{\dagger}_{\ZZ}(\infty)$-modules, 
whose terms are in degrees $-2$ and $-1$
\begin{align}\label{complex-dual}
 0 \rig \DD^{\dagger}_{\ZZ}(\infty)
\sta{d'}{\rig} \DD^{\dagger}_{\ZZ}(\infty) \rig 0,
\end{align}
such that $d'(P)=(\der_y+\pi x)P$.
 Finally, we see that 
$L_{\pg}=\mathbf{D}(K_{\pg})$ is represented in $D^b_{coh}(\DD^{\dagger}_{\ZZ}(\infty))$
by the following complex in degrees $-2$ et $-1$
\begin{align}  \label{compl_dual} 0 \rig \DD^{\dagger}_{\ZZ}(\infty)
\sta{d''}{\rig} \DD^{\dagger}_{\ZZ}(\infty) \rig 0,\end{align}
such that $d''(P)=P(-\der_y+\pi x)$.

Consider now the canonical map  
$$\DD^{\dagger}_{\ZZ}(\infty')\rig \DD^{\dagger}_{\ZZ}(\infty)\left/ \DD^{\dagger}_{\ZZ}(\infty)
(-\der_y+\pi x)\right. .$$
Over $D_+(u_0)$ both sheaves $\DD^{\dagger}_{\ZZ}(\infty')$ and $\DD^{\dagger}_{\ZZ}(\infty)$ 
coincide. Let us study the situation over $D_+(u_1)$. 
From the previous lemma \ref{division_lemma}, we see that this map is surjective 
over 
$D_+(u_1)\times D_+(v_0)$ and $D_+(u_1)\times D_+(v_1)$ and that over these open subsets 
the following complex is exact (using notations of \ref{division_lemma})
$$0 \rig \DD^{\dagger}_{\ZZ}(\infty') \sta{d''}{\rig} \DD^{\dagger}_{\ZZ}(\infty') \rig 
\rho_* \left(\DD^{\dagger}_{\ZZ}(\infty)\left/\DD^{\dagger}_{\ZZ}(\infty) (-\der_y+\pi
x)\right. \right)\rig 0 ,$$
where $d''(P)=P(-x'\der_y+\pi)$, showing that 
$\rho_* \bD_{\ZZ}(K_{\pg})\in D^b_{coh}(\DD^{\dagger}_{\ZZ}(\infty')),$ hence
$\rho_* \mathbf{D}_{\ZZ}(p_1^!M\wtilde{\ot}K_{\pi})\in
D^b_{coh}(\DD^{\dagger}_{\ZZ}(\infty'))$ for any $M\in D^b_{coh}(\DD^{\dagger}_{\ZZ}(\infty))$.
\end{proof}

Finally, this leads us to the following definition.

\begin{defi}\label{defcompsuppfour}
 For $M\in D^b_{coh}(\DD^{\dagger}_{\YY}(\infty_{\YY}))$ 
$$\FF_{!}(M)=p'_{2+} \mathbf{D}'_{\ZZ} 
\rho_*\mathbf{D}_{\ZZ} (p_1^!M\wtilde{\ot}K_{\pi})\in D^b_{coh}(\DD^{\dagger}_{\ZZ}(\infty')).$$
\end{defi}
Note that from the previous lemma \ref{coh_lemma}, we know that $\FF_{!}(M)\in
D^b_{coh}(\DD^{\dagger}_{\YY^{\vee}}(\infty_{\YY^{\vee}}))$.
\usubsection{Comparison theorem}
\begin{propH} Let $M\in D^b_{coh}(\DD^{\dagger}_{\ZZ}(\infty))$, 
there is a canonical map: $\FF_!(M) \rig \FF(M)$.
\end{propH}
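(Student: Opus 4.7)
The plan is to construct the canonical map as a composition of two natural morphisms. Write $G := p_1^!M\,\wtilde{\ot}\, K_\pi$ so that $\FF(M)=p_{2+}(G)$ and $\FF_!(M)=p'_{2+}(\mathbf{D}'_\ZZ\,\rho_*\,\mathbf{D}_\ZZ G)$. I would produce a morphism $\varphi\colon\mathbf{D}'_\ZZ\,\rho_*\,\mathbf{D}_\ZZ G\to\rho_* G$ in $D^b(\DD^\dagger_\ZZ(\infty'))$ together with the natural transformation $\psi\colon p'_{2+}\circ\rho_*\to p_{2+}$ defined for any $N\in D^b_{\mathrm{coh}}(\DD^\dagger_\ZZ(\infty))$, and then compose to obtain
\[
\FF_!(M)=p'_{2+}(\mathbf{D}'_\ZZ\,\rho_*\,\mathbf{D}_\ZZ G)\xrightarrow{p'_{2+}(\varphi)}p'_{2+}(\rho_* G)\xrightarrow{\psi(G)}p_{2+}(G)=\FF(M).
\]

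The natural transformation $\psi$ comes from the inclusion of transfer bimodules $\DD^\dagger_{\YY^\vee\lrig\ZZ}(\infty')\hookrightarrow\DD^\dagger_{\YY^\vee\lrig\ZZ}(\infty)$ induced by the inclusion of divisors $\infty'\subset\infty$: tensoring with $\rho_* N$ over $\DD^\dagger_\ZZ(\infty')$ and using the canonical multiplication $\DD^\dagger_\ZZ(\infty)\otimes_{\DD^\dagger_\ZZ(\infty')}\rho_* N\to N$ yields, after applying $\mathbf{R}p_{2*}$, the morphism $\psi(N)$.

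For the construction of $\varphi$, I would use the equivalence of Theorem \ref{equivP0} to reduce to the case $M=\DD^\dagger_\YY(\infty_\YY)$. Here Sub-Lemma \ref{complexL} identifies $G$ with the explicit Koszul complex $K_\bullet$, and the proof of Lemma \ref{coh_lemma}---relying on the division lemma \ref{division_lemma}---furnishes a two-term representative for $\mathbf{D}'_\ZZ\,\rho_*\,\mathbf{D}_\ZZ K_\bullet$ on the standard affine open cover of $\ZZ$. Over $D_+(u_0)\times\widehat{\mathbb{P}}^1_\SS$ this representative already coincides with $\rho_* K_\bullet$ since $\DD^\dagger_\ZZ(\infty)=\DD^\dagger_\ZZ(\infty')$ on that open, while over $D_+(u_1)\times\widehat{\mathbb{P}}^1_\SS$ the operator $-\der_y+\pi x$ appearing in $\rho_* K_\bullet$ and the operator $-x'\der_y+\pi$ appearing in the $\infty'$-representative generate the same left ideal by \ref{division_lemma}(i). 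The key identity $x(-x'\der_y+\pi)=-\der_y+\pi x$ then shows that left multiplication by $x$ defines a chain map between the two local representatives, which glues with the identity on the other open piece---on the overlap $D_+(u_0 u_1)\times\widehat{\mathbb{P}}^1_\SS$, multiplication by $x$ is invertible and identifies the two representatives---to produce the global morphism $\varphi$.

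The main obstacle will be verifying functoriality in $M$. The construction of $\varphi$ is performed on an explicit representative for $M=\DD^\dagger_\YY(\infty_\YY)$, but one needs to ensure that it extends canonically to a natural transformation between the triangulated functors $\FF_!$ and $\FF$ on all of $D^b_{\mathrm{coh}}(\DD^\dagger_\YY(\infty_\YY))$, compatibly with direct sums, shifts, and distinguished triangles. This requires tracking the functoriality of every step---the resolution from Sub-Lemma \ref{complexL}, the division-lemma-based presentation from the proof of Lemma \ref{coh_lemma}, and the chain map given by multiplication by $x$---and verifying that the resulting natural transformation is independent of the choice of projective resolution used in the initial reduction.
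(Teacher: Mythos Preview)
Your approach is substantially more complicated than the paper's, and the functoriality issue you flag at the end is a genuine gap rather than a mere technicality.

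The paper's proof is purely formal and avoids any explicit computation. Set $E=\mathbf{D}_{\ZZ}(p_1^!M\,\wtilde{\ot}\,K_\pi)$; by Lemma~\ref{coh_lemma}, $\rho_*E$ is $\DD^\dagger_\ZZ(\infty')$-coherent. For any such $E$ the ring inclusion $\DD^\dagger_\ZZ(\infty')\hookrightarrow\DD^\dagger_\ZZ(\infty)$ induces a canonical morphism $\mathbf{D}'_\ZZ(\rho_*E)\to\mathbf{D}_\ZZ(E)$, and likewise a canonical morphism $p'_{2+}\to p_{2+}$ after restriction. Composing these yields
\[
\FF_!(M)=p'_{2+}\mathbf{D}'_\ZZ\,\rho_*E\;\longrightarrow\;p_{2+}\mathbf{D}_\ZZ E
=p_{2+}\mathbf{D}_\ZZ\mathbf{D}_\ZZ(p_1^!M\,\wtilde{\ot}\,K_\pi),
\]
and one finishes with the biduality isomorphism $\mathbf{D}_\ZZ\mathbf{D}_\ZZ\cong\mathrm{id}$ of \cite[II, 3.6]{Vir}. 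Every step is a natural transformation of functors in $M$, so canonicity and functoriality are automatic---no reduction to $M=\DD^\dagger_\YY(\infty_\YY)$, no explicit resolutions, no gluing.

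Your route bypasses biduality by attempting to construct $\varphi\colon\mathbf{D}'_\ZZ\rho_*\mathbf{D}_\ZZ G\to\rho_*G$ directly via an explicit chain map on the Koszul representative for a single $M$. Even granting that the local chain maps you describe are correct, the gluing is suspect: you use the identity over $D_+(u_0)$ and multiplication by $x$ over $D_+(u_1)$, and these do not agree on the overlap (multiplication by $x$ is not the identity there, even though it is invertible). More fundamentally, a morphism constructed by hand on one object does not yield a \emph{canonical} natural transformation of functors; the word ``canonical'' in the statement is doing real work, and your construction does not supply it. The paper's scalar-extension-plus-biduality argument solves this at a stroke.
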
 
\begin{proof} Let $E$ be a coherent $\DD^{\dagger}_{\ZZ}(\infty)$-module, such that 
$\rho_*E$ is a coherent $\DD^{\dagger}_{\ZZ}(\infty')$-module.   
There are canonical
maps $\mathbf{D}'_{\ZZ}(\rho_* E)\rig \mathbf{D}_{\ZZ}(E)$ and 
also canonical maps
$$p'_{2+}\mathbf{D}'_{\ZZ}(E)\rig p_{2+}\mathbf{D}_{\ZZ}(E).$$
Applying this to $E=\rho_*\bD_{\ZZ}(\infty)(p_1^!M\wtilde{\ot}K_{\pi})$
gives a canonical map
$$\FF_!(M) \rig p_{2+}\mathbf{D}_{\ZZ}\mathbf{D}_{\ZZ}(p_1^!M\wtilde{\ot}K_{\pi}).$$
And we apply the biduality theorem 3.6 of \cite{Vir}, to see that the RHS can be identified 
with $\FF(M)$.
\end{proof}
\begin{thmH} Let $M\in D^b_{coh}(\DD^{\dagger}_{\ZZ}(\infty))$, 
there is a canonical isomorphism: $\FF_!(M) \simeq \FF(M)$.
\end{thmH}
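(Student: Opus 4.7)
The plan is to reduce to the universal case $M = \DD^\dagger_\YY(\infty_\YY)$ by d\'evissage and then to compute both sides explicitly using the Koszul-type resolution of $K_\bullet$ from Sublemma~\ref{complexL} together with the division lemma~\ref{division_lemma}. Both $\FF$ and $\FF_{!}$ are triangulated and the comparison map $\FF_{!}(M)\rig\FF(M)$ is a morphism of triangulated functors, so Theorem~\ref{equivP0} and the finite cohomological dimension of $A_1(K)^{\dagger}$ (\cite{NH2}) let us resolve any $M\in D^b_{coh}(\DD^\dagger_\YY(\infty_\YY))$ by direct summands of free $\DD^\dagger_\YY(\infty_\YY)$-modules of finite rank. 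Hence it suffices to treat $M=\DD^\dagger_\YY(\infty_\YY)$.

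For this $M$, Theorem~\ref{Fourier}(i) identifies $\FF(\DD^\dagger_\YY(\infty_\YY))$ with $\DD^\dagger_{\YY^\vee}(\infty_{\YY^\vee})[-1]$, so the task is to compute $\FF_!(\DD^\dagger_\YY(\infty_\YY))$ and to check compatibility of the comparison map. By Sublemma~\ref{complexL}, $p_1^!\DD^\dagger_\YY(\infty_\YY)\wtilde{\ot}K_{\pi}[1]$ is represented by the two-term complex $K_\bullet$ with differential given by right multiplication by $\der_y+\pi x$, and its dual $\mathbf{D}_\ZZ(K_\bullet)$ is the complex in~(\ref{compl_dual}) with differential given by right multiplication by $-\der_y+\pi x$. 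The proof of Lemma~\ref{coh_lemma}, via Lemma~\ref{division_lemma}, shows that $\rho_*\mathbf{D}_\ZZ(K_\bullet)$ is represented on the charts $D_+(u_1)\times D_+(v_0)$ and $D_+(u_1)\times D_+(v_1)$ by the analogous two-term complex of $\DD^\dagger_\ZZ(\infty')$-modules with differential right multiplication by $-x'\der_y+\pi$; on $D_+(u_0)\times D_+(v_\bullet)$ the sheaves $\DD^\dagger_\ZZ(\infty)$ and $\DD^\dagger_\ZZ(\infty')$ coincide and nothing is changed.

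Applying $\mathbf{D}'_\ZZ$ then produces a two-term $\DD^\dagger_\ZZ(\infty')$-complex with differential given by \emph{left} multiplication by $-x'\der_y+\pi$, presenting a Dwork-type module with singularities only along $\infty'$. Its pushforward under $p'_{2+}$ is computed by the same argument that yields Theorem~\ref{Fourier}(i) in~\cite{NH}: the key ingredient is the division Theorem~\ref{decomposition_ov_Z} inside $B_2(K)^\dagger$, which provides a length-one free resolution of $\DD^\dagger_{\YY^\vee}(\infty_{\YY^\vee})$. The upshot is again $\DD^\dagger_{\YY^\vee}(\infty_{\YY^\vee})[-1]$. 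Finally, one verifies that the resulting isomorphism $\FF_!(\DD^\dagger_\YY(\infty_\YY))\simeq\FF(\DD^\dagger_\YY(\infty_\YY))$ agrees with the canonical map of the preceding proposition, which reduces to unravelling the biduality isomorphism of~\cite{Vir} used in its construction.

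The main obstacle is the bookkeeping in the chain $\mathbf{D}_\ZZ \leadsto \rho_* \leadsto \mathbf{D}'_\ZZ \leadsto p'_{2+}$: one must pass repeatedly between left and right $\DD^\dagger_\ZZ$-module structures while keeping track of the sign of the connection on $K_\pi$ and of the behaviour on the three affine charts of $\ZZ$ (only two of which are covered directly by Lemma~\ref{division_lemma}, the third being trivial). A possible streamlining is to show directly that the canonical morphism $\mathbf{D}'_\ZZ\rho_*\mathbf{D}_\ZZ(K_\bullet)\rig\mathbf{D}_\ZZ\mathbf{D}_\ZZ(K_\bullet)\simeq K_\bullet$ becomes an isomorphism after applying $p'_{2+}$, thereby reducing the theorem to the statement that $p'_{2+}$ does not feel the difference between the divisors $\infty$ and $\infty'$ on the particular complex $K_\bullet$, again by Lemma~\ref{division_lemma}.
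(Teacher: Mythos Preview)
Your reduction to $M=\DD^{\dagger}_{\YY}(\infty_{\YY})$ is exactly what the paper does, and your identification of Sublemma~\ref{complexL} and Lemma~\ref{division_lemma} as the main tools is correct. But in the heart of the computation the paper takes a different and considerably cleaner route than the one you outline.

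You propose to carry out the chain $\mathbf{D}_\ZZ \leadsto \rho_* \leadsto \mathbf{D}'_\ZZ \leadsto p'_{2+}$ literally, computing $\mathbf{D}'_\ZZ\rho_*\mathbf{D}_\ZZ(K_\bullet)$ as an explicit two-term complex over $\DD^{\dagger}_{\ZZ}(\infty')$ and then pushing forward. The paper instead immediately invokes Virrion's relative duality $p'_{2+}\mathbf{D}'_\ZZ\simeq\mathbf{D}_{\YY^\vee}p'_{2+}$ to rewrite
\[
\FF_!(\DD^{\dagger}_{\YY}(\infty_{\YY}))=p'_{2+}\mathbf{D}'_\ZZ\rho_*\mathbf{D}_\ZZ(K_\bullet[-1])
\simeq \mathbf{D}_{\YY^\vee}\,p'_{2+}\rho_*\mathbf{D}_\ZZ(K_\bullet[-1]).
\]
Then a short sublemma shows that for any coherent $\DD^{\dagger}_{\ZZ}(\infty)$-module $N$ with $\rho_*N$ coherent over $\DD^{\dagger}_{\ZZ}(\infty')$, one has $\DD^{\dagger}_{\ZZ}(\infty)\ot_{\DD^{\dagger}_{\ZZ}(\infty')}\rho_*N\simeq N$, whence $p'_{2+}\rho_*N\simeq p_{2+}N$. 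So everything reduces to computing $p_{2+}\mathbf{D}_\ZZ(K_\bullet)$ over the \emph{larger} ring $\DD^{\dagger}_{\ZZ}(\infty)$, which is done by global sections and Theorem~\ref{decomposition_ov_Z}, yielding $\DD^{\dagger}_{\YY^\vee}(\infty_{\YY^\vee})[1]$. Taking $\mathbf{D}_{\YY^\vee}$ of this on the base is trivial.

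The gain is precisely that the ``bookkeeping obstacle'' you flag --- tracking left/right structures and chart-by-chart behaviour through $\mathbf{D}'_\ZZ$ --- disappears: the only duality actually computed is $\mathbf{D}_{\YY^\vee}$ of a free rank-one module. Your approach should work too, but the relative-duality trick is what makes the paper's argument short. Your ``possible streamlining'' at the end is close in spirit but still not quite the paper's move; the paper commutes $\mathbf{D}'_\ZZ$ past $p'_{2+}$ rather than comparing $\mathbf{D}'_\ZZ\rho_*\mathbf{D}_\ZZ$ with $\mathrm{id}$ before pushing forward.
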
 
Since $\FF$ and $\FF_!$ are both way-out functors, we are reduced to the case of a single 
module $M$. Thanks to \ref{equivP0}, we can suppose that
$M=\DD^{\dagger}_{\YY}(\infty_{\YY})$.

\nnsubsubsection{Computation of $\FF_{!}(\DD^{\dagger}_{\YY}(\infty_{\YY}))$.}
Let us consider $\UU$ one of the open sets $D_+(u_1)\times D_+(v_0)$ or $D_+(u_1)\times D_+(v_1)$ of $\ZZ$, denote by 
$x'$ and $t$ coordinates over $\UU$. Then, over $\UU$, we have the following resolution of the 
sheaf $\DD^{\dagger}_{\YY^{\vee}\lrig \ZZ}(\infty')$ by right
$\DD^{\dagger}_{\ZZ}(\infty')$-modules
$$\xymatrix@R=5pt{0 \ar@{->}[r]& \DD^{\dagger}_{\ZZ}(\infty')\ar@{->}[r] & \DD^{\dagger}_{\ZZ}(\infty')
\ar@{->}[r]&   \DD^{\dagger}_{\YY^{\vee}\lrig \ZZ}(\infty')\ar@{->}[r] & 0\\
   & P \ar@{|->}[r]&\der_{x'} P & & }$$
and exactly the same resolution for the sheaf $\DD^{\dagger}_{\ZZ}(\infty)$, replacing 
$\DD^{\dagger}_{\ZZ}(\infty')$ by $\DD^{\dagger}_{\ZZ}(\infty)$. This proves that there is a canonical
isomorphism of right $\DD^{\dagger}_{\ZZ}(\infty)$-modules
$$\DD^{\dagger}_{\YY^{\vee}\lrig \ZZ}(\infty')
\ot_{\DD^{\dagger}_{\ZZ}(\infty')}\DD^{\dagger}_{\ZZ}(\infty)
\simeq \DD^{\dagger}_{\YY^{\vee}\lrig \ZZ}(\infty).$$
In fact, this isomorphism is also right $\DD^{\dagger}_{\YY^{\vee}}(\infty_{\YY^{\vee}})$-linear,
property that we can check over the open set $D_+(u_0)\times D_+(v_0)$ where the previous isomorphism
coincides with identity.

Now we have the following lemma.
\begin{souslem}\label{p2} Let $M$ be a coherent $\DD^{\dagger}_{\ZZ}(\infty)$-module such that 
$\rho_*M$ is a coherent $\DD^{\dagger}_{\ZZ}(\infty')$-module, then there is a canonical 
isomorphism of coherent $\DD^{\dagger}_{\ZZ}(\infty)$-modules
$$\DD^{\dagger}_{\ZZ}(\infty)\ot_{\DD^{\dagger}_{\ZZ}(\infty')}\rho_*M \simeq M.$$
\end{souslem}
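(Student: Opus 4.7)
The canonical map I propose is the multiplication map
\begin{equation*}
\mu \colon \DD^{\dagger}_{\ZZ}(\infty) \ot_{\DD^{\dagger}_{\ZZ}(\infty')} \rho_* M \lorig M, \qquad P \otimes m \longmapsto P \cdot m,
\end{equation*}
which is visibly $\DD^{\dagger}_{\ZZ}(\infty)$-linear. The plan is to verify, in this order, that $\mu$ is surjective, that $\ker \mu$ is a coherent $\DD^{\dagger}_{\ZZ}(\infty)$-module whose restriction to a suitable dense open vanishes, and finally that this forces $\ker \mu = 0$ by a faithfulness principle for overconvergent sheaves.

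Surjectivity of $\mu$ is immediate since $\mu(1 \otimes m) = m$ for every local section $m$. For the support statement, I would exploit the fact that the extra divisor $V(u_0) = \infty \setminus \infty'$ has empty intersection with the open $D_+(u_0) \subset \ZZ$; hence on $D_+(u_0)$ both sheaves $\OO_{\ZZ}(^\dag \infty)$ and $\OO_{\ZZ}(^\dag \infty')$ restrict to the same subsheaf of $\OO_{\ZZ,\Qr}$ (allowing overconvergence only along $V(v_0)$), and the same then holds for the associated rings of arithmetic differential operators. Consequently $\mu|_{D_+(u_0)}$ reduces to the identity, and $\ker \mu|_{D_+(u_0)} = 0$.

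Next I would check that the source of $\mu$ is itself a coherent $\DD^{\dagger}_{\ZZ}(\infty)$-module. Using the coherence hypothesis on $\rho_* M$, one locally picks a presentation $(\DD^{\dagger}_{\ZZ}(\infty'))^n \trig \rho_* M$ and tensors with $\DD^{\dagger}_{\ZZ}(\infty)$ to obtain a surjection $(\DD^{\dagger}_{\ZZ}(\infty))^n \trig \DD^{\dagger}_{\ZZ}(\infty) \ot_{\DD^{\dagger}_{\ZZ}(\infty')} \rho_* M$; since $\DD^{\dagger}_{\ZZ}(\infty)$ is a coherent sheaf of rings, the source is coherent. Therefore $\ker \mu$ is a coherent $\DD^{\dagger}_{\ZZ}(\infty)$-module whose restriction to $D_+(u_0)$ vanishes.

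The final and most delicate step is to invoke the faithfulness principle: a coherent $\DD^{\dagger}_{\ZZ}(\infty)$-module whose restriction to $D_+(u_0)$ is zero must itself vanish. The underlying reason is that sections of $\OO_{\ZZ}(^\dag \infty)$ are power series in a local equation for $V(u_0)$ with overconvergent growth conditions, and are therefore uniquely determined by their restriction to the complement $D_+(u_0)$; this determination propagates to coherent $\DD^{\dagger}_{\ZZ}(\infty)$-modules. The main obstacle I foresee is pinning down this faithfulness statement in exactly the form needed on $\ZZ$. Should this prove too delicate, a computational alternative is to reduce via the equivalence of categories \ref{equivP1} to an isomorphism of modules over the algebras $A_2(K)^{\dagger}$ and $B_2(K)^{\dagger}$ on global sections, and to verify the injectivity of $\mu$ there by direct manipulation, exploiting the acyclicity results of \ref{acyclicity} and, where needed, the division lemma \ref{division_lemma}.
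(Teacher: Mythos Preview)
Your approach is correct and is essentially the same as the paper's: define the multiplication map, observe it is a morphism of coherent $\DD^{\dagger}_{\ZZ}(\infty)$-modules that becomes an isomorphism on the open set where the two rings coincide, and conclude by the faithfulness principle. The paper is simply terser: it checks the map is an isomorphism on $D_+(u_0)\times D_+(v_0)$ (the complement of $\infty$) and then cites 4.3.7 of \cite{Ber1}, which is precisely the statement you were looking for---a morphism of coherent $\DD^{\dagger}_{\VV}(Z)$-modules that is an isomorphism outside the divisor $Z$ is an isomorphism. Your separate treatment of surjectivity and of $\ker\mu$, and your fallback computational route via \ref{equivP1} and \ref{division_lemma}, are unnecessary once this reference is in hand.
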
 
\begin{proof} The canonical map of the statement is a morphism of coherent 
$\DD^{\dagger}_{\ZZ}(\infty)$-modules, which is an isomorphism over $D_+(u_0)\times D_+(v_0)$ and thus
an isomorphism (cf 4.3.7 of \cite{Ber1}). 
\end{proof}
With the hypothesis of the lemma, we get 
\begin{align*}
p_{2+}M &=Rp_{2*}\left(\DD^{\dagger}_{\YY^{\vee}\lrig \ZZ}(\infty)
\ot_{\DD^{\dagger}_{\ZZ}(\infty)}M\right)\\
        &\simeq Rp_{2*}\left(\DD^{\dagger}_{\YY^{\vee}\lrig \ZZ}(\infty')
\ot_{\DD^{\dagger}_{\ZZ}(\infty')}\DD^{\dagger}_{\ZZ}(\infty)\ot_{\DD^{\dagger}_{\ZZ}(\infty)}M\right)\\
       & \simeq Rp_{2*}\left(\DD^{\dagger}_{\YY^{\vee}\lrig \ZZ}(\infty')
\ot_{\DD^{\dagger}_{\ZZ}(\infty')}\rho_*(M)\right)\\
     &\simeq p'_{2+}M.
\end{align*}
Applying this to $M=K_{\pg}[-1]$ of \ref{complexL}, we see that 
\begin{align*}
\FF_!(\DD^{\dagger}_{\YY}(\infty_{\YY}))&= p'_{2+} \bD_{\ZZ}' \rho_*  \bD_{\ZZ}
(K_{\pg}[-1])\\
&\simeq \bD_{\YY}  p'_{2+}\rho_*  \bD_{\ZZ}(K_{\pg}[-1])\qquad\textrm{(see \cite{Vir2})}\\
&\simeq \bD_{\YY}  p_{2+} \bD_{\ZZ} (K_{\pg}[-1])
\end{align*}
Now we need the following statement
\begin{souslem} There is a canonical isomorphism
$$\DD^{\dagger}_{\YY^{\vee}}(\infty_{\YY^{\vee}})[1]\simeq p_{2+} \bD_{\ZZ} (K_{\pg}).$$
\end{souslem}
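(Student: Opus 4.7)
The plan is to reduce the computation to an application of Theorem \ref{Fourier}(i), in which $\pi$ is replaced by $-\pi$, by means of the explicit representative of $\bD_\ZZ(K_{\pg})$ already produced during the proof of Lemma \ref{coh_lemma}.

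First I will invoke equation (\ref{compl_dual}), which exhibits $L_{\pg}:=\bD_\ZZ(K_{\pg})$ in $D^b_{coh}(\DD^\dagger_\ZZ(\infty))$ as the two-term complex
$$0\to \DD^\dagger_\ZZ(\infty) \xrightarrow{\,\cdot(-\der_y+\pi x)\,} \DD^\dagger_\ZZ(\infty) \to 0$$
placed in degrees $-2$ and $-1$. A direct comparison with Sublemma \ref{complexL} shows that, up to a sign on the differential and a shift by $[1]$, this is the complex $K_{\pg}$ in which $\pi$ has been replaced by $-\pi$ everywhere; writing $K^{-\pi}_{\pg}$ for the latter complex, the trivial sign-flip yields a quasi-isomorphism $L_{\pg}\simeq K^{-\pi}_{\pg}[1]$ in $D^b_{coh}(\DD^\dagger_\ZZ(\infty))$.

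Next I will observe that the proof of Theorem \ref{Fourier}(i) from \cite{NH} transports verbatim when $\pi$ is replaced by $-\pi$: Dwork's relation $(-\pi)^{p-1}=-p$ continues to hold (for odd $p$, the standing assumption), and the key division lemma \ref{decomposition_ov_Z} is already phrased in terms of the operator $-\der_y+\pi x$, which is exactly the operator appearing in the differential of $L_{\pg}$. The $-\pi$ variant of Theorem \ref{Fourier}(i) therefore gives $p_{2+}(K^{-\pi}_{\pg}[-1])\simeq \DD^\dagger_{\YY^\vee}(\infty_{\YY^\vee})[-1]$, whence $p_{2+}K^{-\pi}_{\pg}\simeq \DD^\dagger_{\YY^\vee}(\infty_{\YY^\vee})$. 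Combining both steps yields
$$p_{2+}\bD_\ZZ(K_{\pg})\simeq (p_{2+}K^{-\pi}_{\pg})[1]\simeq \DD^\dagger_{\YY^\vee}(\infty_{\YY^\vee})[1],$$
which is the required isomorphism.

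The main point deserving attention is verifying that the proof of Theorem \ref{Fourier}(i) from \cite{NH} (whose details are not reproduced in the excerpt) indeed transports to the $-\pi$ parameter. This essentially amounts to tracking signs through the analysis of right multiplication by $-\der_y+\pi x$ on $A_2(K)^\dagger$ and matching the residual $\DD^\dagger_{\YY^\vee}(\infty_{\YY^\vee})$-module structure via the side-changing twist built into $\DD^\dagger_{\YY^\vee\lrig\ZZ}(\infty)$; since the division lemma is already adapted to precisely this operator, no substantially new computation is required and this last step should be essentially routine.
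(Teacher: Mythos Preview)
Your reduction is correct and genuinely different from the paper's route. The paper does not invoke Theorem \ref{Fourier}(i) with a modified parameter; instead it computes $p_{2+}\bD_\ZZ(K_\bullet)$ directly. Using (\ref{compl_dual}) it writes $\DD^\dagger_{\YY^\vee\leftarrow\ZZ}(\infty)\otimes^{\bL}_{\DD^\dagger_\ZZ(\infty)}\bD_\ZZ(K_\bullet)$ as the two-term complex $P\mapsto P(-\der_y+\pi x)$ on $\DD^\dagger_{\YY^\vee\leftarrow\ZZ}(\infty)$, calls its $H^{-1}$ by $\EE$, produces a map $\DD^\dagger_{\YY^\vee}(\infty_{\YY^\vee})\to Rp_{2*}\EE$ by $P\mapsto P\cdot 1$, and checks via the equivalence \ref{equivP1} and the division lemma \ref{decomposition_ov_Z} that on global sections this is the identification
\[
A_2(K)^\dagger\big/\bigl(\der_x A_2(K)^\dagger + A_2(K)^\dagger(-\der_y+\pi x)\bigr)\;\simeq\;B_2(K)^\dagger/\der_x B_2(K)^\dagger\;\simeq\;A_1(K)^\dagger.
\]
A spectral-sequence argument then shows the higher $R^ip_{2*}\EE$ vanish.

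Your argument is more conceptual: you recognise $L_\bullet\simeq K^{-\pi}_\bullet[1]$ exactly, and then feed this back into the Fourier computation already done. Two remarks. First, your caveat about odd $p$ is unnecessary: the proof of Theorem \ref{Fourier}(i) in \cite{NH} (and the division lemmas here) uses only $v_p(\pi)=1/(p-1)$, which $-\pi$ shares for all $p$; Dwork's equation enters only for the Frobenius structure, irrelevant to this algebraic statement. Second, the paper's explicit construction has a payoff you lose: it pins down the isomorphism as sending $1\mapsto 1$, and this is exactly what is used at the end of the main theorem's proof to check that the canonical map $\FF_!(\DD^\dagger_\YY(\infty_\YY))\to\FF(\DD^\dagger_\YY(\infty_\YY))$ respects the generator. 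Your abstract isomorphism proves the sublemma as stated, but to complete the comparison theorem you would still have to unwind the identification coming from \cite{NH} and match generators, which amounts to redoing part of the paper's computation anyway.
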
 
\begin{proof} Using (\ref{compl_dual}), we know that 
$\DD^{\dagger}_{\YY^{\vee}\lrig \ZZ}(\infty)\ot^{\bf L}_{\DD^{\dagger}_{\ZZ}(\infty)}\bD_{\ZZ} (K_{\pg})$
  is represented by the following complex with terms in degrees $-2$ and $-1$
$$\xymatrix@R=5pt{0 \ar@{->}[r]& \DD^{\dagger}_{\YY^{\vee}\lrig \ZZ}(\infty)\ar@{->}[r] & 
\DD^{\dagger}_{\YY^{\vee}\lrig \ZZ}(\infty) \ar@{->}[r]   & 0\\
   & P \ar@{|->}[r]&P(-\der_y+\pi x). &  }$$
Denote by $\EE$ the $-1$-cohomology group of this complex. 
Since the module $\DD^{\dagger}_{\YY^{\vee}\lrig \ZZ}(\infty)$ is a right coherent 
$\DD^{\dagger}_{\ZZ}(\infty)$-module, it is acyclic for the functor $\Gamma(\ZZ,.)$. It is
also the case for $\EE$ by the long cohomology exact sequence and we can then compute 
$\Ga(\ZZ,\EE)$ as the cokernel of the map
$$\xymatrix@R=5pt{0 \ar@{->}[r]& \Ga(\ZZ,\DD^{\dagger}_{\YY^{\vee}\lrig \ZZ}(\infty))\ar@{->}[r] & 
\Ga(\ZZ,\DD^{\dagger}_{\YY^{\vee}\lrig \ZZ}(\infty)) \ar@{->}[r]&    0\\
   & P \ar@{|->}[r]&P(-\der_y+\pi x) &  }.$$
In particular, we get an element $1\in \Ga(\ZZ,\EE)$, allowing us to consider 
a morphism $\varphi\colon\DD^{\dagger}_{\YY^{\vee}}(\infty_{\YY^{\vee}})\rig \bR p_{2*}\EE,$ that 
sends $P$ to $P\cdot 1$, where $1\in R^0p_{2*}\EE$. 

On the other hand, from the previous lemma \ref{p2}, we know that 
$p_{2+}\bD_{\ZZ} (K_{\pg}[-1])$, thus $\EE$, is a coherent
$\DD^{\dagger}_{\YY}(\infty_{\YY})$-module. 
By \ref{equivP1}, 
 it is enough to prove that the morphism induced on global sections of both 
sheaves is an isomorphism, to see that $\varphi$ is an isomorphism.

As $\DD^{\dagger}_{\ZZ}(\infty)$-coherent module, $\DD^{\dagger}_{\YY^{\vee}\lrig \ZZ}(\infty)$
is acyclic for the functor $\Gamma$. Using the resolution given in \ref{acyclicity}, we identify 
$\Ga(\ZZ,\DD^{\dagger}_{\YY^{\vee}\lrig \ZZ}(\infty))$ with 
$A_2(K)^{\dagger}/\der_x A_2(K)^{\dagger}$. Finally, we get the following isomorphisms
\begin{align*}\Ga(\ZZ,\DD^{\dagger}_{\YY^{\vee}\lrig \ZZ}(\infty))\left/
         \Ga(\ZZ,\DD^{\dagger}_{\YY^{\vee}\lrig \ZZ}(\infty))(-\der_y+\pi x)\right.&\simeq 
            A_2(K)^{\dagger}\left/\der_x A_2(K)^{\dagger}+A_2(K)^{\dagger}(-\der_y+\pi
x)\right.\\
&\simeq B_2(K)^{\dagger}\left/\der_x B_2(K)^{\dagger}\right.\\
& \simeq A_1(K)^{\dagger}\\
&\simeq \Ga(\YY^{\vee},\DD^{\dagger}_{\YY^{\vee}}(\infty_{\YY^{\vee}})).
\end{align*}
But $\bR\Gamma(\ZZ,\EE)$ is isomorphic to $\Ga(\ZZ,\EE)$ placed in degree $0$, 
and also to $\bR\Gamma(\ZZ,\bR p_{2*}\EE)$. Because the cohomology sheaves of $R p_{2*}\EE$ are acyclic 
for $\Gamma(\YY^{\vee},.)$, the spectral sequence attached to composite functors
$\Gamma(\YY^{\vee},.)$ and $p_{2*}$ degenerates, proving that $R^ip_{2*}\EE=0$ for $i\neq
0$. Finally the previous computation gives that $p_{2*}\EE$, and thus $p_{2+}\bD_{\ZZ}
(K_{\pg})[-1]$ is isomorphic to $\DD^{\dagger}_{\YY^{\vee}}(\infty_{\YY^{\vee}})$ (in degree $0$)
and the lemma.

We finally get 
\begin{align*}
\FF_!(\DD^{\dagger}_{\YY}(\infty_{\YY}))&\simeq \bD_{\YY}p_{2+}\bD_{\ZZ}(K_{\pg}[-1])\\
                       &\simeq
\bD_{\YY}(\DD^{\dagger}_{\YY^{\vee}}(\infty_{\YY^{\vee}})[2])\\
                       &\simeq \DD^{\dagger}_{\YY^{\vee}}(\infty_{\YY^{\vee}})[-1]\\
                       &\simeq
 \FF(\DD^{\dagger}_{\YY}(\infty_{\YY}))\qquad\textrm{(cf.\ \ref{Fourier})}.
\end{align*}
It remains to show that the canonical map $\FF_{!}(\DD^{\dagger}_{\YY}(\infty_{\YY}))\rig 
\FF(\DD^{\dagger}_{\YY}(\infty_{\YY}))$ maps $1$ to $1$. 
For this we observe that 
the canonical map 
$$p'_{2+}\bD_{\ZZ}'\rho_*\bD_{\ZZ}(K_{\pg})[-1]\rig p_{2+}\bD_{\ZZ}\bD_{\ZZ}(K_{\pg})[-1]$$
maps $1$ (considered as an element of the $0$-cohomology group of these complexes) to $1$,
 which is clear from the 
explicit computation of the complex $K_{\pg}$ and the map of functors 
$p'_{2+}\bD_{\ZZ}'\rig p_{2+}\bD_{\ZZ}$. Then identification 
of $\bD_{\ZZ}\bD_{\ZZ}(K_{\pg})$ also maps $1$ to $1$, and this finally gives us the isomorphism
$$\FF_!(\DD^{\dagger}_{\YY}(\infty_{\YY}))\simeq
\FF(\DD^{\dagger}_{\YY}(\infty_{\YY})).$$
\end{proof}

Tomoyuki Abe:\\
Institute for the Physics and Mathematics of the Universe (IPMU)\\
The University of Tokyo\\
5-1-5 Kashiwanoha, Kashiwa, Chiba, 277-8583, Japan \\
e-mail: {\tt tomoyuki.abe@ipmu.jp}

\bigskip\noindent
Christine Noot-Huyghe:\\
Institut de Recherche Math\'ematique Avanc\'ee\\
UMR 7501, Universit\'e de Strasbourg et CNRS\\
7 rue Ren\'e Descartes, 67000 Strasbourg, France\\
e-mail: {\tt huyghe@math.unistra.fr}
\end{document}